\definecolor{hypercolor}{HTML}{003399}
\numberwithin{equation}{section}
\newtheorem{thm}{Theorem}[section]
\newtheorem{lem}[thm]{Lemma}
\newtheorem{prop}[thm]{Proposition}
\newenvironment{customprop}[1]
{\innercustomprop}%
{\endinnercustomprop}
\newtheorem{cor}[thm]{Corollary}
\newenvironment{customcor}[1]
{\innercustomcor}%
{\endinnercustomcor}
\theoremstyle{definition}
\theoremstyle{remark}
\newtheorem{ex}[thm]{Example}
\newtheorem{rmk}[thm]{Remark}
\newtheorem{assu}[thm]{Assumption}
\newcommand{\e}{\varepsilon}
\newcommand{\sff}{\mathsf{f}}
\newcommand{\hatf}{\hat{f}}
\newcommand{\hatg}{\hat{g}}
\newcommand{\barg}{\overline{g}}
\renewcommand{\aa}{a}							% \EE[\noise_1^2]
\newcommand{\calN}{\mathcal{N}}
\newcommand{\conc}{\mathrm{conc}}
\newcommand{\last}{\mathrm{last}}
\newcommand{\rest}{\mathrm{rest}}
\newcommand{\HS}{\mathrm{HS}}
\newcommand{\Msp}{\mathcal{M}_+} 				% space of locally-finite positive measures
\newcommand{\PM}{M}								% polymer measure
\newcommand{\PMa}{\mathbf{Q}}					% annealed \PM^{\otimes n}
\newcommand{\PMw}{\overline{M}}						% weighted \PM	
\newcommand{\PMm}{M'}							% a general path-space measure to be matched to \PM
\newcommand{\bridge}{\operatorname{bridge}}		% unnormalized brownian bridge 
\newcommand{\measure}{\mu}						% a general measure
\newcommand{\measurew}{\overline{\mu}}				% weighted \measure
\newcommand{\measuree}{\nu}						% alternative to \measure
\newcommand{\pmeasure}{\mu}						% a general, path-space measure
\newcommand{\margin}[1]{|_{#1}{}}
\newcommand{\Path}{\gamma}						% a general path; "\path" shouldn't be redefined
\newcommand{\Pathh}{\eta}						% alternative to \Path
\newcommand{\vecpath}{\vec{\gamma}}				% vector path
\newcommand{\Psp}{\Gamma}						% the path space
\newcommand{\gmc}{\mathcal{G}_{0}}			% GMC
\newcommand{\gmcw}{\mathcal{G}}		            % weighted-unweighted GMC
\newcommand{\inters}{\tau}						% intersection local time
\newcommand{\intersop}{\mathsf{T}}				% operator of \inters
\newcommand{\noisee}{\zeta}						% alternative to \noise
\newcommand{\noise}{\xi}						% isotropic Gaussian vector
\newcommand{\noiseu}{\xi_{\hilsp}}				% universal isotropic Gaussian vector
\newcommand{\noisesp}{\Delta}					% sample space of the Gaussian vector
\newcommand{\filt}{\mathscr{F}}
\newcommand{\filtt}{\mathscr{G}}
\newcommand{\filttt}{\mathscr{H}}
\renewcommand{\ker}{\mathsf{k}}					% correlation kernel in GMC
\newcommand{\kerop}{\mathsf{K}}					% correlation operator in GMC
\newcommand{\op}{V}								% a general operator
\newcommand{\hsker}{v}							% the kernel of \hsop
\newcommand{\eigenval}{\lambda}					% eigenvalue
\newcommand{\eigenfn}{u}						% eigenfunction
\newcommand{\ortho}{\phi}						% general orthonormal vectors/functions
\newcommand{\Yop}{Y}							% Shamov's Y operator
\newcommand{\Yopu}{\hat{Y}}						% \Yop in the universal hilbert space
\newcommand{\Yst}{Y_{\star}}					% standard Y operator
\newcommand{\Ystt}[1]{Y_{\star\,#1}}			% standard Y operator
\newcommand{\shift}{\phi}						% Shamov's shift
\newcommand{\isom}{O}							% partial isometry
\newcommand{\isomu}{U}					        % partial isometry in the universal setting
\newcommand{\proj}{\Pi}	              			% linear algebraic projection
\newcommand{\wfn}{g_\mathrm{w}}					% weighting function
\newcommand{\expfn}{g_\mathrm{exp}}				% exponential weighting function
\newcommand{\id}{\mathbf{1}}					% identity operator 
\newcommand{\sg}{\mathcal{Q}}					% the delta-Bose semigroup
\newcommand{\sgsum}{\mathcal{R}}				% the summand in the semigroup
\newcommand{\heatsg}{\mathcal{P}}				% heat semigroup
\newcommand{\heatsger}[2]{\mathcal{P}^{\hphantom{#1}\varepsilon}_{#1#2}}
\newcommand{\heatsgel}[2]{\mathcal{P}^{\varepsilon\hphantom{#2}}_{#1#2}}
\newcommand{\Jop}{\mathcal{J}}					
\newcommand{\jfn}{\mathsf{j}}
\newcommand{\Sop}{S}							% the colliding operator on \R^{2\intv{n}}
\newcommand{\Top}{\mathcal{T}}					% a general operator
\newcommand{\Cop}{\mathcal{C}}					% operator defined in proof
\newcommand{\Dop}{\mathcal{D}}					% operator defined in proof
\newcommand{\Eop}{\mathcal{E}}					% operator defined in proof
\newcommand{\Fop}{\mathcal{F}}					% operator defined in proof
\newcommand{\Uop}{\mathcal{U}}					% operator defined in proof
\newcommand{\Vop}{\mathcal{V}}					% operator defined in proof
\newcommand{\pair}{\mathrm{Pair}}
\newcommand{\intv}[1]{[\![#1]\!]}
\newcommand{\dgm}{\mathrm{Dgm}}
\newcommand{\vecalpha}{\vec{\alpha}}			% element of \dgm
\newcommand{\vecsigma}{\vec{\sigma}}			% element of \dgm
\newcommand{\vecu}{\vec{u}}						% time variables
\newcommand{\textc}{\mathrm{c}}
\newcommand{\textr}{\mathrm{r}}
\newcommand{\yc}{y_\mathrm{c}}
\newcommand{\yr}{y_\mathrm{r}}
\newcommand{\flow}{Z}							% shf
\newcommand{\floww}{Z'}							% alternative to \flow
\newcommand{\wien}{\operatorname{Wein}}			% Wiener measure
\newcommand{\hk}{\mathsf{p}}					% 2d heat kernel
\newcommand{\embed}{\iota}						% embedding for change of measures
\newcommand{\embedd}{\jmath}					% embedding
\newcommand{\unit}{q}							% time unit in \D
\newcommand{\startt}{r}							% the starting time in main theorem
\newcommand{\refine}{\operatorname{Major}}
\newcommand{\opdot}{\circ}						% open dot 
\newcommand{\opdott}{\raisebox{0.1ex}{\scalebox{.75}{$\odot$}}} % \opdot_r for path measures 
\newcommand{\cldot}{\bullet}					% closed dot
\newcommand{\N}{\mathbb{N}}
\newcommand{\R}{\mathbb{R}}
\newcommand{\Z}{\mathbb{Z}}
\newcommand{\Csp}{C}
\newcommand{\Cbsp}{C_\mathrm{b}}
\newcommand{\Cloc}{\widehat{\mathcal{C}}}
\newcommand{\Cfin}{C_\mathrm{fd}}
\newcommand{\Lsp}{L}
\newcommand{\BMsp}{L}							% space of Borel measurable functions
\newcommand{\lsp}{\ell}
\newcommand{\hilsp}{\mathcal{H}}				% a general hilbert space
\newcommand{\msp}{\mathfrak{X}} 				% a general metric space
\newcommand{\pt}{\mathbf{x}}					% points in \msp
\newcommand{\vecpt}{\vec{\mathbf{x}}}			% points in \msp
\newcommand{\D}{\mathbb{D}}						% dyadic rationals
\newcommand{\I}{\mathbb{I}}						% intervals with endpoints in \D
\newcommand{\J}{\mathbb{J}}						% the algebra of finite unions of intervals in \I
\newcommand{\intvs}{\mathcal{I}}				% a set of finitely many disjoint intervals in \I
\newcommand{\intvss}{\mathcal{J}}				% alternative to \intvs
\newcommand{\Intvs}{\mathscr{I}}				% set of \intvs
\newcommand{\norm}[1]{\Vert #1\Vert}
\newcommand{\Norm}[1]{\big\Vert #1\big\Vert}
\newcommand{\normop}[1]{\Vert #1\Vert}
\newcommand{\Normop}[1]{\big\Vert #1\big\Vert}
\newcommand{\NOrmop}[1]{\Big\Vert #1\Big\Vert}
\newcommand{\ip}[1]{\langle #1\rangle}
\newcommand{\Ip}[1]{\big\langle #1\big\rangle}
\newcommand{\IP}[1]{\Big\langle #1\Big\rangle}
\newcommand{\ipp}[1]{(#1)}
\newcommand{\Ipp}[1]{\big( #1\big)}
\newcommand{\cdott}{\,\cdot\,}
\newcommand{\E}{\mathbf{E}}
\newcommand{\EE}{\mathbb{E}}
\renewcommand{\P}{\mathbf{P}}
\newcommand{\PP}{\mathbb{P}}
\newcommand{\Var}{\mathbf{Var}}
\newcommand{\ind}{1}					
\renewcommand{\d}{\mathrm{d}}		
\newcommand{\til}{\widetilde}
\newcommand{\supp}{\operatorname{supp}}
\newcommand{\nullsp}{\operatorname{null}}		% null space; "\null" shouldn't be redefined
\newcommand{\range}{\operatorname{range}}		% range/image
\title{Conditional GMC within the stochastic heat flow}
\author{Jeremy Clark}
\address[Jeremy Clark]{Department of Mathematics, University of Mississippi}
\author{Li-Cheng Tsai}
\address[Li-Cheng Tsai]{Department of Mathematics, University of Utah}
\begin{document}
\begin{abstract} 
We establish that the family of polymer measures $\PM^{\theta}_{[s,t]}$ associated with the Stochastic Heat Flow (SHF), indexed by $\theta\in\R$, has a conditional Gaussian Multiplicative Chaos (GMC) structure. Namely, taking the random measure $\PM^{\theta}_{[s,t]}$ as the reference measure, we construct the path-space GMC with noise strength $\aa > 0$ and prove that the resulting random measure is equal in law to $\PM^{\theta+\aa}_{[s,t]}$. As two applications, we prove that the polymer measure and SHF tested against general nonnegative functions are almost surely strictly positive and that the SHF converges to $0$ as $\theta\to\infty$.
\end{abstract}

\maketitle

%%%%%%%%%%%%%%%%%%%%%%%%%%%%%%%%%%%%%%%%%%%%%%%%%%%%%%%%%%%%%%%%%%%%%%%%%%%%%%%%%%%%%%%%%%%%
%%%% section intro %%%%%%%%%%%%%%%%%%%%%%%%%%%%%%%%%%%%%%%%%%%%%%%%%%%%%%%%%%%%%%%%%%%%%%%%%
%%%%%%%%%%%%%%%%%%%%%%%%%%%%%%%%%%%%%%%%%%%%%%%%%%%%%%%%%%%%%%%%%%%%%%%%%%%%%%%%%%%%%%%%%%%%
\section{Introduction}
\label{s.intro}

In this paper, we study the polymer measure associated with the Stochastic Heat Flow (SHF).
The SHF is the scaling limit of directed polymers and the noise-mollified Stochastic Heat Equation (SHE) in the critical dimension $1+2$ and around the critical temperature.
The SHF behaves dramatically differently from the solution of the $1+1$ dimensional SHE, and its construction and characterization already pose significant challenges.
The work \cite{caravenna2023critical} derived the finite-dimensional distributions (in time) of the SHF as a universal limit of discrete polymer models.
The SHF with parameter $\theta\in\R$ is a two-parameter stochastic process in $s\leq t\in\R$ that takes values in the set of positive measures on $\R^4$
\begin{align*}
	\flow^{\theta}_{s,t} = \flow^{\theta}_{s,t}(\d x, \d x') \in \Msp(\R^4)\ ,
	\quad
	x,x'\in\R^2\ .
\end{align*}
The parameter $\theta$ can be viewed as a renormalized coupling constant, with $\theta\to-\infty$ and $\theta\to+\infty$ respectively representing the weak- and strong-disorder regimes.
Recently, \cite{tsai2024stochastic} provided an axiomatic formulation of the SHF as a continuous process and established the uniqueness and existence under this formulation.
Around the same time, \cite{clark2024continuum} constructed the (unnormalized) polymer measure $\PM^{\theta}=\PM^{\theta}_{[s,t]}$ associated with the SHF.
In particular, $Z^{\theta}_{s,t}$ is the initial-final time marginal of $\PM^{\theta}_{[s,t]}$.
Just like the SHF, $\PM^{\theta}$ is expected to behave qualitatively differently from its $1+1$ dimensional counterpart constructed in \cite{alberts2014intermediate}.

A question of interest is whether the polymer measure $\PM$ is a path-space Gaussian Multiplicative Chaos (GMC).
At an informal level, a GMC is a random measure of the form
\begin{align*}
    \gmcw(\d\Path)
    =
    \measure(\d\Path)
    \,
    \exp\big(\noisee(\Path)-\tfrac{1}{2}\EE\noisee(\Path)^2\big)  \ ,
\end{align*}
where $\measure$ is a deterministic measure, called the \emph{reference measure}, and $\noisee$ is a Gaussian field having correlations $\EE[ \noisee(\Path)\noisee(\Pathh)  ]=\ker(\Path,\Pathh)$ for a given kernel $\ker$.
Often $\noisee$ is not function-valued, and hence the informal expression above does not make sense merely as a Radon-Nikodym relation between $\measure$ and $\gmc$.
Still, the theory of GMC, which commenced with the article \cite{kahane1985} and is reviewed in \cite{rhodes2014gaussian}, provides methods for rigorously defining random measures of this type under suitable assumptions.
We will consider GMCs on the \emph{path space} 
\begin{align}
	\label{e.pathsp}
	\Path \in \Psp_{[s,t]} := \Csp([s,t],\R^d)\ ,
	\quad
	d=2\ .
\end{align}
Pre-limiting Gaussian polymer measures are path-space GMCs.
Take, for instance, the noise-mollified SHE.
It is not difficult to check that its polymer measure is a GMC with reference measure being the Wiener measure and Gaussian field $\noisee(\Path):=\beta_\e\int_s^t\d u\, \noise_\e(u,\Path(u))$, where $\noise_\e$ is obtained by spatially mollifying the spacetime white noise at a fixed scale $\e>0$, and $\beta_\e>0$.
One may expect the limiting polymer measure to be a GMC also, which the article \cite{quastel2022kpz} demonstrated to be the case in dimensions $1+1$. However, in $1+2$ dimensions, the measure $\PM^{\theta}$ \emph{cannot} be a GMC, as was explained in \cite[Section~2.6]{clark2024continuum}.
We mention that \cite{caravenna2023gmc} studied what can be understood as the final-time marginal of $\PM^{\theta}$, showing that it is not a GMC.
We emphasize that our notation of GMC concerns the path space, as opposed to the final-time marginal.
The two do not have direct relations to our knowledge.

% Since their notion of GMC concerns the final-time marginal, and our concerns the path space, the two differ and do not have direct relations.

Even though the random measures $\PM^{\theta}=\PM^{\theta}_{[s,t]}$ cannot be constructed as a GMC, the family $(\PM^{\theta})_{\theta\in \R}$  possesses a conditional GMC structure.  Before making this claim more precise, let us explain the idea.
First, the expectation $\E\PM^{\theta}$ is a Wiener measure (see Section~\ref{s.moment.annealed}), implying that if $\PM^{\theta}$ were a GMC, its reference measure could be taken to be said Wiener measure. 
Second, this Wiener measure is the $\theta'\to-\infty$ limit of $\PM^{\theta'}$ (see \eqref{e.infiniteT.lim}), that is, $\PM^{-\infty}=\E\PM^{\theta}$.
Consequently, we can understand the impossibility of constructing $\PM^{\theta}$ as a GMC in terms of the need for an infinitely strong  ``coupling strength'' to go from parameter $-\infty$ to $\theta$.
However, if instead of taking the deterministic Wiener measure $\PM^{-\infty}$ as the reference measure, we take $\PM^{\theta_0}$ for some $\theta_0\in(-\infty,\theta)$---making the difference $\theta-\theta_0$ finite---, we can expect that $\PM^{\theta}$ is a GMC wrt $\PM^{\theta_0}$. 
Since $\PM^{\theta_0}$ is random, we view this GMC as being conditional on $\PM^{\theta_0}$, and hence the name \emph{conditional} GMC.
This gives rise to a structure governing the interrelationship between the laws of the random measures $\PM^{\theta}$, $\theta\in \R$.

We prove this conditional GMC structure in the present work.
Stating the main result precisely requires some GMC-related definitions, which we defer to Section~\ref{s.tools}, as indicated below.
Given any Polish space $\msp$, call a positive measure on $\msp$ \textbf{locally finite} if it is finite on bounded subsets of $\msp$, and let $\Msp(\msp)$ be the set of locally finite positive Borel measures.
In Section~\ref{s.tools.gmc}, we will use the formulation in \cite{shamov2016gaussian} to define the GMC 
\begin{align}
	\label{e.gmcw.show}
	\gmcw=\gmcw[\pmeasure,\Yop,\noise] \in \Msp(\msp)
\end{align}
that depends on a reference measure $\pmeasure\in\Msp(\msp)$, an operator $\Yop$ that factorizes the correlation kernel $\ker$ (see \eqref{e.Yop}), and a Gaussian vector $\noise=(\noise_1,\noise_2,\ldots)$, where $\noise_1,\noise_2,\ldots$ are iid, mean-zero, $\R$-valued Gaussian.
It is common to assume that $\EE\noise_1^2=1$, but we choose to keep $\EE\noise_1^2:=\aa\in[0,\infty)$ flexible.
For deterministic $\pmeasure$ and $\Yop$, the GMC \eqref{e.gmcw.show} is random only through its dependence on $\noise$.
In our case, with $\msp$ being the path space $\Psp_{[s,t]}$ equipped with the uniform metric, we take the reference measure $\pmeasure$ to be the polymer measure $\PM^{\theta}_{[s,t]}\in\Msp(\Psp_{[s,t]})$ (see Section~\ref{s.tools.PM}), which is random and taken to be \emph{independent} of $\noise$, and we will use $\PM^{\theta}_{[s,t]}$ to construct an operator $\Ystt{[s,t]}^{\theta}$ in \eqref{e.Ystt}, which is hence $\PM^{\theta}_{[s,t]}$-measurably random.
Here is the main result of this paper.

\begin{thm}\label{t.main}
With $s<t$, $\theta\in\R$, and $\EE\noise_1^2:=\aa$,
\begin{align}
	\label{e.t.main}
	\gmcw\big[ \PM^{\theta}_{[s,t]}, \Ystt{[s,t]}^{\theta}, \noise \big]
	\text{ is equal in law to }
	\PM^{\theta+\aa}_{[s,t]}\ .
\end{align}
\end{thm}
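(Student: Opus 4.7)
My plan is to establish \eqref{e.t.main} by passing to the limit from a noise-mollified approximation in which both sides become bona fide Gaussian multiplicative chaoses. Let $\noise_\e$ denote a smooth mollification of spacetime white noise at scale $\e>0$, and let $\PM^{\theta,\e}_{[s,t]}$ be the noise-mollified polymer measure with critical renormalized coupling $\beta_\e(\theta)$, for which one has joint convergence $\PM^{\theta,\e}_{[s,t]} \Rightarrow \PM^{\theta}_{[s,t]}$ in $\theta$ as $\e \downarrow 0$ (cf.\ \cite{caravenna2023critical, tsai2024stochastic}). At the pre-limit level, $\PM^{\theta,\e}_{[s,t]}$ is a literal GMC with respect to the Wiener measure, and the identity \eqref{e.t.main} at level $\e$ reduces to a direct Cameron--Martin/Wick calculation: write the field driving $\PM^{\theta+\aa,\e}_{[s,t]}$ as a sum of two independent smooth Gaussian pieces, scaled so that the first piece, together with its self-intersection renormalization, reproduces $\PM^{\theta,\e}_{[s,t]}$, while the second piece furnishes an independent Gaussian field whose covariance along paths matches the kernel factorized by $\Ystt{[s,t]}^{\theta,\e}$. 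This yields almost surely
\begin{align*}
\PM^{\theta+\aa,\e}_{[s,t]}(\d\Path)
 \;=\;
 \PM^{\theta,\e}_{[s,t]}(\d\Path) \,
 \exp\!\bigl(\mathcal{W}^{\aa,\e}(\Path) - \tfrac{1}{2}\EE[\mathcal{W}^{\aa,\e}(\Path)^2 \mid \PM^{\theta,\e}_{[s,t]}]\bigr),
\end{align*}
which is $\gmcw[\PM^{\theta,\e}_{[s,t]}, \Ystt{[s,t]}^{\theta,\e}, \noise]$ in Shamov's sense, with $\noise$ an iid $\R$-valued Gaussian sequence of variance $\aa$, independent of $\PM^{\theta,\e}_{[s,t]}$.

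The analytic heart of the proof is passing this identity to the $\e\downarrow 0$ limit. The left-hand side converges in law to $\PM^{\theta+\aa}_{[s,t]}$ by definition. For the right-hand side, I would establish joint convergence $(\PM^{\theta,\e}_{[s,t]}, \Ystt{[s,t]}^{\theta,\e}) \Rightarrow (\PM^{\theta}_{[s,t]}, \Ystt{[s,t]}^{\theta})$ in a mode strong enough to preserve Shamov's GMC construction, combined with a uniform second-moment estimate on $\int f\,\d\gmcw[\PM^{\theta,\e}_{[s,t]}, \Ystt{[s,t]}^{\theta,\e}, \noise]$ for $f \in \Ccsp(\Psp_{[s,t]})$. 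The second-moment bound is available because the same kernel governs both the second moment of $\PM^{\theta+\aa,\e}_{[s,t]}$ and the conditional GMC variance, and polymer second moments are already under control via \cite{clark2024continuum}.

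The principal obstacle is precisely this continuity step. Since the limiting reference measure $\PM^{\theta}_{[s,t]}$ is random and typically singular with respect to the Wiener measure, deterministic stability theorems for GMC do not apply directly. My approach is to couple pre-limit and limit on a common probability space via Skorokhod representation, prove tightness of the GMC objects as random elements of $\Msp(\Psp_{[s,t]})$ using the second-moment bound, and identify every subsequential limit via Shamov's randomization/functional characterization together with the limiting factorization $(\Ystt{[s,t]}^{\theta})^{*}\Ystt{[s,t]}^{\theta}$. Uniqueness in Shamov's theorem then forces every subsequential limit to equal $\gmcw[\PM^{\theta}_{[s,t]}, \Ystt{[s,t]}^{\theta}, \noise]$, and the pre-limit identity passes to the limit to yield \eqref{e.t.main}.
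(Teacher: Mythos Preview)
Your approach is natural but has a genuine gap at precisely the point you flag as ``the principal obstacle.'' The proposed resolution---Skorokhod coupling plus Shamov's characterization---does not address the underlying difficulty, and the paper's discussion in Section~\ref{s.intro.singular} explains why this step is expected to fail.

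The concrete problem is that $\Ystt{[s,t]}^{\theta,\e}:\lsp^2\to\Lsp^2(\Psp_{[s,t]},\PMw^{\theta,\e}_{[s,t]})$ and $\Ystt{[s,t]}^{\theta}:\lsp^2\to\Lsp^2(\Psp_{[s,t]},\PMw^{\theta}_{[s,t]})$ take values in $\Lsp^2$ spaces over \emph{different} random measures. In the critical $1{+}2$ setting those measures are expected to be mutually singular (see the discussion after Theorem~\ref{t.main} and Remark~\ref{r.singular}): paths under $\PM^\theta$ should visit only a random Lebesgue-null fractal, so there is no common ambient $\Lsp^2$ space in which to formulate, let alone prove, the ``joint convergence $(\PM^{\theta,\e},\Ystt{}^{\theta,\e})\Rightarrow(\PM^{\theta},\Ystt{}^{\theta})$'' you need. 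Shamov's uniqueness identifies the limit only once you know the limit satisfies the shift relation \eqref{e.shamov.2} with the \emph{specific} operator $\Ystt{[s,t]}^{\theta}$; establishing that requires exactly the kind of convergence of the $Y$-operators that the singularity obstructs. Second-moment control gives tightness of the random measures but does not by itself pin down the limiting $Y$.

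The paper avoids this limit entirely. It proves Theorem~\ref{t.main} by the axiomatic route of Proposition~\ref{p.axioms}: it verifies (i) independence over disjoint intervals, (ii) a Chapman--Kolmogorov consistency for the GMCs, and (iii) moment matching $\E\EE(\gmcw\circ\margin{\{s,t\}}^{-1})^{\otimes n}=\d x\,\d x'\,\sg^{\intv{n},\theta+a}(t-s,x,x')$ for $n\le 4$. Step (iii) is done in Section~\ref{s.moment} by explicit operator computations culminating in the resummation identity $\Eop^{\theta,a}(t)=\sg^{\theta+a}(t)$. Step (ii) requires coupling the GMCs over nested intervals, which is where the paper confronts the same singularity issue you face---but between polymer measures over different time intervals rather than across a mollification. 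This is handled by the operator-embedding result (Proposition~\ref{p.embedding}) and the universal Hilbert space construction of Section~\ref{s.coupling}, which exploit the martingale structure of Proposition~\ref{p.prob.space}\eqref{p.prob.space.2}--\eqref{p.prob.space.3}. No analogue of that martingale structure is available along the mollification parameter $\e$, which is why your limiting argument does not go through.
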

\noindent{}%
The pair of articles \cite{clark2022continuum,clark2023conditional} proved a similar conditional GMC structure for polymer measures defined on  diamond hierarchical lattices.
Based on these studies, the first listed author of the current paper conjectured that the same structure holds in the $1+2$ dimensional Euclidean space, which we now settle in Theorem~\ref{t.main}.

Much of the difficulty in proving Theorem~\ref{t.main} stems from the (expected) extremely localized behavior of polymer measures in critical dimensions.
More precisely, we conjecture that paths sampled from $\PM^{\theta}_{[s,t]}$ would \emph{only visit a random fractal} of zero Lebesgue measure in spacetime.
This conjecture is supported by \cite[Theorem~1.1]{caravenna2025singularity}, which asserts that the time-$t$ marginal of $\PM^{\theta}_{[s,t]}$ is singular wrt the Lebesgue measure.
In physics terms, when the localized behavior is present, the conditional GMC needs to be constructed from the spacetime white noise ``restricted'' to a random fractal, which is in stark contrast to subcritical systems such as the $1+1$ dimensional continuum directed polymer.
Given this, even the construction of the conditional GMC is subtle, and more so for proving a statement like Theorem~\ref{t.main}.
For the polymer measure on the hierarchical lattice, these difficulties were overcome in \cite{clark2022continuum,clark2023conditional} by using the hierarchical structure.
In our setting, no such structure is available, and we have to devise a completely different construction and proof, which we describe in more detail in Section~\ref{s.intro.singular}.

%Polymer measures in critical dimensions should exhibit extremely localized behaviors, which make even the construction of the conditional GMC subtle.
%For the analogous polymer measure on the diamond hierarchical lattice, it was shown in \cite[Section~4.2]{clark2022continuum} that quenched-independently sampled paths intersect only within a \emph{random fractal} in spacetime, and we expect $\PM$ to behave similarly.
%In a typical setting, one takes a better-behaved measure like the Wiener measure to construct the GMC.
%Here, we take the polymer measure instead.
%In the presence of the fractal behavior, even the construction of the conditional GMC is subtle, and more so for proving a statement like Theorem~\ref{t.main}.
%These difficulties were overcome in \cite{clark2022continuum,clark2023conditional} by using the hierarchical structure of the lattice.
%In our setting, no such structure is available, and we have to devise a completely different construction and proof, which we describe in more detail in Section~\ref{s.intro.singular}.

\subsection{Applications}
\label{s.intro.apps}
In addition to answering a question of interest, Theorem~\ref{t.main}  provides a \emph{tool} for studying the polymer measure and SHF.
The tool is a coupling of $\PM^{\theta}_{[s,t]}$ for different $\theta\in\R$.
First, by Theorem~\ref{t.main}, for every fixed $\theta_0$, the conditional GMC $\gmcw[\PM^{\theta_0}_{[s,t]},\Ystt{[s,t]}^{\theta_0},\noise]$ with $\EE\,\noise_1^2=\aa$ gives a coupling of $\PM^{\theta_0}_{[s,t]}$ and $\PM^{\theta_0+\aa}_{[s,t]}$.
The idea is to vary $\aa$ by taking each $\noise_i=\noise_i(\aa)$ to be a Brownian motion in $a$, and to take smaller and smaller $\theta_0$ tending to $-\infty$.
After proving some consistency properties, which we do in Section~\ref{s.apps.tool}, the idea leads to a coupling of $\PM^{\theta}_{[s,t]}$ for all $\theta\in\R$.
We state this as Proposition~\ref{p.gmc.coupling} and prove it in Section~\ref{s.apps.tool}.
Write $\noise(\aa)=(\noise_{1}(\aa),\noise_{2}(\aa),\ldots)$ for the Gaussian vector whose components are independent two-sided Brownian motions in $\aa\in\R$, and write $\noise(\theta,\theta'):=\noise(\theta')-\noise(\theta)$ for its increment.
Recall from before Theorem~\ref{t.main} that $\Ystt{[s,t]}^{\theta}$ is $\PM^{\theta}_{[s,t]}$ measurable.

\begin{prop}\label{p.gmc.coupling}
There exists a coupling of the family $(\PM^{\theta}_{[s,t]})_{ \theta\in\R}$ such that, with $\filttt_{\theta}:=\sigma(\PM^{\phi}_{[s,t]}\,|\,\phi\leq\theta)$ denoting its canonical filtration,
\begin{align}
\label{e.gmc.coupling}
\begin{split}
    \text{the law of } &(\PM^{\theta}_{[s,t]})_{\theta\geq\theta_0}
    \text{ conditioned on } \filttt_{\theta_0}
\\
    &\text{ is equal to the law of }
\big(\gmcw\big[\PM^{\theta}_{[s,t]},\Ystt{[s,t]}^{\theta},\noise(\theta_0,\theta)\big]\big)_{\theta\geq\theta_0}\ .
\end{split}
\end{align}
In particular, $\PM^{\theta}_{[s,t]}$ is a Markovian martingale in $\theta$.
\end{prop}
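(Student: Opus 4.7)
The plan is to construct the coupling by Kolmogorov extension over a countable dense set of $\theta$, using Theorem~\ref{t.main} as the one-step transition law and a semigroup identity for conditional GMCs as the consistency input, and then to extend continuously in $\theta$ to all of $\R$. Introduce independent two-sided standard Brownian motions $(\noise_i(\aa))_{i\geq 1,\,\aa\in\R}$, independent of the marginal law of $\PM^{\theta_1}_{[s,t]}$, and write $\noise(\theta_0,\theta):=(\noise_i(\theta)-\noise_i(\theta_0))_{i\geq 1}$. On any finite rational grid $\theta_1<\cdots<\theta_k$, define the joint law iteratively: sample $\PM^{\theta_1}_{[s,t]}$ from its marginal (independent of $\noise$) and set $\PM^{\theta_j}_{[s,t]} := \gmcw\big[\PM^{\theta_1}_{[s,t]},\,\Ystt{[s,t]}^{\theta_1},\,\noise(\theta_1,\theta_j)\big]$ for $j\geq 2$. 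Iterated application of Theorem~\ref{t.main} gives the correct marginal for each $\PM^{\theta_j}_{[s,t]}$, and by construction the conditional law \eqref{e.gmc.coupling} will hold at $\theta_0=\theta_1$ on this grid.

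The main obstacle is Kolmogorov consistency, which reduces to a semigroup identity for the conditional GMC: conditional on $\PM^{\theta_1}_{[s,t]}$, the two-stage composition
\begin{equation*}
\gmcw\big[\gmcw[\PM^{\theta_1}_{[s,t]},\Ystt{[s,t]}^{\theta_1},\noise(\theta_1,\theta_2)],\,\Ystt{[s,t]}^{\theta_2},\,\noise(\theta_2,\theta_j)\big]
\end{equation*}
must agree in law with $\gmcw[\PM^{\theta_1}_{[s,t]},\,\Ystt{[s,t]}^{\theta_1},\,\noise(\theta_1,\theta_j)]$, with $\noise(\theta_1,\theta_2)$ and $\noise(\theta_2,\theta_j)$ independent and their covariances summing to that of $\noise(\theta_1,\theta_j)$. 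I would derive this from the Shamov characterization of GMC by joint exponential moments: two independent Wick exponentials with kernels proportional to the same base kernel combine into a single Wick exponential whose coupling strength is the sum — a standard GMC composition principle. The delicate point is verifying that $\Ystt{[s,t]}^{\theta_2}$, which is random through its measurability in the intermediate GMC, factors the correct base kernel relative to the new reference measure in a way compatible with $\Ystt{[s,t]}^{\theta_1}$; this coherence is what makes the two Wick factors legitimately add.

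With consistency in hand, Kolmogorov's theorem produces a coupling on rationals satisfying \eqref{e.gmc.coupling} for rational $\theta_0$. The Markov property is automatic from the construction: conditional on $\filttt_{\theta_0}$, the post-$\theta_0$ family depends only on $\PM^{\theta_0}_{[s,t]}$ and on the Brownian increments $\noise(\theta_0,\cdot)$, which are independent of $\filttt_{\theta_0}$. The martingale property $\EE[\PM^{\theta}_{[s,t]}\mid\filttt_{\theta_0}]=\PM^{\theta_0}_{[s,t]}$ follows from the mean-one property of the Wick exponential defining the GMC. Finally, to extend from rational to real $\theta$, I would test against a countable dense family of continuous $f\geq 0$, observe that $\theta\mapsto \PM^{\theta}_{[s,t]}\ip{f}$ is a positive martingale admitting a cadlag version, and establish $L^2$-continuity in $\theta$ via a second-moment estimate on $(\PM^{\theta+\delta}_{[s,t]}-\PM^{\theta}_{[s,t]})\ip{f}$ controlled by the Brownian increment of size $\delta$, yielding a continuous real-indexed coupling satisfying \eqref{e.gmc.coupling} for every $\theta_0\in\R$.
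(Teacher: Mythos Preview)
Your overall strategy matches the paper's: parametrize the noise by Brownian increments, construct the coupling on half-lines $[\phi,\infty)$ via $\gmcw[\PM^{\phi},\Yst^{\phi},\noise(\phi,\cdot)]$, establish a consistency/semigroup property, and extend by Kolmogorov. You also correctly identify the ``delicate point'' as the compatibility of $\Ystt{[s,t]}^{\theta_1}$ and $\Ystt{[s,t]}^{\theta_2}$ relative to the intermediate measure.

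However, this delicate point is a genuine gap in your proposal. The tower identity you invoke (two independent Wick exponentials combine into one) is easy and the paper records it as \eqref{e.gmcw.tower}, but it uses the \emph{same} $Y$-operator at both stages. What must then be shown is that $\Yst^{\theta_1}(\Yst^{\theta_1})^{*}=\intersop^{\theta_2}$ as an operator on $\Lsp^2(\Psp,\PMw^{\theta_2})$ --- i.e., that the eigenfunctions of $\intersop^{\theta_1}$, originally defined in $\Lsp^2(\PMw^{\theta_1})$, continue to factor the intersection kernel after the reference measure is replaced by $\PMw^{\theta_2}$. The Shamov characterization alone does not deliver this; the paper proves it (equation \eqref{e.p.gmc.coupling.goal}) by applying the operator-embedding machinery of Section~\ref{s.embedding} (specifically Lemmas~\ref{l.embed.eigen} and~\ref{l.embed.tool}) with the role of the approximating measures $\measure_\ell$ played by the Kahane martingale truncations $\PMm_\ell$ of \eqref{e.p.gmc.coupling.1}, which are absolutely continuous with respect to $\PM^{\theta_1}$ for each finite $\ell$. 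That martingale structure stands in for the conditional-expectation structure of Proposition~\ref{p.prob.space} used in Section~\ref{s.embedding}. You have named the obstacle but not a mechanism for crossing it.

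Separately, your extension from rational to real $\theta$ via c\`adl\`ag modifications and $L^2$-continuity is unnecessary. The paper defines the coupling on each continuous half-line $[\phi,\infty)$ directly (since $\noise(\phi,\theta)$ is defined for all real $\theta$) and invokes Kolmogorov extension only once, to glue the half-lines as $\phi\to-\infty$ via the consistency property \eqref{e.p.gcm.coupling.consistent}. The conditional-law statement \eqref{e.gmc.coupling} for general $\theta_0$ then follows from $\bigvee_{\phi>-\infty}\filttt_{\phi,\theta_0}=\filttt_{\theta_0}$.
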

\noindent%{}
We call this coupling the \textbf{GMC coupling}.

The GMC coupling leads to concrete results about the polymer measure and SHF.
We demonstrate this point by proving Corollaries~\ref{c.positivity}--\ref{c.superc}, stated below.
The proof of these results, which we carry out in Sections~\ref{s.apps.}--\ref{s.apps.moment}, is fairly simple given Proposition~\ref{p.gmc.coupling}.
Roughly speaking, the idea is to vary $\theta$ with the aid of some asymptotic properties of the polymer measures as $\theta\to-\infty$.

Next we prepare for the statement of Corollary~\ref{c.positivity}.
Let $\PP_{x,[s,t]}$ denote the law of a Brownian motion that starts from $x$ at time $s$ and runs until time $t$, and define the Wiener measure
\begin{align}
    \label{e.wien}
    \wien_{[s,t]}:= \int_{\R^{d} }\d x \, \PP_{x,[s,t]}\ ,
    \quad
    d = 2\ .
\end{align}
Given a metric space $\msp$, let $\BMsp(\msp)$ denote the space of all $\R$-valued Borel measurable functions on $\msp$ and $\Cloc(\msp)$ denote the set of continuous functions with bounded supports. We endow $\Msp(\msp)$ with the vague topology, that is, the coarsest topology such that $\measure\mapsto \measure f:=\int_{\msp}\measure(\d\pt)\,f(\pt)$ is continuous for all $f\in\Cloc(\msp)$.
By \cite{tsai2024stochastic}, the SHF $Z^{\theta}_{s,t}$ is an $\Msp(\R^4)$-valued continuous process in $s\leq t\in\R$ that is uniquely characterized by a list of axioms, which we recall in Section~\ref{s.intro.axioms}.

\begin{cor}\label{c.positivity}
For any deterministic $ 0\leq f\in\BMsp(\Psp_{[s,t]})$ that is not $\wien_{[s,t]}$-a.e.\ zero,
\begin{align}
	\label{e.c.positivity.PM}
	\PM^\theta_{[s,t]} f > 0
	\quad
	\text{a.s.}
\end{align}
In particular, for any deterministic $ 0\leq g,g'\in\BMsp(\R^2)$ that are not Lebesgue-a.e.\ zero,
\begin{align}
	\label{e.c.positivity}
	\flow^\theta_{s,t} \ g \otimes g'
	:=
	\int_{\R^4} \flow^{\theta}_{s,t}(\d x, \d x')\ g(x) \, g'(x') > 0
	\quad
	\text{a.s.}
\end{align}
\end{cor}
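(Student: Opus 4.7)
The plan is to deduce the SHF statement \eqref{e.c.positivity} from the polymer statement \eqref{e.c.positivity.PM} and then prove \eqref{e.c.positivity.PM} via the GMC coupling of Proposition~\ref{p.gmc.coupling}. For the reduction I take $f(\Path):=g(\Path(s))\,g'(\Path(t))$, a nonnegative Borel function on $\Psp_{[s,t]}$ with $\flow^\theta_{s,t}\,g\otimes g'=\PM^\theta_{[s,t]} f$, because $\flow^\theta_{s,t}$ is the initial-final time marginal of $\PM^\theta_{[s,t]}$. Since the Brownian transition density $\hk_{t-s}$ is strictly positive on $\R^2\times\R^2$, the integral $\wien_{[s,t]} f=\int_{\R^2\times\R^2}g(x)\,\hk_{t-s}(x,y)\,g'(y)\,\d x\,\d y$ is strictly positive whenever $g$ and $g'$ are not Lebesgue-a.e.\ zero, so such an $f$ satisfies the hypothesis of \eqref{e.c.positivity.PM}.

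For \eqref{e.c.positivity.PM}, I adopt the GMC coupling of Proposition~\ref{p.gmc.coupling} and consider the process $M_\theta := \PM^\theta_{[s,t]} f$. By that proposition $(M_\theta)_{\theta\in\R}$ is a nonnegative $(\filttt_\theta)$-martingale, and combining this with backward martingale convergence and the asymptotic $\PM^{-\infty}_{[s,t]}=\wien_{[s,t]}$ alluded to in the introduction, the martingale emanates from the deterministic positive value $M_{-\infty}=\wien_{[s,t]} f>0$. The goal is to conclude $M_\theta>0$ a.s.\ at every fixed $\theta\in\R$.

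The crux is to show that, under the GMC coupling, $\{M_\theta=0\}=\{M_{\theta_0}=0\}$ almost surely for every $\theta_0<\theta$. The inclusion $\supseteq$ should be immediate from the general fact that a GMC over a reference that tests $f$ to $0$ must itself test $f$ to $0$, since $\gmcw[\pmeasure,\cdot,\cdot]$ is carried by $\supp\pmeasure$ and $f\geq0$. The opposite inclusion is the nondegeneracy statement that $\gmcw[\PM^{\theta_0}_{[s,t]},\Ystt{[s,t]}^{\theta_0},\noise(\theta_0,\theta)]\,f>0$ a.s.\ on the event $\{M_{\theta_0}>0\}$, which I would establish as a general lemma on the conditional GMC built in Section~\ref{s.tools}. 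Granted both inclusions, $\theta\mapsto\PP[M_\theta=0]$ is constant in $\theta$ and equal to its $\theta\to-\infty$ value $\PP[M_{-\infty}=0]=0$, which gives the desired $M_\theta>0$ a.s.

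The main obstacle I anticipate is the nondegeneracy inclusion, which requires a GMC positivity estimate against a random, essentially singular reference measure. Because the aggregated noise strength $\EE\,\noise_1(\theta_0,\theta)^2=\theta-\theta_0$ can be arbitrarily large, a one-shot $L^2$ estimate on the total mass of $\gmcw[\PM^{\theta_0}_{[s,t]},\Ystt{[s,t]}^{\theta_0},\noise(\theta_0,\theta)]$ is unlikely to suffice; a natural workaround is to iterate the GMC coupling over a fine partition $\theta_0<\theta_1<\cdots<\theta_N=\theta$ so that each individual GMC increment carries only a small noise strength and admits an $L^2$-type nondegeneracy bound, which can then be chained through the martingale structure of Proposition~\ref{p.gmc.coupling}.
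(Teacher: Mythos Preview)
Your overall architecture matches the paper's: reduce \eqref{e.c.positivity} to \eqref{e.c.positivity.PM} via the marginal relation, work under the GMC coupling of Proposition~\ref{p.gmc.coupling}, show that the events $\{M_{\theta}>0\}$ are monotone in $\theta$, and then appeal to the $\theta\to-\infty$ asymptotics to conclude. The reduction step and the trivial inclusion $\{M_{\theta_0}=0\}\subset\{M_\theta=0\}$ are correct.

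Where you diverge is the nondegeneracy inclusion. You flag it as the main obstacle and propose to attack it by chaining $L^2$ estimates over a fine partition of $[\theta_0,\theta]$. This is both unnecessary and, as stated, incomplete: a second-moment bound on $\gmcw f$ with small noise strength gives, via Paley--Zygmund, that $\PP[\gmcw f>0\mid \pmeasure]$ is close to $1$, not equal to $1$; iterating over $N$ steps accumulates a deficit you have no mechanism to close. The paper bypasses this entirely with a one-line Gaussian zero-one law (Lemma~\ref{l.01}): the Shamov characterization \eqref{e.shamov.2} gives $\gmcw[\pmeasure,\Yop,\noise+\shift]f=\gmcw[\pmeasure,\Yop,\noise]\,[e^{\Yop\shift}f]$, and since $e^{\Yop\shift}>0$ $\gmcw$-a.e., the event $\{\gmcw f>0\}$ is invariant under Cameron--Martin shifts of $\noise$, hence has probability $0$ or $1$ by \cite[Theorem~2.5.2]{bogachev1998gaussian}. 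Since $\EE[\gmcw f]=\pmeasure f>0$, it must be $1$. This handles arbitrary noise strength in one shot with no quantitative input.

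A minor point: the paper does not invoke backward martingale convergence for the $\theta\to-\infty$ limit. Instead it proves directly, via the operator expansion of the delta-Bose semigroup, that $\E(\PM^{\theta'}\psi-\wien\psi)^{2n}\to 0$ for bounded $\psi$ with bounded support (equation \eqref{e.infiniteT.lim}), which gives convergence in probability of $M_{\theta'}$ to the deterministic $\wien f>0$. Your backward-martingale route would require identifying the limit, which in turn needs tail-triviality of $\bigcap_\theta\filttt_\theta$---not established in the paper and not obviously easier than the moment computation.
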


We now make a few remarks regarding the above corollary.
First, note that the assumption that $f$ is not $\wien_{[s,t]}$-a.e.\ zero is minimal, because $\E\PM^{\theta}_{[s,t]}=\wien_{[s,t]}$ (see Section~\ref{s.moment.annealed}). 
Next, in the polymer language, the left-hand side of \eqref{e.c.positivity} is the partition function.
Given \eqref{e.c.positivity}, one can now define the \emph{normalized} polymer measure
\begin{align}
	\frac{1}{\flow^\theta_{s,t} \ g \otimes g'} \, \PM^{\theta}_{[s,t]}(\d\Path)\, g(\Path(s)) \, g'(\Path(t)) \ ,
\end{align}
for $g,g'$ that make the denominator finite and satisfy the assumptions of Corollary~\ref{c.positivity}.
The result~\eqref{e.c.positivity} also answers the question about infinite speed of propagation in \cite[Section~11.2.2]{caravenna2024critical}.
Analogous positivity results in $1+1$ dimensions were obtained in \cite{mueller1991support,morenoflores2014positivity} by different means.

The second application shows that SHF($\theta$) converges to $0$ as $\theta\to\infty$.
\begin{cor}\label{c.superc}
As $\theta\to\infty$, $\flow^\theta_{s,t}\to 0$ vaguely a.s.
\end{cor}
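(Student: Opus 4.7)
The plan is to reduce the vague convergence to countably many scalar tests, use Proposition~\ref{p.gmc.coupling} to access a martingale structure, and then exhibit the supercritical degeneracy of the conditional GMC as the noise strength diverges.

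First I would reduce to scalar convergence. By separability of $\Cloc(\R^4)_+$ in the vague topology, to prove $\flow^\theta_{s,t} \to 0$ vaguely a.s.\ it suffices to show $\flow^\theta_{s,t}(\varphi) \to 0$ a.s.\ for each $\varphi$ in a countable family of nonnegative tensor products $\varphi = g \otimes g'$ with $g, g' \in \Cloc(\R^2)$. For each such pair, set $f(\Path) := g(\Path(s)) \, g'(\Path(t))$, so that $M_\theta := \flow^\theta_{s,t}(g \otimes g') = \PM^\theta_{[s,t]} f$.

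Next, invoke Proposition~\ref{p.gmc.coupling} to work under the coupling in which $(\PM^\theta_{[s,t]})_{\theta \in \R}$ is a Markovian martingale. Then $(M_\theta)$ is a nonnegative real-valued martingale and converges a.s.\ to some $W \in [0, \infty)$; the task reduces to $W = 0$ a.s. Fix $q \in (0, 1)$. By concavity of $x \mapsto x^q$, $(M_\theta^q)_\theta$ is a nonnegative supermartingale, and
\[
    \sup_{\theta \in \R} \E\, (M_\theta^q)^{1/q} \ = \ \sup_{\theta \in \R} \E M_\theta \ = \ \wien_{[s,t]} f \ < \ \infty,
\]
so $(M_\theta^q)_\theta$ is bounded in $L^{1/q}$, hence uniformly integrable. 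Consequently $M_\theta^q \to W^q$ in $L^1$ and $\E W^q = \lim_{\theta \to \infty} \E M_\theta^q$. Therefore $W = 0$ a.s.\ is equivalent to $\E M_\theta^q \to 0$, equivalently $M_\theta \to 0$ in probability.

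Finally, to get $M_\theta \to 0$ in probability, I would use the GMC identity in Proposition~\ref{p.gmc.coupling}: conditional on $\PM^{\theta_0}_{[s,t]}$, the law of $\PM^{\theta}_{[s,t]}$ equals that of $\gmcw\bigl[\PM^{\theta_0}_{[s,t]}, \Ystt{[s,t]}^{\theta_0}, \noise(\theta_0, \theta)\bigr]$, whose iid Gaussian components have variance $\theta - \theta_0$. As $\theta \to \infty$ with $\theta_0$ fixed, this variance diverges, driving the conditional GMC into its supercritical regime. The goal is a quantitative degeneracy estimate
\[
    \E\bigl[ M_\theta^q \,\big|\, \PM^{\theta_0}_{[s,t]} \bigr] \ \leq \ \bigl(\PM^{\theta_0}_{[s,t]} f\bigr)^q \cdot \eta(\theta - \theta_0),
    \qquad \eta(\aa) \to 0 \text{ as } \aa \to \infty,
\]
obtained by a Kahane-type chaos-degeneration argument exploiting the faithfulness of $\Ystt{[s,t]}^{\theta_0}$ (which, by Theorem~\ref{t.main}, genuinely factorizes the full GMC correlation kernel). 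Taking $\P$-expectation and using the a priori bound $\E[M_\theta^q \mid \PM^{\theta_0}_{[s,t]}] \leq (\PM^{\theta_0}_{[s,t]} f)^q$ with dominated convergence then yields $\E M_\theta^q \to 0$. The hard part will be establishing this quantitative degeneracy: unlike the classical Kahane--Rhodes--Vargas setting where the reference measure has a controlled scaling (e.g.\ absolutely continuous wrt Lebesgue), here the reference measure is itself a singular random measure expected to live on a random fractal (cf.\ \cite{caravenna2025singularity}), so the classical degeneracy cannot be quoted off the shelf and must be reworked using the structural information about $\Ystt{[s,t]}^{\theta_0}$ developed during the proof of Theorem~\ref{t.main}.
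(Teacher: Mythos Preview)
Your reduction to convergence in probability is correct, and indeed the paper reduces the same way (a nonnegative martingale converges a.s., so one only needs to identify the limit as $0$ via convergence in probability; your fractional-moment detour is valid but unnecessary). The gap is in the final step: the ``Kahane-type chaos-degeneration argument'' is asserted but not supplied, and as stated the bound
\[
    \E\bigl[ M_\theta^q \,\big|\, \PM^{\theta_0}_{[s,t]} \bigr] \ \leq \ \bigl(\PM^{\theta_0}_{[s,t]} f\bigr)^q \cdot \eta(\theta - \theta_0)
\]
with a \emph{universal} $\eta(\aa)\to 0$ cannot hold. If it happened that $\ipp{f,\intersop^{\theta_0} f}_{\theta_0}=0$, the noise would not couple to $f$ at all and $M_\theta$ would be constant in $\theta$ conditionally on $\PM^{\theta_0}$. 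So any degeneracy argument must first establish that the GMC kernel is \emph{genuinely active} on $f$, and you have not addressed this.

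The paper supplies exactly this missing ingredient, and then bypasses a general degeneracy estimate altogether. It first proves the a.s.\ positivity
\[
    \ipp{f,\intersop^{\theta_0} f}_{\theta_0} \ = \ \sum_{i\geq 1}\ipp{f,\Ystt{}^{\theta_0}e_i}_{\theta_0}^2 \ > \ 0,
\]
which is nontrivial: it uses the comparison principle (Corollary~\ref{c.01}) to reduce to the $\theta\to-\infty$ regime, and a second-moment computation (the variance bound \eqref{e.variance}, proved in Section~\ref{s.apps.moment}) to show concentration there. Once this positivity is in hand, there is an a.s.\ random index $j$ with $\ipp{f,\Ystt{}^{\theta_0}e_j}_{\theta_0}\neq 0$. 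Conditioning on that single Brownian mode $\noise_j(\cdot)$ and on $\PM^{\theta_0}$ gives
\[
    \EE\bigl[\gmcw^{\theta'}f \,\big|\, \noise_j(\cdot),\PM^{\theta_0}\bigr]
    \ = \ \PM^{\theta_0}\bigl[e^{\noise_j(\theta_0,\theta')\Ystt{}^{\theta_0}e_j - \tfrac{\theta'-\theta_0}{2}(\Ystt{}^{\theta_0}e_j)^2} f\bigr],
\]
which is an elementary one-dimensional object and is seen directly to converge to $0$ in probability as $\theta'\to\infty$. Nonnegativity of $\gmcw^{\theta'}f$ then forces the same for $\gmcw^{\theta'}f$ itself. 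In short, rather than a global supercriticality estimate, the paper isolates one non-trivial Gaussian direction and computes; the work you deferred is precisely the positivity $\ipp{f,\intersop^{\theta_0} f}_{\theta_0}>0$.
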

\noindent{}%
We actually prove a slightly stronger mode of convergence than vague.
Stating it requires more notation, so we defer it to Corollary~\ref{c.superc.} in Section~\ref{s.apps.}.
A similar result was recently achieved in \cite[Theorem~1.4]{caravenna2025singularity} by different means.
See Remark~\ref{r.superc} for a comparison of the two results.

\subsection{Axiomatic characterization}
\label{s.intro.axioms}
First, recall from \cite{tsai2024stochastic} that the law of SHF($\theta$) is uniquely characterized by the following axioms.
The definition of the product $\bullet$ will be recalled in Section~\ref{s.tools.PM} and the definition of $\sg^{\intv{n},\theta}$ will be recalled in Section~\ref{s.moment.sg}.
\begin{enumerate}
\item \label{d.shf.conti}
$Z=Z^{\theta}_{s,t}$ is an $\Msp(\R^4)$-valued, continuous process in $s\leq t\in\R$.
\item \label{d.shf.ind}
$\flow^{\theta}_{s,t}$ and $\flow^{\theta}_{t,u}$ are independent for all $s< t< u$.
\item \label{d.shf.ck}
$\flow^{\theta}_{s,t}\bullet \flow^{\theta}_{t,u}=\flow^{\theta}_{s,u}$ for all $s< t< u$.
\item \label{d.shf.mome}
$
	\E \flow^{\theta}_{s,t}{}^{\otimes n}
	=
	\d x\, \d x' \, \sg^{\intv{n},\theta}(t-s,x,x')
$
for all $s<t$ and $n=1,2,3,4$.
\end{enumerate}

Our proof of Theorem~\ref{t.main} utilizes a version of the above axiomatic characterization.
Rename the $s,t$ in Theorem~\ref{t.main} to $\startt,\unit$ to avoid overloading notation here.
To prove Theorem~\ref{t.main}, since the law of $\PM^{\theta}_{[\startt,\unit]}$ is translation invariant in time (by \cite[Proposition 2.9]{clark2024continuum}), without loss of generality, we take $\startt=0$.
Let $\unit\D:=\cup_{m\geq 1} \unit 2^{-m}\Z$ be the set of $\unit$-dyadic numbers.
In much of this paper, to avoid unnecessary technical difficulties, we will only work with $s<t \in \unit\D$.
Accordingly, we will use the axiomatic characterization of the polymer measure stated in the proposition below, which we prove in Appendix~\ref{s.a.PM} using \cite[Theorem~1.9]{tsai2024stochastic}.
For $A\subset [s,t]$, write $\margin{A}:\Psp_{[s,t]}\to\Psp_{A}$ for the restriction map $\Path\mapsto\Path\margin{A}$,
When $A$ is finite, we interpret $\Psp_{A}$ as $ (\R^2)^{A}=\R^{2A}$ and view $\margin{A}:\Psp_{[s,t]}\to\R^{2A}$ as the evaluation map $\Path\mapsto(\Path(u))_{u\in A}$.
Accordingly, for $\pmeasure\in\Msp(\Psp_{[s,t]})$, we write $\pmeasure\circ\margin{A}^{-1}$ for the pushforward of $\pmeasure$ by $\margin{A}$ and call it the \textbf{time-$A$ marginal of $\pmeasure$}.
\begin{prop}
\label{p.axioms}
Take $\unit\in(0,\infty),\theta'\in\R$, and any probability space that houses random $\PMm_{[s,t]}\in\Msp(\Psp_{[s,t]})$ for $s<t\in \unit\D$. Write $\floww_{s,t}:=\PMm_{[s,t]}\circ\margin{\{s,t\}}^{-1}$ for the time-$\{s,t\}$ marginal.
If for all $t_0<\cdots<t_{\ell}\in\unit\D$ we have that
\begin{enumerate}
\setcounter{enumi}{1}
\item \label{p.axioms.ind}
$\PMm_{[t_0,t_1]},\ldots,\PMm_{[t_{\ell-1},t_{\ell}]}$ are independent, 
\item \label{p.axioms.coupling}
$\PMm_{[t_0,t_\ell]}\circ\margin{\{t_0,\ldots,t_\ell\}}^{-1} = \floww_{t_0,t_1}\opdot\cdots\opdot\floww_{t_{\ell-1},t_{\ell}}$\ , and
\item \label{p.axioms.mome}
$
	\E \floww_{s,t} {}^{\otimes n}
	=
	\d x\, \d x' \, \sg^{\intv{n},\theta'}(t-s,x,x')
$
for all $s<t \in \unit\D$ and $n=1,2,3,4$,
\end{enumerate}
then $\PMm_{[0,\unit ]}$ is equal in law to $\PM^{\theta'}_{[0,\unit ]}$.
\end{prop}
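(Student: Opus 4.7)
The plan is to reduce Proposition~\ref{p.axioms} to the axiomatic characterization of the SHF established in \cite[Theorem~1.9]{tsai2024stochastic}. I would proceed in three steps: (i) verify that the two-point marginals $\floww_{s,t}$ satisfy the SHF axioms restricted to dyadic times; (ii) extend $\floww$ to all real times and invoke Theorem~1.9; (iii) transfer the resulting equality in law from $\floww$ back to $\PMm_{[0,\unit]}$ via hypothesis (\ref{p.axioms.coupling}), then promote dyadic finite-dimensional agreement to full equality in law on $\Msp(\Psp_{[0,\unit]})$.

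For step (i), the independence and moment axioms (\ref{d.shf.ind}) and (\ref{d.shf.mome}) of Section~\ref{s.intro.axioms} are exactly hypotheses (\ref{p.axioms.ind}) and (\ref{p.axioms.mome}), restricted to dyadic times. The Chapman--Kolmogorov axiom (\ref{d.shf.ck}) is derived from hypothesis (\ref{p.axioms.coupling}) by applying the further marginalization $\margin{\{t_0,t_\ell\}}^{-1}$ to both sides: the left side reduces to $\floww_{t_0,t_\ell}$ by the definition of the two-point marginal, while on the right the $\opdot$-product collapses to $\floww_{t_0,t_1}\bullet\cdots\bullet\floww_{t_{\ell-1},t_\ell}$ once the intermediate coordinates are integrated out.

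For step (ii), I extend $(\floww_{s,t})_{s<t\in\unit\D}$ to all real $s<t$ by taking dyadic approximants $s_m\to s$, $t_m\to t$ and setting $\floww_{s,t}$ equal to the limit of $\floww_{s_m,t_m}$. Existence of the limit and the continuity in $(s,t)$ required by axiom (\ref{d.shf.conti}) follow from the explicit fourth-moment formula in hypothesis (\ref{p.axioms.mome}) via a Kolmogorov-type estimate, with the semigroup relation from step (i) and the independence in hypothesis (\ref{p.axioms.ind}) used to control the short-scale increments. The full axioms (\ref{d.shf.conti})--(\ref{d.shf.mome}) then hold for the extension, so Theorem~1.9 yields equality in law with $(\flow^{\theta'}_{s,t})_{s<t\in\R}$, and in particular with its dyadic restriction.

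For step (iii), hypothesis (\ref{p.axioms.coupling}) expresses $\PMm_{[0,\unit]}\circ\margin{A}^{-1}$ as a deterministic function of $(\floww_{t_i,t_{i+1}})_i$ for any finite dyadic $A=\{0=t_0<\cdots<t_\ell=\unit\}$, and the analogous identity holds for $\PM^{\theta'}_{[0,\unit]}$ by the corresponding property in \cite{clark2024continuum}; combined with step (ii), the dyadic finite-dimensional distributions of $\PMm_{[0,\unit]}$ and $\PM^{\theta'}_{[0,\unit]}$ coincide. To upgrade this to equality in law on $\Msp(\Psp_{[0,\unit]})$, I approximate each $f\in\Cloc(\Psp_{[0,\unit]})$ by cylinder functions $f\circ\pi^{(m)}$, where $\pi^{(m)}$ piecewise-linearly interpolates between path values on the mesh-$\unit 2^{-m}$ dyadic partition: continuity of paths gives $f\circ\pi^{(m)}\to f$ pointwise, and local finiteness of $\PMm_{[0,\unit]}$ lets dominated convergence push this to $\PMm_{[0,\unit]}(f\circ\pi^{(m)})\to\PMm_{[0,\unit]}f$ almost surely, whereupon the matching dyadic joint laws give the claim. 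I expect the main obstacle to be the extension argument in step (ii), where one needs exactly the right fourth-moment bounds, combined with the discrete semigroup and independence structure, to satisfy Kolmogorov's criterion; the remaining steps are either direct reformulations of the hypotheses or routine measure-theoretic approximation.
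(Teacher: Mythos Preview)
Your plan matches the paper's proof in structure: derive the $\bullet$-semigroup relation for $\floww$ from hypothesis~\eqref{p.axioms.coupling}, invoke the uniqueness in \cite[Theorem~1.9]{tsai2024stochastic}, transfer back to the finite-dimensional marginals of $\PMm_{[0,\unit]}$ via \eqref{p.axioms.coupling} and \eqref{e.PM.fdd}, and finish by path-continuity. The chief difference is your step~(ii): the paper does \emph{not} extend $\floww$ to a continuous process on all of $\R$, but applies Theorem~1.9 directly to the dyadic-time family, concluding that $\floww$ is equal in finite-dimensional distributions to the restriction of $\flow^{\theta'}$ to $\unit\D$. Your proposed Kolmogorov-type extension would work but reproduces regularity estimates already contained in \cite{tsai2024stochastic}; the paper's route is shorter because the characterization argument there already operates at the level of finite-dimensional distributions, and only dyadic-time marginals are needed for the conclusion.

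One gap in step~(iii): local finiteness of $\PMm_{[0,\unit]}$ alone does not license dominated convergence for $\PMm_{[0,\unit]}(f\circ\pi^{(m)})\to\PMm_{[0,\unit]}f$. The support of $f\circ\pi^{(m)}$ is the cylinder set $\{\max_i|\Path(t_i)|\le R\}$ (where $\supp f\subset\{\norm{\Path}_\infty\le R\}$), which is unbounded in $\Psp_{[0,\unit]}$, so local finiteness supplies no integrable majorant. The fix is to note that $|f\circ\pi^{(m)}|\le \norm{f}_\infty\,\ind_{|\Path(0)|\le R}$ and that the $n=1$ case of hypothesis~\eqref{p.axioms.mome} gives $\E\,\PMm_{[0,\unit]}\{|\Path(0)|\le R\}<\infty$, hence a.s.\ finiteness and the required domination. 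The paper's one-line ``since both are measures on continuous paths'' hides the same point.
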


\subsection{Moment matching}
\label{s.intro.mome}
In addition to the difficulty caused by the localized behaviors mentioned after Theorem~\ref{t.main}, verifying the moment matching in Proposition~\ref{p.axioms}\eqref{p.axioms.mome} is a task that is by no means straightforward. 
The moment matching was achieved in \cite{clark2023planar} up to $n=2$ using martingale techniques, which rely on properties of the quenched polymer measures that seem available only for $n\leq 2$.
Here we take a different route through the operator techniques developed in \cite{gu2021moments}.
Using the operator techniques, however, requires taking certain local-time like limits and invoking combinatorial resummings.
We carry these out in Section~\ref{s.moment} and prove the matching for all $n\in\N:=\{1,2,\ldots\}$.

\subsection{Localized behavior, challenges, ideas}
\label{s.intro.singular}

Let us explain how the localized behavior mentioned after Theorem~\ref{t.main} poses a challenge to proving Theorem~\ref{t.main}.

For the purpose of comparison, let us briefly recall the GMC construction in $1+1$ dimensions from \cite[Sections~3.3, 7.1]{quastel2022kpz}.  We take $[0,1]$ as our time interval here and write $\wien=\wien_{[0,1]}$. Let $\noise_1,\noise_2,\ldots$  be as before Theorem~\ref{t.main} (recall that $\EE\noise_1^2=a$), and take any $\ortho_1,\ortho_2,\ldots\in\Csp([0,1]\times\R)$ that form an orthonormal basis wrt to $\d t \d x$.
The $1+1$ dimensional continuum polymer measure can be constructed as
\begin{align}
	\label{e.1+1}
	\lim_{\ell\to\infty} 
	\wien(\d\Path)\,
	\exp\sum_{i=1}^{\ell} 
        \Big(
		  \noise_i\int_{0}^1 \d t\, \ortho_{i}(t,\Path(t)) 
            - \frac{\aa}{2} \, (\text{same integral})^2
        \Big)
	\ .
\end{align}
The expression within the limit is a measure-valued martingale, and this fact together with known moment bounds ensures that the limit exists non-trivially.
Note that the $\noise_i$s and $\ortho_i$s realize the spacetime white noise $[0,1]\times\R$ through $\noisee:=\sum_{i} \noise_i \,\ip{ \ortho_i, \cdott }$, and, accordingly, \eqref{e.1+1} gives a rigorous construction of the non-rigorous expression 
\begin{align*}
	\wien(\d \Path)\,\exp\Big(\int_{0}^1 \d t\, \noisee(t,\Path(t))-\frac{1}{2} \,\EE\big[(\text{same integral})^2\big]\Big)\ .
\end{align*}

In $1+2$ dimensions, we would like to replace $\wien$ with $\PM=\PM^{\theta}_{[0,1]}$ and run a similar construction, but it will \emph{not} work if $\PM$ exhibits the anticipated localized behavior mentioned after Theorem~\ref{t.main}.
Observe that the construction \eqref{e.1+1} uses an orthonormal basis $\ortho_1,\ortho_2,\ldots$ wrt $\d t \d x$ to model the spacetime white noise.
For $\PM$, however, we expect the paths sampled from a realization of $\PM$ to only visit a random, fractal subset of $[0,1]\times\R^2$.
In this case, the basis needs to be chosen to model the white noise ``concentrated'' on a random Lebesgue null subset of $[0,1]\times\R^2$.

% In $1+2$ dimensions, we would like to replace $\wien$ with $\PM=\PM^{\theta}_{[0,1]}$ and run a similar construction, but it will \emph{not} work if $\PM$ exhibits the anticipated localized behavior mentioned after Theorem~\ref{t.main}.
% While the construction \eqref{e.1+1} uses an orthonormal basis $\ortho_1,\ortho_2,\ldots$ wrt $\d t \d x$ to model the spacetime white noise, this approach fails for $\PM$ if, as we expect to be the case, paths sampled from a realization of $\PM$ only visit a random fractal in spacetime. 
% In this case, the basis would need to be chosen to model the white noise ``concentrated'' on a random Lebesgue null subset of $[0,1]\times\R^2$.

Except for the instance studied in \cite{clark2022continuum} in the diamond hierarchical lattice setting, we are not aware of any method for constructing the conditional GMCs that addresses the difficulty arising from the localized behavior. The aforementioned work achieved the construction through a factorization identity (Proposition 10.12 therein) whose proof crucially utilized the hierarchical structure. Proving its analog for $\PM$ remains open.

As a first step towards circumventing this issue, we construct the conditional GMC in the path space without going through the spacetime, following the spirit of the abstract GMC formulation of \cite{shamov2016gaussian}.  
In other terms, we avoid the localized behavior in spacetime by bypassing spacetime altogether.
To see how, refer back to \eqref{e.1+1}, set $v_{i}(\Path):=\int_{0}^{1} \d t \, \ortho_i(t,\Path(t))$, and note that the GMC \eqref{e.1+1} can be defined through just $v_{1},v_{2},\ldots$.
The latter are functions on the path space $\Psp_{[0,1]}$, unlike the $\ortho$s, which are functions on the spacetime $[0,1]\times\R^{d}$.
In general, given a correlation kernel on the path space (or any Polish space) that defines a positive Hilbert--Schmidt operator, under suitable assumptions, we can take the functions $v_{i}(\Path)$ to be the eigenfunctions of the operator and use them to construct the GMC.
In Section~\ref{s.tools.gmc}, we explain how this works and apply it to $\PM$.

% To see how, note that the GMC \eqref{e.1+1} can be defined through the functions $v_{1},v_{2},\ldots$  on the path space $\Psp_{[0,1]}$ defined by $v_{i}(\Path):=\int_{0}^{1} \d t \, \ortho_i(t,\Path(t))$.
% In general, given a correlation kernel on the path space that defines a positive Hilbert--Schmidt operator, under suitable assumptions, we can take the  $v_{1},v_{2},\ldots$ to be the eigenfunctions of the operator, using them to construct the GMC.
% In Section~\ref{s.tools.gmc}, we explain how this works and apply it to $\PM$.

The path-space construction alone does not solve the issue, and there are still challenges to overcome.
Since our proof relies on Proposition~\ref{p.axioms}, we need to couple the conditional GMCs over different time intervals, which amounts to coupling the Gaussian vectors (the $\noise$s) used to build the GMCs.
Take the intervals $[0,1],[1,2],[0,2]$ for example, and denote the respective Gaussian vectors by $\noise_{[0,1]},\noise_{[1,2]},\noise_{[0,2]}$.
For the GMC \eqref{e.1+1} in $1+1$ dimensions, since $\noise_{I}$ represents the spacetime white noise over $I\times\R$, the coupling is simply $\noise_{[0,2]}=\noise_{[0,1]}\oplus\noise_{[1,2]}$, where $\noise_{[0,1]}$ and $\noise_{[1,2]}$ are independent.
For our conditional GMC in $1+2$ dimensions, the coupling should be much more complicated due to the (expected) localized behavior.
We construct the coupling by analyzing the correlation kernels.

The analysis of the correlation kernels, which we carry out in Sections~\ref{s.embedding}--\ref{s.coupling}, encounters a likely anomalous behavior of $\PM$.
For a fixed $[s,t]$, the kernel acts on the $\Lsp^2$ space over $\Psp_{[s,t]}$ wrt the polymer measure $\PM^{\theta}_{[s,t]}$ (or, more precisely, a weighted version of it defined in Section~\ref{s.tools.inters}).
The issue is that, when different intervals are considered, the measures are likely mutually singular.
Take $[0,1],[0,2]$ for example, let $\PM_1:=\PM^{\theta}_{[0,1]}$, and let $\PM'$ be the time-$[0,1]$ marginal of $\PM^{\theta}_{[0,2]}$.
For reasons explained in Remark~\ref{r.singular}, we conjecture that $\PM_1$ and $\PM'$ are a.s.\ mutually singular.
This causes much difficulty since we need to compare functions in different $\Lsp^2$ spaces wherein the underlying measures are potentially mutually singular.
We overcome this obstacle by appealing to a martingale structure (Proposition~\ref{p.prob.space}\eqref{p.prob.space.2}--\eqref{p.prob.space.3}) that allows us to relate measures like $\PM_1$ and $\PM'$.

\subsection*{Acknowledgment}
We thank B\'{a}lint Vir\'{a}g and Mo Dick Wong for useful discussions.
The research of LCT is partially supported by the NSF through DMS-2243112 and the Alfred P.\ Sloan Foundation through the Sloan Research Fellowship FG-2022-19308.

\subsection*{Outline}
In Section~\ref{s.tools}, we recall the relevant background, prepare a few tools, and explain how to construct the conditional GMC over a fixed time interval.
In Section~\ref{s.moment}, we prove the moment matching required by Proposition~\ref{p.axioms}\eqref{p.axioms.mome}.
In Section~\ref{s.embedding}, we establish an operator embedding result to be used in Section~\ref{s.coupling}.
In Section~\ref{s.coupling}, we construct a coupling of the conditional GMCs over different time intervals and use the coupling to prove Theorem~\ref{t.main}.
In Section~\ref{s.apps}, we prove Corollaries~\ref{c.positivity}--\ref{c.superc} based on Theorem~\ref{t.main}.
To streamline the presentation, the proof of some technical results are placed in the appendices, with explicit references made in the main text.

%%%%%%%%%%%%%%%%%%%%%%%%%%%%%%%%%%%%%%%%%%%%%%%%%%%%%%%%%%%%%%%%%%%%%%%%%%%%%%%%%%%%%%%%%%%%
%%%% section tools %%%%%%%%%%%%%%%%%%%%%%%%%%%%%%%%%%%%%%%%%%%%%%%%%%%%%%%%%%%%%%%%%%%%%%%%%
%%%%%%%%%%%%%%%%%%%%%%%%%%%%%%%%%%%%%%%%%%%%%%%%%%%%%%%%%%%%%%%%%%%%%%%%%%%%%%%%%%%%%%%%%%%%
\section{Background and tools}
\label{s.tools}

\subsection{Polymer measure}
\label{s.tools.PM}
Let us recall the construction of the polymer measure $\PM^{\theta}_{[s,t]}$ from \cite{clark2024continuum}.
The overall idea is similar to that of \cite{alberts2014intermediate}, but the required Chapman--Kolmogorov equation is more delicate.

Our construction begins with defining suitable products between measures on $\R^4$. For any $\measuree_1(\d x,\d x'), \measuree_2(\d x,\d x')\in\Msp(\R^4)$, define the \textbf{left-} and \textbf{right-indented $\opdot_r$} respectively as
\begin{align}
	\label{e.opendotl}
	\big( \measuree_{1} \opdot_r \measuree_{2} \big)(\d x,\d x', \d x'')
	&:=
	\int_{y\in\R^{2}}
	\measuree_{1} (\d x, \d y) \, \hk(r,y-x') \, \measuree_{2} (\d x', \d x'')\ ,
\\
	\label{e.opendotr}
	\big( \measuree_{1} \opdot_r \measuree_{2} \big)(\d x,\d x', \d x'')
	&:=
	\int_{y\in\R^{2}}
	\measuree_{1} (\d x, \d x') \, \hk(r,x'-y) \, \measuree_{2} (\d y, \d x'')\ ,
\end{align}
where $\hk(r,x):=\exp(-|x|^2/2r)/2\pi r$ denotes the heat kernel on $\R^2$. The idea in \eqref{e.opendotl}--\eqref{e.opendotr} is to bring $y$ close to $x'$ through an approximation of the delta function $f_r(y-x')$, which is chosen to be the heat kernel $f_r=\hk(r,\cdott)$. We use the term \textit{indented} in reference to these operations because we will apply them to the SHF with
\begin{align*}
    \measuree_1=\flow^{\theta}_{s,t-r}\ , \ 
    \measuree_2=\flow^{\theta}_{t,u} \text{ in (\ref{e.opendotl})}, 
    \quad \text{and}\quad
    \measuree_1=\flow^{\theta}_{s,t}\ , 
    \measuree_2=\flow^{\theta}_{t+r,u}
    \text{ in (\ref{e.opendotr}) }
\end{align*}
% $\measuree_1=\flow^{\theta}_{s,t-r}$, $\measuree_2=\flow^{\theta}_{t,u}$ in (\ref{e.opendotl}) and $\measuree_1=\flow^{\theta}_{s,t}$, $\measuree_2=\flow^{\theta}_{t+r,u}$ in (\ref{e.opendotr}) 
for $s<t<u$ and small $r>0$. These choices for $\measuree_1,\measuree_2$ yield a martingale structure since
$
	\E\flow^{\theta}_{s,t}(\d x, \d x')=\d x \d x' \hk(t-s,x-x')
$.
It was shown in \cite[Theorem~2.8]{clark2024continuum} that the limit
\begin{align}
	\label{e.opendot}
	\flow^{\theta}_{t_0,t_1} \opdot \cdots \opdot \flow^{\theta}_{t_{\ell-1},t_\ell}
	&:=
	\lim_{r\to 0} 
	\flow^{\theta}_{t_0 ,t_1} \opdot_r \cdots \opdot_r \flow^{\theta}_{t_{\ell-1},t_\ell}
	\in
	\Msp(\R^{2\{t_0 ,\ldots,t_{\ell}\}})
\end{align}
exists vaguely a.s.\ and vaguely in $\Lsp^2$, where the multiple product is defined similarly, each $\opdot_r$ can be left- or right-indented (or even more generally, which we will not use), and the limit does not depend on the ways of indentation.
Later \cite{tsai2024stochastic} showed that the limit exists vaguely in $\Lsp^n$ for all $n\in\N$, that the heat kernel can be replaced by a general class of functions, and that the limit does not depend on the choice of function.
In \eqref{e.opendot}, integrating out the middle variables leads to the product
\begin{align}
\label{e.closedot}
	\flow^{\theta}_{t_0 ,t_1} \cldot \flow^{\theta}_{t_{1},t_2}(\d x_0, \d x_{2})
	:=
	\flow^{\theta}_{t_0 ,t_1} \opdot \flow^{\theta}_{t_1,t_2} \big( \d x_0, \R^2, \d x_2)
\end{align}
for $\ell=2$ and similarly for $\ell>2$.

To construct the polymer measure, define the finite-dimensional marginals as
\begin{align}
	\label{e.PM.fdd}
	\PM^{\theta}_{t_0 ,\ldots,t_\ell} := \flow^{\theta}_{t_0 ,t_1} \circ \cdots \circ \flow^{\theta}_{t_{\ell-1},t_\ell}\ , 
	\quad
	t_0 <\cdots<t_\ell\ ,
\end{align}
and note that by Axiom~\eqref{d.shf.ck} these marginals enjoy the Kolmogorov consistency.
Given the consistency, the argument in \cite[Section~5.1]{clark2024continuum} shows that these marginals uniquely extend to a random measure on $\Psp_{[s,t]}$, which we call the \textbf{polymer measure} $\PM^{\theta}_{[s,t]}$. 
The proposition below states a useful conditional expectation property, which is phrased in terms of the family of polymer measures $\PM^{\theta}_{[s,t]}$ with $s<t\in \unit\D$ (the $\unit$-dyadic numbers, as in Proposition~\ref{p.axioms}).
The proof is placed in Appendix~\ref{s.a.PM}.
Let $\bridge_{s,x\to t,x'}\in\Msp(\Psp_{[s,t]})$ denote the \emph{unnormalized} Brownian bridge measure 
\begin{align}
	\label{e.bridge}
	\bridge_{s,x\to t,x'} \psi
	:=
	\EE_{x}\big[ \psi(B) \, \big| B(t) =x' \big]\, \hk(t-s,x'-x)\ ,
\end{align}
where $B$ denotes the Brownian motion with $B(s)=x$.
For $\pmeasure_{[s,t]}\in\Msp(\Psp_{[s,t]})$ and $t_1<\cdots<t_{2\ell}\in\unit\D$, put $I_{i}:=[t_{2i-1},t_{2i}]$, $r_i := t_{2i+1}-t_{2i}$, and define 
\begin{align}
	\label{e.opendot.path}
	\big( \pmeasure_{I_1} \,\opdott_{\,r_1}\, \cdots \,\opdott_{\,r_{\ell-1}}\, \pmeasure_{I_{\ell}} \big)(\d\Path)
	:=
	\bigotimes_{i=1}^{\ell} \pmeasure_{I_i}(\d\Path_i) \cdot
	\prod_{i=1}^{\ell-1} \bridge_{t_{2i},\Path_i(t_{2i})\to t_{2i+1},\Path_{i+1}(t_{2i+1})}(\d\Path'_i) \ ,
\end{align}
where $\Path\in\Psp_{[t_0,t_{2\ell}]}$, $\Path_i:=\Path|_{I_i}$ and $\Path'_{i}:=\Path|_{[t_{2i},t_{2i+1}]}$.
Take any probability space $(\Omega,\filt,\P)$ that houses the SHF $\flow^{\theta}_{s,t}$ for $s<t$, where $\filt$ is the sigma algebra generated by the full family $\flow^{\theta}_{s,t}$, $s<t\in\R$.
Fix any $\unit> 0$, and for a closed interval $I$ with endpoints in $\unit\D$, let $\filt_{I}$ be the sigma algebra generated by $\flow^{\theta}_{s,t}$ for all $s<t\in I$, and for disjoint closed intervals $I_1,\ldots,I_\ell$ with endpoints in $\unit\D$, let $\filt_{I_1\cup\cdots\cup I_\ell}$ be the sigma algebra generated by $\filt_{I_1},\ldots,\filt_{I_\ell}$.
\begin{prop}\label{p.prob.space}
Notation as above.
\begin{enumerate}
\item \label{p.prob.space.1}
The polymer measures $\PM^{\theta}_{[s,t]}$ for $s<t\in \unit\D$ can be constructed on $(\Omega,\filt,\P)$ with each $\PM^{\theta}_{[s,t]}$ being $\filt_{[s,t]}$-measurable.
\item \label{p.prob.space.2}
For $t_1<\cdots<t_{2\ell} \in \unit\D$, $I_{i}:=[t_{2i-1},t_{2i}]$, and $r_i:=t_{2i+1}-t_{2i}$\ ,
\begin{align}
	\label{e.martingale}
	\E\big[ 
		\PM^{\theta}_{[t_1,t_{2\ell}]} 
	\,\big|\,
		\filt_{I_1\cup\cdots\cup I_{\ell}}
	\big]
	=
	\PM^{\theta}_{I_1} \,\opdott_{\,r_1}\, \cdots \,\opdott_{\,r_{\ell-1}}\, \PM^{\theta}_{I_\ell}\ .
\end{align}
\item \label{p.prob.space.3}
Take the notation in \eqref{p.prob.space.2}, fix $t_1,t_{2\ell}$, and, for each $i=1,\ldots,\ell-1$, fix $t_{2i}$ or $t_{2i+1}$ and vary $r_i\in\unit\D_{>0}$ so that $t_{2i+1}:=t_{2i}+r_{i}$ or $t_{2i}:=t_{2i+1}-r_{i}$. %, and $I_i=I_i(r_1,\ldots,r_{\ell-1})$.
We have
\begin{align}
    \label{e.p.prob.space.3}
	\bigvee_{r_1,\ldots,r_{\ell}\in\unit\D_{>0}} \filt_{I_1\cup\cdots\cup I_\ell}
    =
    \filt_{[t_1,t_{2\ell}]}\ .
\end{align}
\end{enumerate}
\end{prop}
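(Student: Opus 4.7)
The plan is to address \eqref{p.prob.space.1}--\eqref{p.prob.space.3} in order, building on the construction of $\PM^{\theta}_{[s,t]}$ in \cite{clark2024continuum} together with the SHF axioms listed in Section~\ref{s.intro.axioms}.

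For \eqref{p.prob.space.1}, recall that $\PM^{\theta}_{[s,t]}$ is constructed in \cite{clark2024continuum} from its finite-dimensional marginals $\PM^{\theta}_{t_0,\ldots,t_\ell}=\flow^{\theta}_{t_0,t_1}\opdot\cdots\opdot\flow^{\theta}_{t_{\ell-1},t_\ell}$ (with $s=t_0<\cdots<t_\ell=t$), and each $\opdot$ product is an $\Lsp^2$ limit of $\opdot_r$ products as $r\downarrow 0$ along $\unit\D_{>0}$. Each $\opdot_r$ product is a deterministic function of SHFs supported in $[s,t]$ and is therefore $\filt_{[s,t]}$-measurable; this measurability passes to the $\Lsp^2$ limit and to the resulting path-space extension.

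For \eqref{p.prob.space.2}, by uniqueness of a random path-space measure from its fdd's, it suffices to identify the $\filt_{I_1\cup\cdots\cup I_\ell}$-conditional expectation of each fdd of $\PM^{\theta}_{[t_1,t_{2\ell}]}$. Choose times $s_1<\cdots<s_k$ in $[t_1,t_{2\ell}]$ that include every $t_j$; by Axiom~\eqref{d.shf.ck} the fdd equals $\flow^{\theta}_{s_1,s_2}\opdot\cdots\opdot\flow^{\theta}_{s_{k-1},s_k}$. Partition the factors into those with $[s_j,s_{j+1}]\subset I_i$ (which are $\filt_{I_1\cup\cdots\cup I_\ell}$-measurable) and those with $[s_j,s_{j+1}]\subset[t_{2i},t_{2i+1}]$ (which, by Axiom~\eqref{d.shf.ind}, are independent of $\filt_{I_1\cup\cdots\cup I_\ell}$ with mean $\d x\,\d x'\,\hk(s_{j+1}-s_j,x'-x)$). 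Writing the fdd as the $\Lsp^2$ limit of its $\opdot_r$ approximants and using that conditional expectation is an $\Lsp^2$ contraction which commutes with the $r\downarrow 0$ limit, the in-gap factors are replaced by their means, and the resulting chain of heat kernels across each gap reproduces the unnormalized Brownian bridge densities entering \eqref{e.opendot.path}, giving exactly the fdd of $\PM^{\theta}_{I_1}\opdott_{\,r_1}\cdots\opdott_{\,r_{\ell-1}}\PM^{\theta}_{I_\ell}$.

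For \eqref{p.prob.space.3}, the inclusion $\supseteq$ is immediate; for the reverse it suffices to show each generator $\flow^{\theta}_{s,s'}$ with $s<s'\in[t_1,t_{2\ell}]$ lies in $\bigvee_{r_1,\ldots,r_{\ell-1}}\filt_{I_1\cup\cdots\cup I_\ell}$. Using Axiom~\eqref{d.shf.ck}, decompose $\flow^{\theta}_{s,s'}$ as a $\bullet$ product over the fixed breakpoints (those $t_{2i}$ or $t_{2i+1}$ held fixed in our choice). Each piece either (i) lies inside some $I_j$ for a sufficiently small choice of the free $r_i$'s---hence is directly in $\filt_{I_1\cup\cdots\cup I_\ell}$ for that configuration---or (ii) is a boundary piece of the form $\flow^{\theta}_{t_{2i},u}$ with $t_{2i}$ fixed (or the symmetric $\flow^{\theta}_{u,t_{2i+1}}$ with $t_{2i+1}$ fixed). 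For (ii), the continuity Axiom~\eqref{d.shf.conti} yields $\flow^{\theta}_{t_{2i},u}=\lim_{r'\downarrow 0,\,r'\in\unit\D_{>0}}\flow^{\theta}_{t_{2i}+r',u}$, and for each such $r'$ the configuration $r_i=r'$ puts $[t_{2i}+r',u]$ inside $I_{i+1}$, so each approximant---and hence the limit---lies in the union. The main obstacle is the limit interchange in \eqref{p.prob.space.2}, which requires the $\Lsp^n$ convergence of $\opdot_r$ products established in \cite{clark2024continuum,tsai2024stochastic}.
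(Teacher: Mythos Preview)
Your proposal is correct and follows essentially the same route as the paper: parts \eqref{p.prob.space.1}--\eqref{p.prob.space.2} amount to spelling out what the paper obtains by citing \cite[Propositions~2.12(ii), 2.13]{clark2024continuum}, and your argument for \eqref{p.prob.space.3} is the same measurability-of-limits idea the paper uses (the paper expresses $\flow^{\theta}_{s,t}$ directly as the $r\to 0$ limit of an $\opdot_r$ product, whereas you go via Axiom~\eqref{d.shf.conti}; both work). One slip: in \eqref{p.prob.space.3} the immediate inclusion is $\subseteq$, not $\supseteq$ --- what you then prove (that every generator $\flow^{\theta}_{s,s'}$ of $\filt_{[t_1,t_{2\ell}]}$ lies in the join) is indeed the nontrivial $\supseteq$ direction, so only the label is reversed.
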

\begin{ex}\label{ex.interval}
In Proposition~\ref{p.prob.space}\eqref{p.prob.space.3}, if we choose to fix $t_{2},t_{4},\ldots,t_{2\ell-2}$, then
\begin{align}
    I_1 = [t_1,t_2]\ ,
    &&
    I_2 = [t_2+r_1,t_4]\ ,
    &&
    I_3 = [t_4+r_2,t_6]\ ,
    &&
    \ldots,
    &&
    I_{\ell} = [t_{2\ell-2}+r_{\ell-1},t_{2\ell}]\ .
\end{align}
To reiterate, we fix the $t$s and vary the $r$s.
\end{ex}
\begin{rmk}\label{r.indep}
Observe that the random measures $\PM^\theta_{I_1},\ldots, \PM^\theta_{I_\ell}$ are independent for internally disjoint $I_1,\ldots,I_\ell$, in consequence of Proposition~\ref{p.prob.space}\eqref{p.prob.space.1} and Axiom~\eqref{d.shf.ind}.
\end{rmk}
\begin{rmk}\label{r.singular}
Let us point out a likely singular behavior of the polymer measure.
Call the time-$t_1$ marginals of $\flow^{\theta}_{t_0 ,t_1}$, $\flow^{\theta}_{t_1 ,t_2}$, and $\flow^{\theta}_{t_0 ,t_1} \opdot \flow^{\theta}_{t_1,t_2}$ respectively $\flow_1,\flow_2$, and $\flow_\textc\in\Msp(\R^2)$.
It was shown in \cite[Theorem~1.1]{caravenna2025singularity} that $\flow_1$ and $\flow_2$ are a.s.\ singular wrt the Lebesgue measure, and these two random measures are independent by Axiom~\eqref{d.shf.ind}.
These properties suggest that $\flow_1$ and $\flow_2$ likely assign positive masses to disjoint sets a.s.
Namely, the measures should at least partially ``miss'' each other, which, by \eqref{e.closedot}, would make $\flow_\textc$ and $\flow_1$ mutually singular and would therefore make $\PM^{\theta}_{[t_0,t_1]}$ and $\PM^{\theta}_{[t_0,t_2]}\circ\margin{[t_0,t_1]}^{-1}$ mutually singular (in the probability space in Proposition~\ref{p.prob.space}).
\end{rmk}

\subsection{Intersection local time}
\label{s.tools.inters}
Let us recall the construction of the intersection local time from \cite{clark2023planar}.
For $\Path,\Pathh\in\Psp_{[s,t]}$, consider 
\begin{align}
	\label{e.inters.e}
	\inters^{\Path,\Pathh}_{[s,t],\e}
	:=
	\d u \, \frac{1}{\e^{2}\,|\log \e|^2} \Phi\Big(\frac{\Path(u)-\Pathh(u)}{\e}\Big)
	\in
	\Msp[s,t]
	\ ,
	\quad
	\Phi(x) := \ind_{|x|\leq 1}\ ,
\end{align}
where $|\cdott|$ denotes the Euclidean norm on $\R^2$.
By \cite[Theorem~2.9]{clark2023planar}, there exists a measurable
$
	\inters_{[s,t]}=\inters^{\Path,\Pathh}_{[s,t]}: \Psp^2_{[s,t]} \to \Msp[s,t]
$
such that $\inters^{\Path,\Pathh}_{[s,t]}$ has no atoms and, for every $m<\infty$, as $\e\to 0$, 
\begin{align}
	\label{e.inters.L1conv}
	\int_{|\Path(s)|+|\Pathh(s)|\leq m} 
	\big( \E\PM^{\theta}_{[s,t]}{}^{\otimes 2}\big)(\d\Path, \d\Pathh)\,
	\sup_{u\in[s,t]}
	\big| \inters^{\Path,\Pathh}_{[s,t],\e}[s,u] - \inters^{\Path,\Pathh}_{[s,t]}[s,u] \big| 
	\longrightarrow
	0\ .
\end{align}
We call this $\inters^{\Path,\Pathh}_{[s,t]}$ the \textbf{intersection local time measure} for the pair of paths $\Path,\Pathh$.
It follows from Lemma~\ref{l.expansion} with $n=2$ that, for every $g,g'\in\Lsp^2(\R^4,\d x)$ and $\aa<\infty$,
\begin{align}
	\label{e.inters.e.finite}
	\lim_{\e\to 0}
	\int_{\Psp_{[s,t]}^2} 
	\big( \E\PM^{\theta}_{[s,t]}{}^{\otimes 2}\big)(\d\Path, \d\Pathh) \
	|g(\Path(s),\Pathh(t))\, g'(\Path(s),\Pathh(t))| 
	\,
	e^{\aa\inters^{\Path,\Pathh}_{[s,t],\e}[s,t]} 
	<
	\infty\ .
\end{align}
This together with \eqref{e.inters.L1conv} implies that
\begin{align}
	\label{e.inters.finite}
	\int_{\Psp_{[s,t]}^2} 
	\big( \E\PM^{\theta}_{[s,t]}{}^{\otimes 2}\big)(\d\Path, \d\Pathh) \
	|g(\Path(s),\Pathh(t))\, g'(\Path(s),\Pathh(t))| 
	\, 
	e^{\aa\inters^{\Path,\Pathh}_{[s,t]}[s,t]} 
	<
	\infty\ .
\end{align}

Let us introduce the intersection local time operator $\intersop$.
Since the bound \eqref{e.inters.finite} requires the ``localizing'' functions $g,g'$, we similarly localize the polymer measure by introducing a weight
\begin{align}
	\label{e.PMw}
	\PMw{}^{\theta}_{[s,t]}(\d\Path)
	:=
	\PM^{\theta}_{[s,t]}(\d\Path)\, \expfn(\Path(s)) \ ,
	\quad
	\expfn(x) := e^{-|x|}\ .
\end{align}
The choice of $\expfn$ is arbitrary and can be replaced by any strictly positive continuous function in $\Lsp^2(\R^2,\d x)$.
For Borel $A\subset[s,t]$, let $\intersop^{\theta,[s,t]}_{A}$ denote  the operator acting on $\Lsp^2(\Psp_{[s,t]},\PMw{}^{\theta}_{[s,t]})$ as
\begin{align}
	\label{e.intersop}
	\big( \intersop^{\theta,[s,t]}_{A} \psi \big)(\Path)
	:=
	\int_{\Psp_{[s,t]}} \PMw{}^{\theta}_{[s,t]}(\d\Pathh)\, 
	\inters^{\Path,\Pathh}_{[s,t]}(A)\,\psi(\Pathh)\ .
\end{align}
This operator is Hilbert--Schmidt by \eqref{e.inters.finite}, and we verify that it is positive in Lemma~\ref{l.positive}.
% Let $\mathcal{X}$ be the space of bounded positive operators on $\Lsp^2(\Psp_{[s,t]},\PMw{}^{\theta}_{[s,t]})$.
% Since $\inters_{[s,t]}$ is a measure, $A\mapsto\intersop^{\theta,[s,t]}_{A}$ defines an $\mathcal{X}$-valued measure, which is related to the dilation theory; see \cite{paulsen2002completely} for example.
% We will not dive into the theory and only use the following finite additivity.
Call closed intervals \textbf{internally disjoint} if they intersect at most at their endpoints.
For any internally disjoint closed subintervals $I_1,\ldots,I_\ell$ of $[s,t]$, 
\begin{align}
	\label{e.intersop.additive}
	\intersop^{\theta,[s,t]}_{I_1\cup\cdots\cup I_\ell}
	=
	\intersop^{\theta,[s,t]}_{I_1} + \cdots + \intersop^{\theta,[s,t]}_{I_\ell}\ ,
\end{align}
since $\inters_{[s,t]}^{\Path,\Pathh}$ has no atoms.

\subsection{GMCs}
\label{s.tools.gmc}
Here, we make precise the GMC framework that we will be working with.
We will state the framework in a general setting and then specialize at the end of this subsection.

To set up the general setting, take a Polish space $\msp$, a reference measure $\measure\in\Msp(\msp)$, and a nonnegative symmetric function $\ker(\pt,\pt')\in\BMsp(\msp^2)$ as the correlation kernel.
For the purpose of generalizing the weighted polymer measure \eqref{e.PMw}, let us fix any strictly positive $\wfn\in\Csp(\msp)$ such that $\wfn$ and $1/\wfn$ are bounded on bounded subsets of $\msp$. Assume that the weighted measure 
$
	\measurew(\d\pt) := \measure(\d\pt)\, \wfn(\pt)
$
satisfies 
\begin{align}
	\label{e.gcm.assump}
	\int_{\msp^2} \measurew^{\otimes 2}(\d\pt,\d\pt')\, e^{ \aa' \ker(\pt,\pt')}  < \infty \quad \text{ for all } \aa' <\infty\ .
\end{align}
Assume further that the associated operator 
\begin{align}
	\kerop: \Lsp^2(\msp,\measurew) \to \Lsp^2(\msp,\measurew)\ ,
	\quad
	\big( \kerop \psi\big)(\pt) := \int_{\msp} \measurew(\d\pt')\, \ker(\pt,\pt')\, \psi(\pt')
\end{align}
is positive.
Note that it is Hilbert--Schmidt in consequence of \eqref{e.gcm.assump}. As stated before Theorem~\ref{t.main}, the randomness of the GMC will be built from iid Gaussian variables $\noise_1,\noise_2,\ldots$.
To streamline our subsequent notation, we adopt the equivalent (but more abstract) language of Gaussian Hilbert spaces to describe the $\noise_i$s.
Call a linear map $\noise:\lsp^2\to\Lsp^2(\noisesp,\filtt,\PP)$, for some sample space $\noisesp$ and sigma algebra $\filtt$, an \textbf{isotropic Gaussian vector with strength $\aa\geq 0$} when every $\noise v$ is a mean-zero Gaussian random variable and 
\begin{align}
	\EE[\noise v \cdot \noise v'] = \aa\, \ip{ v, v' }_{\lsp^2}\ ,
	\quad
	v,v'\in\lsp^2\ .
\end{align}
Write $e_i$ for the $i$th coordinate vector in $\lsp^2$.
Of course, $\noise_i:=\noise e_i$ for $i=1,2,\ldots$ provide the iid Gaussian variables stated before Theorem~\ref{t.main}.
We write $\PP,\EE$ for the law and expectation of $\noise$ to distinguish them from the law and expectation $\P,\E$ of the polymer measure, which will appear later when we specialize.

To build the GMC, following \cite{shamov2016gaussian} (but with a stronger assumption of being bounded in $\Lsp^2$), we consider any $\Yop$ that factorizes $\kerop$ as follows
\begin{align}
	\label{e.Yop}
	\Yop: \lsp^2 \to \Lsp^2(\Psp,\pmeasure)\ ,
	\text{ bounded linear, such that }
	\Yop\Yop^* = \kerop\ .
\end{align}
Such a $\Yop$ exists.
To see how, given a positive and Hilbert--Schmidt operator $\kerop$, enumerate with multiplicity its nonzero eigenvalues in decreasing order $\eigenval_1 \geq \eigenval_2 \geq \cdots>0$, and let $\eigenfn_1,\eigenfn_2,\ldots$ be corresponding orthonormal eigenvectors spanning $\nullsp \kerop\,{}^{\perp}$.  With $e_i$ denoting the $i$th coordinate vector of $\lsp^2$, the \textbf{standard Y operator of $\kerop$} defined by
\begin{align}
	\label{e.Yst}
	\Yst=\Yst(\kerop):\lsp^2 \to \Lsp^2(\Psp,\pmeasure)\ ,
	\quad
	\Yst\, e_{i} := \sqrt{\eigenval_{i}}\,\eigenfn_i
\end{align}
is readily seen to satisfy \eqref{e.Yop}.
By \cite[Corollary~5]{shamov2016gaussian}, the (primitive) GMC $\gmc[\measurew,\Yop,\noise]$ is the unique $\Msp(\msp)$-valued $\noise$-measurable function characterized by the conditions 
$
	\EE\,\gmc[\measurew,\Yop,\noise] = \measurew
$
and
$
	\gmc[\measurew,\Yop,\noise+\shift] = \gmc[\measurew,\Yop,\noise] \, e^{\Yop\shift}
$
for all $\shift\in\lsp^2$.
We call this GMC \emph{primitive} because we want the reference measure to be $\measure$ rather than $\measurew$.
The desired GMC is then defined as 
\begin{align}
    \gmcw[\measure,\Yop,\noise]:=\gmc[\measurew,\Yop,\noise]\, \wfn^{-1}=\gmc[\measure\,\wfn,\Yop,\noise]\, \wfn^{-1}\ .
\end{align}
The characterization of $\gmc$ translates into the following characterization of $\gmcw$ as a $\noise$-measurable function:
\begin{align}
	\label{e.shamov.1}
	\EE\,\gmcw[\measure,\Yop,\noise] &= \measure\ ,
\\
	\label{e.shamov.2}
	\gmcw[\measure,\Yop,\noise+\shift] &= \gmcw[\measure,\Yop,\noise] \, e^{\Yop\shift}\,
	\
	\text{for all } \shift \in \lsp^2\ .
\end{align}
Note that $\wfn$ does not appear explicitly in \eqref{e.shamov.1}--\eqref{e.shamov.2} but only implicitly through $\Yop$.
The weight is only a technical device that ensures a bound like \eqref{e.gcm.assump} and plays no other role.
Given \eqref{e.gcm.assump}, the GMC $\gmcw$ exists, is unique, and can be constructed from Kahane's martingale approximation with $\aa:=\EE(\noise e_1)^2$ as
\begin{align}
    \label{e.kahane.martingale}
    \gmcw[\measure,\Yop,\noise]
    =
    \lim_{\ell\to\infty} \measure \, \exp
    \sum_{i=1}^{\ell}
    \Big( \noise e_i\, \Yop e_i - \frac{\aa}{2} (\Yop e_i)^2\Big) \ .
\end{align}

The results from \cite{shamov2016gaussian} guarantee the uniqueness of $\gmcw[\measure,\Yop,\noise]$ for a fixed $\Yop$, but we will need to consider different $\Yop$s satisfying \eqref{e.Yop}.
Lemma~\ref{l.polar}\eqref{l.polar.2} below gives the needed consistency property.
Recall that a bounded linear $\isom$ between two (real) inner product spaces is a \textbf{partial isometry} if $\isom|_{\nullsp\isom\,{}^\perp}$ is an isometry.
\begin{lem}\label{l.polar}
Take any bounded $\Yop_j:\lsp^2\to\Lsp^2(\msp,\measure)$ with $\Yop_j\,\Yop_j^{*}=\kerop$ for $j=1,2$.
\begin{enumerate}
\item \label{l.polar.1}
The linear map defined by
\begin{align}
	\label{e.l.polar}
	\isom: \lsp^2 \longrightarrow \lsp^2 \ ,
	\qquad
	\begin{array}{l@{,}l}
		\Yop_1^{*}\eigenfn_i \longmapsto \Yop_2^{*}\eigenfn_i 
		\ & \ \ 
		i=1,2,\ldots
	\\
		\hspace{18pt}
		v \longmapsto 0  
		\ & \ \ 
		v \in \nullsp\Yop_1 = \range \Yop_1^{*}\,{}^{\perp}
	\end{array}	
\end{align}
is a partial isometry such that
\begin{align}
	\label{e.isom}
	\Yop_1 = \Yop_2 \isom\ ,
	\quad
	\nullsp \isom = \nullsp\Yop_1 = \range\Yop_1^*\,{}^\perp\ ,
\end{align}
and the properties in \eqref{e.isom} uniquely characterize the partial isometry $\isom$.
%Further, $\range\isom=\range\Yop_2^{*} = \nullsp\Yop_2\,{}^{\perp}$.
%
\item \label{l.polar.2}
The GMCs $\gmcw[\pmeasure,\Yop_1,\noise]$ and $\gmcw[\pmeasure,\Yop_2,\noise]$ have the same law.
\end{enumerate}
\end{lem}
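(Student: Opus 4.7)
For part (a), the plan is to first establish an orthonormality identity: since $\Yop_j\Yop_j^* = \kerop$ and $\kerop\eigenfn_k = \eigenval_k\eigenfn_k$, direct computation yields $\langle \Yop_j^*\eigenfn_i,\Yop_j^*\eigenfn_k\rangle_{\lsp^2} = \langle \eigenfn_i,\kerop\eigenfn_k\rangle = \eigenval_i\delta_{ik}$, so $\{\Yop_j^*\eigenfn_i/\sqrt{\eigenval_i}\}_{i\geq 1}$ is orthonormal in $\lsp^2$. Combining this with the observation that $\Yop_j^*$ annihilates $\nullsp\kerop$ (because $\|\Yop_j^*w\|_{\lsp^2}^2 = \langle \kerop w,w\rangle = 0$) and the spectral decomposition $\hilsp = \nullsp\kerop\oplus\overline{\mathrm{span}\{\eigenfn_i\}}$, one obtains $\overline{\mathrm{span}\{\Yop_j^*\eigenfn_i\}_{i\geq 1}} = \overline{\range\Yop_j^*} = \nullsp\Yop_j\,{}^{\perp}$. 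Hence $\isom$ sends an orthonormal basis of $\nullsp\Yop_1\,{}^{\perp}$ to one of $\nullsp\Yop_2\,{}^{\perp}$ and is zero on $\nullsp\Yop_1$, making it a partial isometry. The identity $\Yop_1 = \Yop_2\isom$ is immediate on generators $\Yop_1^*\eigenfn_i$ (both sides give $\eigenval_i\eigenfn_i$) and on $\nullsp\Yop_1$ (both zero), and extends by linearity and continuity.

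For the uniqueness assertion in part (a), suppose $\isom'$ is another partial isometry satisfying \eqref{e.isom}. Then $\Yop_2(\isom'-\isom) = 0$ gives $\range(\isom'-\isom)\subseteq\nullsp\Yop_2$. By construction, $\range\isom\subseteq\overline{\range\Yop_2^*} = \nullsp\Yop_2\,{}^{\perp}$, so $\langle \isom v,\isom'v - \isom v\rangle = 0$ for every $v\in\lsp^2$. Expanding then gives $\|\isom' v\|^2 = \|\isom v\|^2 + \|\isom'v - \isom v\|^2$, and the partial isometry property of $\isom,\isom'$ with common kernel $\nullsp\Yop_1$ forces $\|\isom'v - \isom v\|^2 = 0$ on $\nullsp\Yop_1\,{}^{\perp}$; together with the trivial equality on $\nullsp\Yop_1$, this yields $\isom' = \isom$.

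For part (b), I plan to use Kahane's martingale approximation \eqref{e.kahane.martingale}, but applied in an orthonormal basis of $\lsp^2$ adapted to $\Yop_j$ rather than the abstract $\{e_i\}$. Let $N := \dim(\nullsp\kerop\,{}^{\perp})$ and extend $\{f_i^{(j)} := \Yop_j^*\eigenfn_i/\sqrt{\eigenval_i}\}_{1\leq i\leq N}$ to an orthonormal basis of $\lsp^2$ by adjoining any orthonormal basis of $\nullsp\Yop_j$. Then $\Yop_j f_i^{(j)} = \sqrt{\eigenval_i}\,\eigenfn_i$ for $i\leq N$ and vanishes otherwise, so the martingale reduces to $M_\ell^{(j)} = \pmeasure\,\exp\sum_{i=1}^{\min(\ell,N)}\bigl(\sqrt{\eigenval_i}\,\eigenfn_i\,G_i^{(j)} - \tfrac{\aa\eigenval_i}{2}\,\eigenfn_i^{\,2}\bigr)$, where $G_i^{(j)} := \noise f_i^{(j)}$ are iid $\mathcal{N}(0,\aa)$ in $i$ by orthonormality of $\{f_i^{(j)}\}$ and isotropy of $\noise$. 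The joint laws of $(G_i^{(1)})_{i\geq 1}$ and $(G_i^{(2)})_{i\geq 1}$ coincide, $M_\ell^{(j)}$ is the same deterministic function of these, and hence $M_\ell^{(1)}$ and $M_\ell^{(2)}$ are equal in law; passing $\ell\to\infty$ transfers this to the GMCs.

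The main technical point to justify is that Kahane's construction produces $\gmcw[\pmeasure,\Yop_j,\noise]$ in \emph{any} orthonormal basis of $\lsp^2$, not merely the abstract $\{e_i\}$ fixed in \eqref{e.kahane.martingale}. This can be verified by checking that the limit in a general basis still satisfies Shamov's characterization \eqref{e.shamov.1}--\eqref{e.shamov.2}, which forces it to agree with the unique GMC; once that basis-independence is in hand, the remainder of part (b) is just bookkeeping of the common distribution of $(G_i^{(j)})_i$.
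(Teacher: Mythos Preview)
Your proof of part~(a) is correct and essentially matches the paper's argument, which also reduces to the observation that $\{\Yop_j^*\eigenfn_i/\sqrt{\eigenval_i}\}_i$ is an orthonormal basis of $\overline{\range\Yop_j^*}=\nullsp\Yop_j\,{}^\perp$; you simply spell out the uniqueness argument in more detail than the paper does.

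For part~(b) your approach is correct but genuinely different from the paper's. The paper exploits the partial isometry $\isom$ from part~(a) directly: since $\range\isom=\nullsp\Yop_2\,{}^\perp$, one has $\Yop_2\isom\isom^*=\Yop_2$, and then the Shamov characterization \eqref{e.shamov.1}--\eqref{e.shamov.2} gives the \emph{pointwise} equality $\gmcw[\pmeasure,\Yop_2\isom,\noise\isom]=\gmcw[\pmeasure,\Yop_2,\noise]$; combining $\Yop_1=\Yop_2\isom$ with $\noise\isom\stackrel{d}{=}\noise$ finishes the argument. Your route instead rewrites Kahane's martingale in the adapted bases $\{f_i^{(j)}\}$, where it visibly becomes the same functional of i.i.d.\ Gaussians $(G_i^{(j)})_i$, and transfers equality in law through the limit. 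The paper's argument is shorter because it invokes Shamov's uniqueness once, whereas you invoke it to justify basis-independence of \eqref{e.kahane.martingale} and then still need the martingale limit. On the other hand, your argument makes the mechanism completely explicit (both GMCs are the limit of the \emph{same} random measure in law) and avoids the small computation $\Yop_2=\Yop_2\isom\isom^*$ needed to make the paper's pointwise identity go through.
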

\noindent{}%
Note that Lemma~\ref{l.polar}\eqref{l.polar.1} amounts to rephrasing a special case of the polar decomposition.
\begin{proof}
\eqref{l.polar.1}\ 
Let $\eigenfn_{i},\eigenval_{i}$ be the eigenfunctions and nonzero eigenvalues of $\kerop$ as above.
The assumption $\Yop_j\Yop_j^*=\kerop$ implies that $\Yop_j^{*}\eigenfn_1/\sqrt{\eigenval_1},\Yop_j^{*}\eigenfn_2/\sqrt{\eigenval_2},\ldots$ form an orthonormal basis for $\range\Yop_j^*$.
From this the conclusions of Part~\eqref{l.polar.1} are readily checked.

\eqref{l.polar.2}\ 
It follows from \eqref{e.isom} that  $\range \isom=\nullsp\Yop_{2}\,{}^\perp$. Note that $\isom \isom^*$ is the orthogonal projection onto $\nullsp\Yop_2\,{}^\perp$ and, in particular, $\Yop_2=\Yop_2 \isom \isom^*$.  The GMC characterization \eqref{e.shamov.1}--\eqref{e.shamov.2} implies that $\gmcw[\pmeasure,\Yop_2\isom,\noise\isom]$ is equal to $\gmcw[\pmeasure,\Yop_2,\noise]$.  Since $\Yop_1=\Yop_2\isom$ and the Gaussian vector $\noise\isom$ is equal in distribution to $\noise$, we conclude that the GMCs $\gmcw[\pmeasure,\Yop_1,\noise]$  and $\gmcw[\pmeasure,\Yop_2,\noise]$ are equal in distribution. 
% Let $\proj_{\nullsp\Yop_i\,{}^\perp}$ denote the orthogonal projection of $\lsp^2$ onto $\nullsp\Yop_{i}\,{}^\perp$ and note that, by the characterization \eqref{e.shamov.1}--\eqref{e.shamov.2},
% \begin{align}
% 	\label{e.Y.redundant}
% 	\gmcw[\measure,\Yop_i,\noise] = \gmcw[\measure,\Yop_i, \noise\proj_{\nullsp\Yop_i\,{}^\perp}]\ ,
% 	\quad
% 	i=1,2.
% \end{align}
% Use this for $i=1$ and the first identity in \eqref{e.isom} to write $\gmcw[\pmeasure,\Yop_1,\noise]=\gmcw[\pmeasure,\Yop_2\isom,\noise\proj_{\nullsp\Yop_1\,{}^\perp}]$.
% Then observe that $\noise\proj_{\nullsp\Yop_1\,{}^\perp}$ is equal in law to $\noise\proj_{\nullsp\Yop_2\,{}^\perp}\isom$.
% Using this observation and \eqref{e.Y.redundant} for $i=2$ in reverse shows that $\gmcw[\pmeasure,\Yop_1,\noise]$ is equal in law to $\gmcw[\pmeasure,\Yop_2\isom,\noise\isom]$.
% This expression is equal to $\gmcw[\pmeasure,\Yop_2,\noise]$, which is not hard to verify from the characterization \eqref{e.shamov.1}--\eqref{e.shamov.2}.
\end{proof}

%\begin{rmk}\label{r.gaussian}
%\begin{enumerate}
%\item[]
%\item \label{r.gaussian.1}
%Take any bounded linear operator $\op$ on $\lsp^2$.
%It can be random, in which case we assume it to be independent of $\noise$.
%The product $\op\noise$ is well-defined, because $(\op\noise)_i:=\sum_{j}\op_{ij}\noise_j$ and $\sum_{j}|\op_{ij}|^2 \leq \norm{\op}^2$.
%%
%\item \label{r.gaussian.2}
%One can equivalently realize $\noise$ as the bounded linear map (which is in fact a linear isometry scaled by $\sqrt{\aa}$):
%\begin{align}
%	\hat{\noise}: \lsp^2 \mapsto \Lsp^2(\noisesp,\PP)\ ,
%	\quad
%	\hat{\noise} v := \sum_{i} \noise_{i} v\ ,
%\end{align}
%where $\noisesp$ denotes the sample space of $(\noise_1,\noise_2,\ldots)$.
%The product $\op\noise$ corresponds to $\hat{\noise}\op^*$.
%\end{enumerate}
%\end{rmk}

Next we recall Kahane's GMC moment formula. Let $\pair\intv{n}$ denote the collection of  unordered pairs of  $\intv{n}:=\{1,\ldots,n\}$.  We use  $\alpha=ij$ for $i<j\in\intv{n}$ to denote members of  $\pair\intv{n}$.
For $\vecpt:=(\pt_1,\ldots,\pt_n)\in\msp^n$, we write $\vecpt_\alpha := (\pt_{i},\pt_{j})$ for $\alpha=ij\in\pair\intv{n}$.
We have
\begin{align}
	\label{e.kahane.}
	\EE\,\gmcw[\measure,\Yop,\noise]^{\otimes n} 
	=
	\pmeasure^{\otimes n}(\d \vecpt)\, e^{\aa \sum_{\alpha\in\pair\intv{n}} \ker(\vecpt_{\alpha}) }
	\quad
	\text{in } \Msp(\msp^{n})\ ,
\end{align}
which follows by taking any nonnegative $f\in\Cloc(\msp)$ and applying \cite[Theorem~6]{kahane1985} to $\gmcw[\measure\cdot f,\Yop,\noise]$.

We now specialize with
\begin{align}
	\label{e.gmcw.specialization}
	\msp=:\Psp_{[s,t]}\ ,
	&&
	\measure=\PM=:\PM^{\theta}_{[s,t]}\ ,
	&&
	\ker=\inters(\cdott,\cdott)=:\inters^{\cdott,\cdott}_{[s,t]}[s,t] \ ,
	&&
	\kerop=\intersop=:\intersop^{\theta,[s,t]}_{[s,t]}\ .
\end{align}
Note that $\PM$ is random, and we take it to be independent of $\noise$.
Accordingly, $\kerop$ is random and $\PM$-measurable.
To simplify notation, we write the standard Y operator in the specialized case as
\begin{align}
	\label{e.Ystt}
	\Ystt{[s,t]}^{\theta}
	:=
        \Yst(\intersop)
        =
	\Yst\big(\intersop^{\theta,[s,t]}_{[s,t]}\big)\ .
\end{align}
More generally, we can take any bounded $\Yop:\lsp^2\to\Lsp^2(\Psp_{[s,t]},\PMw{}^{\theta}_{[s,t]})$ such that $\Yop\Yop^*=\intersop$, provided that the random $\Yop$ is $\PM$-measurable.
These specializations give the conditional GMC 
\begin{align}
    \label{e.gmcw}
    \gmcw=\gmcw[\PM,\Yop,\noise]=\gmcw[\PM^{\theta}_{[s,t]},\Yop,\noise]
\end{align}
over a fixed interval $[s,t]$.
The coupling of them over different intervals will be carried out in Section~\ref{s.coupling}.
Recall that $\P,\E$ denote the law and expectation of $\PM$.
Under the current specialization, taking the time-$\{s,t\}$ marginal of \eqref{e.kahane.} and taking $\E$ on both sides yields
\begin{align}
\label{e.kahane}
	\E\,\EE \big( \gmcw \circ\margin{\{s,t\}}^{-1}\big)^{\otimes n} 
	=
	\E\PM^{\otimes n}\cdot e^{\aa\sum_{\alpha\in\pair\intv{n}} \inters(\vecpath_{\alpha}) } \circ(\margin{\{s,t\}}^{\otimes n})^{-1} 
	\quad
	\text{in } \Msp\big(\R^{2\intv{n}}\times\R^{2\intv{n}}\big)\ ,
\end{align}
where $\PM$, $\inters$, and $\gmcw$ are as in \eqref{e.gmcw.specialization} and \eqref{e.gmcw}.  
This moment formula has a role in the next section.

%%%%%%%%%%%%%%%%%%%%%%%%%%%%%%%%%%%%%%%%%%%%%%%%%%%%%%%%%%%%%%%%%%%%%%%%%%%%%%%%%%%%%%%%%%%%
%%%% section moment %%%%%%%%%%%%%%%%%%%%%%%%%%%%%%%%%%%%%%%%%%%%%%%%%%%%%%%%%%%%%%%%%%%%%%%%
%%%%%%%%%%%%%%%%%%%%%%%%%%%%%%%%%%%%%%%%%%%%%%%%%%%%%%%%%%%%%%%%%%%%%%%%%%%%%%%%%%%%%%%%%%%%
\section{Moment matching}
\label{s.moment}
The goal of this section is to prove the following result, which verifies the required moment matching in Proposition~\ref{p.axioms}\eqref{p.axioms.mome}.
We do so by explicitly evaluating the right-hand side of \eqref{e.kahane} using the operator techniques developed in \cite{gu2021moments}. 
\begin{prop}\label{p.mome}
For $n\in\N$ and the conditional GMC $\gmcw$ in \eqref{e.gmcw} with $\EE(\noise e_1)^2=\aa\geq 0$,
\begin{align}
	\label{e.moment}
	\E\,\EE \big(\gmcw\circ \margin{\{s,t\}}^{-1}\big)^{\otimes n} 
	=
	\d x\, \d x' \ \sg^{\intv{n},\theta+\aa}(t-s,x,x')
	\quad
	\text{in } \Msp\big(\R^{2\intv{n}}\times\R^{2\intv{n}}\big)\ .
\end{align}
\end{prop}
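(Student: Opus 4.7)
The starting point is Kahane's moment formula \eqref{e.kahane}: it reduces \eqref{e.moment} to the identity
\begin{align*}
	\E\PM^{\theta}_{[s,t]}{}^{\otimes n}\cdot e^{\aa\sum_{\alpha\in\pair\intv{n}}\inters(\vecpath_\alpha)}\circ(\margin{\{s,t\}}^{\otimes n})^{-1} = \d x\,\d x'\,\sg^{\intv{n},\theta+\aa}(t-s,x,x')
\end{align*}
in $\Msp(\R^{2\intv{n}}\times\R^{2\intv{n}})$. The first step of the plan is to replace $\inters$ by the regularization $\inters_\e$ of \eqref{e.inters.e} and only pass $\e\to 0$ at the end. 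Finiteness of the resulting objects and the legitimacy of the swap $\lim_{\e\to 0}\int = \int \lim_{\e\to 0}$ rest on an $n$-point extension of the two-point integrability \eqref{e.inters.e.finite}--\eqref{e.inters.finite}, which is exactly what Lemma~\ref{l.expansion} (invoked for general $n$) supplies.

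With the regularization in place, I would Taylor-expand $e^{\aa\sum_\alpha \inters_\e(\vecpath_\alpha)}$ in $\aa$ and time-order the resulting $k$-fold integrals.  The $k$-th contribution becomes a sum, over pair sequences $(\alpha_1,\ldots,\alpha_k)\in\pair\intv{n}^{\,k}$ and ordered times $s<u_1<\cdots<u_k<t$, of the $\E\PM^{\otimes n}$-expectation of $\prod_{j=1}^k\e^{-2}|\log\e|^{-2}\,\Phi\big((\Path_{i_j}(u_j)-\Path_{i'_j}(u_j))/\e\big)$ with $\alpha_j=i_ji'_j$.  Using the Chapman--Kolmogorov decomposition~\eqref{e.PM.fdd} of $\PM^{\otimes n}$ on the subintervals $[s,u_1],[u_1,u_2],\ldots,[u_k,t]$ together with the independence of Remark~\ref{r.indep}, the annealed expectation factorizes into a product of semigroup kernels alternating with the regularized pair-deltas: schematically,
\begin{align*}
	\sg^{\intv{n},\theta}(u_1-s)\cdot\Phi_\e^{(\alpha_1)}\cdot\sg^{\intv{n},\theta}(u_2-u_1)\cdot\Phi_\e^{(\alpha_2)}\cdots\Phi_\e^{(\alpha_k)}\cdot\sg^{\intv{n},\theta}(t-u_k),
\end{align*}
where $\Phi_\e^{(\alpha)}$ denotes multiplication by the regularized pair-delta acting on the $\alpha=ij$ coordinates. (For $n\le 4$, the appearance of $\sg^{\intv{n},\theta}$ is the content of Axiom~\eqref{d.shf.mome}; for $n\ge 5$, the identification of $\E\PM^{\otimes n}\circ (\margin{\{s,t\}}^{\otimes n})^{-1}$ with $\sg^{\intv{n},\theta}$ is itself part of what the operator computation of Section~\ref{s.moment.sg} must produce, and so appears as the $\aa=0$ consistency check of the plan.)

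The final step invokes the operator framework of \cite{gu2021moments}: as $\e\to 0$, each sandwich $\sg^{\intv{n},\theta}\,\Phi_\e^{(\alpha)}\,\sg^{\intv{n},\theta}$ converges, in an appropriate resolvent sense, to $\sg^{\intv{n},\theta}\,\mathsf{J}^\theta_\alpha\,\sg^{\intv{n},\theta}$ for a renormalized ``intersection operator'' $\mathsf{J}^\theta_\alpha$ that encodes a single pair-collision of the coordinates $\alpha=ij$ at one instant. Summing these sandwiched terms over $k$, over pair sequences, and over ordered times is precisely the Dyson/Duhamel expansion in $\aa$ of the perturbed semigroup $\sg^{\intv{n},\theta+\aa}$ around $\sg^{\intv{n},\theta}$; this is the ``combinatorial resumming'' signalled in Section~\ref{s.intro.mome}, and it delivers the claimed identity.

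The main obstacle will be the rigorous identification in this last step. The combinatorial growth $|\pair\intv{n}|^k$ of pair sequences must be dominated uniformly in $k$ in order to interchange $\sum_k$ with $\lim_{\e\to 0}$, and the short-distance replacement $\Phi_\e^{(\alpha)}\leadsto \mathsf{J}^\theta_\alpha$ must come with enough operator-norm control to survive the $k$-fold time integration. The $n$-point majorant from Lemma~\ref{l.expansion} will supply the global bound, while the Hilbert--Schmidt and resolvent estimates adapted from \cite{gu2021moments} will supply the termwise convergence; marrying the two is where the bulk of Section~\ref{s.moment} is expected to do its work. A secondary subtlety to watch for is that nearly coincident times $u_j,u_{j+1}$ with overlapping pairs must not generate spurious contact contributions in the $\e\to 0$ limit; this is where the $\pair\intv{n}$ indexing (as opposed to an unordered index including self-loops) and the atomlessness of $\inters^{\Path,\Pathh}_{[s,t]}$ are essential structural inputs.
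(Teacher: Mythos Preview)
Your proposal follows the same overall architecture as the paper: Kahane's formula \eqref{e.kahane}, $\e$-regularization of the intersection local time, Taylor expansion and time-ordering to produce an alternating product $\sg^{\intv{n},\theta}\Psi^{\e}\sg^{\intv{n},\theta}\cdots$, passage $\e\to 0$ via the operator machinery of \cite{gu2021moments}, and finally a resummation identifying the result with $\sg^{\intv{n},\theta+\aa}$. This is exactly the route taken in Section~\ref{s.moment}.

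One point where your description diverges from what actually happens: the limit of a single sandwich is \emph{not} of the form $\sg^{\intv{n},\theta}\,\mathsf{J}^{\theta}_{\alpha}\,\sg^{\intv{n},\theta}$ for any standalone operator $\mathsf{J}^{\theta}_{\alpha}$. With the scaling $\e^{-2}|\log\e|^{-2}$ built into $\inters_{\e}$, the approximate delta $\Psi^{\e}_{\sigma}$ vanishes when sandwiched between smooth pieces; it survives only between the singular endpoints of the diagrammatic expansion of $\sg^{\intv{n},\theta}$. Concretely, the paper first expands each $\sg^{\intv{n},\theta}$ via \eqref{e.sg}, so that $\Psi^{\e}_{\sigma}$ sits between two factors $\heatsg_{\alpha^{\ell}_{\last}}$ and $\heatsg_{\alpha^{\ell+1}_{1}}{}^*$, producing the operator $\Fop^{\e}_{\alpha^{\ell}_{\last}\,\sigma\,\alpha^{\ell+1}_{1}}$ of \eqref{e.Fop}. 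By \eqref{e.Fop.bds}, only the concatenating terms $\alpha^{\ell}_{\last}=\sigma=\alpha^{\ell+1}_{1}$ survive, contributing $\tfrac{1}{4\pi}\,\id$; the others vanish. So the role of $\Psi^{\e}_{\sigma}$ in the limit is to enforce a concatenation constraint on adjacent diagram strings, not to insert a new interaction operator.

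The resummation is likewise more concrete than a generic Dyson argument. After the concatenation structure is exposed, one must show that inserting $m$ extra factors of $\tfrac{\aa}{4\pi}\Jop^{\theta}_{\sigma}$ at each diagram vertex and summing over $m\ge 0$ converts $\Jop^{\theta}_{\sigma}$ into $\Jop^{\theta+\aa}_{\sigma}$. This is the identity $\sum_{j\ge 1}(\tfrac{\aa}{4\pi})^{j-1}(\jfn^{\theta})^{*j}=\jfn^{\theta+\aa}$, proved from the explicit representation \eqref{e.jfn} and the Beta integral; see \eqref{e.Jop.til} and the lines following it. Your ``Dyson/Duhamel'' heuristic is morally right (formally one is perturbing the generator by $\aa\sum_{\alpha}\delta_{\alpha}$), but the rigorous identification goes through this special-function computation rather than an abstract semigroup perturbation theorem.

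A minor correction: the input you need at the factorization step is \eqref{e.PMa.marginals} for the annealed measure $\PMa^{n,\theta}$, which the paper takes for all $n$ (citing \cite{clark2024continuum} for $n\le 2$ and asserting the same argument extends), not Axiom~\eqref{d.shf.mome}, which concerns the SHF marginals only.
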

\noindent{}%
In Section~\ref{s.moment.sg}, we recall the delta-Bose semigroup and its properties.
In Section~\ref{s.moment.annealed}, we define the annealed measures and prepare a few properties.  
Then, in Section~\ref{s.moment.}, we prove Proposition~\ref{p.mome}.

\subsection{Delta-Bose semigroup}
\label{s.moment.sg}
Let us recall the delta-Bose semigroup $\sg^{\intv{n},\theta}$ and its properties.
This semigroup is obtained in \cite{gu2021moments} based on \cite{rajeev99,dimock04}.
To simplify notation, we will often drop the $n$ dependence in scripts,  writing $\sg^{\intv{n},\theta}=\sg^{\theta}$, etc.
Recall the notation for pairs from before \eqref{e.kahane.}.
For $\alpha\in\pair\intv{n}$, the relevant operators map between functions on 
\begin{align}
	\label{e.xsp}
	\R^{2\intv{n}} 
	:=
	(\R^{2})^{\intv{n}}
	&:= 
	\big\{ (x_i)_{i\in\intv{n}} \, \big| \, x_i\in\R^2 \big\}
	\ ,
\\
	\label{e.ysp}
	\R^{2}\times\R^{2\intv{n}\setminus\alpha} 
	:= 
	\R^{2}\times\R^{2(\intv{n}\setminus\alpha)} 
	&:=
	\big\{ y=(\yc, (y_i)_{i\in\intv{n}\setminus\alpha}) \, \big| \, \yc,y_i\in\R^2 \big\}
	\ ,
\end{align}
where we index the first coordinate in \eqref{e.ysp} by ``c'' for ``center of mass''.
Consider
\begin{align}
	\label{e.Sop}
	\Sop_{\alpha}: \R^{2}\times\R^{2\intv{n}\setminus\alpha}  \to \R^{2\intv{n}},
	\qquad
	\big( \Sop_{\alpha}y \big)_{i}
	:=
	\begin{cases}
		\yc & \text{when } i\in\alpha \, ,
	\\
		y_i & \text{when } i\in\intv{n}\setminus\alpha\ .
	\end{cases}
\end{align}
Recall that $\hk(t,x)=\exp(-|x|^2/2t)/2\pi t$ is the heat kernel on $\R^2$, and let $\heatsg(t,x):=\prod_{i\in\intv{n}}\hk(t,x_i)$ denote the heat kernel on $\R^{2\intv{n}}$. We also define 
\begin{align}
	\label{e.jfn}
	\jfn^{\theta}(t)
	:=
	\int_{0}^{\infty} \d v \ \frac{t^{v-1}e^{\theta v}}{\Gamma(v)}\ .
\end{align}
For $\alpha\neq\alpha'\in\pair\intv{n}$, define the integral operators $\heatsg_{\alpha}(t)$, $\heatsg_{\alpha}(t)^*$, $\heatsg_{\alpha\alpha'}(t)$, $\Jop^{\theta}_{\alpha}(t)$ through their kernels as
\begin{subequations}
\label{e.ops}
\begin{align}
	\label{e.incoming}
	\heatsg_{\alpha}(t,y,x)
	&:=
	\heatsg\big(t, \Sop_{\alpha}y - x \big)
	=:
	\big(\heatsg_{\alpha}\big)^*(t,x,y)
	\ ,
\\
	\label{e.swapping}
	\heatsg_{\alpha\alpha'}(t,y,y')
	&:=
	\heatsg\big(t, \Sop_{\alpha}y - \Sop_{\alpha'}y' \big)\ ,
\\
	\label{e.Jop}
	\Jop^{\theta}_{\alpha}(t,y, y')
	&:=
	4\pi\,\jfn^{\theta}(t) \, \hk(\tfrac{t}{2},\yc-\yc') \, \prod_{i\in\intv{n}\setminus\alpha} \hk(t,y_i-y'_i)\ ,
\end{align}
\end{subequations}
where $x\in\R^{2\intv{n}}$ and $y,y'\in\R^{2}\times\R^{2\intv{n}\setminus\alpha}$ in \eqref{e.incoming} and \eqref{e.Jop}, and $y\in\R^{2}\times\R^{2\intv{n}\setminus\alpha}$ and $y'\in\R^{2\intv{n}\setminus\alpha'}$ in \eqref{e.swapping}.
Next, let
\begin{align}
	\label{e.dgm}
	\dgm\intv{n}
	&:=	
	\big\{ \vecalpha=(\alpha_k)_{k=1}^m\in\pair\intv{n}^m \, \big| \, m\in\N, \alpha_{k}\neq\alpha_{k+1}, k=1,\ldots, m-1 \big\}\ .
\end{align}
This set indexes certain diagrams, and hence the abbreviation $\dgm$; see \cite[Section~2]{gu2021moments}.
Write $|\vecalpha|:=m$ for the length of $\vecalpha\in\dgm$.
%We will sometimes write the last entry $\alpha_{|\vecalpha|}$ as $\alpha_{\last}$ to simplify notation.
For a function $f=f(u,u',\ldots)$  depending on finitely many nonnegative $u$s, write $\int_{\Sigma(t)}\d \vecu\, f = \int_{u+u'+\cdots=t} \d \vecu\, f$ for the convolution-like integral.
For $\vecalpha\in\dgm\intv{n}$, define the operator
\begin{align}
	\label{e.sgsum}
	\sgsum^{\theta}_{\vecalpha}(t)
	&:=
	\int_{\Sigma(t)} \d \vecu \
	\heatsg_{\alpha_{1}}(u_{\frac{1}{2}})^* \,
	\prod_{k=1}^{|\vecalpha|-1} \Jop^{\theta}_{\alpha_{k}}(u_{k}) \, \heatsg_{\alpha_{k}\alpha_{k+1}}(u_{k+\frac{1}{2}}) \cdot
	\Jop^{\theta}_{\alpha_{|\vecalpha|}}(u_{|\vecalpha|}) \, \heatsg_{\alpha_{|\vecalpha|}}(u_{|\vecalpha|+\frac{1}{2}})\ .
\end{align}
%with the convention that $\sgsum^{\theta}_{\emptyset}(t):=\heatsg(t)$.
Hereafter, products of operators are understood in the indexed order going from left to right, meaning that $\prod_{\ell=1}^{k}\Top_\ell := \Top_1 \Top_2 \cdots \Top_k$.
The delta-Bose semigroup on $\R^{2\intv{n}}$ is
\begin{align}
\label{e.sg}
	\sg^{\intv{n},\theta}(t)
	=
	\sg^{\theta}(t)
	:=
	\heatsg(t)
	+
	\sum_{\vecalpha\in\dgm\intv{n}} 
	\sgsum^{\theta}_{\vecalpha}(t)\ .
%	=
%	\sum_{\vecalpha\in\dgm'\intv{n}}
%	\sgsum^{\theta}_{\vecalpha}(t)
%	\ .
\end{align}

By \cite[Section~8]{gu2021moments}, the series \eqref{e.sg} converges under the $\Lsp^2$ operator norm.
To state the convergence, let us prepare some notation and tools.
For a bounded operator $\Top:\Lsp^2(\R^{d},\d x)\to\Lsp^2(\R^{d'},\d x)$, let $\normop{\Top}$ denote its operator norm.
Note that the kernels in \eqref{e.ops} are positive.
For operators $\Top(t)$  parameterized by $t>0$ with nonnegative kernels $\Top(t,z',z)$, $z\in\R^{d},z'\in\R^{d'}$, a bounded operator $ \int_{0}^b \d t \, \Top(t):\Lsp^2(\R^{d},\d x)\to\Lsp^2(\R^{d'},\d x)$ is defined by the kernel $\int_{0}^b \d t \, \Top(t,z',z)$ provided that
\begin{align}
	\sup_{\substack{ \norm{f}_2\leq 1 \\ \norm{f'}_2\leq1 } } \ \int_{0}^b \d t \int_{\R^{d'+d}} \d z' \d z \ |f'(z')| \, \Top(t,z',z) \, |f(z)| 
\end{align}
is finite, and the above is the operator norm of $ \int_{0}^b \d t \, \Top(t) $. Hereafter, $c=c(v,v',\ldots)$ denotes a deterministic, finite, nonnegative constant that may vary from place to place but depends only on the designated variables $v,v',\ldots$.
By \cite[Lemmas~8.1, 8.2, 8.5]{gu2021moments} (the second of which uses the proof of Lemma~3.1 in \cite{dellantonio1994hamiltonians}), for $c=c(\theta,n)$ and $\alpha\neq\alpha'\in\pair\intv{n}$,
\begin{subequations}
\label{e.opbds}
\begin{align}
	\label{e.opbd.incoming}
	\Normop{ \heatsg_{\alpha}(t) }
	&\leq
	c \, t^{-1/2}\ ,
	\quad
	\Normop{ \heatsg_{\alpha}(t)^* }
	\leq
	c \, t^{-1/2}\ ,
\\
	\label{e.opbd.swapping}
	\Normop{ \heatsg_{\alpha\alpha'}(t) }
	&\leq
	c \, t^{-1}\ ,
\\
	\label{e.opbd.swapping.int}
	\NOrmop{ \int_{0}^\infty \d t\, \heatsg_{\alpha\alpha'}(t) }
	&\leq
	c\ ,
\\
	\label{e.opbd.Jop}
	\Normop{ \Jop^{\theta}_{\alpha}(t) }
	&\leq
	c \, t^{-1} \big|\log\big(t\wedge \tfrac{1}{2}\big)\big|^{-2}\, e^{c\,t} \ .
\end{align}
\end{subequations}
These bounds are combined in \cite[Lemma~8.10]{gu2021moments} to show the convergence of \eqref{e.sg}.
The following is a slightly generalized version of that lemma, and we include a proof in Appendix~\ref{s.a.ops}.
\begin{lem}
\label{l.sum}
For $m\in\N$ and $i\in\intv{m}$, let $\Top_{i}(t):\Lsp^2(\R^{d_i},\d x) \to \Lsp^2(\R^{d_{i-1}},\d x)$ be a $t$-parameterized operator with a nonnegative kernel.
Assume that, for $c', c_1, \ldots, c_{m}\in[0,\infty)$, $b_i\in(0,1)$,  and for a partition $A_1,A_2,A_3$ of $\intv{m}$, the following bounds hold for all $t>0$.
\begin{subequations}
\label{e.l.sum.bd}
\begin{align}
	\label{e.l.sum.bd.1}
	&\text{When } i \in A_1,
	\quad
	\Normop{ \Top_{i}(t) }
	\leq
	c_{i} \, e^{c't} \, t^{-b_i}\ .
\\
	\label{e.l.sum.bd.2}
	&\text{When } i \in A_2,
	\quad
	\Normop{ \Top_{i}(t) }
	\leq
	c_{i} \, e^{c't} \,t^{-1} \, \big|\log(\tfrac{1}{2}\wedge t)\big|^{-2} \ .
\\
	\label{e.l.sum.bd.3}
	&\text{When } i \in A_3,
	\quad
	\Normop{ \Top_{i}(t) }
	\leq
	c_{i} \, e^{c't} \,t^{-1}
	\text{ and }
	\NOrmop{\int_{0}^\infty \d t\ e^{-c' t} \Top_{i}(t) } \leq c_i\ .
\end{align}
\end{subequations}
There exists a universal $c>0$ such that for all $m\in\N$, $\ell=0,\dots,|A_2|-1$, $t>0$, and $\lambda \geq c'+2$
\begin{align}
\label{e.l.sum}
\begin{split}
	\NOrmop{ \int_{\Sigma(t)} &\d \vecu \, \prod_{i\in\intv{m}} \Top_{i}(u_{i}) }
	\leq
	c^m\, m^{2} \, t^{|A_1|-1} \,  e^{\lambda t} \,
	\prod_{i\in\intv{m}} c_i 
	\cdot 
	\prod_{j\in A_1}\frac{t^{-b_j}}{1-b_j}
\\
	&\cdot
	\begin{cases}
	\displaystyle
	\frac{1}{ |\log(\lambda-c'-1)|^{|A_2|-1-\ell} \, |\log(t\wedge\frac{1}{2})|^{\ell+1} } & \text{ when } |A_2|>0\ ,
	\\
	\ 1 &\text{ when } |A_2|=0\ .
	\end{cases}
\end{split}
\end{align}
\end{lem}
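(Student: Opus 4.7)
The plan is to combine pointwise operator-norm bounds, which suffice for operators in $A_1\cup A_2$, with an integrated-norm (resolvent-type) argument for operators in $A_3$, whose pointwise bounds are not integrable near zero. The entire argument relies on all $\Top_i(t)$ having non-negative kernels. As a first reduction, I would set $\hat\Top_i(u):=e^{-c'u}\Top_i(u)$, so that $\prod_i e^{c'u_i}=e^{c't}$ pulls out of the simplex integral and the $\hat\Top_i$ satisfy the same-shape bounds with $c'=0$; in particular, $\|\hat R_j(\mu)\|\leq c_j$ for every $j\in A_3$ and $\mu\geq 0$, where $\hat R_j(\mu):=\int_0^\infty e^{-\mu u}\hat\Top_j(u)\,du$. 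I would also partition $A_2=S\sqcup(A_2\setminus S)$ with $|S|=\ell+1$, handling $S$ pointwise on the simplex and $A_2\setminus S$ via the resolvent; enumerating over the choice of $S$ contributes a binomial factor $\binom{|A_2|}{\ell+1}$, absorbed into the prefactor $c^m$.

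For operators in $J:=A_1\cup S$, submultiplicativity of operator norms reduces the problem to a scalar simplex integral. Using the bounds $\|\hat\Top_i(u)\|\leq c_i u^{-b_i}$ on $A_1$ and $\|\hat\Top_j(u)\|\leq c_j u^{-1}|\log(u\wedge\tfrac12)|^{-2}$ on $S$, a generalized-Beta function computation yields $t^{|A_1|-1}\prod_{j\in A_1}(t^{-b_j}/(1-b_j))\cdot|\log(t\wedge\tfrac12)|^{-(\ell+1)}$ up to a universal combinatorial constant. For operators in $I:=A_3\cup(A_2\setminus S)$, I set $\mu:=\lambda-c'-1\geq 1$; then $\|\hat R_j(\mu)\|\leq c_j$ for $j\in A_3$ by hypothesis, and $\|\hat R_j(\mu)\|\leq c\,c_j/|\log\mu|$ for $j\in A_2\setminus S$ by the elementary estimate $\int_0^{1/\mu} u^{-1}|\log u|^{-2}\,du\asymp 1/|\log\mu|$. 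Combining everything with the global factor $e^{c't}$ produces the overall exponential $e^{(c'+\mu)t}\leq e^{\lambda t}$, and the prefactor $c^m m^2$ absorbs both the combinatorial choice of $S$ and polynomial overcounting in the Beta-function estimates.

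The principal obstacle is combining the pointwise and resolvent bounds consistently in a single operator-norm bound on $F(t):=\int_{\Sigma(t)}d\vec u\prod_i\hat\Top_i(u_i)$, because the operators indexed by $I$ sit sandwiched between other operators in an ordered product and cannot be naively pulled out as Laplace transforms. My proposal is to proceed inductively on $|I|$: at each step, isolate one operator $\hat\Top_{j_0}$ ($j_0\in I$), split $F$ as a three-block sandwich $L_<\cdot\hat\Top_{j_0}\cdot L_>$, and bound the corresponding slice of the simplex integral using the operator inequality $\int_0^a \hat\Top_{j_0}(u)\,du\leq e^{\mu a}\hat R_{j_0}(\mu)$, valid because $e^{\mu(a-u)}\geq 1$ for $u\in[0,a]$ and $\mu\geq 0$ and the kernels are non-negative. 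The delicate bookkeeping is to ensure that the $e^{\mu a}$ factors from successive substitutions telescope across sandwich blocks, so that the total exponential accumulates to $e^{\mu t}$ rather than $e^{|I|\mu t}$; this is where the ``available time'' $a$ at each step must be carefully tracked. Once this key step is in place, taking operator norms and applying submultiplicativity to the reduced ordered product yields the stated bound.
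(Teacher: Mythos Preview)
Your approach differs substantially from the paper's, and the inductive sandwich step you propose has a genuine gap.

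The paper's argument is a one-shot pigeonhole: on $\Sigma(t)$ at least one coordinate satisfies $u_{i_0}\geq t/m$, and after extracting the global factor $e^{\lambda t}=\prod_ie^{\lambda u_i}$ one arrives at
\[
\Big\|\int_{\Sigma(t)}d\vec u\,\prod_i\Top_i(u_i)\Big\|\leq e^{\lambda t}\sum_{i_0}\ \sup_{u\in[t/m,t]}e^{-\lambda u}\|\Top_{i_0}(u)\|\cdot\prod_{i\neq i_0}\Big\|\int_0^t e^{-\lambda u}\Top_i(u)\,du\Big\|.
\]
The sup-factor handles the singled-out operator (the lower bound $u_{i_0}\geq t/m$ neutralizes its small-time singularity), and every remaining factor is a \emph{norm of an integrated operator}, to which the resolvent-type hypothesis for $A_3$ applies directly. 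For $i\in A_2$ there are two competing bounds on $\|\int_0^t e^{-\lambda u}\Top_i\,du\|$---one coming from truncating at $t$ and one from extending to $\infty$---and the parameter $\ell$ just records how many times each is used. There is no induction, no subdivision of $A_2$, and no Beta-function estimate.

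Your proposal correctly names the obstacle (resolvent-type operators sit sandwiched in an ordered product), but the resolution does not close. The kernel-wise inequality $\int_0^a\hat\Top_{j_0}(u)\,du\leq e^{\mu a}\hat R_{j_0}(\mu)$ is valid, yet in the simplex integral one never integrates $\hat\Top_{j_0}$ in isolation: what appears is the convolution $\int_0^a\hat\Top_{j_0}(u)\,L_>(a-u)\,du$, with $L_>$ depending on $u$ through $a-u$. Your inequality says nothing about such a convolution. The reduction you describe would implicitly need a kernel-wise bound of the shape $(\hat\Top_{j_0}*L_>)(a)\leq e^{\mu a}\,\hat R_{j_0}(\mu)\,L_>(a)$, and this already fails in the scalar case (take $\hat\Top_{j_0}$ concentrated at a single time $u_0\in(0,a)$: the left side is $L_>(a-u_0)$ while the right side involves $L_>(a)$, and these are not comparable). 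The ``telescoping of $e^{\mu a}$'' you flag as delicate is not the real issue; the real issue is that your key inequality applies to an integral of $\hat\Top_{j_0}$ alone, not to a convolution against a $u$-dependent block. The paper's pigeonhole sidesteps this entirely: the distinguished operator is handled by a pointwise operator-norm bound (no kernel substitution), and the remaining operators decouple into individually integrated factors where the $A_3$ hypothesis can be invoked one factor at a time.
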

\noindent{}%
Combining \eqref{e.opbds} with Lemma~\ref{l.sum} for $\ell=0$ and a large enough $\lambda$ shows that \eqref{e.sg} converges absolutely in operator norm. 

\subsection{Annealed polymer measures}
\label{s.moment.annealed}
Define the \textbf{$n$th annealed measure} as
\begin{align}
	\label{e.PMa}
	\PMa^{n,\theta}_{[s,t]}
	:=
	\E\PM^{\theta\ \otimes n}_{[s,t]}\in\Msp(\Psp_{[s,t]}^{n})\ .
\end{align}
The annealed measure has finite-dimensional marginals given by the delta-Bose semigroup, that is, for $s=t_0<\cdots<t_{\ell}=t$ we have
\begin{align}
	\label{e.PMa.marginals}
	\PMa^{n,\theta}_{[s,t]}\circ\margin{\{t_0,\ldots,t_\ell\}}^{-1}
	=
	\prod_{i=0}^{\ell-1} \d x^{i} \, \sg^{\intv{n},\theta}\big(t_{i+1}-t_{i}, x^{i}, x^{i+1}\big) \cdot \d x^{\ell}\ ,
\end{align}
Moreover, the annealed polymer measures have a concatenation consistency that can be expressed in terms of an operation $\opdott_{r}^{n}$ defined analogously to \eqref{e.opendot.path}  for measures $\pmeasure_I\in\Msp(\Psp_I^n)$, namely that
\begin{align}
	\label{e.PMa.ck}
	\PMa^{n,\theta}_{I_1} 
	\,\opdott_{r_1}^{n}\, 
	\cdots 
	\,\opdott_{r_{\ell-1}}^{n}\,
	\PMa^{n,\theta}_{I_{\ell}}
	\longrightarrow
	\PMa^{n,\theta}_{[t_1,t_{2\ell}]}
	\quad
	\text{vaguely as } r_1,\ldots,r_{\ell-1}\to 0\ ,
\end{align}
where $I_i:=[t_{2i-1},t_{2i}]$ and $r_i:=t_{2i+1}-t_{2i}$, as before. These properties are proven in \cite[Proposition~2.12]{clark2024continuum} for $n=1,2$, and the same proof generalizes to $n>2$.
When $n=1$, the delta-Bose semigroup is the heat semigroup $\sg^{\intv{1},\theta}(t)=\hk(t)$ (see \eqref{e.sg} and note that $\dgm\intv{1}=\emptyset$), so by \eqref{e.PMa.marginals} we have $\PMa^{1,\theta}_{[s,t]}=\wien_{[s,t]}$.
In general, the kernel of $\sg^{\intv{n},\theta}(t)$ is larger than the heat kernel (see \eqref{e.sgsum}--\eqref{e.sg}), and it follows from \eqref{e.PMa.marginals} that
\begin{align}
	\label{e.PMa.>=heat}
	\wien_{[s,t]}^{\otimes n} 
	\leq
	\PMa^{n,\theta}_{[s,t]}\ .
\end{align}
Next, by \cite[Equation~(2.20)]{tsai2024stochastic} we have
\begin{align}
    \label{e.annealed.centered}
    \E\big((\PM^{\theta}_{[s,t]}-\wien_{[s,t]}) \circ \margin{s,t}^{-1}\big)^{\otimes n}
    =
    \d x\, \d x'\,\sum_{\vecalpha\in\dgm_*\intv{n}} \sgsum^{\intv{n},\theta}_{\vecalpha}(t-s,x,x')\ ,
\end{align}
where $\dgm_*\intv{n}:=\{\vecalpha\in\dgm\intv{n}\,|\, \alpha_1\cup\cdots\cup\alpha_{\last}=\intv{n}\}$.
Note that the right-hand side of \eqref{e.annealed.centered} is a positive measure.
This fact together with \eqref{e.PMa.ck} shows that $\E(\PM^{\theta}_{[s,t]}-\wien_{[s,t]})^{\otimes n}$ is also a positive measure.

\subsection{Moment matching}
\label{s.moment.}
As the first step toward proving Proposition~\ref{p.mome}, we develop a series expansion for the right-hand side of \eqref{e.kahane}, and we achieve this through an $\e$ approximation.
Take $g,g'\in\Lsp^{2}(\R^{2\intv{n}},\d x)$, write $\inters_{\e}(\Path,\Pathh)=\inters^{\Path,\Pathh}_{[s,t],\e}[s,t]$ to simplify notation, and consider the $\e$ approximation of the right-hand side of \eqref{e.kahane} tested against $g\otimes g'$:
\begin{align}
	\label{e.p.expansion.}
	\int_{\Psp_{[s,t]}^n} \PMa_{[s,t]}^{n,\theta}(\d\vecpath)\ 
	e^{\aa\sum_{\alpha\in\pair\intv{n}} \inters_{\e}(\vecpath_{\alpha}) }\,
	g(\vecpath(s)) \, g'(\vecpath(t))\ .
\end{align}
In Lemma~\ref{l.expansion} and its proof below, we will develop a series expansion for \eqref{e.p.expansion.} and obtain the $\e\to 0$ limit of it.
Stating Lemma~\ref{l.expansion} requires some notation.
For $\vecalpha,\vecalpha^0,\ldots,\vecalpha^{m}\in\dgm\intv{n}$, let
\begin{align}
	\label{e.Cop}
	\Cop^{\theta}_{\vecalpha}(t)
	&:=
	\int_{\Sigma(t)} \d \vecu \
	\prod_{i=1}^{|\vecalpha|-1} \Jop^{\theta}_{\alpha_{i}}(u_{i}) \, \heatsg_{\alpha_{i}\alpha_{i+1}}(u_{i+\frac{1}{2}}) \cdot
	\Jop^{\theta}_{\alpha_{|\vecalpha|}}(u_{|\vecalpha|})\ ,
\\
	\label{e.Dop}
	\Dop^{\theta}(\vecalpha^0,\ldots,\vecalpha^{m},t)
	&:=
	\frac{1}{(4\pi)^m}
	\int_{\Sigma(t)} \d \vecu \ 
	\heatsg_{\alpha^{0}_{0}}(u_{-1})^* \,
	\prod_{\ell=0}^{m} \Cop^{\theta}_{\vecalpha^\ell}(u_\ell) 
	\cdot \heatsg_{\alpha^{m}_\last}(u_{m+1})\ ,
\end{align}
where we wrote $\alpha^m_\last:=\alpha^m_{|\vecalpha^m|}$ to simplify notation.
For $a\in\R$, consider the series
\begin{align}
	\label{e.Eop}
	\Eop^{\theta,\aa}(t)
	:=
	\sg^{\theta}(t)
	+
	\sum_{m=1}^\infty
	\aa^m
	\sum_{\conc}
	\Dop^{\theta}(\vecalpha^0,\ldots,\vecalpha^{m},t)\ ,
\end{align}
where $\sum_{\conc}$ runs over $\vecalpha^{\ell}=(\alpha^{\ell}_{1},\ldots,\alpha^{\ell}_{\last})\in\dgm\intv{n}$ for $\ell=0,\ldots,m$ under the \emph{concatenating condition} that
\begin{align}
	\label{e.conc.constraint}
	\alpha^{\ell}_{\last} = \alpha^{\ell+1}_{1}\ ,
	\qquad
	\ell = 0, \ldots, m-1\ .
\end{align}
The sum \eqref{e.Eop} converges absolutely in operator norm thanks to \eqref{e.opbds} and Lemma~\ref{l.sum} (for $\ell=0$ and a large enough $\lambda$).
Hereafter, we write $\ip{\cdott,\cdott}$ for the inner product on $\Lsp^2(\R^{d},\d x)$.

\begin{lem}\label{l.expansion}
For every $a\in\R$, $s<t\in\R$, $n\in\N$, and $g,g'\in\Lsp^2(\R^{2\intv{n}},\d x)$,
\begin{align}
	\label{e.p.expansion}
	\lim_{\e\to 0}
	\int_{\Psp_{[s,t]}^n} \PMa_{[s,t]}^{n,\theta}(\d\vecpath)\ 
	e^{\aa\sum_{\alpha\in\pair\intv{n}} \inters_{\e}(\vecpath_{\alpha}) }\,
	g(\vecpath(s)) g'(\vecpath(t))
	=
	\Ip{ g, \Eop^{\theta,\aa}(t-s) g' }\ ,
\end{align}
where $\Eop^{\theta,\aa}$ is the series defined in \eqref{e.Eop}.
In particular, together with \eqref{e.inters.e}, we have
\begin{align}
	\text{rhs of \eqref{e.kahane}}
	=
	\Ip{ g, \Eop^{\theta,\aa}(t-s) g' } < \infty\ .
\end{align}
\end{lem}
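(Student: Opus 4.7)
The plan is to Taylor-expand the exponential, apply the annealed finite-dimensional marginal formula~\eqref{e.PMa.marginals} to reduce to a sum of operator chains, and identify the $\e\to 0$ limit term by term with the series defining $\Eop^{\theta,\aa}$.

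First, from \eqref{e.inters.e}, setting $F_{\e,\alpha}(\vec x):=(\e^2|\log\e|^2)^{-1}\Phi((x_i-x_j)/\e)$ for $\alpha=ij\in\pair\intv{n}$ gives $\inters^{\vecpath_i,\vecpath_j}_{[s,t],\e}[s,t]=\int_s^t F_{\e,\alpha}(\vecpath(u))\,\d u$. Taylor-expand the exponential, symmetrize the resulting $m$-fold time integrals onto the ordered simplex (parameterized by spacings $(u_0,\ldots,u_m)\in\Sigma(t-s)$), and apply \eqref{e.PMa.marginals} to integrate against $\PMa^{n,\theta}_{[s,t]}$. The $\e$-approximation to the left-hand side of \eqref{e.p.expansion} becomes
\begin{align*}
    \sum_{m\geq 0}\aa^m\sum_{\alpha_1,\ldots,\alpha_m\in\pair\intv{n}}\int_{\Sigma(t-s)}\!\d\vec u\,\IP{g,\,\sg^{\intv{n},\theta}(u_0)\prod_{k=1}^m\big[M_{F_{\e,\alpha_k}}\sg^{\intv{n},\theta}(u_k)\big]g'}\ ,
\end{align*}
where $M_h$ denotes multiplication by the function $h$. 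Substituting~\eqref{e.sg} for each $\sg^{\intv{n},\theta}(u_k)$ and reading $\sgsum^\theta_\vecalpha$ (up to the time convolution) as $\heatsg_{\alpha_1}^*\cdot\Cop^\theta_\vecalpha\cdot\heatsg_{\alpha_{|\vecalpha|}}$, each resulting term becomes an operator chain built from $\heatsg,\heatsg_\alpha,\heatsg_\alpha^*,\Jop^\theta_\alpha,\heatsg_{\alpha\alpha'}$ and interrupted by the $m$ insertions $M_{F_{\e,\alpha_k}}$.

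The analytic heart is the renormalization limit for $M_{F_{\e,\alpha_k}}$ sandwiched between adjacent coalescing operators $\heatsg_\beta$ and $\heatsg_{\beta'}^*$ coming from the neighboring $\sgsum^\theta$ factors. A direct Gaussian computation in the center-of-mass/difference decomposition of the $\alpha_k$-pair shows that the contribution vanishes unless $\beta=\alpha_k=\beta'$. When this matching holds, resumming all consecutive same-pair $M_{F_{\e,\alpha_k}}$ insertions produces the renormalized factor $\jfn^\theta$ from \eqref{e.jfn}, accompanied by a Gaussian prefactor $(4\pi)^{-1}$ per matched insertion; terms with adjacent differing $\alpha_k\neq\alpha_{k+1}$ leave intact the swap operators $\heatsg_{\alpha_k\alpha_{k+1}}$ that appear internally to $\Cop^\theta$. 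Tracking the combinatorics shows that each non-vanishing limit is precisely one $\Dop^\theta(\vecalpha^0,\ldots,\vecalpha^m,t-s)$ satisfying the concatenation condition $\alpha^\ell_\last=\alpha^{\ell+1}_1$, matching the definition of $\Eop^{\theta,\aa}$.

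Uniform summability of the expansion in $\e\in(0,1]$ follows from \eqref{e.opbds} and Lemma~\ref{l.sum} (with $\lambda$ large enough), after the renormalization estimates reduce each $M_{F_{\e,\alpha_k}}$ insertion to a uniform-in-$\e$ operator bound comparable to $\Jop^\theta_{\alpha_k}$. Dominated convergence then transfers the term-by-term $\e\to 0$ limit to the full series, yielding the identification with $\Ip{g,\Eop^{\theta,\aa}(t-s)g'}$ and the finiteness claim. The main obstacle is the renormalization step: showing that $M_{F_{\e,\alpha}}$ flanked by arbitrary adjacent operators produces the correct $(4\pi)^{-1}\Jop^\theta_\alpha$ concatenation (vanishing off the matched-pair condition) and that the resummation of same-pair concatenations recovers $\jfn^\theta$, while maintaining uniform $\e$-control of the full series through Lemma~\ref{l.sum}.
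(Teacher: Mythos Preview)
Your overall strategy matches the paper's: Taylor-expand, reduce to ordered simplex, apply \eqref{e.PMa.marginals}, expand each $\sg^{\intv{n},\theta}$ via \eqref{e.sg}, and analyze the $M_{F_{\e,\alpha_k}}$ insertions by absorbing the flanking heat kernels into an operator $\Fop^\e_{\alpha\sigma\alpha'}(t):=\int_{u+u'=t}\heatsg_\alpha(u)\,\Psi^\e_\sigma\,\heatsg_{\alpha'}(u')^*$ (your ``direct Gaussian computation in the center-of-mass/difference decomposition'' is exactly how the paper proves the bounds \eqref{e.Fop.bds}). The off-diagonal vanishing and the uniform summability via Lemma~\ref{l.sum} are also the paper's route.

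There is, however, a genuine confusion in your description of the matched case. When $\beta=\alpha_k=\beta'$, the limit of the sandwich is \emph{not} $\jfn^\theta$; it is simply $(4\pi)^{-1}$ times the identity, in the delta-in-time sense $\int_0^{t'}\Fop^\e_{\sigma\sigma\sigma}(t)\,\d t\to(4\pi)^{-1}\id$ (the paper's \eqref{e.Fop.convg}, handled through Lemma~\ref{l.delta}). No resummation occurs at this stage: the $\jfn^\theta$ factors are already present inside the $\Jop^\theta_\alpha$ operators carried by the $\Cop^\theta_{\vecalpha^\ell}$ blocks, and the limiting series $\Eop^{\theta,\aa}$ retains the explicit $m$-fold sum with prefactor $(4\pi)^{-m}\aa^m$ and the concatenation constraint $\alpha^\ell_\last=\alpha^{\ell+1}_1$. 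The resummation you describe---collapsing consecutive same-pair insertions into a single renormalized $\jfn$---is a separate step (the proof of Proposition~\ref{p.mome}), where one shows $\Eop^{\theta,\aa}=\sg^{\theta+\aa}$ by summing convolution powers of $\Jop^\theta_\sigma$ to get $\Jop^{\theta+\aa}_\sigma$.

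Relatedly, your claim that the $M_{F_{\e,\alpha_k}}$ insertion is ``comparable to $\Jop^\theta_{\alpha_k}$'' for the uniform bounds is not quite right: $\Fop^\e_{\alpha\sigma\alpha'}$ obeys $\|\Fop^\e(t)\|\le c\,t^{-1}e^{ct}|\log\e|^{-1}$ and $\|\int_0^\infty e^{-t}\Fop^\e(t)\,\d t\|\le c$, which is the $A_3$-type input to Lemma~\ref{l.sum}, whereas $\Jop^\theta$ has the $A_2$-type bound with the extra $|\log(t\wedge\tfrac12)|^{-2}$. The distinction matters because $t^{-1}$ alone is not integrable near $0$; it is the Laplace-transform bound, not a pointwise $\Jop$-like bound, that makes the uniform-in-$\e$ summability go through.
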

\begin{proof}
The first step is a pre-limiting expansion.
Since both sides of \eqref{e.p.expansion} depend only on $t-s$, without loss of generality we take $s=0$.
For $\sigma=ij\in\pair\intv{n}$ and $x\in\R^{2\intv{n}}$, set 
\begin{align}
	\label{e.expansion.Psi}
	\Psi^{\e}_{\sigma}(x):=\frac{1}{\e^2\,|\log\e|^2}\ind_{|x_i-x_j|\leq \e}\ ,
	\quad
	\Psi^{\e}:=\sum_{\sigma\in\pair\intv{n}}\Psi^{\e}_{\sigma}\ .
\end{align}
Recall $\inters_{\e}(\Path,\Pathh)=\inters^{\Path,\Pathh}_{\e,[s,t]}[s,t]$ from \eqref{e.inters.e}.
In \eqref{e.p.expansion.}, Taylor expand the exponential, recall $\inters_{\e}(\Path,\Pathh)=\inters^{\Path,\Pathh}_{\e,[s,t]}[s,t]$ from \eqref{e.inters.e}, and swap the sum and integrals (justified after \eqref{e.Eop.e}--\eqref{e.Dop.e}).
Doing so gives
\begin{align}
	\label{e.expmom.e.1}
	\eqref{e.p.expansion.}
	&=
	\sum_{m=0}^\infty 
	\frac{\aa^m}{m!}\int_{[0,t]^m} \d t_1\cdots \d t_m \  
	\int_{\Psp_{[0,t]}^n} \PMa^{n,\theta}_{[0,t]}(\d\vecpath)\  
	g(\vecpath(0)) \, \prod_{\ell=1}^m\Psi_\e(\vecpath(t_\ell))  \cdot  g'(\vecpath(t))\ .
\end{align}
Given that the integrand is symmetric in $t_1,\ldots,t_m$, write $\frac{1}{m!}\int_{[0,t]^m}$ as $\int_{0<t_1<\cdots<t_m<t}$.
Then, rename the variables $u_i:=t_{i}-t_{i-1}$ and use \eqref{e.PMa.marginals} to express the result in terms of the delta-Bose semigroup.
We obtain
\begin{align}
	\label{e.expmom.e.2}
	\eqref{e.p.expansion.}
	&=
	\ip{g, \Eop^{\theta,\aa}_{\e}(t) g' }\ ,
\\
	\label{e.Eop.e.}
	\Eop^{\theta,\aa}_{\e}(t)
	&:=
	\sg^{\theta}(t)
	+
	\sum_{m=1}^\infty \aa^{m} \int_{\Sigma(t)}\d\vecu\
	\sg^{\theta}(u_0) \,\Psi_{\e} \,\sg^{\theta}(u_1) \,\Psi_{\e} \cdots \sg^{\theta}(u_m)\ ,
\end{align}
where $\Psi_{\e}=\Psi_\e(x)$ acts as a multiplicative operator in $\Lsp^2(\R^{2\intv{n}},\d x)$.

The next step is to further expand \eqref{e.expmom.e.2}--\eqref{e.Eop.e.} to arrive at a pre-limiting version of \eqref{e.Eop}.
Use \eqref{e.sgsum}--\eqref{e.sg} and the definition of $\Cop^{\theta}_{\vecalpha}(t)$ in \eqref{e.Cop} to get
$
	\sgsum^{\theta}_{\vecalpha}(t)
	=
	\int_{\Sigma(t)}\d \vecu\, 
	\heatsg_{\alpha_1}(u)^* \, \Cop^{\theta}_{\vecalpha}(u') \, \heatsg_{\alpha_{\last}}(u'') 
$ 
and insert this into \eqref{e.sg} gives us
\begin{align}\label{e.Fop.pre}
	\sg^{\theta}(t)
	=
	\heatsg(t)
	+ \sum_{\vecalpha\in\dgm\intv{n}} 
	\int_{\Sigma(t)}\d \vecu\, 
	\heatsg_{\alpha_1}(u)^* \, \Cop^{\theta}_{\vecalpha}(u') \, \heatsg_{\alpha_{\last}}(u'')\ .
\end{align}
Let us adopt the conventions that $\heatsg_{\emptyset}(u):=\heatsg(u)$ and $\Cop^{\theta}_{\emptyset}(u):=\delta_0(u)$, and put
\begin{align}
	\label{e.Fop}
	\Fop^{\e}_{\alpha\sigma\alpha'}(t)
	:=
	\int_{u+u'=t} \d u\ \heatsg_{\alpha}(u)\,\Psi^{\e}_{\sigma}\,\heatsg_{\alpha'}(u')^*\,.
\end{align}
Using \eqref{e.Fop.pre} and the second relation in \eqref{e.expansion.Psi} to expand the summand in \eqref{e.Eop.e.} results in our desired pre-limiting version of \eqref{e.Eop}:
\begin{align}
\label{e.Eop.e}
	\Eop^{\theta,a}_{\e}(t)
	=
	\sg^{\theta}(t)
	+
	\sum_{m=1}^\infty \aa^{m} 
	\sum\nolimits' \Dop^{\theta}_{\e}(\vecalpha^0,\ldots,\vecalpha^{m}, \sigma^1,\ldots,\sigma^m,t)\ ,
\end{align}
where $\sum'$ runs over $\vecalpha^0,\ldots,\vecalpha^m\in\dgm\intv{n}\cup\emptyset$ and over $\sigma^1,\ldots,\sigma^m\in\pair\intv{n}$, with the convention that $\alpha_1=\alpha_{\last}=\emptyset$ when $\vecalpha=\emptyset$, and the summand is defined as
\begin{align}
\label{e.Dop.e}
\begin{split}
	\Dop^{\theta}_{\e}&(\vecalpha^0,\ldots,\vecalpha^{m}, \sigma^1,\ldots,\sigma^m,t)
	:=
	\int_{\Sigma(t)}\d\vecu\
	\heatsg_{\alpha^{0}_{1}}(u_{-1})^*	
\\
	&\cdot
	\prod_{\ell=0}^{m-1}
	\Cop^{\theta}_{\vecalpha^{\ell}}(u_m) \,
	\Fop^{\e}_{\alpha^{\ell}_{\mathrm{last}} \sigma^{\ell+1} \alpha^{\ell+1}_{1}}(u_{\ell+\frac{1}{2}})
	\cdot
	\Cop^{\theta}_{\vecalpha^m}(u_m) \, \heatsg_{\alpha^{m}_{\mathrm{last}}}(u_{m})\ .
\end{split}
\end{align}

Let us address the convergence and boundedness of \eqref{e.Eop.e}, but defer the proof of the following technical results for $\Fop^{\e}_{\alpha\sigma\alpha'}$ to Appendix~\ref{s.a.ops}.
For $\alpha,\alpha'\in\pair\intv{n}\cup\{\emptyset\}$ and $\sigma\in\pair\intv{n}$,
\begin{subequations}
\label{e.Fop.bds}
\begin{align}
	\label{e.Fop.bd}
	&
	\Normop{ \Fop^{\e}_{\alpha\sigma\alpha'}(t) }
	\leq
	c\, t^{-1}\, e^{ct}\, |\log\e|^{-1}\ ,
\\
	\label{e.Fop.bdint}
	&
	\NOrmop{ \int_{0}^\infty \d t \ e^{-t} \Fop^{\e}_{\alpha\sigma\alpha'}(t) }
	\leq
	c\,
	\begin{cases}
		1 	&\text{when } \alpha=\sigma=\alpha'\ ,
	\\
		|\log\e|^{-1}		& \text{otherwise}\ ,
	\end{cases}
\\
	\label{e.Fop.convg}
	&
	\text{for every } t'\in(0,\infty),
	\quad
	\int_{0}^{t'} \d t \, \Fop^{\e}_{\sigma\sigma\sigma}(t) \longrightarrow \frac{1}{4\pi}\,\id\ \text{ strongly as } \e \to 0\ .
\end{align}
\end{subequations}
Combining \eqref{e.Fop.bd}--\eqref{e.Fop.bdint} and Lemma~\ref{l.sum} with $\ell=0$ and a large enough $\lambda$ shows that \eqref{e.Dop.e} converges absolutely in operator norm, uniformly over $\e\in(0,1/2]$.

Now we send $\e\to 0$ in \eqref{e.Eop.e}.
Given the uniform (in $\e$) convergence from the previous paragraph, we take the limit term by term.
The terms $\Dop^{\theta}_{\e}$ are indexed by $\vecalpha^0,\ldots,\vecalpha^m\in\dgm\intv{n}\cup\emptyset$ and $\sigma^1,\ldots,\sigma^m\in\pair\intv{n}$.
We consider separately the indices that satisfy the condition
\begin{align}
	\label{e.conc.constraint.}
	\alpha^{\ell}_{\last} = \sigma^{\ell+1} = \alpha^{\ell+1}_1 \, \quad \text{for all} \ \  \ell=0,\ldots,m-1
\end{align}
and those that violate the condition.
For the latter, using \eqref{e.Fop.bd}--\eqref{e.Fop.bdint} and Lemma~\ref{l.sum} for $\ell=0$ and a large enough $\lambda$ shows that $\Dop^{\theta}_{\e}$ converges to $0$ in operator norm.
To finish the proof, we hence take any $\Dop^{\theta}_{\e}$ with indices satisfying \eqref{e.conc.constraint.}, and prove that it converges to the $\Dop^{\theta}$ defined in \eqref{e.Dop}.
Recall $\Cop^{\theta}_{\vecalpha}(t)$ from \eqref{e.Cop}.
For $\vecalpha\neq\emptyset$, bound its operator norm by applying \eqref{e.opbds} and Lemma~\ref{l.sum} with $m=|\vecalpha|$ and $\ell=1$.
Doing so gives, for $\vecalpha\neq\emptyset$ and $c=c(\theta,n,\vecalpha)$,
\begin{align}
	\label{e.l.expansion.Cbd}
	\normop{\Cop^{\theta}_{\vecalpha}(t)}
	\leq 
	c\, t^{-1}\, |\log(t\wedge\tfrac{1}{2})|^{-2} \, e^{ct}\ .
\end{align}
Now apply Lemma~\ref{l.delta} with $\{\Top_{i,\e}\}_{i\in I} = \{4\pi \Fop^{\e}_{\sigma^{1}\sigma^{1}\sigma^{1}},\ldots,4\pi\Fop^{\e}_{\sigma^{m}\sigma^{m}\sigma^{m}}\}$ and $\{\Top_{i}\}_{i\in\intv{m}\setminus I}$ being the remaining operators in \eqref{e.Dop.e}.
The assumptions of that lemma are satisfied thanks to \eqref{e.opbd.incoming}, \eqref{e.Fop.bds}, \eqref{e.l.expansion.Cbd}, and the fact that $\heatsg_{\alpha}(u)$ is strongly continuous away from $u=0,\infty$ (because $\heatsg_{\alpha}(u+s)=\heatsg_{\alpha}(u)\,\heatsg(s)$).
The result of applying Lemma~\ref{l.delta} gives the desired convergence of $\Dop^{\theta}_{\e}$ to $\Dop^{\theta}$.
\end{proof}

Let us now prove Proposition~\ref{p.mome}.
\begin{proof}[Proof of Proposition~\ref{p.mome}]
Given Lemma~\ref{l.expansion}, the desired result follows once we show that
\begin{align}
	\label{e.resum}
	\Eop^{\theta,\aa}(t)=\sg^{\theta+\aa}(t)\ ,
	\quad
	\aa\in\R \ , t\in(0,\infty)\ .
\end{align}
To prove this, we perform a resummation of \eqref{e.Eop}.
Recall that the $\vecalpha$s there are subject to the concatenating condition \eqref{e.conc.constraint}. Using concatenation, we define
\begin{align}
	\vecsigma 
	=
	(\sigma_1,\ldots,\sigma_{|\vecsigma|})
	:= 
	(\alpha^0_1,\ldots,\alpha^{0}_\last, \alpha^{1}_2, \ldots,\alpha^{1}_\last, \ldots, \alpha^{m}_{\last}) \in \dgm\intv{n}\ .
\end{align}
Let $n_j$ be the number of deletions that occur at the $j$th component of $\vecsigma$ during the concatenation procedure.
For example, if the $\vecalpha$s are $(12)$, $(12,23,12)$, $(12)$, $(12,34)$, concatenating them gives $\vecsigma=(12,23,12,34)$.
In the process of concatenation, we deleted one $12$ when merging $(12)$, $(12,23,12)$, we deleted two $12$s when merging $(12,23,12)$, $(12)$, $(12,34)$, while $23$ and $34$ remained intact.
These correspond to $n_1=1$, $n_3=2$, and $n_2=n_4=0$, respectively.
Note that $m=n_1+\cdots+n_{|\vecsigma|}$.
Insert \eqref{e.Cop}--\eqref{e.Dop} into \eqref{e.Eop} and use the above notation to rewrite the resulting sum as
\begin{align}
	\notag
	\Eop^{\theta,\aa}(t)
	=
	\sg^{\theta}(t)
	+
	\sum_{\vecsigma\in\dgm\intv{n}} 
	&\sum_{n_i\text{s}}
	\int_{\Sigma(t)}
	\d\vecu\
	\heatsg_{\sigma_1}(u_{\frac{1}{2}})^* 
\\
	\label{e.resum.1}
	&\cdot 
	\prod_{j=1}^{|\vecsigma|-1}
	\prod_{i=1}^{n_j}
	\frac{\aa}{4\pi}\Jop^\theta_{\sigma_j}(u_{ij}) 
	\cdot
	\Jop^\theta_{\sigma_j}(u_{n_{j}+1\, j})\,
	\heatsg_{\sigma_{j}\sigma_{j+1}}(u_{j+\frac{1}{2}})
\\
	\notag
	&\cdot
	\prod_{i=1}^{n_{|\vecsigma|}}
	\frac{\aa}{4\pi}\Jop^\theta_{\sigma_j}(u_{ij}) \cdot
	\Jop^\theta_{\sigma_j}(u_{n_{|\vecsigma|}+1\, |\vecsigma|})\,
	\heatsg_{\sigma_{|\vecsigma|}}(u_{|\vecsigma|+\frac{1}{2}})\ ,
\end{align}
where the second sum runs over $n_1,\ldots,n_{|\vecsigma|}\in\N\cup\{0\}$ under the constraint $n_1+\cdots+n_{|\vecsigma|}\geq 1$ (because $m\geq 1$ in \eqref{e.Eop}).
Observe that the summand in \eqref{e.sgsum}, after the $\vecalpha$ there being renamed to $\vecsigma$, is exactly the summand in \eqref{e.resum.1} with $n_1=\cdots=n_{|\vecsigma|}=0$.
Given this observation, combine \eqref{e.sgsum}--\eqref{e.sg}, rename the $\vecalpha$ there to $\vecsigma$, use the result to write the $\sg^{\theta}(t)$ on the right-hand side of \eqref{e.resum.1} as $\heatsg(t)$ pulse a sum over $\vecsigma\in\dgm\intv{n}$, and combine the latter sum with the sum in \eqref{e.resum.1}.
Doing so removes the constraints on the $n_i$s, so that they all run over $\N\cup\{0\}$ unconstrained.
Knowing that the series converges absolutely in operator norm (explained after \eqref{e.Eop}), we pass the sums over the $n_i$s into the integral, arriving at
\begin{align}
	\label{e.resum.2}
	\Eop^{\theta,\aa}(t)
	&=
	\heatsg(t)
	+
	\sum_{\vecsigma\in\dgm\intv{n}} 
	\int_{\Sigma(t)}
	\d\vecu\
	\heatsg_{\sigma_1}(u_{\frac{1}{2}})^* 
	\prod_{j=1}^{|\vecsigma|-1}
	\til{\Jop}^{\theta,\aa}_{\sigma_j}(u_{i})\,
	\heatsg_{\sigma_{j}\sigma_{j+1}}(u_{j+\frac{1}{2}})
\\
	\label{e.resum.3}
	&
	\hspace{.3\linewidth}
	\cdot
	\til{\Jop}^{\theta,\aa}_{\sigma_{|\vecsigma|}}(u_{|\vecsigma|})\,
	\heatsg_{\sigma_{|\vecsigma|}}(u_{|\vecsigma|+\frac{1}{2}})\ ,
	\text{ where }
\\
	\label{e.Jop.til}
	\til{\Jop}^{\theta,\aa}_{\sigma}(t)
	&:=
	\sum_{j=1}^\infty \Big( \frac{\aa}{4\pi} \Big)^{j-1} 
	\int_{\Sigma(t)} \d\vecu\ \prod_{i=1}^j\Jop^{\theta}_{\sigma}(u_i)\ .
\end{align}
Next, from \eqref{e.jfn} and using the known identity $\int_{\Sigma(t)}\d\vecu\ \prod_{i=1}^j u_i^{v_i-1}=\frac{\prod_{i=1}^j \Gamma(v_i)}{\Gamma(v_1+\cdots+v_j)}t^{v_1+\cdots+v_j-1}$, we have
$
	\int_{\Sigma(t)} \d\vecu\ \prod_{i=1}^j\jfn^{\theta}(u_i) 
	= 
	\int_{0}^\infty \frac{\d v}{\Gamma(v)} t^{v-1} e^{v\theta} \frac{v^{j-1}}{(j-1)!}
$.
Using this in \eqref{e.Jop.til} (and recalling $\Jop^\theta_\sigma$ from \eqref{e.Jop}) shows that $\til{\Jop}^{\theta,\aa}_{\sigma}(t)=\Jop^{\theta+\aa}_{\sigma}(t)$.
Inserting this equality into \eqref{e.resum.2}--\eqref{e.resum.3} and comparing the result with \eqref{e.sg} for $\theta\mapsto\theta+\aa$ show that the result is equal to $\sg^{\theta+\aa}(t)$.
\end{proof}

%%%%%%%%%%%%%%%%%%%%%%%%%%%%%%%%%%%%%%%%%%%%%%%%%%%%%%%%%%%%%%%%%%%%%%%%%%%%%%%%%%%%%%%%%%%%
%%%% section embedding %%%%%%%%%%%%%%%%%%%%%%%%%%%%%%%%%%%%%%%%%%%%%%%%%%%%%%%%%%%%%%%%%%%%%
%%%%%%%%%%%%%%%%%%%%%%%%%%%%%%%%%%%%%%%%%%%%%%%%%%%%%%%%%%%%%%%%%%%%%%%%%%%%%%%%%%%%%%%%%%%%
\section{Operator embedding}
\label{s.embedding}
The goal of this section is to prove Proposition~\ref{p.embedding}, or more precisely, its equivalent form Proposition~\ref{p.embedding.}, which will be a key input in Section~\ref{s.coupling}. Recall that $\unit\D$ denotes the set of dyadic multiples of some fixed $\unit\in(0,\infty)$. Given $s'\leq s<t\leq t'\in\unit\D$, we think of $[s,t]$ and $[s',t']$ respectively as the smaller and larger background intervals and consider
\begin{align}
	\label{e.embedding.notation0}
	&&
	\PMw_0&:=\PMw^{\theta}_{[s,t]}\ , 
	&
	\inters_0(\Path,\Pathh)&:=\inters^{\Path,\Pathh}_{[s,t]}[s,t]\ ,
	&
	\intersop_0&:=\intersop^{\theta,[s,t]}_{[s,t]}\ ,
	&
	\Psp_0&:=\Psp_{[s,t]}\ ,
	&&
\\
	\label{e.embedding.notation1}
	&&
	\PMw_1&:=\PMw^{\theta}_{[s',t']}\ ,
	&
	\inters_1(\Path,\Pathh)&:=\inters^{\Path,\Pathh}_{[s',t']}[s,t]\ ,
	&
	\intersop_1&:=\intersop^{\theta,[s',t']}_{[s,t]}\ ,
	&
	\Psp_1&:=\Psp_{[s',t']}\ .
	&&
\end{align}
In \eqref{e.embedding.notation0}--\eqref{e.embedding.notation1}, we measure the intersection local times over the same period $[s,t]$ but in different background intervals $[s,t]$ and $[s',t']$.
Roughly speaking, we seek to embed $\intersop_0$ from $\Lsp^2(\Psp_0,\PMw_0)$ to $\Lsp^2(\Psp_1,\PMw_1)$ and show that the result gives $\intersop_1$.
To state this precisely, define the embedding
\begin{align}
	\label{e.embed}
	\embed: \BMsp(\Psp_0^{n}) \to \BMsp(\Psp_1^{n})\ ,
	\qquad
	\big( \embed f \big)(\vecpath)
	:= 
	\big( \embed f \big)(\vecpath|_{[s,t]})\ ,
\end{align}
where $\vecpath=(\Path_1,\ldots,\Path_{n})\in\Psp_1^{n}$, and $\vecpath|_{[s,t]}:=(\Path_1|_{[s,t]},\ldots,\Path_{n}|_{[s,t]})$.
Recall that $\intersop_0$ is positive and Hilbert--Schmidt on $\Lsp^2(\Psp_{0},\PMw_{0})$ and that we enumerate with multiplicity its nonzero eigenvalues as $\eigenval_1 \geq \eigenval_2 \geq \cdots >0$ and denote the corresponding orthonormal eigenfunctions  by $\eigenfn_{1},\eigenfn_{2},\ldots\in \Lsp^2(\Psp_0,\PMw_0)$.
Write $\ipp{\cdott,\cdott}_0$ and $\ipp{\cdott,\cdott}_1$  for the respective inner products on $\Lsp^2(\Psp_0,\PMw_0)$ and $\Lsp^2(\Psp_1,\PMw_1)$. 
The spectral decomposition reads $\intersop_0 = \sum_{i} \eigenfn_{i} \, \eigenval_{i} \, \ipp{\eigenfn_{i},\cdott}_0$.
Embedding $\eigenfn_{i}\in\BMsp(\Psp_0)$ gives $\embed\eigenfn_{i}\in\BMsp(\Psp_1)$.
The goal of this section is to prove the following embedding result.
\begin{prop}\label{p.embedding}
Notation as above.
Almost surely, there exists an $\PMw_0$ version of the eigenfunction, denoted by $\eigenfn_{i}\in\BMsp(\Psp_0)$, such that $\embed\eigenfn_{i}\in\Lsp^2(\Psp_1,\PMw_1)$ and 
\begin{align}
	\sum_{i}\eigenval_{i}\,\ipp{\embed\eigenfn_{i}, f}^2_1
	=
	\PMw_1^{\otimes 2}\big[\inters_1\,f^{\otimes 2} \big]
	\quad
	\text{for all } f\in\Lsp^2(\Psp_1,\PMw_1),
\end{align}
or equivalently
$
	\sum_{i} \embed\eigenfn_{i} \, \eigenval_{i} \, \ipp{\embed\eigenfn_{i},\,\cdot\,}_1
	=
	\intersop_1 
$
in $\Lsp^2(\Psp_1,\PMw_1)$.
Further, the function $\eigenfn_{i}$ is $\filt_{[s,t]}$-measurably random.
\end{prop}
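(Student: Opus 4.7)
The plan is to establish the claim through a martingale approximation of $\PMw_1$ grounded in Proposition~\ref{p.prob.space}. First, I fix a concrete Borel representative for each eigenfunction by setting
\begin{align*}
    \eigenfn_i(\Path) := \eigenval_i^{-1}\int_{\Psp_0} \PMw_0(\d\Pathh)\,\inters_0(\Path, \Pathh)\,\eigenfn_i(\Pathh)
\end{align*}
whenever the integral converges absolutely, and $0$ otherwise. This yields an $\filt_{[s,t]}$-measurable Borel function on $\Psp_0$ that agrees with the original eigenfunction as an element of $\Lsp^2(\Psp_0,\PMw_0)$.

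For $r=(r_1,r_2)\in(\unit\D_{>0})^2$ with $r_1\leq s-s'$ and $r_2\leq t'-t$, set $I_1:=[s',s-r_1]$, $I_3:=[t+r_2,t']$, and let $\PMw_1^{(r)}:=\expfn(\Path(s'))\,(\PM^\theta_{I_1}\,\opdott_{r_1}\,\PM^\theta_{[s,t]}\,\opdott_{r_2}\,\PM^\theta_{I_3})$; by Proposition~\ref{p.prob.space}\eqref{p.prob.space.2} this equals $\E[\PMw_1\mid\filt_{I_1\cup[s,t]\cup I_3}]$. Integrating out the two Brownian bridges and the polymer measures on $I_1,I_3$, the restriction $\PMw_1^{(r)}\circ\margin{[s,t]}^{-1}$ takes the form
\begin{align*}
    \measuree_r(\d\Path):= L^-_r(\Path(s))\,L^+_r(\Path(t))\,\expfn(\Path(s))^{-1}\,\PMw_0(\d\Path)\ ,
\end{align*}
where $L^\pm_r$ are strictly positive continuous functions of the respective endpoint, built from the polymer measures on $I_1,I_3$ via heat-kernel convolution. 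In particular $\measuree_r\ll\PMw_0$, hence $\measuree_r^{\otimes 2}\ll\PMw_0^{\otimes 2}$, so Mercer's identity $\inters_0=\sum_i\eigenval_i\eigenfn_i\otimes\eigenfn_i$, valid $\PMw_0^{\otimes 2}$-a.e.\ by the positivity and Hilbert--Schmidtness of $\intersop_0$, also holds $\measuree_r^{\otimes 2}$-a.e. Taking a bounded continuous $f$ on $\Psp_1$, integrating this identity against $f^{\otimes 2}$, and exchanging sum and integral via the moment bound \eqref{e.inters.finite}, one arrives at
\begin{align*}
    \sum_i \eigenval_i\,\ipp{\embed\eigenfn_i,\, f}_{(r)}^2 = \big(\PMw_1^{(r)}\big)^{\otimes 2}\big[\inters_1\,f^{\otimes 2}\big]\ ,
\end{align*}
where $\ipp{\cdot,\cdot}_{(r)}$ denotes the $\Lsp^2(\Psp_1,\PMw_1^{(r)})$ inner product.

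Finally, I send $r\to 0$. By Proposition~\ref{p.prob.space}\eqref{p.prob.space.3}, $\bigvee_{r}\filt_{I_1\cup[s,t]\cup I_3}=\filt_{[s',t']}$, so L\'evy's martingale convergence theorem gives $\PMw_1^{(r)}(h)\to\PMw_1(h)$ almost surely for each deterministic bounded continuous $h$, with analogous two-fold convergence for the integrals in the displayed identity; uniform integrability for the interchange with the sum is supplied by \eqref{e.inters.e.finite}--\eqref{e.inters.finite}. Passing both sides through the limit and extending by density in $f$ simultaneously yields $\embed\eigenfn_i\in\Lsp^2(\Psp_1,\PMw_1)$ and the desired quadratic-form identity. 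The main obstacle will be justifying this limit in view of Remark~\ref{r.singular}: the true pushforward $\measuree:=\PMw_1\circ\margin{[s,t]}^{-1}$ is conjecturally mutually singular with $\PMw_0$, so Mercer's identity does not transfer directly to $\measuree^{\otimes 2}$; the martingale approximation circumvents the singularity by operating exclusively on the absolutely continuous $\measuree_r$ and by moving to the limit only at the level of integrated bilinear forms.
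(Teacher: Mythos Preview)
Your approach is essentially the paper's: fix the Borel version of $\eigenfn_i$ via the integral formula (this is Lemma~\ref{l.embed.eigen}), use the martingale approximation $\PMw_1^{(r)}=\E[\PMw_1\mid\filt_{I_1\cup[s,t]\cup I_3}]$ from Proposition~\ref{p.prob.space}, exploit $\measuree_r\ll\PMw_0$ to get the exact identity at level $r$, and then send $r\to 0$. You also correctly identify where the difficulty lies. But the final paragraph does not actually close the gap, and the appeal to \eqref{e.inters.e.finite}--\eqref{e.inters.finite} for ``uniform integrability for the interchange with the sum'' is not enough.

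Two concrete issues. First, the right-hand side $C_r:=(\PMw_1^{(r)})^{\otimes 2}[\inters_1 f^{\otimes 2}]$ is not a martingale in $r$; it is only a submartingale (write it as $\sum_i\eigenval_i(A^i_r)^2$ with $A^i_r:=\PMw_1^{(r)}[\embed\eigenfn_i\, f]$ a genuine martingale since $\eigenfn_i$ is $\filt_{[s,t]}$-measurable). Submartingale convergence only gives $C_r\to C'$ a.s.\ with $C'\leq C:=\PMw_1^{\otimes 2}[\inters_1 f^{\otimes 2}]$, and you must separately show $\E C_r\to\E C$ to rule out loss of mass. The paper does this via the annealed comparison $\E\PM_{1,r}^{\otimes 2}\leq\E\PM_1^{\otimes 2}$ together with the vague convergence \eqref{e.PMa.ck}; the bounds \eqref{e.inters.e.finite}--\eqref{e.inters.finite} alone do not deliver it. Second, passing the limit through the infinite sum on the left requires a uniform-in-$r$ tail estimate of the type $\sup_r\sum_{i\geq N}\eigenval_i(A^i_r)^2\to 0$ as $N\to\infty$. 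The paper isolates this as condition~\eqref{e.embed.tool.5} and proves it by applying Doob's $L^2$ maximal inequality to the martingales $\sqrt{\eigenval_i\eigenval_j}\,B^{ij}_r$ and then invoking the same annealed convergence $\E C_r\to\E C$; a vague reference to annealed exponential moments does not substitute for this argument. Finally, your claim that $\embed\eigenfn_i\in\Lsp^2(\Psp_1,\PMw_1)$ falls out ``simultaneously'' from the limit is unjustified: the paper proves it beforehand (Lemma~\ref{l.version} feeding into Lemma~\ref{l.embed.eigen}), and indeed one needs it to even make sense of the density extension in $f$.
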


We will rephrase Proposition~\ref{p.embedding} in terms of marginals as Proposition~\ref{p.embedding.} below.
The latter is more convenient to prove and will also connect to a previous remark, which we explain after Proposition~\ref{p.embedding.}.
Let $\PMw_{0'}$ be the time-$[s,t]$ marginal of $\PMw_1$:
\begin{align}
	\label{e.pushforward}
	\PMw_{0'}(B)
	:=
	\PMw_{1}\circ\margin{[s,t]}^{-1} (B)
	:=
	\PMw_1\{ \til{\Path}\in\Psp_1 \,|\, \til{\Path}|_{[s,t]} \in B  \}\ ,
	\quad \text{for Borel }\,
	B\subset \Psp_{1} \ . 
\end{align}
Define $\Psp_{[s',s]}:=\Psp'$ and $\Psp_{[t,t']}:=\Psp''$, and view $ \Psp_1 $ as a subset of $\Psp'\times\Psp_0\times\Psp''$.
Accordingly, view $\PMw_{1}(\d\til{\Path})=\PMw_{1}(\d\Path',\d\Path,\d\Path'')$ as a measure on the product space, where $\Path\in\Psp_0$, $\Path'\in\Psp'$, and $\Path''\in\Psp''$, and consider the disintegration $\PMw{}_{0'|1}(\Path,\d \Path',\d\Path'')$ of $\PMw_1$ wrt $\PMw_{0'}$.
See \cite[Chapter~3]{kallenberg2021foundations} for a discussion of measure disintegrations, for example.
Given any $\psi\in \Lsp^1(\Psp^{n}_1,\PMw{}^{\otimes n}_1)$, extend it to a function on $(\Psp'\times\Psp_0\times\Psp'')^{n}$ by setting it to be $0$ outside of $\Psp_1^{n}$ and define
\begin{align}
	\label{e.version.disintegration}
	\big(\PMw_{0'|1}^{\otimes n}\, \psi\big)(\vec{\Path})
	:= 
	\prod_{i=1}^{n}
	\int_{\Psp'\times\Psp''}  \PM_{0'|1}(\Path_i,\d\Path'_i,\d\Path''_i) \cdot 
	\psi(\vec{\Path}',\vec{\Path},\vec{\Path}'')\ .
\end{align}
In analogy to the tower property for conditional expectations, the disintegration satisfies 
\begin{align}
	\label{e.tower.rule}
	\PMw_{1}^{\otimes n} \psi
	= 
	\PMw_{0'}^{\otimes n} \big[ \,\PMw_{0'|1}^{\otimes n}\, \psi \,\big] \ ,
\end{align}
(see \cite[Theorem~3.4(i)]{kallenberg2021foundations} for example).
It is not difficult to check that %any $\E\PMw{}_0^{\otimes 2}$ version of $\inters_0$ satisfies
\begin{align}
	\label{e.version.marginal}
	\PMw_{1}^{\otimes 2} \big[ f \, \inters_1 \big]
	=	
	\PMw_{0'}^{\otimes 2} \big[ \big(\PMw_{0'|1}^{\otimes 2} f\big)\, \inters_0 \big]\ ,
	\quad
	\text{for all }
	f\in\Lsp^2(\Psp_1^2,\PMw{}_1^{\otimes 2})\ .
\end{align}
%Take this version. % and write $\ipp{\cdott,\cdott}_{0'}$ for the inner product on $\Lsp^2(\Psp_{0},\PMw_{0'})$.
Proposition~\ref{p.embedding} is equivalent to the following.
\begin{customprop}{\ref*{p.embedding}'}\label{p.embedding.}
Notation as above.
Almost surely, there exists an $\PMw_0$ version of the eigenfunction, denoted by $\eigenfn_{i}\in\BMsp(\Psp_0)$, such that
\begin{enumerate}
\item \label{p.embedding..1}
$\eigenfn_{i}$ is $\filt_{[s,t]}$-measurably random and a.s.\ in $\Lsp^2(\Psp_0,\PMw_{0'})$, and
\item \label{p.embedding..2}
a.s.,
$
	\sum_{i} \eigenval_{i}\, 
	\PMw_{0'}[\embed\eigenfn_{i}\, f]\,{}^2
	=
	\PMw_{0'}^{\otimes 2}
	[\inters_0\, f^{\otimes 2}]
$ for all $f\in\Lsp^2(\Psp_0,\PMw_{0'})$.
\end{enumerate}
\end{customprop}

As was explained in Remark~\ref{r.singular}, we conjecture that $\PMw_0$ and $\PMw_{0'}$ are a.s.\ mutually singular (in the probability space in Proposition~\ref{p.prob.space}).
Proving Proposition~\ref{p.embedding.} therefore requires comparing functions between $\Lsp^2$ spaces where the underlying measures are potentially mutually singular, which constitutes the main difficulty in the proof.

We begin the proof of Proposition~\ref{p.embedding.} by preparing a useful bound.
% Let us construct the desired $\E\PMw{}_0^{\otimes 2}$ version of $\inters_0$ stated before Proposition~\ref{p.embedding.}.
% Write $\PM_0:=\PM^{\theta}_{[s,t]}$ for the \emph{unweighted} polymer measure and define $\PM_{0'}$ similarly to \eqref{e.pushforward}.
% Hereafter, let $\Cbsp(\msp)$ denote the space of bounded continuous functions on $\msp$.

\begin{lem}\label{l.version}
% There exists an $\E\PM_0^{\otimes 2}$ version (equivalently, an $\E\PMw{}_0^{\otimes 2}$ version) of $\inters_0(\Path,\Pathh)=\inters^{\Path,\Pathh}_{[s,t]}[s,t]$ such that, 
For every deterministic $g,g'\in\Lsp^2(\R^4,\d x)$ and $\aa<\infty$,
\begin{align}
	\label{e.version.bd}
	\int_{\Psp_0^2} 
	\E(\PM_{0} + \PM_{0'})^{\otimes 2}(\d\Path, \d\Pathh)
	\, |g(\Path(s),\Pathh(s))\,g'(\Path(t),\Pathh(t))| 
	\,
	e^{\aa\inters_0(\Path,\Pathh)} 
	<
	\infty\ .
\end{align}
% and \eqref{e.version.marginal} holds almost surely.
\end{lem}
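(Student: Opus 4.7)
The plan is to reduce the bound to the four tensor pieces of $(\PMw_0+\PMw_{0'})^{\otimes 2}$ and handle each. Since $\inters_0\geq 0$, we may take $\aa\geq 0$ without loss of generality. For the $\PMw_0^{\otimes 2}$ piece, the bounded weight $\expfn^{\otimes 2}$ can be absorbed so that $\tilde g(x):=\expfn^{\otimes 2}(x)|g(x)|$ lies in $\Lsp^2(\R^4)$, and unrolling the weight in $\PMw_0^{\otimes 2}$ followed by Lemma~\ref{l.expansion} with $n=2$ gives
\begin{align*}
\int \E\PMw_0^{\otimes 2}(\d\Path,\d\Pathh)\,|g(\Path(s),\Pathh(s))\,g'(\Path(t),\Pathh(t))|\,e^{\aa\inters_0}
=\Ip{\tilde g,\,\Eop^{\theta,\aa}(t-s)\,|g'|}<\infty,
\end{align*}
by the $\Lsp^2$-boundedness of $\Eop^{\theta,\aa}(t-s)$.

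For the $\PMw_{0'}^{\otimes 2}$ piece, I would rewrite the integral as one over $\E\PMw{}^{\theta\otimes 2}_{[s',t']}$ with intersection local time restricted to $[s,t]\subset[s',t']$ and test functions $|g|,|g'|$ at the interior times $s,t$. Following the proof of Lemma~\ref{l.expansion}, Taylor-expand $e^{\aa\inters_0}$ in the $\e$-regularization only over $[s,t]$ and apply the finite-dimensional marginal formula for $\PMa^{2,\theta}_{[s',t']}$ at $\{s',s,t_1,\ldots,t_m,t,t'\}$, given by products of $\sg^{\intv{2},\theta}$ kernels. Integrating out the positions at $s'$ and $t'$ against $\expfn^{\otimes 2}$ and the constant function $\id$, and resumming the remaining series over $[s,t]$ via the identity $\Eop^{\theta,\aa}(t-s)=\sg^{\intv{2},\theta+\aa}(t-s)$ from Proposition~\ref{p.mome}, yields a bilinear pairing of the form
\begin{align*}
\Ip{\expfn^{\otimes 2},\,\sg^{\intv{2},\theta}(s-s')\,M_{|g|}\,\sg^{\intv{2},\theta+\aa}(t-s)\,M_{|g'|}\,\sg^{\intv{2},\theta}(t'-t)\,\id},
\end{align*}
where $M_{|g|},M_{|g'|}$ denote multiplication operators. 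The mixed terms $\E[\PMw_0\otimes\PMw_{0'}+\PMw_{0'}\otimes\PMw_0]$ admit analogous representations, obtained by first applying the conditional expectation formula from Proposition~\ref{p.prob.space}\eqref{p.prob.space.2} to one factor so as to reduce it to $\PMw_0$ weighted by SHF contributions outside $[s,t]$.

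The main obstacle is to establish finiteness of the pairing above. The ``boundary'' functions $\sg^{\intv{2},\theta}(s-s')\expfn^{\otimes 2}$ and $\sg^{\intv{2},\theta}(t'-t)\id$ are not in $\Lsp^\infty(\R^4)$ in general: a direct computation using the decomposition $\sg^{\intv{2},\theta}(t)=\heatsg(t)+\sgsum^{\theta}_{(12)}(t)$ of \eqref{e.sg} shows that they carry a logarithmic singularity along the diagonal $x_1=x_2$, stemming from the attractive $\delta$-interaction intrinsic to the two-dimensional delta-Bose problem. Hence the naive $\Lsp^2$-$\Lsp^\infty$ bound through the operator norm of $\sg^{\intv{2},\theta+\aa}(t-s)$ fails, and overcoming this requires direct iterated-integral estimates using the operator bounds \eqref{e.opbds} and Lemma~\ref{l.sum}, together with the rapid decay of $\expfn$ and the smoothing effect of the heat-kernel pieces of the intermediate semigroups.
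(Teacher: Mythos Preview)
Your treatment of the $\PM_0^{\otimes 2}$ term is correct and coincides with the paper's: it is precisely the bound \eqref{e.inters.finite}.

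The genuine gap is in the $\PM_{0'}^{\otimes 2}$ term. The expression you arrive at,
\[
\Ip{\expfn^{\otimes 2},\ \sg^{\intv{2},\theta}(s-s')\,M_{|g|}\,\sg^{\intv{2},\theta+\aa}(t-s)\,M_{|g'|}\,\sg^{\intv{2},\theta}(t'-t)\,\id},
\]
is not even well-posed as written: $\id\notin\Lsp^2(\R^4)$, so $\sg^{\intv{2},\theta}(t'-t)\,\id$ is not an $\Lsp^2$ vector, and the logarithmic diagonal singularity you identify is one symptom of this. The paper avoids this detour entirely with a one-line observation you are missing. After pushing $\PM_{0'}$ back to $\PM_1$ via the marginal, one has $\embed\inters_0(\Path,\Pathh)=\inters^{\Path,\Pathh}_{[s',t']}[s,t]\le \inters^{\Path,\Pathh}_{[s',t']}[s',t']$, the trivial domination of the partial intersection local time by the full one. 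This bounds the second integral by one of exactly the same shape as the first, now on the larger interval $[s',t']$, and the paper then invokes \eqref{e.inters.e.finite} there. No pairing of a semigroup against $\id$ ever arises, and the iterated-integral estimates you propose are unnecessary.

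For the cross terms your instinct to use Proposition~\ref{p.prob.space}\eqref{p.prob.space.2} is right, but the paper's execution is sharper than ``$\PM_0$ weighted by SHF contributions outside $[s,t]$'', which would reintroduce the same boundary difficulty. Since the integrand depends on the $\PM_1$-path only through its restriction to $[s,t]$, the paper applies Proposition~\ref{p.prob.space}\eqref{p.prob.space.2} in the degenerate-interval form $\E[\PM_1\mid\filt_{[s,t]}]=\d x\,\opdott_{\,s-s'}\,\PM_0\,\opdott_{\,t'-t}\,\d x'$ and then integrates out the exterior legs. This collapses the $\PM_1$ factor to $\PM_0$ exactly, so the third and fourth integrals are \emph{equal} to the first, and no further estimate is needed.
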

\begin{proof}
On the left-hand side of \eqref{e.version.bd}, expand the product of measures into four terms, and use \eqref{e.version.marginal} to convert $\PM_{0'}$ back to $\PM_{1}$ to get
\begin{align}
\label{e.l.version.2}
\begin{split}
    \text{lhs of \eqref{e.version.bd}}
    =
    \Big( &
        \int_{\Psp_0^2} \E\PM_{0}^{\otimes 2} 
        + \int_{\Psp_{1}^2} \E\PM_{1}^{\otimes 2} 
        + \int_{\Psp_{0}\times\Psp_{1}} \E\PM_{0}\otimes\PM_{1} 
        + \int_{\Psp_{1}\times\Psp_{0}} \E\PM_{1}\otimes\PM_{0} 
    \Big)
    \\
    &
    \cdot
    |g(\cdott|_{s})\, g'(\cdott|_{t})| 
    \,
    e^{\aa\inters_{0}}\ .
\end{split}
\end{align}
We have slightly abused notation in that $\inters_{0}$ should have been suitably embedded.
For example, in the second integral in \eqref{e.l.version.2}, $\inters_{0}$ should be interpreted as $\embed\inters_{0}$.
The first integral in \eqref{e.l.version.2} is finite because of \eqref{e.inters.e.finite}.
The second integral in \eqref{e.l.version.2} is finite because $\embed\inters_0\leq \inters_{[s',t']}[s',t']$ and because of \eqref{e.inters.e.finite}.
For the third integral, we apply Proposition~\ref{p.prob.space}\eqref{p.prob.space.2} with $\ell=3$, $I_2=[s,t]$, $r_1=s-s'$, $r_2=t'-t$, and the degenerate intervals $I_1=[s',s']$ and $I_3=[t',t']$.
Even though Proposition~\ref{p.prob.space} was stated for nondegenerate intervals, its proof generalizes to the current degenerate setting by interpreting $\PM^{\theta}_{[s',s']}=\d x$ and  $\PM^{\theta}_{[t',t']}=\d x$ and defining  $\d x\, \opdott_{\,s-s'} \PM_0\, \opdott_{\,t'-t}\, \d x'$ in a similar way to \eqref{e.opendot.path}.
We have
\begin{align}
    \E[\PM_{1}&|\filt_{[s,t]}]
    =
    \d x\, \opdott_{\,s-s'} \PM_0\, \opdott_{\,t'-t}\, \d x'\ \in \Msp(\Psp_{[s',t']})\ .
\end{align}
View $\Psp_{[s',t']}$ as a subset of $\Psp_{[s',s]}\times\Psp_0\times\Psp_{[t,t']}$ and integrate over the first and third components of the product space.
Doing so gives
\begin{align}
    \int_{\Psp_{[s',s]}\times\Psp_{[t,t']}} 
    \E[\PM_{1}&|\filt_{[s,t]}](\d\Path',\d\Path,\d\Path'')\, \ind_{\Path(s)}(\Path'(s))\, \ind_{\Path(t)}(\Path''(t))
    =
    \PM_0(\d\Path)\ .
\end{align}
Multiplying both sides by $\PM_0\,|g(\cdott|_{s})\,g'(\cdott|_{t})|\,e^{\aa\inters_{0}}$ and integrating over $\Psp_0^2$ show that the third integral in \eqref{e.l.version.2} is equal to the first.
The same holds for the last integral in \eqref{e.l.version.2}.
\end{proof}

We are now ready to prove Proposition~\ref{p.embedding.}\eqref{p.embedding..1}.
Let us state a simple tool in Assumption~\ref{a.embed.eigen} and Lemma~\ref{l.embed.eigen}, which we prove in Appendix~\ref{s.a.embed.tool} and apply in the  proof of Proposition~\ref{p.embedding.}\eqref{p.embedding..1}.
\begin{assu}\label{a.embed.eigen}
Take a Polish space $\msp$, measures $\measure,\measure'\in\Msp(\msp)$, and a symmetric $\ker\in\BMsp(\msp^2)$ such that
\begin{align}
	\label{e.l.embed.eigen.0}
	(\measure + \measure')^{\otimes 2} [\ker^2] 
	:=
	\int_{\msp^2} (\measure + \measure')^{\otimes 2}(\d\pt,\d\pt') \, \ker^2(\pt,\pt') 
	< \infty\ .
\end{align}
Let $\kerop$ and $\kerop'$ respectively be integral operators in $\Lsp^2(\msp,\measure)$ and $\Lsp^2(\msp,\measure')$ with the same kernel $\ker$.
Assume further that $\kerop,\kerop'$ are positive.
Note that they are Hilbert--Schmidt by \eqref{e.l.embed.eigen.0}.
\end{assu}
\begin{lem}\label{l.embed.eigen}
Under Assumption~\ref{a.embed.eigen}, for every enumerated positive eigenvalue $\eigenval_{i}$, the associated eigenfunction $\eigenfn_{i}\in \Lsp^2(\msp,\measure)$ has finite $\Lsp^2(\msp,\measure')$ norm, after possibly redefining it on a $\measure$-null set.
\end{lem}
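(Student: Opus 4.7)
The plan is to replace the eigenfunction $\eigenfn_i$, which a priori is defined only $\measure$-a.e., with a pointwise representative defined through the integral equation $\kerop \eigenfn_i = \eigenval_i\,\eigenfn_i$ itself. Specifically, I would set
\begin{align*}
    \til{\eigenfn}_i(\pt) := \frac{1}{\eigenval_i}\int_{\msp} \measure(\d\pt')\,\ker(\pt,\pt')\,\eigenfn_i(\pt')
\end{align*}
at every $\pt\in\msp$ for which the integral converges absolutely, and $\til\eigenfn_i(\pt):=0$ otherwise. Since $\eigenfn_i$ solves $\kerop\eigenfn_i=\eigenval_i\eigenfn_i$ in $\Lsp^2(\msp,\measure)$, the function $\til\eigenfn_i$ agrees with $\eigenfn_i$ for $\measure$-a.e.\ $\pt$, so it is a legitimate redefinition on a $\measure$-null set.

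The main step is then to bound the $\Lsp^2(\msp,\measure')$ norm of $\til\eigenfn_i$. Cauchy--Schwarz applied inside the integral gives
\begin{align*}
    \til\eigenfn_i(\pt)^2
    \leq
    \frac{\norm{\eigenfn_i}^2_{\Lsp^2(\measure)}}{\eigenval_i^2}
    \int_{\msp}\measure(\d\pt')\,\ker^2(\pt,\pt')
\end{align*}
for each $\pt$ where the defining integral converges. Integrating both sides against $\measure'$ and invoking the Hilbert--Schmidt assumption \eqref{e.l.embed.eigen.0} yields a finite upper bound on $\norm{\til\eigenfn_i}^2_{\Lsp^2(\measure')}$, since the cross term $\int\measure'(\d\pt)\int\measure(\d\pt')\,\ker^2(\pt,\pt')$ is one of the four terms in the expansion of $(\measure+\measure')^{\otimes 2}[\ker^2]<\infty$.

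The same cross-term finiteness, via Fubini, also guarantees that $\int\measure(\d\pt')\,\ker^2(\pt,\pt')<\infty$ for $\measure'$-a.e.\ $\pt$, which (by Cauchy--Schwarz) ensures that the integral defining $\til\eigenfn_i(\pt)$ converges absolutely for $\measure'$-a.e.\ $\pt$. Thus the trivial choice $\til\eigenfn_i(\pt):=0$ on the exceptional set has negligible $\measure'$-mass.

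I do not anticipate a serious obstacle: the point of introducing $\til\eigenfn_i$ is precisely to circumvent the fact that $\measure$ and $\measure'$ may be mutually singular, by giving the eigenfunction a pointwise meaning determined by $\ker$ and $\measure$ alone. Once this representative is in hand, the Hilbert--Schmidt hypothesis \eqref{e.l.embed.eigen.0} does all the work. The only subtlety worth flagging is keeping careful track of which ``a.e.'' one is using at each step---specifically, verifying that the two null sets (one under $\measure$, where $\til\eigenfn_i$ need not equal $\eigenfn_i$, and one under $\measure'$, where $\til\eigenfn_i$ is set to $0$ by fiat) do not interfere with the estimate.
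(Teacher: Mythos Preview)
Your proposal is correct and follows essentially the same approach as the paper: define the pointwise representative $\til\eigenfn_i(\pt)=\eigenval_i^{-1}\int\measure(\d\pt')\ker(\pt,\pt')\eigenfn_i(\pt')$, observe it agrees with $\eigenfn_i$ $\measure$-a.e., and then bound its $\Lsp^2(\msp,\measure')$ norm via Cauchy--Schwarz and the mixed Hilbert--Schmidt bound from \eqref{e.l.embed.eigen.0}. The paper's proof is terser (it does not spell out the Cauchy--Schwarz step or the null-set bookkeeping), but the idea is identical.
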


\begin{rmk}\label{r.version}
By $\psi\in\Lsp^2(\msp,\measure)$, we mean that $\psi\in\BMsp(\msp)$ is an everywhere defined Borel function such that $\measure\,\psi^2<\infty$, and we \emph{do not} view $\psi$ as an equivalent class of $\measure$-a.e.\ equal functions.
%Accordingly, when $\varphi=\psi$ $\measure$-a.e., we say $\varphi$ is an $\measure$ version of $\psi$.
\end{rmk}

\begin{proof}[Proof of Proposition~\ref{p.embedding.}\eqref{p.embedding..1}]
Thanks to Lemma~\ref{l.version}, the condition \eqref{e.l.embed.eigen.0} a.s.\ holds for $(\measure,\measure',\ker)=(\PMw_0,\PMw_{0'},\inters_{0})$.
Lemma~\ref{l.embed.eigen} applies and completes the proof.
Note that $\eigenval_{i}=\eigenval_{i}(\omega),\eigenfn_{i}=\eigenfn_{i}(\omega)$ are $\filt_{[s,t]}$-measurably random, because they can be chosen to depend only on $\PMw_0$ and $\inters_0$, the latter of which is deterministic.
\end{proof}

To prove Proposition~\ref{p.embedding.}\eqref{p.embedding..2}, we proceed by approximating $\PMw_{0'}$.
Hereafter, we work in the probability space in Proposition~\ref{p.prob.space}, with the same $\unit$ at the start of this section.
As explained previously, $\PMw_{0'}$ and $\PMw_{0}$ are potentially mutually singular.
To overcome this challenge, recalling the operation $\opdott_{r}$ from \eqref{e.opendot.path}, we take any $\unit\D\ni r_1>r_2>\cdots\to 0$ to define 
\begin{align}
	\label{e.PMw_1r}
	\PMw_{1,\ell}
	:=
	\PM^{\theta}_{[s',s-r_{\ell}]} 
	\,\opdott_{\,r_\ell} \,
	\PM^{\theta}_{[s,t]} 
	\,\opdott_{\,r_\ell} \,
	\PM^{\theta}_{[t-r_\ell,t']}\big[
		e^{-|\Path(s')|-|\Path(t')|} \cdott
	\big]\ ,
\end{align}
and let $\PMw_{0',\ell}$ be the time-$[s,t]$ marginal of $\PMw_{1,\ell}$ in the same sense as in \eqref{e.pushforward}.
For every fixed $\ell<\infty$, it is readily checked that $\PMw_{0',\ell}\ll \PMw_{0}$ with
\begin{align}
	\label{e.radon.nikodym}
	\d\PMw_{0',\ell}/\d\PMw_{0}(\Path)
	=
	\flow^{\theta}_{s',s-r}\, \expfn \otimes g_{r,\Path(s)} 
	\cdot
	e^{|\Path(s)|+|\Path(t)|}
	\cdot
	\flow^{\theta}_{t+r,t'}\, g'_{r,\Path(t)}\otimes\expfn\ ,
\end{align}
where $r:=r_\ell$, $g_{s,y}(x):=\hk(s,x-y)$, $g'_{t,y}(x):=\hk(t,y-x)$, and $\expfn(x):=e^{-|x|}$.

We now state, in a general setting, a set of conditions under which statements like Proposition~\ref{p.embedding.}\eqref{p.embedding..2} follow.
\begin{assu}\label{a.embed.tool}
Let $\msp,\measure,\measure',\ker$ be as in Assumption~\ref{a.embed.eigen}, let $\eigenfn_{i}$ be as in Lemma~\ref{l.embed.eigen}, and let $\eigenval_{i}>0$ be the eigenvalue.
Take $\measure_1,\measure_2,\ldots\in\Msp(\msp)$ such that $\measure_\ell\ll\measure$ and $\d\measure_{\ell}/\d\measure$ is bounded on bounded sets, and take a $D\subset\Cbsp(\msp)\cap\Lsp^2(\msp,\measure')$ that is dense in $\Lsp^2(\msp,\measure')$.
Consider the following conditions:
\begin{align}
	\label{e.embed.tool.1}
	&&&&&
	\measure_{\ell} [ \psi\eigenfn_{i} ]
	\xlongrightarrow{\ell}
	\measure' [\psi\eigenfn_{i}]\ ,
	&&
	\psi\in D\ , \ i=1,2,\ldots\ ,
\\
	\label{e.embed.tool.2}
	&&&&&
	\measure_{\ell}^{\otimes 2} [ \ker\, \psi^{\otimes 2} ]
	\xlongrightarrow{\ell}
	\measure'{}^{\otimes 2} [ \ker\, \psi^{\otimes 2}] \ ,
	&&
	\psi\in D\ ,
\\
	\label{e.embed.tool.3}
	&&&&&
	\measure_{\ell} [\eigenfn_{i}\eigenfn_{j} ]
	\xlongrightarrow{\ell}
	\measure' [\eigenfn_{i}\eigenfn_{j} ]\ ,
	&&
	i,j=1,2,\ldots\ ,
\\
	\label{e.embed.tool.4}
	&&&&&
	\measure_{\ell}^{\otimes 2} [ \ker^2 ]
	\xlongrightarrow{\ell}
	\measure'{}^{\otimes 2} [\ker^2]\ ,
\\
	\label{e.embed.tool.5}
	&&&&&
	\sup_{\ell}
	\sum_{i\geq 1, j \geq N}
	\eigenval_{i}\,\eigenval_{j}\,
	\measure_{\ell} [\eigenfn_{i}\eigenfn_{j}]\,{}^2
	\xlongrightarrow{N}
	0 \ .
\end{align}
\end{assu}

\begin{lem}\label{l.embed.tool}
Setup as in Assumption~\ref{a.embed.tool}.
If \eqref{e.embed.tool.1}--\eqref{e.embed.tool.5} hold, then
\begin{align}
	\sum_{i=1}^\infty \eigenval_{i}\,
	 \measure' [ f\eigenfn_{i} ] \,{}^2
	=
	\measure'{}^{\otimes 2}\big[ \ker\, f^{\otimes 2} \big]\ ,
	\quad
	f\in\Lsp^2(\msp,\measure')\ .
\end{align}
\end{lem}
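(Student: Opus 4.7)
The plan is to prove the identity first for $\psi\in D$ and then extend to $f\in\Lsp^2(\msp,\measure')$ by continuity. For $\psi\in D$, I would pass through the approximating measures $\measure_\ell$: establish the analogous spectral identity
\begin{equation*}
\measure_\ell^{\otimes 2}[\ker\,\psi^{\otimes 2}]
=\sum_{i\geq 1}\eigenval_i\,\measure_\ell[\psi\eigenfn_i]^2
\end{equation*}
at each $\ell$, then take $\ell\to\infty$.

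To establish the $\measure_\ell$-identity, exploit $\measure_\ell\ll\measure$ with density $h_\ell:=\d\measure_\ell/\d\measure$ that is bounded on bounded sets. The left-hand side rewrites as $\ip{h_\ell\psi,\kerop(h_\ell\psi)}_{\measure}$, and Parseval in $\Lsp^2(\msp,\measure)$ relative to the orthonormal eigensystem $\{\eigenfn_i\}_{i\geq 1}$ of $\kerop$ (spanning $\overline{\range\kerop}$) produces the series. Since $h_\ell$ is only locally bounded, this step likely needs a preliminary truncation of $\psi$ to bounded support, removed via monotone convergence using positivity of $\ker$.

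To pass $\ell\to\infty$, the left-hand side converges to $\measure'{}^{\otimes 2}[\ker\,\psi^{\otimes 2}]$ by \eqref{e.embed.tool.2} and each fixed-$i$ term on the right by \eqref{e.embed.tool.1}. Term-by-term passage under dominated convergence is justified by the Cauchy--Schwarz tail bound
\begin{equation*}
\sum_{i>N}\eigenval_i\,\measure_\ell[\psi\eigenfn_i]^2
=\measure_\ell^{\otimes 2}\Big[\psi^{\otimes 2}\textstyle\sum_{i>N}\eigenval_i\,\eigenfn_i\otimes\eigenfn_i\Big]
\leq \measure_\ell[\psi^2]\Big(\textstyle\sum_{i,j>N}\eigenval_i\eigenval_j\,\measure_\ell[\eigenfn_i\eigenfn_j]^2\Big)^{1/2},
\end{equation*}
whose second factor vanishes uniformly in $\ell$ by \eqref{e.embed.tool.5}, while \eqref{e.embed.tool.3}--\eqref{e.embed.tool.4} underwrite the associated spectral manipulations (in particular, $\measure_\ell^{\otimes 2}[\ker^2]=\sum_{i,j}\eigenval_i\eigenval_j\,\measure_\ell[\eigenfn_i\eigenfn_j]^2$). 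Once the identity holds on $D$, polarization and density extend it to $\Lsp^2(\msp,\measure')$ via continuity of $(f,g)\mapsto\measure'{}^{\otimes 2}[\ker(f\otimes g)]=\ip{f,\kerop'g}_{\measure'}$.

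The main obstacle is uniform-in-$\ell$ control of the prefactor $\measure_\ell[\psi^2]$, which is not directly supplied by \eqref{e.embed.tool.1}--\eqref{e.embed.tool.5}. I expect this forces restricting $D$ to a dense subclass of bounded-support test functions, where local finiteness of each $\measure_\ell$ together with a uniform bound on $h_\ell$ over the common compact support delivers the required uniformity; the conclusion on all of $\Lsp^2(\msp,\measure')$ then follows by density.
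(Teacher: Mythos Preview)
Your identity at level~$\ell$ and its use to get the inequality
\[
\sum_{i}\eigenval_i\,\measure'[\psi\eigenfn_i]^2\;\le\;\measure'{}^{\otimes 2}[\ker\,\psi^{\otimes 2}]
\]
(via Fatou, or exactly as you describe) is fine and is essentially how the paper handles the $\le$ direction. The gap is in the reverse inequality. Your tail estimate
\[
\sum_{i>N}\eigenval_i\,\measure_\ell[\psi\eigenfn_i]^2
\;\le\;\measure_\ell[\psi^2]\,\Big(\sum_{i,j>N}\eigenval_i\eigenval_j\,\measure_\ell[\eigenfn_i\eigenfn_j]^2\Big)^{1/2}
\]
requires $\sup_\ell\measure_\ell[\psi^2]<\infty$, and nothing in Assumption~\ref{a.embed.tool} delivers this. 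The hypothesis is only that each $h_\ell=\d\measure_\ell/\d\measure$ is bounded on bounded sets, with no uniformity in~$\ell$; restricting $D$ to compactly supported functions does not help, because $\sup_\ell\|h_\ell\|_{L^\infty(K)}$ is simply not assumed finite. Conditions \eqref{e.embed.tool.1}--\eqref{e.embed.tool.5} only control quantities built from $\ker$, $\psi\eigenfn_i$, and $\eigenfn_i\eigenfn_j$; none of them bounds $\measure_\ell[\psi^2]$.

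The paper avoids this entirely by abandoning test functions in the equality step. Having established $\kerop''\le\kerop'$ (your $\le$ direction), it shows $\kerop''=\kerop'$ by proving $\|\kerop''\|_{\HS,\measure'}=\|\kerop'\|_{\HS,\measure'}$. The point is that $\|\kerop'\|_{\HS,\measure'}^2=\measure'{}^{\otimes 2}[\ker^2]$ can, via \eqref{e.embed.tool.4}, be written as $\lim_\ell\measure_\ell^{\otimes 2}[\ker^2]=\lim_\ell\lim_N\sum_{i,j\le N}\eigenval_i\eigenval_j\,\measure_\ell[\eigenfn_i\eigenfn_j]^2$, and then \eqref{e.embed.tool.5} is exactly what allows swapping $\lim_\ell$ and $\lim_N$; finally \eqref{e.embed.tool.3} identifies the result as $\sum_{i,j}\eigenval_i\eigenval_j\,\measure'[\eigenfn_i\eigenfn_j]^2=\|\kerop''\|_{\HS,\measure'}^2$. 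Every quantity here is purely spectral, so the uncontrolled factor $\measure_\ell[\psi^2]$ never appears.
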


To maintain the flow of the presentation, we place the proof of Lemma~\ref{l.embed.tool} in Appendix~\ref{s.a.embed.tool}.
Next we prove Proposition~\ref{p.embedding.}\eqref{p.embedding..2} using Lemma~\ref{l.embed.tool}.
\begin{proof}[Proof of Proposition~\ref{p.embedding.}\eqref{p.embedding..2}]
We begin with a few reductions.
Given Lemma~\ref{l.embed.tool}, the proof amounts to verifying Assumption~\ref{a.embed.tool}.
More precisely, we specialize $(\measure,\measure_{\ell},\measure')$ to $(\PMw_0,\PM_{0',\ell},\PMw_{0'})$ and
seek to verify that the conditions hold a.s.
Note that, by \eqref{e.radon.nikodym}, a.s.\ $\d\PM_{0',\ell}/\d\PM_{0}$ is bounded on bounded sets.
Note also that, by Corollary~\ref{c.dense}, there exists a \emph{deterministic}, countable $D\subset\Cbsp(\Psp_0)\cap\Lsp^2(\Psp_0,\PMw_{0'})$ that is a.s.\ dense in $\Lsp^2(\Psp_0,\PMw_{0'})$. 
This being the case, we fix any deterministic $\psi\in\Cbsp(\Psp_0)$, and when verifying \eqref{e.embed.tool.1}--\eqref{e.embed.tool.2}, we can and will only show that they hold a.s.\ for this one $\psi$.

To prove \eqref{e.embed.tool.1} and \eqref{e.embed.tool.3} for our fixed $\psi\in\Cbsp(\Psp_0)$, consider
\begin{align}
	\label{e.p.embedding..A}
	A^{i}
	&:= 
	\PMw_{0'}\big[ \psi\eigenfn_{i} \big]
	= 
	\PMw_{1}\big[ \embed(\psi\eigenfn_{i}) \big]\ ,
	&
	A^{i}_{\ell} 
	&:= 
	\PMw_{0',\ell}\big[ \psi\eigenfn_{i} \big]
	= 
	\PMw_{1,\ell}\big[ \embed(\psi\eigenfn_{i})\big]\ ,
\\
	\label{e.p.embedding..B}
	B^{ij}
	&:= 
	\PMw_{0'}\big[ \eigenfn_{i}\eigenfn_{j} \big]
	= 
	\PMw_{1}\big[ \embed(\eigenfn_{i}\eigenfn_{j})\big]\ ,
	&
	B^{ij}_{\ell} 
	&:= 
	\PMw_{0',\ell}\big[ \eigenfn_{i}\eigenfn_{j} \big]
	= 
	\PMw_{1,\ell}\big[ \embed(\eigenfn_{i}\eigenfn_{j}) \big]\ .
\end{align}
Verifying \eqref{e.embed.tool.1} and \eqref{e.embed.tool.3} amounts to showing that $A^{i}_{\ell}\to A^{i}$ and $B^{ij}_{\ell}\to B^{ij}$ a.s.\ as $\ell\to\infty$. 
Let 
\begin{align}
    \filt_{\ell} := \filt_{[s',s-r_\ell]\cup[s,t]\cup[t+r_\ell,t']}\ ,
\end{align}
where the right-hand side notation was defined before Proposition~\ref{p.prob.space}.
Recall from Part~\eqref{p.embedding..1} that the eigenfunctions $\eigenfn_{i}=\eigenfn_{i}(\omega)$ are $\filt_{[s,t]}$-measurable.
This being the case, applying Proposition~\ref{p.prob.space}\eqref{p.prob.space.2} to the last expressions in \eqref{e.p.embedding..A}--\eqref{e.p.embedding..B} gives that
\begin{align}
	\label{e.p.embedding..martingales}
	A^{i}_{\ell} = \E[ A^{i} \,|\, \filt_{\ell} ] \ ,
	\quad
	B^{ij}_{\ell} = \E[ B^{ij} \,|\, \filt_{\ell} ] \ .
\end{align}
This together with Proposition~\ref{p.prob.space}\eqref{p.prob.space.3} gives the desired convergence.

The arguments for verifying \eqref{e.embed.tool.2} and \eqref{e.embed.tool.4} are similar, and we do only the former.
To verify \eqref{e.embed.tool.2}, consider
\begin{align}
	\label{e.p.embedding..C}
	C
	&:=
	\PMw{}_{0'}^{\otimes 2}\big[ \inters_0\,\psi^{\otimes 2} \big]
	=
	\PMw{}_{1}^{\otimes 2}\big[ \embed(\inters_0\,\psi^{\otimes 2})	\big]
	\ ,
\\
	\label{e.p.embedding..Cl}
	C_\ell 
	&:=
	\PMw_{0',\ell}^{\otimes 2}\big[ \inters_0\,\psi^{\otimes 2} \big]
	=
	\PMw_{1,\ell}^{\otimes 2}\big[ \embed(\inters_0\,\psi^{\otimes 2}) \big]
	=
	\sum_{i\geq 1} \eigenval_{i} \, (A^{i}_{\ell})^2\ ,
%\\
%	\label{e.p.embedding..D}
%	&D:=\int_{\Psp_0^2} \PMw{}_{0'}^{\otimes 2}\, \inters_0^2 ,
%	&&
%	D_\ell 
%	:=
%	\int_{\Psp_0^2} \PMw_{0',\ell}^{\otimes 2}\, \inters_0^2
%	=
%	\sum_{i,j\geq 1} \eigenval_{i} \eigenval_{j} \cdot (B^{ij}_{\ell}B^{ij}_{\ell})^2\ ,&
\end{align}
where the last expression in \eqref{e.p.embedding..Cl} is obtained by applying the spectral decomposition of $\intersop_0$ on $\Lsp^2(\Psp_0,\PMw_0)$ to the first expression on the right-hand side (note that $\PMw_{0',\ell}\ll\PMw_{0}$).
Verifying \eqref{e.embed.tool.2} amounts to showing that $C_{\ell}\to C$ a.s.\ as $\ell\to\infty$.
Recall from the proof of Part~\eqref{p.embedding..1} that the eigenvalues $\eigenval_{i}=\eigenval_{i}(\omega)$ are $\filt_{[s,t]}$-measurable.
Using this and \eqref{e.p.embedding..martingales} in the last expressions in \eqref{e.p.embedding..Cl} shows that $C_\ell$ is a submartingale.
By \eqref{e.p.embedding..martingales} and Jensen's inequality, $C_\ell \leq \E[C|\filt_{\ell}]$, and by Proposition~\ref{p.prob.space}\eqref{p.prob.space.3}, $\E[C|\filt_{\ell}]\to C$.
These properties together with the facts that $C_\ell \geq 0$ and $\E C<\infty$ imply that $C_\ell$ converges a.s.\ and in $\E|\cdott|$ to a limit $C'$ with $C'\leq C$.
Hence, it suffices to show that $\E C_\ell\to \E C$ as $\ell\to\infty$.

To show that $\E C_\ell\to \E C$ (thereby completing the verification of \eqref{e.embed.tool.2}), it suffices to show that for $\PM_{1,\ell}$ defined the same way as \eqref{e.PMw_1r} but without the exponential weights,
\begin{align}
	\label{e.p.embedding..C.}	
	\E\PM_{1,\ell}^{\otimes 2}\leq \E\PM_{1}^{\otimes 2}\ ,
	\qquad
	\E\PM_{1,\ell}^{\otimes 2}\xrightarrow{\ell} \E\PM_{1}^{\otimes 2} \ \text{ vaguely} .
\end{align}
This implies the desired result because the first statement upgrades the vague convergence (tested against $\Cloc(\Psp_1^2)$ functions) to the convergence against any $f\in\Lsp^1(\Psp_1^2,\E\PM^{\otimes 2}_{1})$ and taking $f=\inters_0\,(\psi\,\wfn)^{\otimes 2}$ with $\wfn(\Path):=e^{-|\Path(s')|-|\Path(t')|}$ gives $\E C_\ell\to \E C$.
To prove \eqref{e.p.embedding..C.}, recall the annealed measures from \eqref{e.PMa}, let $s_\ell:=s-r_\ell$ and $t_\ell:=t+r_\ell$, and write
\begin{align}
	\label{e.p.embedding..C.1}	
	\E\PM_{1,\ell}^{\otimes 2}
	=
	\PMa^{2,\theta}_{[s',s_{\ell}]} 
	\, \opdott_{r_\ell}^{2} \,
	\PMa^{2,\theta}_{[s,t]}
	\, \opdott_{r_\ell}^{2} \, 
	\PMa^{2,\theta}_{[t_\ell,t']}\ .
\end{align}
Given this expression, the second statement in \eqref{e.p.embedding..C.} follows from \eqref{e.PMa.ck}.
Next, for small $r\in\unit\D_{>0}$, write the two $\opdott_{r_\ell}^{2}$s in \eqref{e.p.embedding..C.1} respectively as
\begin{align}
	\opdott_{r}^{2} \,
	\wien_{[s_{\ell}+r,s-r]}^{\otimes 2}
	\, \opdott_{r}^{2} 
	\quad \text{and} \quad
	\opdott_{r}^{2} \,
	\wien_{[t+r,t_{\ell}-r]}^{\otimes 2}
	\, \opdott_{r}^{2} \,	
	\PMa^{2,\theta}_{[t_\ell,t']}\ ,
\end{align}
apply the bound \eqref{e.PMa.>=heat} for $n=2$, and take $r\to 0$ using \eqref{e.PMa.ck}.
Doing so proves the first statement in \eqref{e.p.embedding..C.}.

To verify \eqref{e.embed.tool.5}, put $D_{N}:=\sup_{\ell} \sum_{i\geq 1, j \geq N} \eigenval_{i}\eigenval_{j} (B^{ij}_\ell)^2$.
Our goal is to show that $D_N\to 0$ a.s.
Since $D_N$ decreases in $N$ and is nonnegative, it suffices to show that $\E D_N\to 0$.
Bound
\begin{align}
	\label{e.p.embed..D0}
	\E \, D_{N} 
	\leq
	\sum_{i\geq 1, j \geq N}
	\E\Big[ \eigenval_{i}\eigenval_{j} \sup_{\ell} \, (B^{ij}_\ell)^2 \Big]\ .
\end{align}
Recall from \eqref{e.p.embedding..martingales} that $B^{ij}_\ell$ is a martingale in $\ell$ and recall that $\eigenval_{i}\eigenval_{j}$ is $\PMw_0$-measurable.
Hence, $\sqrt{\eigenval_{i}\eigenval_{j}} B^{ij}_\ell$ is a martingale in $\ell$.
Applying Doob's $\Lsp^2$ maximal inequality in \eqref{e.p.embed..D0} and using that $\E[\eigenval_{i}\eigenval_{j}(B_{\ell}^{ij})^2]\leq\E[\eigenval_{i}\eigenval_{j}(B^{ij})^2]$ (by \eqref{e.p.embedding..martingales} and Jensen's inequality over $\E[\cdott|\filt_{\ell}]$) gives that
\begin{align}
	\label{e.p.embed..D1}
	\E \, D_{N}
	\leq
	4
	\sum_{i\geq 1, j \geq N}
	\sup_{\ell} \E\big[ \eigenval_{i}\eigenval_{j} (B^{ij}_\ell)^2 \big]
	\leq
	4
	\sum_{i\geq 1, j \geq N}
	\E\big[ \eigenval_{i}\eigenval_{j} (B^{ij})^2 \big]	
	\ .
\end{align}
Next, let us check that the full sum
$
	\sum_{i,j\geq 1}
	\E[ \eigenval_{i}\eigenval_{j} (B^{ij})^2]	
$
is finite.
To this end, given that $B^{ij}_{\ell}\to B^{ij}$ a.s., using Fatou's lemma yields the bound 
\begin{align}
	\label{e.p.embed..D2}
	\sum_{i,j\geq 1}
	\E[ \eigenval_{i}\eigenval_{j} (B^{ij})^2]	
	\leq
	\liminf_{\ell}
	\sum_{i,j\geq 1}
	\E[ \eigenval_{i}\eigenval_{j} (B^{ij}_{\ell})^2]\ .
\end{align}
Use \eqref{e.p.embedding..C} to write the right-hand side as $\liminf_{\ell}\E C_\ell$.
This is equal to $\E C$, as we showed in the previous paragraph.
Hence, the left-hand side of \eqref{e.p.embed..D2} is finite, which implies that the right-hand side of \eqref{e.p.embed..D1} converges to $0$ as $N\to\infty$.
This completes the proof.
\end{proof}

%%%%%%%%%%%%%%%%%%%%%%%%%%%%%%%%%%%%%%%%%%%%%%%%%%%%%%%%%%%%%%%%%%%%%%%%%%%%%%%%%%%%%%%%%%%%
%%%% section coupling %%%%%%%%%%%%%%%%%%%%%%%%%%%%%%%%%%%%%%%%%%%%%%%%%%%%%%%%%%%%%%%%%%%%%%
%%%%%%%%%%%%%%%%%%%%%%%%%%%%%%%%%%%%%%%%%%%%%%%%%%%%%%%%%%%%%%%%%%%%%%%%%%%%%%%%%%%%%%%%%%%%
\section{Coupling, Proof of Theorem~\ref{t.main}}
\label{s.coupling}
As the last step toward proving Theorem~\ref{t.main}, our main goal here is to couple the GMCs over different intervals together.
To state the coupling result, fix any $\unit\in(0,\infty)$, let
\begin{align}
	\label{e.dyadic.intvl}
	\I_m := \big\{ [s,t] \, \big| s<t\in \unit 2^{-m}\Z \big\}\ ,
	\quad
	\I := \I_1 \cup \I_2 \cup \cdots\ .
\end{align}
Recall the probability space $(\Omega,\filt,\P)$ from Proposition~\ref{p.prob.space}, which in particular houses the polymer measures $\PM^{\theta}_{[s,t]}$ for $s<t\in \unit\D$.
We also fix $\aa > 0$.
Throughout this section, we call an isotropic Gaussian vector with strength $\aa$ simply a \textbf{Gaussian vector}.
Consider an extended probability space $(\Omega',\filt',\P')$ equipped with Gaussian vectors 
\begin{align}
	\noise_{I}:\lsp^2\to\Lsp^2(\Omega',\filt',\P')\ ,
	\quad
	I\in\I\ .
\end{align}
Let $\filtt_I$ be the sigma algebra generated by $\noise_{I'}$ for all $I'\subset I\in\I$, and for disjoint (not just internally disjoint) $I_1,\ldots,I_\ell$, let $\filtt_{I_1\cup\cdots\cup I_\ell}$ be the sigma algebra generated by $\filtt_{I_1}\ldots,\filtt_{I_\ell}$. Recall that the sigma algebras $\filt_{I_1\cup\cdots\cup I_\ell}$ are defined before Proposition~\ref{p.prob.space} and $\opdott_{r}$ is as in \eqref{e.opendot.path}.
\begin{prop}\label{p.coupling}
Notation as above.
There exists an extended space $(\Omega',\filt',\P')$ such that (\ref{p.coupling.1})-(\ref{p.coupling.2}) below hold. 
\begin{enumerate}
\item \label{p.coupling.1}
For every internally disjoint $I_1,\ldots,I_\ell\in \I$, the sigma algebras $\filt,\filtt_{I_1},\ldots,\filtt_{I_{\ell}}$ are independent.
\item \label{p.coupling.2}
For $t_1<\cdots<t_{2\ell}\in\unit\D$, $I_i:=[t_{2i-1},t_{2i}]$, $r_i:=t_{2i+1}-t_{2i}$, and $\gmcw_{I}:=\gmcw[\PM^\theta_I,\Ystt{I}^{\theta},\noise_{I}]$, 
\begin{align}
	\label{e.coupling}
	\gmcw_{I_1} \,\opdott_{\,r_1}\, \cdots \,\opdott_{\,r_{\ell-1}}\, \gmcw_{I_{\ell}} 
	=
	\E' \big[ \, \gmcw_{[t_1,t_{2\ell}]} \, \big| \, \filt_{I_1\cup\cdots\cup I_\ell}, \filtt_{I_1\cup\cdots\cup I_\ell} \big] \ .
\end{align}
\end{enumerate}
\end{prop}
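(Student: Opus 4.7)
The plan is to couple the Gaussian vectors through a block-diagonal Y-operator adapted to the decomposition of $I$, and to verify identity~\eqref{e.coupling} by combining Kahane's martingale approximation~\eqref{e.kahane.martingale} with the conditional-expectation formula~\eqref{e.martingale} from Proposition~\ref{p.prob.space}\eqref{p.prob.space.2}.

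First I would exploit the additivity~\eqref{e.intersop.additive}: for $I := [t_1, t_{2\ell}]$, the subintervals $I_j := [t_{2j-1}, t_{2j}]$, and the gap intervals $J_i := [t_{2i}, t_{2i+1}]$ (all dyadic),
\[
\intersop^{\theta,I}_I \;=\; \sum_K \intersop^{\theta,I}_K, \qquad K \in \{I_1, J_1, \ldots, J_{\ell-1}, I_\ell\}.
\]
Proposition~\ref{p.embedding}, applied with $[s,t] = K$ and $[s',t'] = I$, gives the spectral decomposition $\intersop^{\theta,I}_K = \sum_k \eigenval_k^K \,(\embed \eigenfn_k^K) \otimes (\embed \eigenfn_k^K)$, where $(\eigenval_k^K, \eigenfn_k^K)$ is the spectral data of $\intersop^{\theta,K}_K$ on $\Lsp^2(\Psp_K, \PMw^\theta_K)$ and $\embed$ is the embedding~\eqref{e.embed}. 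Hence the direct sum $\Yop_I := \bigoplus_K \tilde{\Yop}_K$ defined by $\tilde{\Yop}_K e_k := \sqrt{\eigenval_k^K}\, \embed \eigenfn_k^K$ satisfies $\Yop_I \Yop_I^* = \intersop^{\theta,I}_I$; by Lemma~\ref{l.polar}\eqref{l.polar.2} we can use $\Yop_I$ in place of $\Ystt{I}^\theta$. I then take $\noise_I$ to be an isotropic Gaussian vector on $\bigoplus_K \lsp^2$ (independent of $\filt$) and let $\noise_K$ be its $K$-th summand component; the direct-sum structure makes the $\noise_K$ mutually independent. Iterating the construction along dyadic refinements and invoking a Kolmogorov-type extension produces a single space $(\Omega', \filt', \P')$ carrying $\noise_{I'}$ for all $I' \in \I$, with the independence in part~\eqref{p.coupling.1} following from orthogonality of direct-sum summands for internally disjoint intervals.

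With this coupling, the Kahane approximation reads $\gmcw_I^{(N)}(\d\Path) = \PM_I^\theta(\d\Path) \prod_K F_K^{(N)}(\Path|_K)$, where $F_K^{(N)}(\Path|_K) := \exp(\sum_{k \leq N} \noise_K e_k \sqrt{\eigenval_k^K}\, \eigenfn_k^K(\Path|_K) - \tfrac{\aa}{2}\, \eigenval_k^K\, \eigenfn_k^K(\Path|_K)^2)$ depends on $\Path$ only through $\Path|_K$, and the Gaussian compensator identity yields $\EE F_K^{(N)}(\Path|_K) \equiv 1$ for every fixed $(\Path, \PM_K^\theta)$. Conditioning on $\filt_{I_1 \cup \cdots \cup I_\ell}$ and $\filtt_{I_1 \cup \cdots \cup I_\ell}$, the factors $F_{I_j}^{(N)}$ become measurable; the gap factors $F_{J_i}^{(N)}$ depend on $\noise_{J_i}$ and $\PM_{J_i}^\theta$, both independent of the conditioning, so integrating out the $\noise_{J_i}$ replaces them by $1$ pointwise in $\Path$. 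The remaining $\E[\PM_I^\theta \mid \filt_{I_1 \cup \cdots \cup I_\ell}]$ equals $\PM_{I_1}^\theta \opdott_{r_1} \cdots \opdott_{r_{\ell-1}} \PM_{I_\ell}^\theta$ by~\eqref{e.martingale}, and recombining with the surviving $F_{I_j}^{(N)}$ via \eqref{e.opendot.path} yields
\[
\E'[\gmcw_I^{(N)} \mid \filt_{I_1 \cup \cdots \cup I_\ell}, \filtt_{I_1 \cup \cdots \cup I_\ell}] \;=\; (\PM_{I_1}^\theta F_{I_1}^{(N)}) \opdott_{r_1} \cdots \opdott_{r_{\ell-1}} (\PM_{I_\ell}^\theta F_{I_\ell}^{(N)}).
\]
Sending $N \to \infty$ recovers~\eqref{e.coupling}.

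The hardest part will be passing the $N \to \infty$ limit through both the conditional expectation and the bridge product $\opdott_r$ (itself a vague limit in the sense of~\eqref{e.opendot}). Since $\gmcw_{I_j}$ is expected to be singular with respect to $\PM_{I_j}^\theta$ (cf.\ Remark~\ref{r.singular}), pointwise convergence is unavailable; instead I would test against functions involving endpoint evaluations with heat-kernel weights across the bridge gaps, and use the $n=2$ case of Proposition~\ref{p.mome} together with the bound~\eqref{e.inters.finite} to obtain uniform-in-$N$ second-moment control sufficient to justify the interchange. A secondary technicality is the consistent construction of $\noise_{I'}$ simultaneously for all $I' \in \I$: because the block decomposition of each $\Yop_{I'}$ depends on $\PM^\theta$-measurable eigendata, compatibility between decompositions at nested dyadic levels must be verified by iteratively applying \eqref{e.intersop.additive} and Proposition~\ref{p.embedding}.
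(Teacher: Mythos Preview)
Your approach is essentially the paper's: block-diagonal $Y$-operator via additivity~\eqref{e.intersop.additive} and Proposition~\ref{p.embedding}, Kahane's approximation~\eqref{e.kahane.martingale}, integration over the gap noise, and Proposition~\ref{p.prob.space}\eqref{p.prob.space.2}. Your second-moment justification for the $N\to\infty$ interchange (martingale plus~\eqref{e.inters.finite}) also matches. Two points deserve more care, however.

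First, your invocation of Lemma~\ref{l.polar}\eqref{l.polar.2} is not quite enough: that lemma gives equality in \emph{law} between $\gmcw[\PM_I,\Yop_I,\,\cdot\,]$ and $\gmcw[\PM_I,\Ystt{I}^\theta,\,\cdot\,]$, whereas~\eqref{e.coupling} is an almost-sure identity on one probability space, with $\gmcw_I$ explicitly defined via $\Ystt{I}^\theta$. The paper fixes this by using the partial isometry $\isom_{I_1\ldots I_k}$ from Lemma~\ref{l.polar}\eqref{l.polar.1} to set $\noise_I:=(\noise_{I_1}\oplus'\cdots)\,\isom_{I_1\ldots I_k}$ and $\Ystt{I}=\Yop_I\,\isom_{I_1\ldots I_k}$, and then argues via the Shamov characterization~\eqref{e.shamov.1}--\eqref{e.shamov.2} together with the range condition~\eqref{e.isom.embed} that the $\isom$'s can be dropped \emph{almost surely} from the GMC.

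Second, what you call a ``secondary technicality'' --- the consistent construction of $\noise_{I'}$ for all $I'\in\I$ --- is in fact the main construction of Section~\ref{s.coupling}. A Kolmogorov-type extension does not directly apply, because the coupling between $\noise_I$ and its components depends on the $\filt$-measurable random isometries $\isom_{I_1\ldots I_k}$; the finite-dimensional laws one would feed into Kolmogorov are themselves random. The paper instead works conditionally on $\filt$, verifies a cocycle-type consistency $\isomu^{\intvs''}_{\intvs}=\isomu^{\intvs''}_{\intvs'}\isomu^{\intvs'}_{\intvs}$ (Lemma~\ref{l.isom.consistent}), and builds a single ``universal'' Hilbert space $\hilsp$ (Proposition~\ref{p.hilsp}) into which all the $\lsp^2(\intvs\times\N)$ embed compatibly; a single Gaussian vector $\noiseu$ on $\hilsp$ then realizes every $\noise_I$ at once via $\noise_I:=\noiseu\,\embedd_I$. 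This is what makes Part~\eqref{p.coupling.1} work despite the $\filt$-dependence: the $\embedd$'s are random but always isometric, so the conditional law of each $\noise_I$ given $\filt$ is still isotropic Gaussian.
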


Key to proving Proposition~\ref{p.coupling} is to construct a coupling of the Gaussian vectors.
To illustrate the idea for this, we construct the coupling in a simpler setting in Section~\ref{s.coupling.pre}.
We then generalize the idea in Section~\ref{s.coupling.} to construct the full coupling and  prove Proposition~\ref{p.coupling}.
Equipped with Proposition~\ref{p.coupling} and results from previous sections, we prove Theorem~\ref{t.main} in Section~\ref{s.coupling.pfmain}.

\subsection{Primer for coupling}
\label{s.coupling.pre}
As said, to illustrate the idea of  the general treatment in Section~\ref{s.coupling.}, we begin with a simpler setting.
Instead of coupling $\noise_{J}$ for all $J\in\I$, taking any internally disjoint $I_1,\ldots,I_k\in\I$ whose union is an interval $I$ (which is in $\I$), we seek to couple $\noise_{I_1},\ldots,\noise_{I_k},\noise_{I}$.
Since $I_1,\ldots,I_k$ are internally disjoint, we just take $\noise_{I_1},\ldots,\noise_{I_k}$ to be independent.
To couple them to $\noise_{I}$, however, the naive way of taking $\noise_{I}$ as $\noise_{I_1}\oplus\cdots\oplus\noise_{I_{k}}$ does not work, as explained in Section~\ref{s.intro.singular}.
Below, we will devise an isometric embedding $\isom_{I_{1} \cdots I_{k}}:\lsp^2\to\oplus_{i=1}^k\lsp^2$ and use it to construct the suitable coupling in \eqref{e.noise.coupling}.

Let us rephrase Proposition~\ref{p.embedding} in the following equivalent form.
Write
\begin{align}
	\label{e.coupling.notation}
	\PMw_{I}=\PMw{}_I^{\theta}\ ,
	&&
	\intersop^{I',\theta}_{I}=\intersop^{I'}_{I}\ ,
	&&
	\Lsp^2_I=\Lsp^2(\Psp_{I},\PMw_{I})\ ,
	&&
	\Ystt{I}=\Ystt{I}^{\theta}: \lsp^2 \to \Lsp^2_I\ .
\end{align}
For $I\subset I'\in\I$, let $\embed:\BMsp(\Psp_I)\to \BMsp(\Psp_{I'})$ denote the embedding (see \eqref{e.embed}), where we have omitted the dependence of $\embed$ on $I,I'$ to simplify notation.
%Write $e_{Ii}$ for the $i$th coordinate vector of $\lsp^2_I$.
%
\begin{customprop}{\ref*{p.embedding}''}\label{p.embed..}
Given any $I\subset I'\in\I$, almost surely, there exist $\PMw_I$ versions of $\Ystt{I}e_{1}$, $\Ystt{I}e_{2}$, \ldots such that $\embed\Ystt{I}:\lsp^2\to\Lsp^2_{I'}$ is bounded and 
\begin{align}
	\embed\Ystt{I} \, \big( \embed\Ystt{I} \big)^*
	=
	\intersop^{I'}_{I}
	\quad \text{in } \Lsp^2_{I'}\ .
\end{align}
\end{customprop}

We now construct the desired isometric embedding $\isom_{I_1\ldots I_k}$ and coupling.
With $I_1,\ldots,I_k,I$ as above, apply Proposition~\ref{p.embed..} with $(I,I')\mapsto (I_{i},I)$ for $i=1,\ldots,k$ and sum the result over $i=1,\ldots,k$ using \eqref{e.intersop.additive} to get
\begin{align}
	\label{e.translation.Y.id}
	\embed\Ystt{I_{1}} \, \big( \embed\Ystt{I_{1}} \big)^*
	\, + \, \cdots \, + \,
	\embed\Ystt{I_{k}} \, \big( \embed\Ystt{I_{k}} \big)^*
	=
	\intersop^{I}_{I}
	=
	\Ystt{I} \, \Ystt{I}^*\  .
\end{align}
Let us factorize the left-hand side.
For linear $\op_i:\hilsp_i\to \hilsp$ between inner product spaces, put
\begin{align}
	\op_{1}\oplus'\cdots\oplus'\op_{k} \,: \ \hilsp_{1} \oplus \cdots \oplus \hilsp_{k} \longrightarrow \hilsp\ ,
	\qquad
	(v_1,\ldots,v_k)
	\longmapsto
	\op_1\,v_1 + \cdots + \op_k\,v_k\ .
\end{align}
Namely, $\oplus'$ means direct summing operators only in the domain.
We put
\begin{align}
	\label{e.Yop.sum}
	\Yop_{I_1 \ldots I_k} 
	:= 
	\embed \Ystt{I_{1}} \oplus' \cdots \oplus' \embed \Ystt{I_{k}}\ ,
	\quad
	\lsp^{2} \oplus \cdots \oplus \lsp^{2} \longrightarrow \Lsp^2_{I}\ .
\end{align}
It is readily checked that the left-hand side of \eqref{e.translation.Y.id} factorizes into
$\Yop_{I_1\ldots I_k}\, \Yop_{I_1 \ldots I_k}{}^*$.
Note also that $\nullsp\Ystt{I}=\{0\}$ (see \eqref{e.Yst}).
Given these properties, applying Lemma~\ref{l.polar}\eqref{l.polar.1} with the identification $\lsp^{2} \oplus \cdots \oplus \lsp^{2}\cong\lsp^2$ yields the isometric embedding
\begin{align}
	\label{e.isom.Y}
	\isom_{I_1 \ldots I_k}:
	\lsp^2
	\to
	\lsp^{2} \oplus \cdots \oplus \lsp^{2}\ ,
	\quad
	\text{such that }
	\Ystt{I} 
	=
	\Yop_{I_1 \ldots I_k }\,\isom_{I_1 \ldots I_k } \ ,
\end{align}
with range satisfying (see \eqref{e.l.polar})
\begin{align}
	\label{e.isom.embed}
	\range \isom_{I_1 \ldots I_k}
        =
        \range \Yop_{I_1\ldots I_k}^{*}
	\subset
	\nullsp\embed\Ystt{I_1}\,{}^\perp 
	\oplus\cdots\oplus
	\nullsp\embed\Ystt{I_k}\,{}^\perp\ .
\end{align}
Now, take $\noise_{I_1},\ldots,\noise_{I_k}$ to be independent Gaussian vectors, and couple them to $\noise_{I}$ by letting
\begin{align}
	\label{e.noise.coupling}
	\noise_{I}
	:=
	\big( \noise_{I_1}\oplus\cdots \oplus \noise_{I_k} \big)\, \isom_{I_1\ldots I_k}\ .
\end{align}

Two remarks are in place.
First, even though $\noise_{I}$ is constructed from $\isom_{I_1\ldots I_k}$, conditioned on the latter, the law of the former is always that of a Gaussian vector.
Therefore, $\noise_{I}$ is independent of $\isom_{I_1\ldots I_k}$ and hence $\filt$.
Next, to understand why \eqref{e.noise.coupling} gives the suitable coupling, note that there exist (at least) two choices for building the conditional GMC over $I$.
One is to use $\Ystt{I}$ as the operator and $\noise_{I}$ as the Gaussian vector.
Another is to use $\embed \Ystt{I_{1}} \oplus' \cdots \oplus' \embed \Ystt{I_{k}}$ and $\noise_{I_1}\oplus\cdots \oplus \noise_{I_k}$.
The coupling \eqref{e.noise.coupling} ensures that the two choices give the same result.

Behind the coupling \eqref{e.noise.coupling} lies the picture of a ``universal'' Hilbert space.
This picture will be used in Section~\ref{s.coupling.}, and we explain it now using \eqref{e.noise.coupling} as an example.
First, parameterize the independent Gaussian vectors $(\noise_{I_1},\ldots,\noise_{I_k})$ by
\begin{align}
	\hilsp_1:=\lsp^2(\{I_1,\ldots,I_k\}\times\N)\ ,
\end{align} 
where each $\lsp^2\cong\lsp^2(\{I_i\}\times\N)\hookrightarrow\hilsp_1 $ parametrizes $\noise_{I_i}$.
Similarly parameterize $\noise_{I}$ by $\hilsp_0:=\lsp^2(\{I\}\times\N)$.
A coupling between $(\noise_{I_1},\ldots,\noise_{I_k})$ and $\noise_{I}$ can be encoded by a way of isometrically embedding $\hilsp_0$ and $\hilsp_1$ into a common Hilbert space, which we call the \emph{universal} Hilbert space.
For example, isometrically embedding $\hilsp_1$ and $\hilsp_0$ into their orthogonal direct sum
\begin{align}
	\label{e.coupling.uhilsp.}
	\hilsp'
	:=
	\hilsp_1 \oplus \hilsp_0
\end{align}
corresponds to the trivial coupling of making $(\noise_{I_1},\ldots,\noise_{I_k})$ and $\noise_{I}$ independent.
To obtain the desired coupling \eqref{e.noise.coupling}, view \eqref{e.coupling.uhilsp.} as an algebraic direct sum (not orthogonal) and consider
\begin{align}
	\calN := \big\{ (v_1,v_0) \, \big| \, \isom_{I_1\ldots I_k} v_0 + v_1 =0 \big\} \subset \hilsp'\ ,
\end{align}
where $v_1\in\hilsp_1$, $v_0\in\hilsp_0$, and $\isom_{I_1\ldots I_k}:\hilsp_0\to\hilsp_1$ by identifying $\hilsp_0\cong\lsp^2$ and $\hilsp_1\cong \oplus_{i=1}^k\lsp^2$.
We equip the quotient space $\hilsp:=\hilsp'/\calN$ with the inner product
\begin{align}
	\Ip{ \overline{(v_1,v_0)}, \overline{(w_1,w_0)} }_{\hilsp}
	:=
	\Ip{ \isom_{I_1\ldots I_k} v_0 + v_1 ,  \isom_{I_1\ldots I_k} w_0 + w_1 }_{\hilsp_1}\ ,
\end{align}
where the bars denote the equivalence classes modulo $\calN$.
The resulting $\hilsp$ is the desired universal Hilbert space.
It is not difficult to check that the componentwise embeddings $\hilsp_i \hookrightarrow \hilsp_1\oplus\hilsp_0$ induce, through the quotient, isometric embeddings $\embedd_i:\hilsp_i\hookrightarrow \hilsp$, and these embeddings respect $\isom_{I_1\ldots I_k}$, namely $\embedd_0=\embedd_1\isom_{I_1\ldots I_k}$.
To summarize, the universal Hilbert space $\hilsp$ together with the embeddings $\embedd_0,\embedd_1,\isom_{I_1\ldots I_k}$ encodes the coupling \eqref{e.noise.coupling}.

\subsection{Coupling, proof of Proposition~\ref{p.coupling}}
\label{s.coupling.}
Here, we construct the coupling of $\noise_{I}$ for all $I\in\I$.
As illustrated in the last paragraph in Section~\ref{s.coupling.pre}, the coupling will be done by isometrically embedding $\lsp^2$ spaces into a universal Hilbert space, conditioned on a realization of the $\isom_{I_1\ldots I_k}$s.
The latter are $\filt$-measurable, so this can be understood as conditioned on $\filt$ too.

Let us prepare some notation and state the required embedding result.
Consider any finite set $\intvs=\{I_1,\ldots,I_k\}$ of internally disjoint $I_1,\ldots,I_k\in\I$, and let $\Intvs$ denote the collection of all such $\intvs$s. 
The relevant $\lsp^2$ spaces are
$\lsp^2(\intvs\times\N)$ for $\intvs\in\Intvs$.
When $\intvs\subset\intvs'$, we write 
\begin{align}\label{e.refine.pre}
	\embedd_{\intvs}^{\intvs'}:
	\lsp^2(\intvs\times\N)
	\hookrightarrow
	\lsp^2(\intvs'\times\N)	\ ,
	\quad
	\embedd_{\intvs}^{\intvs'\, *}:
	\lsp^2(\intvs'\times\N)
	\to
	\lsp^2(\intvs\times\N)	
\end{align}
for the coordinatewise embedding and ``projection''.
To simply notation, we will often use set and list notation interchangeably by writing $\isom_{I_1\ldots I_k}=\isom_{\{I_1,\ldots, I_k\}}$, $\embedd^{\intvs'}_{I}=\embedd^{\intvs'}_{\{I\}}$, etc.
We write $\intvs\preceq\intvs'$ if every interval $I\in \intvs$ is a union of internally disjoint intervals in $\intvs'$, and we use $\intvs'(I)$ to denote the collection of intervals in $\intvs'$ that are subsets of $I$.
Note that $\intvs\subset\intvs'$ implies $\intvs\preceq\intvs'$.
For $\intvs\preceq\intvs'$ and notation as above, define the isometric embedding
\begin{align}
	\label{e.isomu}
	\isomu^{\intvs'}_{\intvs}: \lsp^2(\intvs\times\N) \to \lsp^2(\intvs'\times\N)\ ,
	\quad
	\isomu^{\intvs'}_{\intvs} 
	:=
	\sum_{I\in\intvs} \embedd_{\intvs'(I)}^{\intvs'}\, \isom_{\intvs'(I)} \, \embedd_{I}^{\intvs\, *}\ .
\end{align}

\begin{prop}\label{p.hilsp}
Conditioned on a realization of the $\isom_{I_1\ldots I_k}$s (or on $\filt$), there exists a separable Hilbert space $\hilsp$ equipped with isometric embeddings 
\begin{align}
	\embedd_{\intvs}:\lsp^2(\intvs\times\N)\hookrightarrow\hilsp\ ,
	\quad
	\intvs\in\Intvs\ ,
\end{align}
such that for every $\intvs\preceq\intvs'\in\Intvs$
\begin{align}
	\label{e.hilsp.consistent}
	\embedd_{\intvs} = \embedd_{\intvs'}\, \isomu^{\intvs'}_{\intvs}\ .
\end{align}
\end{prop}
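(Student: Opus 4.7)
The plan is to realize $\hilsp$ as the Hilbert-space inductive (direct) limit of the system $\big(\lsp^2(\intvs\times\N),\isomu^{\intvs'}_\intvs\big)_{\intvs\preceq\intvs'\in\Intvs}$. Once I verify that (i) each $\isomu^{\intvs'}_\intvs$ is an isometric embedding, (ii) $(\Intvs,\preceq)$ is upward directed, and (iii) the transition maps are functorial, i.e.\ $\isomu^{\intvs''}_{\intvs'}\,\isomu^{\intvs'}_\intvs=\isomu^{\intvs''}_\intvs$ for $\intvs\preceq\intvs'\preceq\intvs''$, the standard direct-limit construction will produce a Hilbert space $\hilsp$ with canonical isometric embeddings $\embedd_\intvs$ automatically satisfying \eqref{e.hilsp.consistent}. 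Since $\I$ consists of intervals with dyadic endpoints, $\Intvs$ is countable and each $\lsp^2(\intvs\times\N)$ is separable, so the resulting $\hilsp$ will be separable. The construction is deterministic given the $\isom_{I_1\ldots I_k}$'s (which are $\filt$-measurable), so no measurability issues arise.

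Among (i)--(iii), the first two are routine. For (i), the subsets $\intvs'(I)\subset\intvs'$ indexed by $I\in\intvs$ are pairwise disjoint: any common element would be a nondegenerate interval contained in $I\cap I'$ for distinct $I,I'\in\intvs$, contradicting internal disjointness. Consequently, the summands $\embedd^{\intvs'}_{\intvs'(I)}\isom_{\intvs'(I)}\embedd^{\intvs\,*}_I$ in \eqref{e.isomu} send the orthogonal $I$-components of the domain isometrically into orthogonal subspaces of $\lsp^2(\intvs'\times\N)$, so their sum is isometric. For (ii), given $\intvs_1,\intvs_2\in\Intvs$, let $E$ be the finite set of endpoints of all intervals in $\intvs_1\cup\intvs_2$ and take $\intvs_3$ to be the collection of maximal subintervals obtained by splitting each $I\in\intvs_1\cup\intvs_2$ at every point of $E$; then $\intvs_3\in\Intvs$ with $\intvs_i\preceq\intvs_3$.

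The main obstacle is (iii). Expanding both sides of the desired identity via \eqref{e.isomu} and invoking the disjointness from (i) reduces it, component by component on $I\in\intvs$, to the single-interval statement
\begin{align*}
    \isom_{\intvs''(I)}\;=\;\isomu^{\intvs''(I)}_{\intvs'(I)}\,\isom_{\intvs'(I)}\ ,
    \qquad I\in\intvs\ .
\end{align*}
I would establish this using the uniqueness clause of Lemma~\ref{l.polar}\eqref{l.polar.1}. Both sides are injective (being compositions of isometric embeddings), matching $\nullsp\Ystt{I}=\{0\}$, so it suffices to check that $\isomu^{\intvs''(I)}_{\intvs'(I)}\,\isom_{\intvs'(I)}$ factorizes $\Ystt{I}$ through $\Yop_{\intvs''(I)}$. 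Abbreviating $\tau=\intvs'(I)$ and $\tau'=\intvs''(I)$, and using the compatibility $\embed_K^I=\embed_J^I\,\embed_K^J$ for $K\subset J\subset I$ to rewrite $\Yop_{\tau'}\,\embedd^{\tau'}_{\tau'(J)}=\embed_J^I\,\Yop_{\tau'(J)}$, one computes
\begin{align*}
    \Yop_{\tau'}\,\isomu^{\tau'}_\tau
    \;=\;\sum_{J\in\tau}\embed_J^I\,\Yop_{\tau'(J)}\,\isom_{\tau'(J)}\,\embedd^{\tau\,*}_J
    \;=\;\sum_{J\in\tau}\embed_J^I\,\Ystt{J}\,\embedd^{\tau\,*}_J
    \;=\;\Yop_\tau\ ,
\end{align*}
and hence $\Yop_{\tau'}\,\isomu^{\tau'}_\tau\,\isom_\tau=\Yop_\tau\,\isom_\tau=\Ystt{I}$. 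The uniqueness clause then forces $\isom_{\tau'}=\isomu^{\tau'}_\tau\,\isom_\tau$, which establishes (iii) and closes the construction.
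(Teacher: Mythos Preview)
Your proposal is correct and follows essentially the same route as the paper. The paper first proves the cocycle identity (your (iii)) as Lemma~\ref{l.isom.consistent} via exactly the argument you give---factor $\Ystt{I}$ through $\Yop_{\tau'}$ using the intermediate refinement and invoke the uniqueness clause of Lemma~\ref{l.polar}\eqref{l.polar.1}---and then carries out the direct-limit construction explicitly as a quotient of the algebraic direct sum $\bigoplus_{\intvs}\lsp^2(\intvs\times\N)$ by the null space determined by the consistency relations, followed by completion; you invoke this construction as a standard black box, which is fine.
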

\noindent{}%
The proof of Proposition~\ref{p.hilsp} is similar to the last paragraph of Section~\ref{s.coupling.pre}, and we place it in Appendix~\ref{s.a.hilsp}.

We now construct the coupling of $\noise_{I}$, $I\in\I$, based on Proposition~\ref{p.hilsp}.
To reiterate, the coupling is done conditioned on the $\isom_{I_{1} \ldots I_{k}}$s (or $\filt$).
Recall that a Gaussian vector has domain $\lsp^2$.
Identify $\hilsp\cong\lsp^2$ and take a Gaussian vector
\begin{align}
	\noiseu:\hilsp \longrightarrow \Lsp^2(\noisesp,\filtt,\PP)\ .
\end{align}
For each $I\in\I$, identify $\lsp^2\cong\lsp^2(\{I\}\times\N)$ and define
\begin{align}
	\label{e.noise.I}
	\noise_I: \lsp^2(\{I\}\times\N) \longrightarrow \Lsp^2(\noisesp,\filtt,\PP)\ ,
	\qquad
	\noise_I := \noiseu\, \embedd_{I}\ .
\end{align}

Let us verify that these $\noise_{I}$s satisfy the analog of \eqref{e.noise.coupling}.
First, since $\isom_I$ and $\embedd_{\intvs}^{\intvs}$ are identity operators by definition,
\begin{align}
	\label{e.isomu.id1}
	&&\isomu^{\intvs'}_{\intvs} 
	&=
	\embedd^{\intvs'}_{\intvs}\ , &&\text{when } \intvs\subset\intvs'\in\Intvs\ ,
\\
	\label{e.isomu.id2}
	&&\isomu^{I_1\ldots I_k}_{I'} 
	&=
	\isom_{I_1\ldots I_k} \ , &&\text{when } \{I_1,\ldots, I_k\}\in\Intvs\ , \  I'=I_1\cup\cdots\cup I_k\in \I \ .
\end{align}
Next, take any $\{I_1,\ldots, I_k\}\in\Intvs$, combine \eqref{e.hilsp.consistent} and \eqref{e.isomu.id1} with $\intvs=\{I_i\}$ and $\intvs'=\{I_1,\ldots, I_k\}$, and sum the result over $i$ using $\oplus'$ .
In the result, identify $\embedd^{\intvs'}_{I_1}\oplus'\cdots \oplus'\embedd^{\intvs'}_{I_k}$ for $\intvs'=\{I_1,\ldots,I_k\}$ as the identity operator.
Doing so gives
\begin{align}
	\label{e.noise.indep}
	\embedd_{I_1\ldots I_k}
	=
	\embedd_{I_1}\oplus'\cdots\oplus'\embedd_{I_k}\ ,
	\quad
	\{I_1,\ldots, I_k\} \in \intvs\ .
\end{align}
Now, under the assumption of \eqref{e.isomu.id2}, right multiply both sides of \eqref{e.noise.indep} by $\isomu^{I_1\ldots I_k}_{I'}=\isom_{I_1\ldots I_k}$ and left multiply both sides by $\noiseu$ gives us that
\begin{align}\label{e.noise.coupling.pre}
\noiseu\, \embedd_{I_1\ldots I_k}\,\isomu^{I_1\ldots I_k}_{I'} = (\noiseu \,\embedd_{I_1})\oplus'\cdots\oplus' (\noiseu\embedd_{I_k}) \,\isom_{I_1\ldots I_k}\,.
\end{align}
Applying \eqref{e.hilsp.consistent} with $\intvs=\{I'\}$ and $\intvs'=\{I_1,\ldots, I_k\}$ on the left-hand side of \eqref{e.noise.coupling.pre} and then the definition of $\noise_{I}$ in \eqref{e.noise.I} results in the following analog of \eqref{e.noise.coupling}
\begin{align}
	\label{e.noise.coupling.}
	\noise_{I'} = \big(\noise_{I_1}\oplus'\cdots\oplus'\noise_{I_k} \big)\,\isom_{I_1\ldots I_k}\ ,
	\quad
	\{I_1,\ldots, I_k\}\in\Intvs\ , \  I'=I_1\cup\cdots\cup I_k\in \I \ .
\end{align}
Note that the direct sum is primed because the $\noise_{I}$s take values in the same space $\Lsp^2(\noisesp,\filtt,\PP)$.

We are now ready to prove Proposition~\ref{p.coupling}.
\begin{proof}[Proof of Proposition~\ref{p.coupling}]
First, Proposition~\ref{p.hilsp} and the above coupling are done conditioned on a realization of the $\isom_{I_1\ldots I_k}$s or $\filt$, so taking into account the randomness of the latter gives 
$(\Omega\times\noisesp,\filt\vee\filtt,\P\otimes\PP)$ as the desired extended probability space. 

Part~\eqref{p.coupling.1} follows by \eqref{e.noise.indep}.
Even though the $\embedd$s depend on $\filt$, they are always isometric embeddings.
Hence, conditioned on $\filt$, the law of $\noise_{I_1}\oplus'\cdots\oplus'\noise_{I_k}$ is that of a Gaussian vector (of the fixed strength $\aa$), which verifies Part~\eqref{p.coupling.1}.

To show Part~\eqref{p.coupling.2}, take $\ell=2$ to simplify notation, put $I_\textc:=[t_2,t_3]$ and $I:=[t_1,t_4]$, and use \eqref{e.Yop.sum}--\eqref{e.isom.Y} and \eqref{e.noise.coupling.} to write
\begin{align}
	\label{e.p.coupling.1}
	\gmcw_{I}
	&=
	\gmcw\big[ 
		\PM^{\theta}_{I}\, 
        ,(\embed\Ystt{I_1}^{\theta}\oplus'\embed\Ystt{I_\textc}^{\theta}\oplus'\embed\Ystt{I_2}^{\theta}) 
		\isom_{I_1\,I_\textc\,I_2} \, ,
		(\noise_{I_1}\oplus'\noise_{I_\textc}\oplus'\noise_{I_2}) \isom_{I_1\,I_\textc\,I_2}
	\big] .
\end{align}
By Part~\eqref{p.coupling.1}, $\noise_{I_1},\noise_{I_\textc},\noise_{I_2},\filt$ are independent.
Accordingly, write $\EE_1$, $\EE_\textc$, $\EE_2$, and $\E$ respectively for the expectations of the marginal laws of $\noise_{I_1},\noise_{I_\textc},\noise_{I_2}$, and $(\PM^{\theta}_{I'}|I'\in\I)$, and put $\EE:=\EE_1\otimes\EE_\textc\otimes\EE_2$.
This way, the right-hand side of \eqref{e.coupling} is $\E[\EE_\textc\gmcw_{I}|\filt_{I_1\cup I_2}]$.
To evaluate this, first note that the $\isom$s in \eqref{e.p.coupling.1} can be removed, which is not hard to check from the characterization \eqref{e.shamov.1}--\eqref{e.shamov.2} and \eqref{e.isom.embed}.
Then, use \eqref{e.kahane.martingale} to write  
\begin{align}
	\label{e.p.coupling.2}
	\gmcw_{I}
	=
	\lim_{k\to\infty} \PM^{\theta}_{I} 
	\, \exp
	\sum_{J=I_1,I_\textc, I_2} \sum_{i_J=1}^{k} 
	\Big(
		\noise_{J} e_{i}\, \Ystt{J}^{\theta} e_{i} 
		- \frac{\aa}{2} (\Ystt{J}^{\theta} e_{J i})^2 
	\Big)\ .
\end{align}
When tested against a function in $\Cloc(\Psp_I)$, the expression within the limit is bounded in $\E\EE(\cdott)^2$ thanks to the fact that it is a martingale and thanks to \eqref{e.inters.finite}.
Given this property, take $\E[\,\EE_\textc[\cdott]|\filt_{I_1\cup I_2}]$ on both sides of \eqref{e.p.coupling.2} and swap the expectations with the limit.
In the result, observe that taking $\EE_\textc$ amounts to replacing $\sum_{J=I_1,I_\textc, I_2}$ with $\sum_{J=I_1, I_2}$, and use Proposition~\ref{p.prob.space}\eqref{p.prob.space.2} to take $\E[\cdott|\filt_{I_1\cup I_2}]$.
Doing so verifies \eqref{e.coupling}.
\end{proof}

The proof of Theorem~\ref{t.main} requires one more result, which we state as Lemma~\ref{l.naimark}\eqref{l.naimark.2} below and prove in Appendix~\ref{s.a.hilsp}.
For every $I\in\I$, put $\hilsp_I:=\embedd_{I}\,\lsp^2(\{I\}\times\N)\subset \hilsp$ and let $\proj_I:\hilsp\to\hilsp$ denote the orthogonal projection of $\hilsp$ onto $\hilsp_I$.
Recall the notations $\intersop^{I'}_{I}$, $\Lsp^2_I$, $\Ystt{I}$ from \eqref{e.coupling.notation}. 
Using the identification $\lsp^2\cong\lsp^2(\{I\}\times\N)$ for the domain of $\embedd_{I}$, we define
\begin{align}
	\Yopu_{I}: \hilsp\longrightarrow\Lsp^2_I\ ,
	\quad
	\Yopu_{I} := \Ystt{I}\, \embedd_{I}^{\,*}\ .
\end{align}
\begin{lem}\label{l.naimark}
\begin{enumerate}
\item[]
\item 
\label{l.naimark.1}
For every $I\subset I'\in\I$, we have $\intersop^{I'}_{I}=\Yopu_{I'}\,\proj_{I}\,\Yopu^{*}_{I'}$\ .
\item
\label{l.naimark.2}
For any decreasing sequence $\{I_k\}_{k}\subset\I$ that converges to a point,
\begin{align}
    \P\text{-a.s.,}
    \quad
    \proj_{I_k} \longrightarrow 0
    \quad
    \text{strongly.} 
\end{align}
\end{enumerate}
\end{lem}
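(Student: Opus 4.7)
\textbf{Plan for Part~(\ref{l.naimark.1}).} The plan is to rewrite the compression $\Yopu_{I'}\proj_I\Yopu_{I'}^{*}$ in a form where Proposition~\ref{p.embed..} directly applies. Given $I\subset I'\in\I$, I first fill the gap $I'\setminus I$ with finitely many internally disjoint $J_1,\ldots,J_m\in\I$, so that $\intvs:=\{I,J_1,\ldots,J_m\}\in\Intvs$ satisfies $\bigcup\intvs=I'$. Applying \eqref{e.hilsp.consistent} to both $\{I\}\subset\intvs$ and $\{I'\}\preceq\intvs$, together with \eqref{e.isomu.id1}--\eqref{e.isomu.id2}, yields $\embedd_I=\embedd_{\intvs}\,\embedd_I^{\intvs}$ and $\embedd_{I'}=\embedd_{\intvs}\,\isom_{I\,J_1\,\ldots\,J_m}$. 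Since $\embedd_{\intvs}$ is isometric, this gives $\embedd_I^{*}\embedd_{I'}=\embedd_I^{\intvs\,*}\,\isom_{I\,J_1\,\ldots\,J_m}$. From \eqref{e.isom.Y} together with Lemma~\ref{l.polar}\eqref{l.polar.1}---specifically the fact that $\isom\isom^{*}$ is the orthogonal projection onto $\overline{\range\Yop_{I\,J_1\,\ldots\,J_m}^{*}}$---I then obtain $\isom_{I\,J_1\,\ldots\,J_m}\Ystt{I'}^{*}=\Yop_{I\,J_1\,\ldots\,J_m}^{*}$, and extracting the $I$-coordinate via $\embedd_I^{\intvs\,*}$ identifies $\embedd_I^{*}\embedd_{I'}\Ystt{I'}^{*}=(\embed\Ystt{I})^{*}$. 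Hence
\begin{align*}
\Yopu_{I'}\proj_I\Yopu_{I'}^{*}
=\Ystt{I'}\embedd_{I'}^{*}\embedd_I\embedd_I^{*}\embedd_{I'}\Ystt{I'}^{*}
=\embed\Ystt{I}\,(\embed\Ystt{I})^{*}
=\intersop^{I'}_{I}\ ,
\end{align*}
the last equality being Proposition~\ref{p.embed..}.

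\textbf{Plan for Part~(\ref{l.naimark.2}).} Since $\|\proj_{I_k}\|\leq 1$ uniformly and (by Proposition~\ref{p.hilsp} together with \eqref{e.noise.indep}) the subspaces $\hilsp_I:=\range\embedd_I$, $I\in\I$, have dense closed linear span in $\hilsp$, the plan is to reduce to proving $\|\proj_{I_k}(\embedd_I u_I)\|_{\hilsp}\to 0$ for each fixed $I\in\I$ and $u_I$ in the dense subset $\{\Ystt{I}^{*}\psi:\psi\in\Lsp^2_I\}$ of $\lsp^2(\{I\}\times\N)$. I then split into cases by the location of the limit point $t$. If $t\in\mathrm{int}(I)$, eventually $I_k\subset I$, and Part~(\ref{l.naimark.1}) applied with $(I_k,I)$ gives $\|\proj_{I_k}\embedd_I u_I\|^{2}=\Ip{\intersop^{I}_{I_k}\psi,\psi}_{\Lsp^2_I}=\PMw_I^{\otimes 2}\bigl[\inters^{\cdot,\cdot}_I[I_k]\,\psi\otimes\psi\bigr]$, which vanishes by dominated convergence using the no-atom property of $\inters^{\cdot,\cdot}_I$ together with the integrability bound \eqref{e.inters.finite}. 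If $t\notin I$, eventually $I_k$ and $I$ are internally disjoint, so $\{I_k,I\}\in\Intvs$ and \eqref{e.noise.indep} yields $\hilsp_{I_k}\perp\hilsp_I$, hence $\proj_{I_k}(\embedd_I u_I)=0$ for large $k$.

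\textbf{Main obstacle.} The remaining case $t\in\partial I$ with $I_k$ straddling $t$ is the main obstacle: the naive approach of enlarging $I$ to an $I'\in\I$ with $t\in\mathrm{int}(I')$ and invoking Part~(\ref{l.naimark.1}) with $(I_k,I')$ fails, because $\hilsp_I\not\subset\hilsp_{I'}$ in general---the range of the map $\isom$ in \eqref{e.isom.Y} is $\overline{\range\Yop^{*}}$, a potentially strict subspace when the maps $\embed\Ystt{\cdot}$ fail to be injective, a scenario anticipated by the likely singular behavior discussed in Remark~\ref{r.singular}. The plan is to instead unfold both $\embedd_I$ and $\embedd_{I_k}$ through the common refinement $\intvs^{\sharp}:=\{J_0,L_1,L_2\}\in\Intvs$ obtained by cutting at the endpoints of $I_k=[a_k,b_k]$: for $I=[a,t]$ with $a<a_k<t<b_k$, take $J_0:=[a,a_k]$, $L_1:=[a_k,t]=I\cap I_k$, and $L_2:=[t,b_k]$, so that $\{I\}\preceq\intvs^{\sharp}$ via $I=J_0\cup L_1$ and $\{I_k\}\preceq\intvs^{\sharp}$ via $I_k=L_1\cup L_2$. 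Applying \eqref{e.hilsp.consistent} to both and using Lemma~\ref{l.polar}\eqref{l.polar.1} to simplify $\isom_{J_0\,L_1}\Ystt{I}^{*}=\Yop_{J_0\,L_1}^{*}$ gives, for $u_I=\Ystt{I}^{*}\psi$, the bound
\begin{align*}
\|\proj_{I_k}(\embedd_I u_I)\|_{\hilsp}^{2}
\leq\|(\embed\Ystt{L_1})^{*}\psi\|_{\lsp^2}^{2}
=\Ip{\intersop^{I}_{L_1}\psi,\psi}_{\Lsp^2_I}
\end{align*}
via Proposition~\ref{p.embed..} applied to $(L_1,I)$. Since $L_1\searrow\{t\}$ as $k\to\infty$ and $\inters^{\cdot,\cdot}_I$ has no atoms, a final application of dominated convergence yields the desired vanishing, completing the plan. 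The symmetric case $I=[t,b]$ is handled identically.
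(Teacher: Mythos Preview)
Your Part~(\ref{l.naimark.1}) is correct and uses the same ingredients as the paper (Proposition~\ref{p.embed..}, the factorization \eqref{e.isom.Y}, and the compatibility \eqref{e.hilsp.consistent}); the paper chooses to test against the eigenvectors $\eigenfn_i$ rather than manipulate operators directly, but the content is identical.

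For Part~(\ref{l.naimark.2}), your argument is in fact more complete than the paper's. The paper fixes a single $I'\in\I$ containing all the $I_k$ and shows $\proj_{I_k}\embedd_{I'}e_i\to 0$ via Part~(\ref{l.naimark.1}) and Cauchy--Schwarz; this is precisely what is needed at the one place the lemma is invoked (Section~\ref{s.coupling.pfmain}, where the vector lies in $\hilsp_{[s,t]}$ with $I_k\subset[s,t]$), but it does not by itself give strong convergence on all of $\hilsp$. You correctly note that full strong convergence requires treating $\proj_{I_k}|_{\hilsp_I}$ for every $I\in\I$, and that the genuine new case is the straddling one $t\in\partial I$, $I_k\not\subset I$. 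Your resolution through the common refinement $\intvs^{\sharp}=\{J_0,L_1,L_2\}$ is sound: unfolding $\embedd_I$ and $\embedd_{I_k}$ through $\embedd_{\intvs^{\sharp}}$ and using that (the $\embedd_{\intvs^{\sharp}}$-preimage of) $\hilsp_{I_k}$ sits inside the $L_1\oplus L_2$ block while $\embedd_I u_I$ has zero $L_2$-component yields $\|\proj_{I_k}\embedd_I u_I\|^2\le\|(\embed\Ystt{L_1})^*\psi\|^2=\ipp{\intersop^I_{L_1}\psi,\psi}$, and the right-hand side vanishes since $L_1=I\cap I_k\searrow\{t\}$ and the intersection local time has no atoms. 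One small addendum: when $t\in\partial I$ but $I_k\subset I$ (say $I_k=[a_k,t]$), your straddling setup degenerates, but the interior-case argument via Part~(\ref{l.naimark.1}) still applies verbatim, so the case split is exhaustive.
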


\begin{rmk}\label{r.naimark}
We make some remarks related to dilation theory of operators, which we will not use but has its own independent interest.
\begin{enumerate}
\item \label{r.naimark.1}
Let us mention a variant of the projection $\proj_I$.
For $I\in\I$ and large enough $m$, partition $I$ into internally disjoint intervals of length $\unit 2^{-m}$, and let $\intvs_m(I)$ denote the collection of the latter intervals. 
With $\proj_{\intvs_m(I)}:\hilsp\to\hilsp$ denoting the orthogonal projection onto $\embedd_{\intvs_m(I)}\,\lsp^2(\intvs_m(I)\times\N)$, we define
\begin{align}
    \hat{\proj}_{I}
    :=
    \lim_{m\to\infty} \proj_{\intvs_m(I)}\ ,
\end{align}
where the limit exists strongly, as can be verified. The statements that follow can also be readily checked. The family of projections  $\{ \hat{\proj}_I \}_{I\in \I}$ is commuting and, in fact, satisfies the spectral property: $\hat{\proj}_{I\cap I'}=\hat{\proj}_{I}\hat{\proj}_{I'}$.  It is  countably additive in the sense that if $I_1, I_2,\ldots \in \I $ are internally disjoint and have union $\bigcup_{i=1}^{\infty}I_j=I\in \I$, then $ \proj_I=\sum_{i=1}^{\infty} \proj_{I_j}$, where the series converges strongly.  Using these properties, the map $I\to  \hat{\proj}_I $ can be extended to a Projection Valued Measure (PVM), defining $\hat{\proj}_J$  on all Borel sets $J\subset \R$, as in the conclusion of Naimark's dilation theorem; note that it is not surprising that the conclusion of Naimark's theorem holds here considering that we started with a non-unital Positive Operator Valued Measure (POVM) in \eqref{e.intersop.additive}.

\item \label{r.naimark.2} Consider the family $\J$ that consists of finite unions of intervals in $\I$. The above definitions of $\hilsp_I$, $\hat{\proj}_I$, $\Yopu_{I}$ for $I\in\I$ generalize straightforwardly to that for $J\in\J$.  The proof of Lemma~\ref{l.naimark} in Appendix~\ref{s.a.hilsp} gives, mutatis mutandis, a more general result in which $\proj_I$ is replaced by $\hat{\proj}_J$ with $J\in\J$ and $J\subset I'$, and $\{I_k\}_{k}$ is replaced by a decreasing sequence $\{J_k\}_{k}\subset\J$ that $J_k^\circ$ converges to the empty set.
%Accordingly, the relation $\intersop^{I'}_{J}=\Yopu_{I'}\,\hat{\proj}_{J}\,\Yopu^{*}_{I'}$ can be viewed as a dilation of the operators $\hat{\proj}_J$ for $J\in\J, J\subset I'$.
\end{enumerate}
\end{rmk}

\subsection{Proof of Theorem~\ref{t.main}}
\label{s.coupling.pfmain}
We begin with a few reductions.
Indeed, the desired result follows from applying Proposition~\ref{p.axioms} to the probability space in Proposition~\ref{p.coupling} with the family $ (\PMm_I)_{I\in \I} $ given by $\PMm_I=\gmcw_I:=\gmcw[\PM^\theta_I,\Ystt{I}^{\theta},\noise_{I}]$. Given $t_0<\cdots <t_\ell\in \unit\D$, define  $I_{j}=[t_{j-1},t_j]$.
By Remark~\ref{r.indep} and Proposition~\ref{p.coupling}\eqref{p.coupling.1}, the Gaussian vectors $\noise_{I_1},\ldots, \noise_{I_\ell}$ and random measures $\PM^\theta_{I_1},\ldots, \PM^\theta_{I_\ell}$ are jointly independent.
% The Gaussian vectors $\noise_{I_1},\ldots, \noise_{I_d}$ and random measures $\PM^\theta_{I_1},\ldots, \PM^\theta_{I_\ell}$ are jointly independent in consequence of Proposition \ref{p.coupling}\eqref{p.coupling.1} and the independence of the SHF over disjoint intervals.  
Thus, the random measures $\gmcw_{I_1},\ldots,\gmcw_{I_\ell}$ are independent, giving us the condition in Proposition~\ref{p.axioms}\eqref{p.axioms.ind}. 
Next, the condition in Proposition~\ref{p.coupling}\eqref{p.axioms.mome} is satisfied by Proposition~\ref{p.mome}.
To verify the condition in Proposition~\ref{p.coupling}\eqref{p.axioms.ind}, recall from \eqref{e.opendot} that we define
\begin{align}
    \floww_{t_0,t_1}\opdot\cdots\opdot\floww_{t_{\ell-1},t_{\ell}}
    :=
    \lim_{r\to 0} 
    \floww_{t_0,t_1-r}\opdot_r\cdots\opdot_r\floww_{t_{\ell-1},t_{\ell}}\ ,
\end{align}
where the $\opdot_r$ is the left-indented product defined in \eqref{e.opendotl}.
Note that the random measure within the limit is the time-$\{t_0,t_1,\ldots,t_\ell\}$ marginal of
$\gmcw_{[t_0,t_1-r]}\, \opdott_{r}\, \gmcw_{[t_1,t_2-r]} \cdots \opdott_{r}\, \gmcw_{[t_{\ell-1},t_{\ell}]}$, so the desired condition follows once we prove that
\begin{align}
    \gmcw_{[t_0,t_1-r]}\, \opdott_{r}\, \gmcw_{[t_1,t_2-r]} \cdots \opdott_{r}\, \gmcw_{[t_{\ell-1},t_{\ell}]}
    \longrightarrow
    \gmcw_{[t_0,t_\ell]}\ ,
    \quad
    \text{vaguely in probability as } r \to 0\ .
\end{align}
By \eqref{e.coupling}, the left-hand side is equal to
\begin{align}
    \label{e.pfmain.1}
    \E'\big[ \,\gmcw_{[t_0,t_{\ell}]} \, \big| \, \filt_{[t_0,t_1-r]\cup\cdots\cup[t_{\ell-1},t_\ell]}, \filtt_{[t_0,t_1-r]\cup\cdots\cup[t_{\ell-1},t_\ell]} \big]\ .
\end{align}
Hence, it suffices to show that \eqref{e.pfmain.1} converges to $\gmcw_{[t_0,t_\ell]}$ vaguely in probability as $r\to 0$.

We now prove the last stated convergence, considering the case $\ell=2$ to avoid unwieldy notation. 
Given Proposition~\ref{p.prob.space}\eqref{p.prob.space.3}, we achieve this by showing that
\begin{align}
	\label{e.pfmain.goal.}
	  \filt_{[t_0,t_2]} \vee \bigvee_{r'\in\unit\D_{>0}} \filtt_{[t_0,t_1-r']\cup[t_1,t_2]}
	\supset
	\filtt_{[t_0,t_2]}\ ,
\end{align}
which amounts to proving that, for every $s<t\in[t_0,t_2]$, $\noise_{[s,t]}$ is measurable wrt to the left-hand side of \eqref{e.pfmain.goal.}.
This statement follows immediately when $s,t<t_1$ or $s,t\geq t_1$, so we consider only $s< t_1\leq t$.
Partition $[s,t]$ into $J_1(r):=[s,t_1-r]$, $J_\textc(r):=[t_1-r,t_1]$, and $J_2:=[t_1,t]$, and consider
$
	\noise(r) := \noise_{[s,t]} - \noiseu\, \proj_{[t_1-r,t_1]} \, \embedd_{[s,t]}
$.
It is straightforward to check that $\noise(r)$ is measurable wrt to the left-hand side of \eqref{e.pfmain.goal.}.
Further, $\EE((\noise_{[s,t]}-\noise(r))e_i)^2=\norm{\proj_{[t_1-r,t_1]} \, \embedd_{[s,t]}\, e_i}^2$, which by Lemma~\ref{l.naimark}\eqref{l.naimark.2} tends to $0$ $\P$-a.s.
Hence, $\noise_{[s,t]}$ is measurable wrt to the left-hand side of \eqref{e.pfmain.goal.}.

\section{Applications, proof of Corollaries~\ref{c.positivity}--\ref{c.superc}}
\label{s.apps}
Now we turn to the proof of Corollaries~\ref{c.positivity}--\ref{c.superc} based on Theorem~\ref{t.main}.
We prepare a few tools in Section~\ref{s.apps.tool} and prove Corollaries~\ref{c.positivity}--\ref{c.superc} in Section~\ref{s.apps.}.
The proof requires a few results about the moments, which we state in Section~\ref{s.apps.} and prove in Section~\ref{s.apps.moment}.
Throughout this section, we will mostly drop the dependence on $[s,t]$ and write 
\begin{align}
	\PM^{\theta}:=\PM^{\theta}_{[s,t]}\ ,
	&&
	\Psp:=\Psp_{[s,t]}\ ,
	&&
	\Yst^{\theta}:=\Ystt{[s,t]}^{\theta}\ ,
	&&
	\inters(\cdott,\cdott):= \inters^{\cdott,\cdott}_{[s,t]}[s,t]\ ,
	&&
	\intersop^{\theta}:=\intersop^{\theta,[s,t]}_{[s,t]}\ .
\end{align}

\subsection{Comparison principle, GMC coupling}
\label{s.apps.tool}

The first ingredient of the proof is a comparison principle that follows from a Gaussian zero-one law.
\begin{lem}\label{l.01}
Let $\msp$, $\measure$, $\Yop$, $\noise$, and $\gmcw[\pmeasure,\Yop,\noise]$ be as in Section~\ref{s.tools.gmc}.
For every $0\leq f\in\BMsp(\msp^n)$, 
\begin{align}
	\measure^{\otimes n}f >0 \text{ implies that } \PP\text{-a.s.\ } \gmcw[\measure,\Yop,\noise]^{\otimes n}f>0 \ .
\end{align}
\end{lem}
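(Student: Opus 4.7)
The plan is to combine the shift characterization \eqref{e.shamov.2} with Kahane's moment formula \eqref{e.kahane.} to run a Gaussian zero-one law. Set $\Phi := \gmcw[\measure,\Yop,\noise]^{\otimes n} f$ and let $A := \{\Phi = 0\}$; the goal is $\PP(A) = 0$.

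First I would observe that $A$ is essentially shift-invariant. For each fixed $\shift \in \lsp^2$, the identity \eqref{e.shamov.2} gives $\PP$-a.s.\ that $\Phi(\noise+\shift) = \gmcw[\measure,\Yop,\noise]^{\otimes n}\bigl[f \cdot \prod_{i=1}^n e^{\Yop\shift(\pt_i)}\bigr]$, and the exponential factor is $\gmcw^{\otimes n}$-a.e.\ strictly positive, so $\{\Phi(\noise+\shift)=0\} = A$ almost surely; in particular $\PP(A) = \PP\{\Phi(\noise+\shift)=0\}$ for every $\shift$.

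Next I would convert this into a dichotomy via the Cameron--Martin formula (taking $\aa>0$, since $\aa=0$ gives $\gmcw=\measure$ and the conclusion is immediate). The law of $\noise+\shift$ is absolutely continuous wrt the law of $\noise$ with density $D_\shift=\exp(\aa^{-1}\noise\shift-|\shift|^2/(2\aa))$, so the previous identity rewrites as $\PP(A) = \EE[\ind_A\, D_\shift]$ for all $\shift\in\lsp^2$. If $\PP(A)>0$, then the conditional measure $\mu_A := \PP(\cdot\,|\,A)$ satisfies $\EE_{\mu_A}[e^{\aa^{-1}\noise\shift}]=e^{|\shift|^2/(2\aa)}$ for every $\shift$, which is exactly the moment generating function of $\noise$ under $\PP$. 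Hence $\mu_A$ and $\PP$ agree on $\sigma(\noise)$, and since $A\in\sigma(\noise)$ and $\mu_A(A)=1$, this forces $\PP(A)=1$; we thus obtain the dichotomy $\PP(A)\in\{0,1\}$.

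Finally, Kahane's formula \eqref{e.kahane.} together with $\ker\geq 0$ and $\aa\geq 0$ yields $\EE\Phi = \int_{\msp^n} f(\vecpt)\,e^{\aa\sum_{\alpha\in\pair\intv{n}}\ker(\vecpt_\alpha)}\,\measure^{\otimes n}(\d\vecpt) \geq \measure^{\otimes n} f > 0$, ruling out $\PP(A)=1$, and together with the dichotomy this gives $\PP(A)=0$. The delicate point will be the first step: \eqref{e.shamov.2} is an almost-sure identity holding for each fixed $\shift$, not pointwise in $(\noise,\shift)$. Fortunately only the single-$\shift$ marginal equality $\PP(A)=\PP\{\Phi(\noise+\shift)=0\}$ is needed to feed into the Cameron--Martin argument, so this subtlety presents no real difficulty.
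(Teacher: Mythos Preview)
Your proof is correct and follows essentially the same route as the paper's: both use the shift identity \eqref{e.shamov.2} to show the event $\{\gmcw^{\otimes n}f>0\}$ is invariant under Cameron--Martin shifts, invoke a Gaussian zero-one law, and rule out the trivial case via Kahane's formula \eqref{e.kahane.}. The only difference is that the paper cites the zero-one law as a black box (\cite[Theorem~2.5.2]{bogachev1998gaussian}), whereas you supply its Cameron--Martin/MGF proof inline.
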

\begin{proof}
Write $\gmcw[\pmeasure,\Yop,\noise]=\gmcw[\noise]$ to simplify notation.
By \eqref{e.kahane.} we have that $\EE\,\gmcw[\noise]^{\otimes n}f \geq \pmeasure^{\otimes n}f$, so it suffices to show that
\begin{align}\label{e.01.}
	\PP\big[\gmcw[\noise]^{\otimes n} \, f>0\big] \in \{ 0, 1 \}\ .
\end{align}
By \eqref{e.shamov.2}, we have 
\begin{align}\label{e.01..}
    \gmcw[\noise+\shift]^{\otimes n} \, f
    =
    \gmcw[\noise]^{\otimes n} \, (e^{\Yop\shift})^{\otimes n}f\ ,
    \quad
    \shift\in\lsp^2\ .
\end{align}
Since $e^{\Yop\shift}>0$ $\measure$-a.e.\ and every $\measure$-null set is a.s.\ a $\gmcw[\noise]$-null set (in consequence of \eqref{e.shamov.1}), we have that $e^{\Yop\shift}>0$ $\gmcw[\noise]$-a.e.
Hence, the right-hand side of~\eqref{e.01..} is strictly positive if and only if $\gmcw[\noise]^{\otimes n} \, f$ is, which implies $\PP[\gmcw[\noise+\shift]^{\otimes n} \, f>0]=\PP[\gmcw[\noise]^{\otimes n} \, f>0]$.
Given this, \eqref{e.01.} follows from a Gaussian zero-one law: see \cite[Theorem~2.5.2]{bogachev1998gaussian}.
\end{proof}
\noindent{}%
Lemma~\ref{l.01} applied to the conditional GMC immediately gives the following result.
\begin{cor}\label{c.01}
For any $\PM^\theta$-measurable random $0\leq f=f(\omega)\in\BMsp(\Psp)$, 
\begin{align}
	\P\otimes\PP\text{-a.s.,}
	\quad
	\ind_{\PM^\theta{}^{\otimes n}F>0}
	\leq
	\ind_{\gmcw[\PM^\theta,\Yst^{\theta},\noise]^{\otimes n}F>0}\ .	
\end{align}
\end{cor}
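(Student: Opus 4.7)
The plan is to reduce Corollary~\ref{c.01} to Lemma~\ref{l.01} by conditioning on the sigma algebra $\filt$ generated by the family of polymer measures (cf.\ Proposition~\ref{p.prob.space}). Observe that $\PM^\theta$, the intersection local time operator $\intersop^\theta$, and its standard $Y$ operator $\Yst^\theta=\Yst(\intersop^\theta)$ are all $\filt$-measurable by construction (see \eqref{e.Ystt}), and by hypothesis so is $f$. Crucially, the Gaussian vector $\noise$ is taken independent of $\filt$, as stipulated in the setup before \eqref{e.gmcw}.

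First I would fix a $\P$-typical realization $\omega$, so that $\PM^\theta(\omega)$, $\Yst^\theta(\omega)$, and $f(\omega)$ all become deterministic objects satisfying the hypotheses of the GMC framework of Section~\ref{s.tools.gmc}. Once $\omega$ is frozen, $\gmcw[\PM^\theta(\omega),\Yst^\theta(\omega),\noise]$ is a primitive (unconditional) GMC whose only source of randomness is $\noise$. Applying Lemma~\ref{l.01} to this frozen data with $\msp=\Psp$, $\measure=\PM^\theta(\omega)$, $\Yop=\Yst^\theta(\omega)$, and test function $f(\omega)$ yields the pointwise-in-$\omega$ implication: whenever $\PM^\theta(\omega)^{\otimes n} f(\omega)>0$, one has $\PP$-a.s.\ that $\gmcw[\PM^\theta(\omega),\Yst^\theta(\omega),\noise]^{\otimes n}f(\omega)>0$.

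Second I would reinstate the $\P$-randomness and upgrade this ``conditional $\PP$-a.s.\ for $\P$-a.e.\ $\omega$'' statement into the desired $\P\otimes\PP$-a.s.\ inequality via Fubini, which requires the joint measurability of
\[
(\omega,\noisesp\text{-point})\ \longmapsto\ \gmcw[\PM^\theta(\omega),\Yst^\theta(\omega),\noise]^{\otimes n}f(\omega)
\]
as a map into $[0,\infty]$. This measurability is inherited from Kahane's martingale approximation \eqref{e.kahane.martingale}, since each partial sum is manifestly jointly measurable in $(\omega,\noisesp)$ and the limit exists $\P\otimes\PP$-a.s.

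I do not anticipate a substantive obstacle: the corollary is essentially Lemma~\ref{l.01} applied pointwise in the $\PM^\theta$-realization, with the independence of $\noise$ from $\filt$ allowing the conditional Gaussian zero-one dichotomy to be integrated into an unconditional statement. The only bookkeeping item is to ensure that the $\PM^\theta$-measurability of $f$ combines with the $\filt$-measurability of $\Yst^\theta$ so that the freezing argument applies to all random ingredients of the GMC simultaneously, but this is immediate from the assumptions in the statement.
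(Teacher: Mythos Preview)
Your proposal is correct and takes essentially the same approach as the paper, which simply states that the corollary follows immediately from Lemma~\ref{l.01} applied to the conditional GMC. Your write-up spells out the conditioning-on-$\filt$ and Fubini/measurability details that the paper leaves implicit, but the underlying argument is identical.
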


The second ingredient in the proof of Corollaries~\ref{c.positivity}--\ref{c.superc}  is the GMC coupling described in Proposition~\ref{p.gmc.coupling}, which we now prove.
Recall that $\noise_1(\aa),\noise_2(\aa),\ldots$ denote independent two-sided Brownian motions in $\aa\in\R$ and that $\noise_i(\theta,\theta'):=\noise_i(\theta')-\noise_i(\theta)$.
We let $\noise(\theta,\theta'):\lsp^2\to\Lsp^2(\noisesp,\filtt,\PP)$, $(\noise(\theta,\theta'))v:=\sum_{i} (\noise_i(\theta,\theta'))v_i$.
\begin{proof}[Proof of Proposition~\ref{p.gmc.coupling}]
We begin by establishing a consistent property.
Take any $\phi\in\R$ and consider $\PMm(\theta):=\gmcw[\PM^{\phi},\Yst^{\phi},\noise(\phi,\theta)]$ for $\theta\geq\phi$.
We claim that, for every $\theta_0>\phi$, a.s.
%there exist $\PM^{\phi}$ versions of $\Yst^{\phi}e_1$, $\Yst^{\phi}e_2$,\ldots that are $\PM^{\phi}$-measurable such that
\begin{align}
	\label{e.p.gmc.coupling.goal}
	\Yst^{\phi}\, \Yst^{\phi\,*}
	=
	\intersop^{\theta_0}
	\quad
	\text{in }\Lsp^2(\Psp,\PMw{}^{\theta_0})\ .
\end{align}
To prove this, in light of Kahane's martingale approximation \eqref{e.kahane.martingale}, consider
\begin{align}
	\label{e.p.gmc.coupling.1}
	\PMm_{\ell}
	:=
	\PM^{\phi} \,
	\exp\Big( \sum_{i=1}^{\ell} \noise_i(\phi,\theta_0)\, \Yst^{\phi}e_i - \frac{\theta_0-\phi}{2}\, \big(\Yst^{\phi}e_i \big)^2 \Big)\ ,
\end{align}
and apply Lemmas~\ref{l.embed.eigen} and \ref{l.embed.tool} with $\msp,\measure,\measure_{\ell},\measure',\ker$ respectively being $\Psp,\PM^{\theta_0},\PMm_{\ell},\PM^{\theta_1},\inters$.
Doing so requires verifying Assumptions~\ref{a.embed.eigen} and \ref{a.embed.tool}.
Given \eqref{e.inters.finite} and given that $\PMm_{\ell}$ is martingale, the verification follows similar arguments in the proofs of Lemma~\ref{l.version} and Proposition~\ref{p.embedding.}, which we do not repeat.
Next, the following tower property of the GMC is readily verified 
\begin{align}
    \label{e.gmcw.tower}
    \gmcw[\measure, \Yop, \noise(\phi,\theta)]
    =
    \gmcw\big[\,\gmcw[\measure,\Yop,\noise(\phi,\theta_0)]\,, \Yop, \noise(\theta_0,\theta)\big]\ ,
    \quad
    \phi<\theta_0\leq \theta\ .
\end{align}
Using this for $\measure=\PM^{\phi}$ and $\Yop=\Yst^{\phi}$ and combining the result with \eqref{e.p.gmc.coupling.goal} and Lemma~\ref{l.polar}\eqref{l.polar.2} give the consistent property
\begin{align}
	\label{e.p.gcm.coupling.consistent}
	\big( \PMm(\theta) \big)_{\theta \geq \theta_0}
	\text{ is equal in law to }
	\big( \gmcw[ \PMm(\theta_0), \Yst^{\theta_0}, \noise(\theta_0,\theta) ] \big)_{\theta \geq \theta_0}\ .
\end{align}

We now construct the desired coupling.
So far, we have obtained a coupling of $\PM^{\theta}$ for $\theta\geq\phi$ through $\PMm(\cdott)$.
Given the consistent property \eqref{e.p.gcm.coupling.consistent}, Kolmogorov extension theorem extends this coupling to $\theta\in\R$. 
To show \eqref{e.gmc.coupling}, under the so constructed coupling, for every $\phi<\theta_0\in\R$, let $\filttt_{\phi,\theta_0}:=\sigma(\PM^{\theta'}_{[s,t]}\,|\,\theta'\in[\phi,\theta_0])$ observe that by \eqref{e.p.gcm.coupling.consistent}
the law of $(\PM^{\theta})_{\theta\geq\theta_0}$ conditioned on $\filttt_{\phi,\theta_0} $ is equal to the law of $
	( \gmcw[ \PM^{\theta_0}, \Yst^{\theta_0}, \noise(\theta_0,\theta) ] )_{\theta \geq \theta_0}
$.
Combining this with $\vee_{\phi>-\infty}\filttt_{\phi,\theta_0}=\filttt_{\theta_0}$
 verifies \eqref{e.gmc.coupling}.
\end{proof}

\subsection{Proof of Corollaries~\ref{c.positivity}--\ref{c.superc}.}
\label{s.apps.}
First, a few declarations.
As said, the proof requires a few more moment results, and, to streamline the presentation, we defer the proof of those results to Section~\ref{s.apps.moment} and make explicit references in the proof.
Throughout the proof, we work under the GMC coupling in Proposition~\ref{p.gmc.coupling}.

Now we prove Corollary~\ref{c.positivity}. Note that the second statement follows immediately from the first statement because $\flow^{\theta}_{s,t}$ is the time-$\{s,t\}$ marginal of $\PM^{\theta}$. 
Let us extend our notation conventions at the beginning of Section~\ref{s.apps} by putting $\wien:=\wien_{[s,t]}$.  
We assume that the $f$ in Corollary~\ref{c.positivity} is bounded and has bounded support.
Doing so loses no generality since the truncated function $\hatf(\Path):=R\wedge f(\Path)\,\ind_{\norm{\Path}_\infty\leq R}$ is not $\wien$-a.e.\ zero for a large enough $R>0$ and satisfies $\PM^{\theta}f\geq\PM^{\theta}\hatf$.
Under the GMC coupling, Lemma~\ref{l.01} with $n=1$ and $F=f$ yields that
\begin{align}
    \label{e.positivity.coupling}
    \text{for every } \theta'\leq\theta,\quad
    \ind_{\PM^{\theta'}\hatf>0} \leq \ind_{\PM^{\theta}\hatf>0}\ \text{a.s.}
\end{align}
We will prove in Section~\ref{s.apps.moment} that, for any $n\in \N$ and bounded $\psi\in\BMsp(\Psp)$ having a bounded support
\begin{align}
    \label{e.infiniteT.lim}
    \E\big(\PM^{\theta'} \psi - \wien \psi\big)^{2n}
    \longrightarrow 0,
    \quad
    \text{as }\theta'\to-\infty\ .
\end{align}
Applying \eqref{e.infiniteT.lim} with $\psi=f$ gives that $\PM^{\theta'}f$ converges in probability to the constant $\wien f>0$ as $\theta'\to-\infty$, and consequently $\ind_{\PM^{\theta'}f>0}\to 1$ in probability.
Combining this with \eqref{e.positivity.coupling} yields the desired result.

Next, we state and prove a slightly stronger version of Corollary~\ref{c.superc}.
Note that Corollary~\ref{c.superc} follows if $\flow^\theta_{s,t}\,g\otimes g'\to 0$ a.s.\ for every $g,g'\in \Cloc(\R^2)$, so the following is a slightly stronger version of it.
\begin{customcor}{\ref*{c.superc}'}
\label{c.superc.} 
For every deterministic $g,g'\in\BMsp(\R^2)$ such that
$
	\Ip{ |g|^{\otimes 2}, \sg^{\intv{2},\theta}(t-s)\, |g'|^{\otimes 2} } < \infty
$
for all $\theta\in\R$, we have that
$
	\flow^{\theta}_{s,t}\, g\otimes g' \to 0
$
a.s.\ as $\theta\to\infty$.
\end{customcor}
% Note that, given Proposition~\ref{p.mome}, the assumption on $f$ in Corollary~\ref{c.superc.} ensures that 
% $
% 	\gmcw[\PM^{\theta_0},\Yst^{\theta_0},\noise(\theta-\theta_0)]\, f 
% $
% is well-defined and finite for every $\theta_0<\theta\in\R$.
% It is straightforward to check that $g=1$ and $g'=\ind_{A}$ for a bounded $A$ satisfy the assumption in Corollary~\ref{c.superc.}.

In the proof of Corollary~\ref{c.superc.} below, we assume without loss of generality that $g,g'$ are nonnegative and not Lebesgue-a.e.\ zero.
If $g,g'$ are signed, we can decompose them into their positive and negative parts, and if $g$ or $g'$ is Lebesgue-a.e.\ zero, we would already have $\PM^\theta g\otimes g'=0$ a.s.

% In the proof of Corollary~\ref{c.superc.} below, we  lose no generality in assuming that $g,g'$ are nonnegative and not Lebesgue-a.e.\ zero since, for the latter, we  we would otherwise already have $\PM^\theta g\otimes g'=0$ a.s.

We begin the proof of Corollary~\ref{c.superc.} by establishing a positivity property.
Write $\ipp{\,\cdot\, , \,\cdot\,}_{\theta}$ for the inner product on $\Lsp^2(\Psp,\PMw{}^{\theta})$ and put $f:=g\otimes g'\,\circ\margin{\{s,t\}}$.  
We show that
\begin{align}
	\label{e.superc.positive}
	\text{for every }\theta_0\in\R,
	\quad
	\ipp{f,\intersop^{\theta_0} f}_{\theta_0}>0 \text{ a.s. }
\end{align}
% Truncate $g,g'$ to get $\hatg:=R\wedge g\,\ind_{|\cdott|\leq R}$ and $\hatg':=R\wedge g'\,\ind_{|\cdott|\leq R}$, for a large enough $R$ such that $\hatg,\hatg'$ are not Lebesgue-a.e.\ zero, and put $\hatf:=\hatg\otimes \hatg'\,\circ\margin{\{s,t\}}$.
% It suffices to show that $\ipp{\hatf,\intersop^{\theta_0} \hatf}_{\theta_0}>0$ a.s.
% Consider $h_{\theta}:=\ipp{\hatf,\intersop^{\theta} \hatf}_{\theta}/\E\ipp{\hatf,\intersop^{\theta} \hatf}_{\theta}$.
% The expectation is strictly positive and finite, which can be seen from the formula \eqref{e.fTf} in Section~\ref{s.apps.moment}.
% Under the GMC coupling and by Lemma~\ref{l.01} for $n=2$ and $F=\hatf{}^{\otimes 2}\,\inters$ (so that $\PMw{}^{\theta\,\otimes 2}F=\ipp{\hatf,\intersop^{\theta} \hatf}_{\theta}$),
% \begin{align}
% 	\label{e.superc.coupling}
% 	\cdots \leq \ind_{h_{\theta_0-2}>0} \leq \ind_{h_{\theta_0-1}>0} \leq \ind_{h_{\theta_0}>0}
% 	\quad
% 	\text{a.s.}
% \end{align}
For $R>0$, consider the truncation $\hatg:=R\wedge g\,\ind_{|\cdott|\leq R}$ of $g$, and define $\hatg'$ analogously in terms of $g'$.  
We put $\hatf:=\hatg\otimes \hatg'\,\circ\margin{\{s,t\}}$ and assume that $R$ is large enough so that $\hatg,\hatg'$ are not Lebesgue-a.e.\ zero. 
To establish~\eqref{e.superc.positive}, it suffices to show that $\ipp{\hatf,\intersop^{\theta_0} \hatf}_{\theta_0}>0$ a.s.  
For $\theta\in \R$, define the variable
\begin{align}
    h_{\theta}:= \frac{\ipp{\hatf,\intersop^{\theta} \hatf}_{\theta}}{\E\ipp{\hatf,\intersop^{\theta} \hatf}_{\theta}}\ .
 \end{align}
The expectation in the denominator can be seen to be strictly positive and finite using the formula \eqref{e.fTf} in Section~\ref{s.apps.moment}.
Apply Lemma~\ref{l.01} under the GMC coupling with $n=2$ and $F=\hatf{}^{\otimes 2}\,\inters$ and note that $\PMw{}^{\theta\,\otimes 2}F=\ipp{\hatf,\intersop^{\theta} \hatf}_{\theta}$.
Doing so gives that for any $\theta< \theta_0$
\begin{align}\label{e.superc.coupling.}
    \ind_{h_{\theta}>0} \leq \ind_{h_{\theta_0}>0}
    \quad\text{a.s.}
\end{align}
We will prove in Section~\ref{s.apps.moment} that
\begin{align}
	\label{e.variance}
	\Var h_{\theta}
	=
	\frac{\Var\ipp{\hatf,\intersop^{\theta} \hatf}_{\theta} }{ (\E\ipp{\hatf,\intersop^{\theta} \hatf}_{\theta})^2 }
	\longrightarrow 
	0 \ ,
	\quad
	\text{as }\theta\to-\infty\ .
\end{align}
Since $\E h_{\theta}=1$, this implies that $h_{\theta}\to 1$ in probability as $\theta\to-\infty$.
Combining this with \eqref{e.superc.coupling.} gives the desired positivity of $\ipp{\hatf,\intersop^{\theta_0} \hatf}_{\theta_0}$ and thus of~\eqref{e.superc.positive}.

With~\eqref{e.superc.positive} in hand, we turn to completing the proof of Corollary~\ref{c.superc.}. 
To be specific, let us set $\theta_0=0$. 
For $\gmcw{}^{\theta'}:=\gmcw[\PM^{0},\Yst^{0},\noise(0,\theta')]$, the process $(\gmcw{}^{\theta'}f)_{\theta'\geq 0}$ is a nonnegative martingale and thus converges a.s.\ as $\theta'\to\infty$. 
Hence, we only need to show that $\gmcw{}^{\theta'} f \to 0$ in probability as $\theta'\to\infty$. 
Since $\ipp{f,\intersop^{0} f}_{0}=\sum_{i=1}^{\infty}\ipp{f,\Yst^{0} e_i}_{0}^2 $, the positivity~\eqref{e.superc.positive} implies that there a.s.\ exists a $j=j(\PM^{0})\in\N$ such that $\ipp{f,\Yst^{0} e_j}_{0}\neq 0$.
This further implies that there exists an $\PM^{0}$-positive set on which $f$ and $\Yst^{0} e_j$ are both nonzero. 
Use $\EE$ to denote the expectation of $\noise(\cdott)$.
Taking $\EE[ \cdott| \, \noise_j(\cdott),\PM^{0} ]$ in \eqref{e.kahane.martingale} and swapping the conditional expectation with the limit give
\begin{align}\label{e.superc.coupling.re}
	\EE\big[ \,\gmcw{}^{\theta'} f \big| \, \noise_j(\cdott),\PM^{0} \big]
	=
	\PM^{0} \big[ e^{\noise_j(0,\theta)\Yst^{0} e_j-\frac{\theta'}{2}(\Yst^{0} e_j)^2} f \big]\ ,
\end{align}
where the swapping is justified due to the assumption on $g,g'$ in Corollary~\ref{c.superc.}.
Because there exists an $\PM^{0}$-positive set on which $f$ and $\Yst^{0} e_j$ are both nonzero, the right-hand side of~\eqref{e.superc.coupling.re} converges to $0$ in probability as $\theta'\to\infty$.
Since $\gmcw{}^{\theta'} f$ is nonnegative, this forces $\gmcw{}^{\theta'} f$ to also converge to $0$ in probability.

\begin{rmk}\label{r.superc}
Theorem~1.4 in \cite{caravenna2025singularity} gives that, for every deterministic bounded $A\subset\R^2$ and $\theta_0\in\R$ fixed, $\flow_{0,t'}^{\theta_0}1\otimes \ind_{A}\to 0$ in law as $t'\to\infty$.
The long time limit can be translated into the $\theta\to\infty$ limit by the scaling property of the SHF.
Namely, $\flow_{0,t'}^{\theta_0}1\otimes \ind_{A}$ is equal in law to $\flow^{\theta_0+\log t'}_{0,1}\,1\otimes t' \ind_{t'{}^{-1/2}A}$.
Hence, \cite[Theorem~1.4]{caravenna2025singularity} and Corollary~\ref{c.superc.} are similar but different in how the test functions are scaled.
\end{rmk}

\subsection{Proof of \eqref{e.infiniteT.lim} and \eqref{e.variance}}
\label{s.apps.moment}
We start by investigating the behaviors of $\jfn^\theta$ and $\Jop^{\intv{n},\theta}_{\alpha}$ as $\theta\to-\infty$.
Since $n$ will vary in this subsection, we restore the explicit dependence on it by writing $\Jop^{\intv{n},\theta}_{\alpha}$, $\sg^{\intv{n},\theta}$, etc.
First, there exists a universal $c<\infty$ such that, for all $\theta\leq -1$ and $t>0$,
\begin{align}
	\label{e.jfn.bd}
	\jfn^\theta(t)
	\leq
	c \, e^{ct} \, t^{-1} \, (|\log (t\wedge\tfrac{1}{2})| + |\theta| )^{-2}\ .
\end{align}
This follows from the proof of \cite[Lemma~8.4, Equation~(8-5)]{gu2021moments}.
The proof of that lemma fixes $\theta$ (called $\beta_\star$ there), but the same calculations yield \eqref{e.jfn.bd} for $\theta\leq -1$.
Applying  \eqref{e.jfn.bd} to \eqref{e.Jop} yields that 
\begin{subequations}
\label{e.infinityT}
\begin{align}
	\label{e.infinityT.Jopbd}
	\Normop{\Jop^{\intv{n},\theta}_{\alpha}(t)}
	&\leq
	c \, e^{ct} \, t^{-1} \, (|\log (t\wedge\tfrac{1}{2})| + |\theta| )^{-2}\ ,
\\
	\label{e.infinityT.Jopbd.int}
	\int_{0}^{\infty} \d t \, e^{-2ct}\, \Normop{\Jop^{\intv{n},\theta}_{\alpha}(t)}
	&\leq
	c \, |\theta|^{-1}\ ,
\end{align} 
\end{subequations}
for all $r\in(0,\infty)$ and $\theta\leq -1$.  Moreover, applying \cite[Lemma~8.4, Equation~(8-4)]{gu2021moments} with $z=-1$ (or any fixed negative value) yields that 
$
	 \int_{0}^{r} \d t\ |\theta|\,\jfn^{\theta}(t) \to 1
$
as $\theta\to-\infty$, for any $r\in(0,\infty)$, and consequently
\begin{align}
	\label{e.infinityT.Jop.}
	\int_{0}^{r} \d t \ |\theta|\,\Jop^{\intv{n},\theta}_{\alpha}(t)
	&\longrightarrow
	\id
	\text{ strongly as } \theta\to-\infty\ .
\end{align} 

Let us now prove \eqref{e.infiniteT.lim}.
As explained after \eqref{e.annealed.centered}, $\E(\PM^{\theta}-\wien)^{\otimes 2n}$ is a positive measure.
We use this fact to bound the left-hand side of \eqref{e.infiniteT.lim} by
$
	\E(\PM^{\theta}-\wien)^{\otimes 2n} |\psi|^{\otimes 2n}
$.
Next, since $\psi$ is bounded with a bounded support, there exists an $m>0$ large enough so that $|\psi(\Path)| \leq m\, \ind_{|\Path(s)|<m}\, \ind_{|\Path(t)|<m}$.
Combining this bound and \eqref{e.annealed.centered} gives
\begin{align}
	\text{lhs of \eqref{e.infiniteT.lim}}
	\leq
	m\,\IP{ \ind_{|\cdott|<m}^{\otimes 2n}, \sum_{\vecalpha\in\dgm_*\intv{2n}} \sgsum^{\intv{2n},\theta}_{\vecalpha}(t-s) \, \ind_{|\cdott|<m}^{\otimes 2n} }\ ,
\end{align}
in which $\dgm_*\intv{n}:=\{\vecalpha\in\dgm\intv{n}\,|\, \alpha_1\cup\cdots\cup\alpha_{\last}=\intv{n}\}$.
Combining \eqref{e.opbd.incoming}--\eqref{e.opbd.swapping.int}, \eqref{e.infinityT.Jopbd}--\eqref{e.infinityT.Jopbd.int}, and Lemma~\ref{l.sum} shows that, as $\theta\to-\infty$, the sum of operators on the right-hand side converges to $0$ in operator norm.
This proves \eqref{e.infiniteT.lim}.

To prove \eqref{e.variance}, our first step is to derive explicit expressions for $\E\ipp{\hatf,\intersop^{\theta} \hatf}_{\theta}$ and $\E\ipp{\hatf,\intersop^{\theta} \hatf}_{\theta}^2$ using a similar argument as in the proof of Lemma~\ref{l.expansion}.
Let $\intersop^{\theta}_{\e}$ be the analog of $\intersop^{\theta}=\intersop^{\theta,[s,t]}_{[s,t]}$ with the $\inters_{[s,t]}[s,t]$ (defined in \eqref{e.intersop}) replaced by $\inters_{[s,t],\e}[s,t]$ (defined in \eqref{e.inters.e}). 
Recall that $\Psi^{\e}_{\alpha}(x):=|\log\e|^{-2}\,\ind_{|x_{i}-x_j|\leq\e}$ for $\alpha=ij\in\pair\intv{n}$ and $\hatf=\hatg\otimes \hatg'$.  
Put  $\barg:=\hatg\, e^{-|\cdott|}$ and $\barg':=\hatg'\, e^{-|\cdott|}$.
Similarly to \eqref{e.expmom.e.2}, we have
\begin{align}
	\label{e.fTf.e}
	\E \ipp{\hatf,\intersop^{\theta}_\e \hatf}_{\theta}
	&=
	\int_{\Sigma(t-s)} \d u \ 
	\Ip{ \barg^{\otimes 2}, 
		\sg^{\intv{2},\theta}(u)\, \Psi^{\e}_{12}\, \sg^{\intv{2},\theta}(u') 
	\, \barg'{}^{\otimes 2} }
	\ ,
\\
	\label{e.fTf2.e}
	\E \ipp{\hatf,\intersop^{\theta}_\e \hatf}_{\theta}^2
	&=
	2\int_{\Sigma(t-s)} \d \vecu \ 
	\Ip{ \barg^{\otimes 4}, 
		\sg^{\intv{4},\theta}(u)\, \Psi^{\e}_{12}\, \sg^{\intv{4},\theta}(u') \, \Psi^{\e}_{34}\, \sg^{\intv{4},\theta}(u'') 
	\, \barg'{}^{\otimes 4} }
	\ ,
\end{align}
where the inner products appearing on the right-hand sides of \eqref{e.fTf.e} and \eqref{e.fTf2.e} respectively correspond to $\Lsp^2(\R^{4},\d x)$ and $\Lsp^2(\R^{8},\d x)$.
By Lemma~\ref{l.expansion} and \eqref{e.inters.L1conv}, the left-hand sides of \eqref{e.fTf.e}--\eqref{e.fTf2.e} converge respectively to $\E\ipp{\hatf,\intersop^{\theta} \hatf}_{\theta}$ and $\E\ipp{\hatf,\intersop^{\theta} \hatf}_{\theta}^2$ as $\e\to 0$. 
Moreover, the $\e\to 0$ limits of the right-hand sides of \eqref{e.fTf.e}--\eqref{e.fTf2.e} can be obtained through the same argument that follows \eqref{e.expmom.e.2}, which we do not repeat, and we describe the limiting expressions below.
Recall $\Cop^{\theta}_{\vecalpha}=\Cop^{\intv{n},\theta}_{\vecalpha}$ from \eqref{e.Cop}, and let
\begin{align}
	\label{e.Uop}
	\Uop^{\theta}(t')
	&:=
	\int_{\Sigma(t')} \d \vecu \ 
	\heatsg^{\intv{2}}_{12}(u_1)^*\, \Jop^{\intv{2},\theta}_{12}(u_2) \, \tfrac{1}{4\pi} \, \Jop^{\intv{2},\theta}_{12}(u_3) \, \heatsg^{\intv{2}}_{12}(u_4)
	\ ,
\\
	\label{e.Vop}
	\Vop^{\theta}(t')
	&:=
	\sum_{12\to 34}
	2\int_{\Sigma(t')} \d \vecu \ 
	\heatsg^{\intv{4}}_{\alpha^{0}_1}(u_{-1})^* \, \Cop^{\intv{4},\theta}_{\vecalpha^0}(u_0)\, 
	\tfrac{1}{4\pi}\, \Cop^{\intv{4},\theta}_{\vecalpha^1}(u_1)\,  \, 
	\tfrac{1}{4\pi}\, \Cop^{\intv{4},\theta}_{\vecalpha^2}(u_2)\, \heatsg^{\intv{4}}_{\alpha^{2}_\last}(u_3)
	\ ,
\end{align}
where $\sum_{12\to 34}$ runs over $\vecalpha^0,\vecalpha^1,\vecalpha^2\in\dgm\intv{4}$ under the condition that
$
	\alpha^{0}_{\last} = \alpha^{1}_{1} = 12
$
and
$
	34 = \alpha^{1}_{\last} = \alpha^{2}_{1}
$.
We have
\begin{align}
	\label{e.fTf}
	\E \ipp{\hatf,\intersop^{\theta} \hatf}_{\theta}
	=
	\ip{ \barg^{\otimes 2}, \Uop^{\theta}(t-s) \, \barg'{}^{\otimes 2} }\ ,
	&&
	\E \ipp{\hatf,\intersop^{\theta} \hatf}_{\theta}^2
	=
	\ip{ \barg^{\otimes 4}, \Vop^{\theta}(t-s) \, \barg'{}^{\otimes 4} }\ .
\end{align}

Next, we separate $\Vop^{\theta}$ into its leading term and remainder, and bound the latter.
Consider the total index length $|\vecalpha^0|+|\vecalpha^1|+|\vecalpha^2|$ under the sum $\sum_{12\to 34}$.
The shortest total length is achieved by $\vecalpha^0=(12)$, $\vecalpha^1=(12,34)$, $\vecalpha^{2}=(34)$.
Let
\begin{align}
	\label{e.Vop.leading.}
	\Vop^{\theta}_0(t')
	&:=
	2\int_{\Sigma(t')} \d \vecu \ 
	\heatsg^{\intv{4}}_{12}(u_{1})^* \, \Cop^{\intv{4},\theta}_{(12)}(u_2)\, 
	\tfrac{1}{4\pi}\, \Cop^{\intv{4},\theta}_{(12,34)}(u_3)\,  \, 
	\tfrac{1}{4\pi}\, \Cop^{\intv{4},\theta}_{(34)}(u_4)\, \heatsg^{\intv{4}}_{34}(u_5)
\end{align}
denote the corresponding term, let $\Vop^{\theta}_\rest(t')$ denote the rest of the sum in \eqref{e.Vop}, so that $\Vop^{\theta}=\Vop^{\theta}_0+\Vop^{\theta}_\rest$.
Note that each summand of $\Vop^{\theta}_\rest$ has total index length $\geq 5$, and note from \eqref{e.Cop} that $\Cop^{\intv{n},\theta}_{\vecalpha}$ has $|\vecalpha|$ copies of $\Jop$s in its definition. 
Given these observations and using \eqref{e.opbd.incoming}--\eqref{e.opbd.swapping.int}, \eqref{e.infinityT.Jopbd}, and Lemma~\ref{l.sum}, we have that 
\begin{align}
	\label{e.Vop.rest}
	\Normop{ \Vop^{\theta}_\rest(t') }
	\leq
	c(t') \, |\theta|^{-5}\ ,
	\qquad
	\theta \leq -1\ .
\end{align}

Next, let us derive the $\theta\to-\infty$ asymptotics of $\Uop^{\theta}$ and $\Vop^{\theta}_0$.
Use \eqref{e.Cop} to write as 
\begin{align}
\label{e.Vop.leading}
\begin{split}
	\Vop^{\theta}_0(t')
	=
	2\int_{\Sigma(t')} \d \vecu \ 
	&\heatsg^{\intv{4}}_{12}(u_{1})^* \, \Jop^{\intv{4},\theta}_{12}(u_2)\, 
	\tfrac{1}{4\pi}\, 
	\Jop^{\intv{4},\theta}_{12}(u_3) \, 
\\
	&\cdot \heatsg^{\intv{4}}_{12\ 34}(u_4)\, 
	\Jop^{\intv{4},\theta}_{34}(u_5) \, 	
	\tfrac{1}{4\pi}\, \Jop^{\intv{4},\theta}_{34}(u_6)\, 
	\heatsg^{\intv{4}}_{34}(u_7)\ .
\end{split}
\end{align}
From \eqref{e.Uop} and \eqref{e.Vop.leading}, we see that $\Uop^{\theta}$ and $\Vop^{\theta}_0$ respectively have $2$ and $4$ copies of $\Jop$s.
In view of this and \eqref{e.infinityT.Jopbd.int}, we consider the $\theta\to-\infty$ limits of $|\theta|^2\Uop^{\theta}(t')$ and $|\theta|^{4}\Vop^{\theta}_0(t')$.
For the first limit, take \eqref{e.Uop}, multiply both sides by $|\theta|^2$, and apply Lemma~\ref{l.delta} with $\e=1/|\theta|$, $m=4$, $I=\{2,3\}$, and $\Top_{2,\e}=\Top_{3,\e}=|\theta|\Jop^{\intv{2},\theta}_{12}$.
For the second limit, take \eqref{e.Vop.leading}, multiply both sides by $|\theta|^4$, and apply Lemma~\ref{l.delta} with $\e=1/|\theta|$, $m=7$, $I=\{2,3,5,6\}$, $\Top_{2,\e}=\Top_{3,\e}=|\theta|\Jop^{\intv{4},\theta}_{12}$, and $\Top_{4,\e}=\Top_{5,\e}=|\theta|\Jop^{\intv{4},\theta}_{34}$.
Doing gives that, as $\theta\to-\infty$,
\begin{align}
	\label{e.Uop.lim}
	&|\theta|^{2}\,\Uop^{\theta}(t') 
	\longrightarrow 
	\Uop(t')
	:=
	\frac{1}{4\pi} \int_{\Sigma(t')} \d \vecu \ 
	\heatsg^{\intv{2}}_{12}(u)^*\, \heatsg^{\intv{2}}_{12}(u')
	\quad
	\text{strongly},
\\
	\label{e.Vop.lim}
	&|\theta|^{4}\,\Vop^{\theta}(t') 
	\longrightarrow 
	\Vop(t')
	:=
	\frac{2}{(4\pi)^2} \int_{\Sigma(t')} \d \vecu \ 
	\heatsg^{\intv{4}}_{12}(u)^*\, \heatsg^{\intv{4}}_{12\ 34}(u')\, \heatsg^{\intv{4}}_{34}(u'')
	\quad
	\text{strongly}.
\end{align}

Let us now complete the proof.
Combining \eqref{e.fTf}, \eqref{e.Vop.rest}, and \eqref{e.Uop.lim}--\eqref{e.Vop.lim} gives
\begin{align}
	\label{e.variance.1}
	\lim_{\theta\to-\infty}
	\frac{ \Var\ipp{\hatf,\intersop^{\theta} \hatf}_{\theta} }{ (\E\ipp{\hatf,\intersop^{\theta} \hatf}_{\theta})^2 }
	=
	\frac{
		\ip{ \barg^{\otimes 4}, \Vop(t-s)\, \barg'{}^{\otimes 4} }
		-
		\ip{ \barg^{\otimes 2}, \Uop(t-s)\, \barg'{}^{\otimes 2} }^2 	
	}{ 
		\ip{ \barg^{\otimes 2}, \Uop(t-s)\, \barg'{}^{\otimes 2} }^2 
	}\ .
\end{align}
The denominator of the right-hand side is nonzero and finite because $\barg,\barg'$ are not Lebesgue a.e.\ zero.
By \eqref{e.incoming}--\eqref{e.swapping} and the semigroup property of the heat kernel, it is straightforward to check that the integrand in \eqref{e.Vop.lim} has the kernel
\begin{align}
\begin{split}
	\int_{\R^2}\d\yc &\prod_{i=1}^2 \hk(u,x_i-\yc)\,\hk(u'+u'',\yc-x'_i) 
\\
	&\cdot
	\prod_{i=3}^4 \int_{\R^2}\d\yc\ \hk(u+u',x_i-\yc)\,\hk(u'',\yc-x'_i)\ .
\end{split}
\end{align}
Multiply this kernel by $\barg(x_1)\cdots\barg(x_4)$ and $\barg'(x'_1)\cdots\barg'(x'_4)$, integrating over the $x_i$s and $x'_i$s, and rename $u,u',u''$ to $t_1-s,t_2-t_1,t-t_2$.
Doing so gives
\begin{align}
	\label{e.variance.2}
	\ip{ \barg^{\otimes 4}, \Vop(t-s)\, \barg'{}^{\otimes 4} }
	=
	\frac{2}{(4\pi)^2} \int_{s<t_1<t_2<t} 
	\hspace{-20pt} \d t_1 \d t_2
	\prod_{j=1}^2 \Ip{\barg^{\otimes 2}, \heatsg^{\intv{2}}_{12}(t_j-s)^*\,\heatsg^{\intv{2}}_{12}(t-t_j)\, \barg'{}^{\otimes 2} }\ .
\end{align}
Because the integrand is symmetric in $t_1,t_2$, the right-hand side of \eqref{e.variance.2} is equal to $\ip{ \barg^{\otimes 2}, \Uop(t-s)\, \barg'{}^{\otimes 2} }^2$.
This shows that \eqref{e.variance.1} is equal to $0$ and completes the proof of \eqref{e.variance}.

%%%%%%%%%%%%%%%%%%%%%%%%%%%%%%%%%%%%%%%%%%%%%%%%%%%%%%%%%%%%%%%%%%%%%%%%%%%%%%%%%%%%%%%%%%%%
%%%% appendices %%%%%%%%%%%%%%%%%%%%%%%%%%%%%%%%%%%%%%%%%%%%%%%%%%%%%%%%%%%%%%%%%%%%%%%%%%%%
%%%%%%%%%%%%%%%%%%%%%%%%%%%%%%%%%%%%%%%%%%%%%%%%%%%%%%%%%%%%%%%%%%%%%%%%%%%%%%%%%%%%%%%%%%%%
\appendix

\section{Properties of the polymer measure}
\label{s.a.PM}

\begin{proof}[Proof of Proposition~\ref{p.axioms}]
First, using Condition~\eqref{p.axioms.coupling} for $\ell=2$ and considering the time-$\{t_0,t_2\}$ marginals give that $\floww_{t_0,t_2}=\floww_{t_0,t_1}\cldot\floww_{t_1,t_2}$.
Combining this with Conditions~\eqref{p.axioms.ind}--\eqref{p.axioms.mome} and \cite[Theorem~1.9]{tsai2024stochastic} shows that $\floww$ is equal in fdd to the SHF($\theta'$), or more precisely, the SHF($\theta'$) restricted to times $s<t\in\unit\D$.
This together with Condition~\eqref{p.axioms.coupling} and \eqref{e.PM.fdd} shows that, for every finite $A\subset\unit\D\cap[0,\unit]$, the marginal $\gmcw_{[0,\unit]}\circ\margin{A}^{-1}$ is equal in law to $\PM^{\theta'}_{[0,\unit]}\circ\margin{A}^{-1}$.
Since $\gmcw_{[0,\unit]}$ and $\PM^{\theta'}_{[0,\unit]}$ are measures on the set $\Psp_{[0,\unit]}$ of \emph{continuous} paths, it follows that $\gmcw_{[0,\unit]}$ is equal in law to $\PM^{\theta'}_{[0,\unit]}$.
\end{proof}

Let us prepare an approximation tool.
Fix $[s,t]\subset\R$, write $\Psp:=\Psp_{[s,t]}$ to simplify notation.
For finite $A\subset [s,t]$, recall that $\margin{A}:\Psp\to\R^{2A}$ is the evaluation map.
For any $S\subset\Psp$, let $\margin{A|S}:S\to\R^{2A}$ be the restriction of $\margin{A}$ to $S$,
and let $\Cfin(S)$ denote the collection of bounded continuous functions on $S\subset \Psp$ of the form $\varphi=g\circ\margin{A|S}$ for some finite $A\subset [s,t]$ and $g\in\Cloc(\R^{2A})$, where the script ``fd'' stands for ``finite-dimensional''.

\begin{lem}\label{l.dense} 
Let $N$ be a random measure on $\Psp$ such that  $\E N\circ\margin{s}^{-1}$ is locally finite (finite on bounded sets).
Then there exists a deterministic countable subset $D$ of $\Cfin(\Psp)$ such that a.s.\ $D^{\otimes n}$ is dense in $L^p(\Psp^n,N^{\otimes n})$ for all $p\in [1,\infty)$ and $n\in\N$.
\end{lem}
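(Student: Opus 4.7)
Plan. Let $A_\infty := \{s,t\} \cup ([s,t] \cap \Q)$, a countable dense subset of $[s,t]$. For each finite $A \subset A_\infty$, pick a countable $D_A \subset \Cloc(\R^{2A})$ that is dense in $\Cloc(\R^{2A})$ under the sup norm (e.g., polynomials with rational coefficients multiplied by rationally-scaled copies of a fixed smooth cutoff). Let $D$ be the $\Q$-linear span of $\{g \circ \margin{A} : A \subset A_\infty \text{ finite}, \ g \in D_A\}$, a countable subset of $\Cfin(\Psp)$. I claim this $D$ satisfies the lemma.

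Reduction. Writing $\Psp_k := \{\Path \in \Psp : |\Path(s)| \leq k\}$, the hypothesis gives $\E N(\Psp_k) < \infty$ for each $k$; hence on the a.s.\ event $\Omega_0 := \bigcap_k \{N(\Psp_k) < \infty\}$ the measure $N$ is $\sigma$-finite, and so is $N^{\otimes n}$ for every $n$. Fix $\omega \in \Omega_0$, $p \in [1,\infty)$, and $n \in \N$; the rest of the argument is deterministic.

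Approximation. For $n = 1$, any $f \in L^p(\Psp, N)$ reduces by truncation and dominated convergence to a bounded Borel $f$ with $\supp f \subset \Psp_k$. By path continuity and density of $A_\infty$, the Borel $\sigma$-algebra on $\Psp$ equals $\sigma(\margin{u} : u \in A_\infty)$, so bounded cylinder functions $g \circ \margin{A}$ (with $A \subset A_\infty$ finite and $g : \R^{2A} \to \R$ bounded Borel of bounded support) are dense in $L^p$ of the finite measure $N|_{\Psp_k}$, by a monotone-class argument. The pushforward $N \circ \margin{A}^{-1}$ being a finite Radon measure on $\R^{2A}$, one approximates $g$ in $L^p$ by some $g' \in \Cloc(\R^{2A})$, and then $g'$ uniformly by some $d \in D_A$; since $g'$ has bounded support, uniform convergence upgrades to $L^p(N)$ convergence of $g' \circ \margin{A}$ to $g \circ \margin{A}$. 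Every step is $p$-free, so this proves density of $D$ in $L^p(\Psp, N)$ simultaneously for all $p \in [1,\infty)$. For $n \geq 2$, $N^{\otimes n}$ is $\sigma$-finite on $\Psp^n$; by Fubini, linear combinations of simple tensors $\varphi_1 \otimes \cdots \otimes \varphi_n$ with bounded $\varphi_i \in L^p(\Psp, N)$ are dense in $L^p(\Psp^n, N^{\otimes n})$, and replacing each $\varphi_i$ in succession by a close element of $D$ via the $n=1$ case, while controlling the telescoping error with a uniform bound on the other factors, yields density of the $\Q$-linear span of $D^{\otimes n}$.

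Main obstacle. The only point requiring care is that $D$ must be deterministic and the density statement must hold a.s.\ for all $(p, n)$ simultaneously. This is painless here because every approximation step uses a uniform-on-compact approximation that is independent of $p$, so the single full-probability event $\Omega_0$ suffices across all $(p, n)$.
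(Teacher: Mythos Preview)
Your approach differs from the paper's. The paper finds deterministic compact $K_j \subset B_j := \{|\Path(s)| < j\}$ with $\E N(B_j \setminus K_j) \leq 2^{-j}$ via inner regularity of $\E N$, applies Stone--Weierstrass on each $K_j$, and extends the resulting approximants to elements of $\Cfin(\Psp)$ supported in $B_j$; the tail $N(B_j\setminus K_j)\to 0$ a.s.\ absorbs the error off $K_j$. You instead project to finite-dimensional marginals via a monotone-class argument and approximate on $\R^{2A}$. Your route is more direct for the path space and avoids the compactness detour; the paper's route is agnostic to the cylindrical structure and would work on any Polish space.

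Your write-up, however, glosses over the passage from $L^p(N|_{\Psp_k})$ to $L^p(N)$. A cylinder function $g \circ \margin{A}$ close to $f$ in $L^p(N|_{\Psp_k})$ is not automatically close in $L^p(N)$: the contribution of $g\circ\margin{A}$ on $\Psp_k^c$ is uncontrolled. The fix is to insist that $s \in A$ throughout and that every successive approximant (the Borel $g$, the continuous $g'$, and the element $d\in D_A$) be supported in $\{y:|y_s| \leq k'\}$ for a fixed $k'$, so that each $L^p(N)$ error is bounded by $N(\Psp_{k'})^{1/p}$ times a sup-norm difference. Two related slips: the pushforward $N \circ \margin{A}^{-1}$ is not a finite measure, only locally finite (and only once $s\in A$); and the upgrade from uniform to $L^p(N)$ convergence in the step $d\to g'$ hinges on bounding $\supp d$, not just $\supp g'$. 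Your polynomial-times-cutoff family does permit choosing $d$ with support in a prescribed ball, but this should be said explicitly.
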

\noindent{}%
Under the assumption of Lemma~\ref{l.dense}, it is not difficult to verify that every $\varphi=g\circ\margin{A}\in\Cfin(\Psp)$ with $s\in A$ is a.s.\ in $\Lsp^p(\Psp,N)$ for all $p\in[1,\infty)$, which we take as given in the following proof.

\begin{proof}
We begin by establishing a compactness property.
Let $B_r:=\{ |\Path(s)|<r\}\subset\Psp$.
The assumption that $\E N \circ\margin{s}^{-1}$ is locally finite ensures that $\E N$ is locally finite.
By the inner regularity of locally finite measures on Polish spaces, there exists a \emph{deterministic} sequence of compact sets $K_1\subset K_2 \subset \ldots \Psp$ with $K_j\subset B_j$ such that $\E N\,(B_j\setminus K_j)\leq 2^{-j}$.
In particular, there exists an event $\Omega_0$ with $\E1_{\Omega_0}=1$ such that
\begin{align}
    \label{e.l.dense.cpt}
    N(B_{j}\setminus K_j) \longrightarrow 0 \text{ as } j\to\infty\ ,
    \quad
    \text{ on } \Omega_0\ .
\end{align}

We now construct the set $D$.
Note that $\Cfin(K_j)$ is an algebra of functions in $\Csp(K_j)$ that separates points.
By the Stone--Weierstrass theorem, $\Cfin(K_N)$ is dense in $\Csp(K_N)$ wrt the uniform norm.
Note also that $\Cfin(K_j)$ is separable, so there exists a countable subset $D_j'$ of $\Cfin(K_j)$ that is dense in $\Csp(K_j)$. 
To each $\varphi'\in D_j'$, we will construct an extension $\varphi\in\Cfin(\Psp)$ that preserves the uniform norm (namely $\norm{\varphi}_{\infty}=\norm{\varphi'}_{\infty}$) with $\supp\varphi\subset B_j$. Once this is done, we form the set $D_j\subset \Cfin(\Psp)$ by collecting the extensions of members of $D_j'$ and let $D:=\cup_{j\in\N}D_j$.
To construct the extension, write $\varphi'=g'\circ\margin{A|K_j}$ for some finite $A\subset[s,t]$ and $g'\in\Csp(\R^{2A})$, and truncate $g'$ to obtain $g:= (g'\wedge \norm{\varphi'}_\infty)\vee(-\norm{\varphi'}_{\infty})$.
This way, the function $g\circ\margin{A}\in\Csp(\Psp)$ gives a uniform-norm preserving extension of $\varphi'$.
To ensure that the support of the extension is contained in $B_j$, let $r:=\sup\{ |\Path(s)| \, | \, \Path\in K_j \}$, and note that $r<j$ because $\overline{K_j}=K_j\subset B_j=B_j^\circ$. 
We then take any $h\in\Csp(\R^2,[0,1])$ with $h(x)=1$ for all $|x|\leq r$ and $h(x')=0$ for all $|x'|>j$, and put $\varphi:=h\circ|_s\,\cdot \,g \circ \margin{A}$.

We now prove that the so constructed $D$ satisfies the desired property.
Note that $\Cloc(\Psp)^{\otimes n}$ is a.s.\ dense in $L^p(\Psp^{n},N^{\otimes n})$, as can be readily seen from \cite[Lemma~1.37]{kallenberg2021foundations} for example.
Hence, it suffices to show that, for every $f\in\Cloc(\Psp)$, we have that $\inf_{\varphi\in D}\,N[|f-\varphi|^p]=0$ on $\Omega_0$.
To prove this, fixing any $f\in\Cloc(\Psp)$ and any $i\in\N$ large enough such that $\supp f\subset B_i$, for $\varphi\in D_j$ we bound
\begin{align}
    \label{e.l.desnse.bd}
    N\big[ |f-\varphi|^p \big]
    \leq
    \sup_{K_j} \big|f-\varphi\big|^p\, N( K_j )
    +
    \norm{f}^p_\infty \, N( B_i \setminus K_j )
    +
    \norm{\varphi}^p_\infty \, N( B_j \setminus K_j )\ ,
\end{align}
where we used that $\supp f \subset B_i$ and $\supp\varphi\subset B_j$.
Recall that $\varphi\in D_j$ is an extension of a member of $D'_j$, so $\norm{\varphi}_\infty=\norm{\varphi|_{K_j}}_\infty$.
Using this property to bound $\norm{\varphi}_\infty^p$ by
$2^p\norm{f}_{\infty}^p+2^p\sup_{K_j}|f-\varphi|^p$, insert this bound into \eqref{e.l.desnse.bd}, and take the infimum over $\varphi\in D_j$ on both sides.
Doing so gives
\begin{align}
    \label{e.l.dense.bd.}
    \inf_{\varphi\in D_j}
    N\big[ |f-\varphi|^p \big]
    \leq 
    &
    \inf_{\varphi\in D_j}
    \sup_{K_j} 
    \big|f-\varphi\big|^p \, 
    \big( N(K_j) + 2^p\,N(B_j\setminus K_i) \big)
\\
    \label{e.l.dense.bd..}
    &+
    \norm{f}^p_\infty \, \big( N( B_i \setminus K_j ) + 2^p\,N( B_j \setminus K_j ) \big)\ .
\end{align}
The second infimum in \eqref{e.l.dense.bd.} is zero by the construction of $D_j$.
The last term \eqref{e.l.dense.bd..} tends to $0$ as $j\to\infty$ on $\Omega_0$ by \eqref{e.l.dense.cpt}.
Hence, $\inf_{\varphi\in D}\,N[|f-\varphi|^p]=0$ on $\Omega_0$.
\end{proof}

We derive a convenient corollary from Lemma~\ref{l.dense}.
Take $[s,t]\subset[s',t']$, let $\PM_0:=\PM{}^{\theta}_{[s,t]}$, let $\PM_1:=\PM{}^{\theta}_{[s',t']}$, let $\PM_{0'}$ be time-$[s,t]$ marginal of $\PM_1$, let $\PMw_1:=\PMw{}^{\theta}_{[s',t']}$, let $\PMw_{0'}$ be time-$[s,t]$ marginal of $\PMw_1$, and put $\Psp_0:=\Psp_{[s,t]}$.

\begin{cor}\label{c.dense}
There exists a deterministic, countable $D\subset\Cfin(\Psp_0)$ such that a.s.\ $D^{\otimes n}$ is  dense in $\Lsp^p(\Psp^{n}_0,\PM^{\otimes n}_0)$, $\Lsp^p(\Psp^{n}_{0},\PM{}^{\otimes n}_{0'})$, $\Lsp^p(\Psp^{n}_0,\PMw{}^{\otimes n}_0)$, and $\Lsp^p(\Psp^{n}_{0},\PMw{}^{\otimes n}_{0'})$ for all $p\in[1,\infty)$ and $n\in\N$.
\end{cor}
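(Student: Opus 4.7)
The plan is to apply Lemma~\ref{l.dense} separately to each of the four random measures $\PM_0$, $\PM_{0'}$, $\PMw_0$, $\PMw_{0'}$ on $\Psp_0$, obtaining countable subsets $D_1,\ldots,D_4 \subset \Cfin(\Psp_0)$ that are (for each measure individually) a.s.\ dense-tensored in $\Lsp^p$ for all $p\in[1,\infty)$ and $n\in\N$. I would then set $D := D_1\cup D_2\cup D_3\cup D_4$. Since $D\supset D_i$, the tensor power $D^{\otimes n}$ contains $D_i^{\otimes n}$, so density propagates to $D$ for the $i$th measure. Countably many almost sure statements (one per measure, one per $n$, countably many $p$ can be handled since density in $\Lsp^p$ for $p$ in a countable dense subset of $[1,\infty)$ and in $\Lsp^\infty$-truncated test functions propagates, but in fact Lemma~\ref{l.dense} is already quantified over all $p$) intersect on a single full-probability event.

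The one step to carry out is the verification of the hypothesis of Lemma~\ref{l.dense} for each measure, namely that the expected time-$s$ marginal on $\R^2$ is locally finite. I would compute directly using $\E\PM^\theta_{[u,v]} = \wien_{[u,v]}$, which was noted after \eqref{e.PMa.>=heat}. For $\PM_0$: the time-$s$ marginal of $\wien_{[s,t]}$ is the Lebesgue measure on $\R^2$. For $\PMw_0=\PM_0(\d\Path)\,e^{-|\Path(s)|}$: the time-$s$ marginal of its expectation is the finite measure $e^{-|x|}\,\d x$. For $\PM_{0'}$, the time-$[s,t]$ marginal of $\PM_1=\PM^\theta_{[s',t']}$: by linearity and translation invariance of Brownian motion, its expected time-$s$ marginal is again the Lebesgue measure on $\R^2$. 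For $\PMw_{0'}$, the time-$[s,t]$ marginal of $\PMw_1=\PM_1(\d\til\Path)\,e^{-|\til\Path(s')|}$: its expected time-$s$ marginal has density $y\mapsto\int_{\R^2} e^{-|x|}\,\hk(s-s',y-x)\,\d x$, which is bounded on $\R^2$ and hence locally finite.

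I do not anticipate a real obstacle; the statement is essentially a packaging of four applications of Lemma~\ref{l.dense}. The only small piece of care is to run the lemma once per measure rather than attempting to construct a single $D$ directly, since the local-finiteness hypothesis has to be verified separately in each case; forming $D$ as the union at the end is what produces a single deterministic countable family that serves all four targets simultaneously.
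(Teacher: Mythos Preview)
Your proposal is correct and follows essentially the same route as the paper: verify the local-finiteness hypothesis of Lemma~\ref{l.dense} for each of the four random measures (using $\E\PM_0=\E\PM_{0'}=\wien_{[s,t]}$ and the exponential weights for the barred versions), apply the lemma, and combine. Your treatment is in fact slightly more explicit than the paper's in two places: you spell out taking the union $D=D_1\cup\cdots\cup D_4$, and you compute the expected time-$s$ marginal of $\PMw_{0'}$ as the convolution $y\mapsto\int e^{-|x|}\hk(s-s',y-x)\,\d x$, whereas the paper just remarks that the weighted measures ``differ only by exponential weights'' (which, strictly speaking, is not literally true for $\PMw_{0'}$ as a measure on $\Psp_0$, so your direct verification is cleaner).
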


\begin{proof}
Since $\E\PM_0,\E\PM_{0'}$ are both $\wien_{[s,t]}$ and since $ \wien_{[s,t]} \circ\,\margin{s}^{-1}$ is the Lebesgue measure on $\R^2$, Lemma~\ref{l.dense} applies to $\PM_{0},\PM_{0'}$, giving the desired result for them.
Recall that $\PMw_0,\PMw_{0'}$ differ from $\PM_0,\PM_{0'}$ only by exponential weights, so Lemma~\ref{l.dense} also applies to $\PMw_{0},\PMw_{0'}$.
\end{proof}

\begin{proof}[Proof of Proposition~\ref{p.prob.space}]
Part~\eqref{p.prob.space.1} follows from \cite[Proposition~2.12(ii)]{clark2024continuum}.
Part~\eqref{p.prob.space.2} for $\ell=2$ follows from \cite[Proposition~2.13]{clark2024continuum} and the same argument generalizes to $\ell>2$.

We now prove Part~\eqref{p.prob.space.3}.
To simplify notation, take $\ell =2$ and consider the case of fixing $t_3$ so that $t_2=t_3-r_1$.
The inclusion $\subset$ follows by definition.
To prove the inclusion $\supset$, we fix any $s<t\in\unit\D\cap[t_0,t_4]$ and show that $\flow^{\theta}_{s,t}$ is measurable wrt the left-hand side of \eqref{e.p.prob.space.3}.
When $[s,t]\subset[t_0,t_3)$ or $[t_3,t_4]$, the statement follows by definition, so we consider $s<t_3\leq t$.
By Axiom~\eqref{d.shf.ck}, $\flow^{\theta}_{s,t}$ is the $r\to 0$ limit of $(\flow_{s,t_1-r}\opdot_{r}\flow_{t_1,t})(\d x,\R^2,\d x')$, so it is measurable wrt the left-hand side of \eqref{e.p.prob.space.3}.
\end{proof}

\begin{lem}\label{l.positive}
The statement holds a.s: For all Borel $A\subset[s,t]$ and $\varphi\in\Lsp^2(\Psp_{[s,t]},\PMw{}^{\theta}_{[s,t]})$,
\begin{align}
    \label{e.l.positive}
	\int_{\Psp_{[s,t]}^2} \PMw^{\theta\,\otimes 2}_{[s,t]}(\d\Path,\d\Pathh) \,
	\inters^{\Path,\Pathh}_{[s,t]}(A) \, \varphi(\Path) \, \varphi(\Pathh)
	\geq 0\ .
\end{align}
\end{lem}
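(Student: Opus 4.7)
The main difficulty is that the mollifier $\Phi(x)=\ind_{|x|\le 1}$ used in \eqref{e.inters.e} is not a positive-definite function---its Fourier transform changes sign---so the $\e$-prelimit bilinear form associated with $\inters^{\Path,\Pathh}_{[s,t],\e}$ is not manifestly nonnegative. My plan is to replace $\Phi$ with a positive-definite bump of the same total mass, verify the identical $L^1$-type convergence to the same $\inters$, establish positivity at the prelimit level by Fourier inversion, and then pass to the limit.

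Concretely, take
\[
\tilde{\Phi}:=\tfrac{16}{\pi}\,\ind_{B_{1/2}}\ast\ind_{B_{1/2}},
\]
a continuous nonnegative function supported in $B_1$ with $\int\tilde{\Phi}=\pi=\int\Phi$ and, being a self-convolution of a real even function, $\hat{\tilde{\Phi}}=\tfrac{16}{\pi}|\widehat{\ind_{B_{1/2}}}|^2\ge 0$. Define
\[
\tilde{\inters}^{\Path,\Pathh}_\e(A):=\int_A \frac{\d u}{\e^2|\log\e|^2}\,\tilde{\Phi}\Big(\frac{\Path(u)-\Pathh(u)}{\e}\Big),
\]
and write $F_u(\xi):=\int_{\Psp_{[s,t]}} e^{i\xi\cdot\Path(u)}\varphi(\Path)\,\PMw^\theta_{[s,t]}(\d\Path)$. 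Then Fourier inversion yields, for any real $\varphi\in\Lsp^2(\Psp_{[s,t]},\PMw^\theta_{[s,t]})$ and any Borel $A\subset[s,t]$,
\[
\int_{\Psp^2_{[s,t]}}\PMw^{\theta\,\otimes 2}_{[s,t]}(\d\Path,\d\Pathh)\,\tilde{\inters}_\e(A)\,\varphi(\Path)\varphi(\Pathh)=\int_A\frac{\d u}{(2\pi|\log\e|)^2}\int_{\R^2}\hat{\tilde{\Phi}}(\e\xi)\,|F_u(\xi)|^2\,\d\xi \ge 0.
\]

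Next I would show that $\tilde{\inters}_\e$ converges to the same $\inters^{\Path,\Pathh}_{[s,t]}$ in the $L^1$ sense of \eqref{e.inters.L1conv}. The moment-type argument in \cite{clark2023planar} producing the convergence is insensitive to the specific shape of the mollifier and feels it only through $\int\tilde{\Phi}$, which matches $\int\Phi$; alternatively, writing $\tilde{\Phi}-\Phi$ as a mean-zero compactly supported bump and bounding its contribution to $\tilde{\inters}_\e-\inters_\e$ in $L^1$ via the same moment estimates that underlie \eqref{e.inters.L1conv} gives a self-contained verification. Granted this, taking $\e\to 0$ in the display above (first for $\varphi$ bounded and supported in $\{|\Path(s)|\le m\}$, then by approximation) yields \eqref{e.l.positive} for each fixed $(A,\varphi)$.

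Finally, to upgrade to a single a.s.\ statement valid for all Borel $A$ and all $\varphi\in\Lsp^2$ simultaneously, note that, for each fixed $\varphi$, the map $A\mapsto\int\PMw^{\theta\,\otimes 2}_{[s,t]}\inters(A)\,\varphi\otimes\varphi$ is $\sigma$-additive by monotone convergence, hence a signed Borel measure on $[s,t]$, so nonnegativity on a countable generating algebra of rational intervals upgrades automatically to all Borel $A$; and, for each fixed $A$, the map $\varphi\mapsto\int\PMw^{\theta\,\otimes 2}_{[s,t]}\inters(A)\,\varphi\otimes\varphi$ is continuous on $\Lsp^2$ by the Hilbert--Schmidt bound \eqref{e.inters.finite}, so nonnegativity on the deterministic countable dense set supplied by Corollary~\ref{c.dense} extends to all $\varphi$. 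A single event of full probability thus suffices. The main obstacle I foresee is verifying the mollifier-independence invoked in the third paragraph; this appears to require either essentially reproducing the convergence argument of \cite{clark2023planar} for $\tilde{\Phi}$ in place of $\Phi$, or supplying a short auxiliary $L^1$ estimate on the difference of approximations.
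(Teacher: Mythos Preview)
Your approach is essentially the paper's: replace $\Phi$ by a self-convolution of a ball indicator (the paper takes $\Phi(x)=|B(x,1)\cap B(0,1)|/c_0=(\ind_{B_1}*\ind_{B_1})(x)/c_0$, you take $\tfrac{16}{\pi}\,\ind_{B_{1/2}}*\ind_{B_{1/2}}$), and observe that this renders the prelimit form manifestly nonnegative---you via $\hat{\tilde\Phi}\ge 0$ and Fourier inversion, the paper via the equivalent real-space factorization $\int_{\R^2}\ind_{B(\Path(u)/\e,1)}(x)\,\ind_{B(\Pathh(u)/\e,1)}(x)\,\d x$. The mollifier-independence you flag as the main obstacle is exactly what the paper invokes, citing \cite[Theorem~5.12]{clark2023planar}, which gives the convergence \eqref{e.inters.L1conv} for every radially symmetric, bounded, compactly supported $\Phi$ with $\int_{\R^2}\Phi=\pi$; so your $\tilde\Phi$ is covered directly and no auxiliary estimate is needed.
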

\begin{proof}
First, we perform a few measure-theoretical reductions.
For any fixed $A\subset[s,t]$, the left-hand side of \eqref{e.l.positive} is a continuous (quadratic) function in $\varphi\in\Lsp^2(\Psp_{[s,t]},\PMw{}^{\theta}_{[s,t]})$ thanks to the bound \eqref{e.inters.finite}.
Hence, by Corollary~\ref{c.dense} for $n=1$ and $p=2$, it suffices to prove the statement for any deterministic bounded $\varphi\in\Csp(\Psp_{[s,t]})$ that is supported in $\{|\Path(s)|\leq m\}$ for a large enough $m$.
For the fixed $\varphi$, because $\inters_{[s,t]}$ has no atoms, it suffices to prove the statement for any deterministic $A=[s,t_1]$.

Next, to prove the reduced statement, we appeal to the approximation \eqref{e.inters.e} but with a different choice of $\Phi$.
By \cite[Theorem~5.12]{clark2023planar}, the convergence in \eqref{e.inters.L1conv} holds for every radially symmetric, bounded, compactly supported $\Phi\in\BMsp(\R^2)$ such that $\int_{\R^2} \d x\,\Phi(x)=\pi$ (see Remark~\ref{r.units}).
Write $B(x,r)$ for the closed ball in $\R^2$ centered at $x$ with radius $r$.
Instead of choosing $\Phi$ as the indicator function, here we choose $\Phi(x):=|B(x,1)\cap B(0,1)| /c_0$, where $|\cdott|$ denotes the area, and $c_0:=\int_{\R^2} \d x\, |B(x,1)\cap B(0,1)|/\pi$.
For this $\Phi$, use the fact that $|B(x_1,1)\cap B(x_2,1)|$ depends only on $|x_2-x_1|$ to get
\begin{align}
	\label{e.l.positive.1}
	\inters^{\Path,\Pathh}_{\e,[s,t]}(\d u)
	=
	\frac{\d u}{c_0\e^2|\log\e|^2}
	\,
	\int_{\R^2} \d x \, \ind_{B(\Path(u)/\e,1)}(x) \, \ind_{B(\Pathh(u)/\e,1)}(x)\ .
\end{align}
The product form on the right-hand of \eqref{e.l.positive.1} yields that
\begin{align}
	\int_{\Psp_{[s,t]}^2} \PMw^{\theta\,\otimes 2}_{[s,t]}(\d\Path,\d\Pathh)\,
	\inters^{\Path,\Pathh}_{\e,[s,t]}[s,t_1]\,
	\varphi(\Path)\,\varphi(\Pathh)
	\geq 0\ 
\end{align}
and hence completes the proof.
\end{proof}

\begin{rmk}\label{r.units}
To match notation in the current paper and \cite{clark2023planar}, call the respective small parameters $\e$ and $\e'$, let $\Phi(x)$ be as in the current paper, let $\phi_{\e'}(a)$ be as in \cite[Theorem~2.15]{clark2023planar}, and use the relations $\e=\sqrt{2}\,\e'$, $|x|=\sqrt{2}\, a$, and 
$
	\frac{1}{\e^{2}|\log\e|^{2}}\Phi(\frac{x}{\e})
	=
	\phi_{\e'}(a)
$.
\end{rmk}

\section{Operators}
\label{s.a.ops}

\begin{proof}[Proof of Lemma~\ref{l.sum}]
The idea follows that of \cite[Lemma~8.10]{gu2021moments}.
Observe that, because of $\Sigma(t)=\{\vecu \,|\, u_1+\cdots+u_{m}=t\}$, at least one $u_{i_0}$ is $\geq t/m$. 
Using this observation and multiplying and dividing the integrand by $e^{\lambda t}$ gives
\begin{align}
	\label{e.l.sum.1}
	\text{lhs of \eqref{e.l.sum}}
	\leq
	e^{\lambda t}
	\sum_{i_0\in\intv{m}} 
	\sup_{u\in[\frac{t}{m},t]} e^{-\lambda u }\Normop{\Top_{i_0}(u)}
	\, 
	\prod_{i\in\intv{m}\setminus\{i_0\}}
	\NOrmop{ \int_0^{t} \d u \, e^{-\lambda u}\Top_{i}(u) }\ .
\end{align}
Hereafter, we assume $\lambda \geq c'+2$, and write $c$ for universal constants.
Using \eqref{e.l.sum.bd} to bound terms in the summand gives
\begin{align}
	\label{e.l.sum.2}
	\sup_{u\in[\frac{t}{m},t]} e^{-\lambda u }\Normop{\Top_{i}(u)}
	&\leq
	c\, c_i \,
	\begin{cases}
		(t/m)^{-b_i} \leq m\,t^{-b_{i}}\ ,	&\text{ when } i\in A_1\ ,
	\\
		m\, t^{-1} \, |\log(t\wedge\frac{1}{2})|^{-2}\ ,	&\text{ when } i\in A_2\ ,
	\\
		m\, t^{-1} \ ,	&\text{ when } i\in A_3\ ,
	\end{cases}
\\
	\label{e.l.sum.3}
	\NOrmop{ \int_0^{t} \d u \, e^{-\lambda u}\Top_{i}(u) }
	&\leq
	c\, c_i \,
	\begin{cases}
		t^{1-b_i}/(1-b_i)\ ,	&\text{ when } i\in A_1\ ,
	\\
		1 \ ,	&\text{ when } i\in A_3\ ,
	\end{cases}	
\end{align}
and 
$
	\Normop{\int_0^{t} \d u \, e^{-\lambda u}\Top_{i}(u)}
	\leq
	c\,c_i
	\int_0^t\d u \,e^{-(\lambda-c')u}\,u^{-1}\,|\log(u\wedge\frac{1}{2})|^{-2}
$ ,
when $i\in A_2$.
In the last integral, bounding $e^{-(\lambda-c')t}\leq e^{-2t}$ gives the bound $c\, |\log(t\wedge\frac{1}{2})|^{-1}$; alternatively, releasing the range of integration to $[0,\infty)$ and bounding the result gives the bound $c\,/\log(\lambda-c'-1)$.
Hence,
\begin{align}
	\label{e.l.sum.4}
	\NOrmop{ \int_0^{t} \d u \, e^{-\lambda u}\Top_{i}(u) }
	\leq
	c\, c_i \, 
	\begin{cases}
		|\log(t\wedge\tfrac{1}{2})|^{-1} 
	\\
		1/\log(\lambda-c'-1)
	\end{cases}	
	\ ,
	\quad
	\text{when } i\in A_2\ .
\end{align}
Apply \eqref{e.l.sum.2}--\eqref{e.l.sum.4} to bound the terms in \eqref{e.l.sum.1}.
When $i_0\in A_2$, we apply the first bound in \eqref{e.l.sum.4} $\ell$  times and the second bound therein $(|A_2|-1-\ell)_+$  times.
When $i_0\notin A_2$, we use the first bound in \eqref{e.l.sum.4}  $\min\{\ell+1,|A_2|\}$  times and the second bound $(|A_2|-1-\ell)_+$ times.
Doing so gives the desired result.
\end{proof}

\begin{proof}[Proof of \eqref{e.Fop.bds}]
We begin with an identity.
Recall $\Sop_\alpha$ from \eqref{e.Sop} and consider its $\e$ analog
\begin{align}
	\label{e.ysp.}
	&\R^{4}\times\R^{2\intv{n}\setminus\alpha} 
	:= 
	\big\{ y=(\yr, \yc, (y_i)_{i\in\intv{n}\setminus\alpha}) \, \big| \, \yr,\yc,y_i\in\R^2 \big\}
	\ ,
\\
	\label{e.Sop.e}
	&S^\e_{\alpha}: \R^{4}\times\R^{2\intv{n}\setminus\alpha} \to \R^{2\intv{n}},
	\qquad
	\big( S^\e_{\alpha} y \big)_{k}
	:=
	\begin{cases}
		\yc+\e\yr/2 & \text{when } k=i \,,
	\\
		\yc-\e\yr/2 & \text{when } k=j \,,
	\\
		y_{k} & \text{when } k\in\intv{n}\setminus\alpha\ ,
	\end{cases}
\end{align}
where we index the first two coordinates in \eqref{e.ysp.} respectively by $\textr$ and $\textc$ for ``relative'' and  ``center of mass''.
For $\alpha\in\pair\intv{n}\cup\{\emptyset\}$ and $\sigma\in\pair\intv{n}$, let
\begin{align}
	&\heatsger{\alpha}{\sigma}(t,y,y') := \heatsg(t,\Sop_\alpha y-\Sop^\e_{\sigma}y') \text{ when } \alpha\neq\emptyset,
	&&
	\heatsg^{\hphantom{\emptyset}\e}_{\emptyset\sigma}(t,x,y') := \heatsg(t,x-\Sop^\e_{\sigma}y')\ ,
\end{align}
where $x\in\R^{2\intv{n}}$, $y\in\R^{2}\times\R^{2\intv{n}\setminus\alpha}$, and $y'\in\R^{4}\times\R^{2\intv{n}\setminus\sigma}$, and let
$
	\heatsgel{\sigma}{\alpha}(t) := \heatsger{\alpha}{\sigma}(t)^*
$.
Let $\phi=\phi(\yr):=\ind_{|\yr|\leq 1}$ and view it as a multiplicative operator on $\Lsp^2(\R^{4}\times\R^{2\intv{n}\setminus\alpha},\d y)$.
It is straightforward to check from \eqref{e.expansion.Psi} and \eqref{e.Fop} that
\begin{align}
	\label{e.Fop..}
	\Fop^{\e}_{\alpha\sigma\alpha'}(t)
	=
	\frac{1}{|\log\e|^2 }
	\int_{u+u'=t} \d u\
	\heatsger{\alpha}{\sigma}(u)\, \phi \, \heatsgel{\sigma}{\alpha'}(u')\ .
\end{align}

Let us prove \eqref{e.Fop.bd}--\eqref{e.Fop.bdint}.
First, for $c=c(n)$, $\alpha\in\pair\intv{n}\cup\{\emptyset\}$, and $\sigma\in\pair\intv{n}$,
\begin{align}
	\label{e.Fop..bd}
	\Normop{ \heatsger{\alpha}{\sigma}(t)\, \phi }
	&\leq
	c\, t^{-1}\, e^{ct}\ ,
\\
	\label{e.Fop..bdint}
	\NOrmop{ \int_{0}^\infty \d t \ e^{-t}\,\heatsger{\alpha}{\sigma}(t)\, \phi }
	&\leq
	c\,
	\begin{cases}
		|\log\e|	&\text{when } \alpha=\sigma\ ,
	\\
		1 &\text{when } \alpha\neq\sigma\ .
	\end{cases}
\end{align}
When $\alpha\neq\sigma$, these bounds can be obtained by arguments similar to the first paragraph in \cite[Section~3.1]{surendranath2024two}, which we omit.
When $\alpha=\sigma$, the operator $\heatsger{\sigma}{\sigma}(t)\, \phi$ has the kernel
\begin{align}
	\label{e.Fop.ker}
	\hk(2t,\e\yr') \,\phi(\yr') \, \hk(\tfrac{t}{2},\yc-\yc') \, \prod_{i\in\intv{n}\setminus\sigma} \hk(t,y_i-y'_i)\ .
\end{align}
Indeed, the heat kernels in the middle and last factors together contract norm on $\Lsp^2(\R^{2}\times\R^{2\intv{n}\setminus\alpha},\d y)$.
This fact implies that the left-hand sides of \eqref{e.Fop..bd}--\eqref{e.Fop..bdint} for $\alpha=\sigma$ are respectively bounded by the $\Lsp^2(\R^2,\d\yr)$ norm of the functions $\hk(2t,\e\yr)\phi(\yr)$ and $\int_{0}^\infty \d t \, e^{-t}\,\hk(2t,\e\yr)\,\phi(\yr)$.
The former is indeed bounded by $c t^{-1}$.
As for the latter, we have that $\int_{0}^\infty \d t \ e^{-t}\,\hk(2t,\cdot)=\frac{1}{\pi}K_0(|\cdot|)$, where $K_0$ denotes the $0$th order modified Bessel function of the second kind, which satisfies $K_0(r)\leq c \log(1/r)$ for $0<r<1/2$; see \cite[page~379, Equation~9.6.13]{abramowitz1965handbook} for example.
These properties yield \eqref{e.Fop..bd}--\eqref{e.Fop..bdint} for $\alpha=\sigma$.
The bound \eqref{e.Fop.bd} now follows by considering the cases $u>t/2$ and $u<t/2$ in \eqref{e.Fop..} separately and using \eqref{e.Fop..bd}--\eqref{e.Fop..bdint}. The bound \eqref{e.Fop.bdint} then follows by applying $\int_{0}^\infty \d t\ e^{-t}$ to \eqref{e.Fop..} and using \eqref{e.Fop..bdint} in the result.

To prove \eqref{e.Fop.convg}, let $\sff_\e(u,u'):= |\log\e|^{-2} \int_{|\yr|\leq 1} \d\yr\,\hk(2u,\e\yr)\,\hk(2u',\e\yr)$ and use the kernel expression \eqref{e.Fop.ker} to obtain
\begin{align}
	\label{e.Fop.ker.}
	|\log\e|^{-2} \, \heatsger{\sigma}{\sigma}(u)\,  \phi \, \heatsgel{\sigma}{\sigma}(u')
	=
	\sff_\e(u,u') \, \hk(\tfrac{u+u'}{2}) \otimes \hk(u+u')^{\otimes\intv{n}\setminus\alpha}\ .
\end{align}
It is straightforward to check that, for every $0<\delta<t'<\infty$, as $\e\to 0$, $\int_{[0,t']^2\setminus [0,\delta]^2} \d u \d u' \ \sff_\e(u,u') \to 0 $ and $\int_{[0,t']^2} \d u \d u'\ \sff_\e(u,u') \to 1/4\pi$.
In \eqref{e.Fop.ker.}, using these properties together with the fact that the heat semigroup is strongly continuous and norm bounded gives \eqref{e.Fop.convg}.
\end{proof}

Next, we prepare a lemma for taking operator limits.
To set up the notation and assumptions, fix $m\in\N$, and for   $i\in\intv{m}$ and $\e>0$, consider operators $\Top_{i,\e}(t):\Lsp^2(\R^{d_i},\d x) \to \Lsp^2(\R^{d_{i-1}},\d x)$, which we will assume to have nonnegative kernels.
We will send $\e\to 0$.
Take $I\subset\intv{m-1}$ and assume the following.
\begin{enumerate}
\item When $i\in \intv{m}\setminus I$, $\Top_{i,\e}(t)=\Top_{i}(t)$ does not depend on $\e$ and satisfies, for $c'=c'(m)$,
\begin{align}
	\label{e.l.delta.bd1}
	\Normop{\Top_{i}(t)} \leq c'\, t^{-1}\, |\log(t\wedge\tfrac{1}{2})|^{-2}\, e^{c't}\ .
\end{align}
\item 
\label{l.delta.conti}
The last operator $\Top_{m}(t)$ is strongly continuous away from $t=0,\infty$.
More precisely, for every $g\in\Lsp^{2}(\R^{d_i}, \d x)$, $\Top_{m}(t)g$ is continuous on $t\in(0,\infty)$. 
\item
\label{l.delta.delta}
When $i\in I$, the following hold for $c'=c'(m)$, for $c(\e)$ such that $c(\e)\xrightarrow{\e} 0$, and for every fixed $\delta\in(0,\infty)$,
\begin{align}
	\label{e.l.delta.bd2}
	&\Normop{\Top_{i,\e}(t)} \leq c(\e) \,e^{c' t}\,t^{-1}\ ,
\\
	&
	\label{e.l.delta.bd3}
	\NOrmop{ \int_{0}^\infty \d t\ e^{-c't}\,\Top_{i,\e}(t) } \leq c'\ ,
	\quad
	\int_{0}^{\delta} \d t \ \Top_{i,\e}(t) \xlongrightarrow{\e} \id \text{ strongly.} 
\end{align}
\end{enumerate}
\begin{lem}\label{l.delta}
Under the preceding setup, for every $t\in(0,\infty)$, 
\begin{align}
	\label{e.l.deta}
	\int_{\Sigma(t)} \d\vecu\ \prod_{i=1}^{m} \Top_{i,\e}(u_i)
	\xlongrightarrow{\e}
	\int_{\Sigma(t)} \d\vecu\ \prod_{i\notin I} \Top_{i}(u_i)	
	\text{ strongly,}
\end{align}
where the $\Sigma(t)$ on the left-hand side is $\{\sum_{i\in\intv{m}}u_i=t\}$, while the $\Sigma(t)$ on the right-hand side is $\{\sum_{i\in\intv{m}\setminus I}u_i=t\}$.
\end{lem}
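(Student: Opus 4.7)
The plan is to exploit the delta-like concentration of the operator-valued measure $\Top_{i,\e}(u)\,du$ for $i\in I$ at $u=0$, as made precise by \eqref{e.l.delta.bd3}. I would split the simplex $\Sigma(t)$ into a main region $\{u_i\leq\delta\ \forall\,i\in I\}$ and a tail region, for a parameter $\delta>0$ eventually sent to $0$ after $\e\to 0$.

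For the tail, if $u_{i_0}>\delta$ for some $i_0\in I$, I would apply Lemma~\ref{l.sum} to the product obtained by replacing $\Top_{i_0,\e}(u)$ with $\ind_{u>\delta}\Top_{i_0,\e}(u)$. From \eqref{e.l.delta.bd2}, this truncation satisfies $\|\ind_{u>\delta}\Top_{i_0,\e}(u)\|\leq c(\e)\,e^{c'u}/\delta$ and $\int_0^\infty e^{-\lambda u}\|\ind_{u>\delta}\Top_{i_0,\e}(u)\|\,du\lesssim c(\e)/\delta$ for $\lambda>c'$ large; the other factors (those in $I\setminus\{i_0\}$ via \eqref{e.l.delta.bd3}, those in $\intv{m}\setminus I$ via \eqref{e.l.delta.bd1}) fit the hypotheses of Lemma~\ref{l.sum}. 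Summing over $i_0\in I$ yields an operator-norm bound on the tail contribution of order $C(t,\delta)\,c(\e)\to 0$ as $\e\to 0$, for each fixed $\delta>0$.

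The main region is handled by induction on $|I|$, the case $|I|=0$ being trivial. For the inductive step, I would pick the largest $k\in I$, so that all $\Top_i$ with $i>k$ are $\e$-independent, and factor the main-region contribution as
\begin{align*}
    \int_0^\delta d\sigma\int_{r+s=t-\sigma}P_\e(r)\,\Top_{k,\e}(\sigma)\,Q(s)\,dr\,ds,
\end{align*}
where $P_\e(r):=\int_{\Sigma(r)}\prod_{i<k}\Top_{i,\e}(u_i)\,d\vec u$ and $Q(s):=\int_{\Sigma(s)}\prod_{i>k}\Top_i(u_i)\,d\vec u$. The inductive hypothesis applied to $I\setminus\{k\}$ gives $P_\e(r)\to P(r)$ strongly for every $r>0$. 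Combining this with $\int_0^\delta\Top_{k,\e}(\sigma)\,d\sigma\to\ind$ (from \eqref{e.l.delta.bd3}), strong continuity of $Q(\cdot)g$ in $s$ (inherited from assumption~\ref{l.delta.conti} by absorbing $\Top_m$ as the rightmost factor), and dominated convergence underwritten by the uniform norm bounds of Lemma~\ref{l.sum}, one obtains convergence of the main-region contribution to $\int_{r+s=t}P(r)\,Q(s)\,g\,dr\,ds=A(t)g$.

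The principal obstacle I anticipate is reconciling the pointwise-in-$r$ strong convergence $P_\e(r)\to P(r)$ from the induction with the $\sigma$-integration, in which $r=t-s-\sigma$ varies; this should be resolvable by first establishing the cleaner strong convergence of the Laplace-transformed products $\int_0^\infty e^{-\lambda t}P_\e(t)\,dt\to\int_0^\infty e^{-\lambda t}P(t)\,dt$, where a dominating envelope is supplied by Lemma~\ref{l.sum}, and then converting back to pointwise-in-$t$ convergence using the uniform operator-norm bounds together with strong continuity of $Q(\cdot)g$.
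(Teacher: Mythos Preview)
Your tail estimate is fine and matches the paper's $\delta$-truncation in spirit. The genuine gap is in the main-region argument.

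The dominated-convergence step does not go through as written. Lemma~\ref{l.sum} bounds the \emph{full} convolution $\int_{\Sigma(t)}\prod_i\Top_{i,\e}(u_i)$ in operator norm; it does not furnish an integrable dominating function for the pointwise-in-$(r,\sigma)$ integrand $\|P_\e(r)\Top_{k,\e}(\sigma)Q(t-r-\sigma)g\|$. Concretely, the hypotheses give only $\big\|\int_0^\infty e^{-c't}\Top_{k,\e}(t)\,dt\big\|\leq c'$, not $\int_0^\delta\|\Top_{k,\e}(\sigma)\|\,d\sigma<\infty$; the pointwise bound \eqref{e.l.delta.bd2} yields $c(\e)\sigma^{-1}$, which is not integrable at $0$. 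Likewise $\|P_\e(r)\|$ and $\|Q(s)\|$ are not uniformly bounded near $r=0$ or $s=0$, so one cannot simply pull the approximate-identity integral $\int_0^\delta\Top_{k,\e}(\sigma)\,d\sigma$ through. Your Laplace-transform fallback does not rescue this: strong convergence of $\int_0^\infty e^{-\lambda t}A_\e(t)\,dt$ for all $\lambda$ does not imply strong convergence of $A_\e(t)$ at a fixed $t$ without further regularity, and the ``uniform operator-norm bounds together with strong continuity of $Q(\cdot)g$'' you invoke are exactly what is missing near the boundary of the simplex.

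The paper's proof avoids induction altogether by introducing a \emph{second} truncation parameter: restrict to $u_i\geq r$ for all $i\in\intv{m}\setminus I$. On this region every non-$I$ operator $\Top_i(u_i)$ is uniformly bounded by \eqref{e.l.delta.bd1}, which removes the boundary blow-up. One then absorbs the total deficit $u_I:=\sum_{i\in I}u_i\leq |I|\delta$ into the \emph{last} factor, replacing $\Top_m(u_m-u_I)g$ by $\Top_m(u_m)g$ via the strong continuity assumption~\ref{l.delta.conti}; after this replacement the $u_I$-integral decouples and the approximate-identity property \eqref{e.l.delta.bd3} handles all $i\in I$ simultaneously. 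The truncation errors (from both $r$ and $\delta$) are then controlled in operator norm by the same pigeonhole argument you used for the tail. This extra lower truncation $u_i\geq r$ for $i\notin I$ is the missing idea.
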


The intuition behind the convergence~\eqref{e.l.deta} is that, under Assumption~\eqref{l.delta.delta}, the operators $\Top_{i,\e}(u_i)$ for $i\in I$ behave like $\delta_0(u_i)$ as $\e\to 0$.

\begin{proof}
The first step is to truncate the integrals.
Let $\Uop_{\e}(t)$ and $\Uop(t)$ respectively denote the left and right-hand sides of \eqref{e.l.deta}.
To truncation them, for small $r,\delta>0$, let
\begin{align}
    \label{e.Sigma'.}
    \Sigma'_{r,\delta}(t) 
    &:= 
    \Big\{ 
        \vecu\in[0,\infty)^{\intv{m}} 
    \Big| 
        \sum_{i=1}^{m} u_i=t, \
        u_{i}\geq r \ \forall i\in\intv{m}\setminus I, \
        u_{i} <\delta \ \forall i\in I
    \Big\}\ ,
\\
    \label{e.Sigma'}
    \Sigma'_r(t) 
    &:= 
    \Big\{ 
        \vecu\in[0,\infty)^{\intv{m}\setminus I} 
    \Big| 
        \sum_{i\in\intv{m}\setminus I} u_i=t,  \ u_{i}\geq r \ \forall i\in\intv{m}\setminus I
    \Big\}\ ,
\end{align}
let $\Uop'_{r,\delta,\e}(t)$ be obtained by replacing $\Sigma(t)$ by $\Sigma'_{r,\delta}(t)$ on the left-hand side of \eqref{e.l.deta}, and similarly let $\Uop'_{r}(t)$ be obtained by replacing $\Sigma(t)$ by $\Sigma'_{r}(t)$ on the right-hand side of \eqref{e.l.deta}.
In the analysis below, we fix $t\in(0,\infty)$ and will take $\e\to 0$ first, $\delta\to 0$ second, and $r\to 0$ last.

Let us bound the truncation errors $\normop{\Uop'_{r}(t)-\Uop(t)}$ and $\normop{\Uop'_{r,\delta,\e}(t)-\Uop_{\e}(t)}$.
First, observe that
\begin{align}
	\label{e.l.delta.1}
	\normop{\Uop'_{r}(t)-\Uop(t)}
	\leq
	\sum_{j\in \intv{m}\setminus I} \NOrmop{  
		\int_{\Sigma(t), u_j<r} \d\vecu\ \prod_{i\notin \intv{m}\setminus I} \Top_{i}(u_i)
	}\ .
\end{align}
Because of $\Sigma(t)$, the sum of $u_i$ over $i\notin I$ is equal to $t$, which forces $u_{i_0}\geq t/(m-|I|)\geq t/m$ for at least one $i_0$, and this $i_0$ cannot be $j$ when $r$ is small, because $u_j<r$. 
Use this observation to bound the right-hand side of \eqref{e.l.delta.1} by
\begin{align}
	\sum_{j\in\intv{m}\setminus I} 
	\sum_{i_0\notin I\cup\{j\}}
	\sup_{u\in[\frac{t}{m},t]} \Normop{ \Top_{i_0}(u) }
	\,
	\prod_{i\in \intv{m}\setminus (I\cup\{i_0,j\})}
	\NOrmop{ \int_0^{t} \d u\, \Top_{i}(u) } 
	\cdot
	\NOrmop{ \int_{0}^{r} \d u\, \Top_{j}(u) }\ .
\end{align}
Within the sum, the first two factors are finite thanks to \eqref{e.l.delta.bd1}, and the last factor tend to $0$ as $r\to 0$ thanks also to \eqref{e.l.delta.bd1}.
Hence, $\normop{\Uop'_{r}(t)-\Uop(t)}\to 0$ as $r\to 0$.
A similar but more tedious argument shows that $\normop{\Uop'_{r,\delta,\e}(t)-\Uop_{\e}(t)}\to 0$ as $\e\to 0$ first, $\delta\to 0$ second, and $r\to 0$ last.

To complete the proof, we show that, for every small fixed $r>0$, $\Uop'_{r,\delta,\e}(t)-\Uop'_{r,\e}(t)\to 0$ strongly as $\e\to 0$ first and $\delta\to 0$ second.
To this end, take any $g\in\Lsp^2(\R^{d_m},\d x)$, let $u_I:=\sum_{i\in I}u_i$, and express the difference acting on $g$ as
\begin{align}
\label{e.l.delta.3}
\begin{split}
	&\big( \Uop'_{r,\delta,\e}(t)-\Uop'_{r,\e}(t) \big)\,g
	=
\\
	&\int_{\Sigma'_r(t)} \d\vecu 
	\int_{[0,\delta]^I} \d \vecu_{I} \
	\prod_{i=1}^{m-1} \Top_{i,\e}(u_i) \cdot \Top_{m}(u_m-u_I) g
	-\int_{\Sigma'_r(t)} \d\vecu 
	\prod_{i\in\intv{m}\setminus I} \Top_{i}(u_i) g\ ,
\end{split}
\end{align}
Note that we have used our assumption that $m\notin I$ in the first integral in \eqref{e.l.delta.3} and also that $u_m\geq r\geq m \delta \geq u_I$ holds for small enough $\delta$.
Before bounding this expression, let us replace $\Top_{m}(u_m-u_I) g$ with $\Top_{m}(u_m) g$.
The error induced by this replacement is negligible: By Assumption~\eqref{l.delta.conti} and the bounds in \eqref{e.l.delta.bd1}--\eqref{e.l.delta.bd3}, the $\Lsp^2$ norm of the error converges to $0$ as $\delta\to 0$, uniformly in $\e>0$.
After the replacement, we have
\begin{align}
	\label{e.l.delta.2}
	\int_{\Sigma'_r(t)} \d\vecu 
	\int_{[0,\delta]^I} \d \vecu_I \
	\prod_{i=1}^{m} \Top_{i,\e}(u_i) g
	-\int_{\Sigma'_r(t)} \d\vecu 
	\prod_{i\in\intv{m}\setminus I} \Top_{i}(u_i) g\ .
\end{align}
With $r>0$ being fixed, $\normop{\Top_{i}(u_i)}$, $i\in\intv{m}\setminus I$, are uniformly bounded over $\vecu\in\Sigma'_r(t)$.
Combining this property with \eqref{e.l.delta.bd3} shows that \eqref{e.l.delta.2} converges to $0$ in $\Lsp^2$ as $\e\to 0$, for fixed $\delta,r>0$.
This completes the proof.
\end{proof}

\section{Tools for operator embedding}
\label{s.a.embed.tool}

\begin{proof}[Proof of Lemma~\ref{l.embed.eigen}]
%Under the given assumptions, $\kerop$ indeed defines a positive Hilbert--Schmidt operator on $\Lsp^2(\msp,\measure)$.
Let $\eigenval_1\geq \eigenval_{2} \geq \cdots > 0$ be an enumeration with multiplicity of the nonzero eigenvalues of $\kerop$  and $\eigenfn'_{1},\eigenfn'_{2},\ldots \in \Lsp^2(\msp,\measure)$ be corresponding orthonormal eigenvectors, which satisfy
\begin{align}
	\label{e.l.embed.eigen.1}
	\eigenfn'_{i}(\pt)
	=
	\frac{1}{\eigenval_i} \int_{\msp} \measure(\d\pt') \, \ker(\pt,\pt')\, \eigenfn_i'(\pt')\ ,
	\quad
	\text{for $\measure$-a.e.\ $\pt$\ . } 
\end{align}
Defining $\eigenfn_{i}(\pt):=\frac{1}{\eigenval_i} \int_{\msp} \measure(\d\pt') \, \ker(\pt,\pt')\, \eigenfn_i'(\pt')$ for \emph{every} $\pt\in\msp$, we have $\eigenfn_{i}=\eigenfn'_{i}$ $\measure$ a.e.\ by \eqref{e.l.embed.eigen.1}.
Combining \eqref{e.l.embed.eigen.0} and \eqref{e.l.embed.eigen.1} shows that $\eigenfn_i\in\Lsp^2(\msp,\measure')$.
\end{proof}

Next, we prove Lemma~\ref{l.embed.tool} in two steps, stated in \eqref{l.embed.tool..1}-\eqref{l.embed.tool..2} of the following lemma.
Write $\ipp{\cdott,\cdott}_{\measuree}=\ipp{\cdott,\cdott}_{\Lsp^2(\msp,\measuree)}$ and $\norm{\cdott}_{\measuree}=\norm{\cdott}_{\Lsp^2(\msp,\measuree)}$ for $\measuree=\measure,\measure'$.
\begin{lem}\label{l.embed.tool.}
Setup as in Assumptions~\ref{a.embed.eigen} and \ref{a.embed.tool}.
\begin{enumerate}
\item \label{l.embed.tool..1}
If \eqref{e.embed.tool.1}--\eqref{e.embed.tool.2} are satisfied,
\begin{align}
	\sum_{i=1}^\infty \eigenval_{i}\, \measure' [ f\eigenfn_{i}]^2
	\leq
	 \measure'{}^{\otimes 2}\,\big[ \ker\, f^{\otimes 2}\big]
	=
	\ipp{ f, \kerop' f }_{\measure'}
	\ ,
	\quad
	f\in\Lsp^2(\msp,\measure')\ .
\end{align}
\item \label{l.embed.tool..2}
If, in addition, \eqref{e.embed.tool.3}--\eqref{e.embed.tool.5}, are satisfied, the inequality becomes an equality.
\end{enumerate}
\end{lem}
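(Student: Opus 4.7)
My plan is to recast both parts in operator-theoretic language by defining $\Yop:\lsp^2\to\Lsp^2(\msp,\measure')$ via $\Yop e_i:=\sqrt{\eigenval_i}\,\eigenfn_i$ (well-defined into $\Lsp^2(\msp,\measure')$ by Lemma~\ref{l.embed.eigen}); then Part~\eqref{l.embed.tool..1} is the quadratic-form inequality $\Yop\Yop^{*}\leq\kerop'$, while Part~\eqref{l.embed.tool..2} is the operator identity $\Yop\Yop^{*}=\kerop'$. The overall strategy is to first establish the analogous identity $\Yop_\ell\Yop_\ell^{*}=\kerop_\ell$ at the $\measure_\ell$-level and then use the conditions in Assumption~\ref{a.embed.tool} to pass to $\ell\to\infty$.

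For Part~\eqref{l.embed.tool..1}, density of $D$ in $\Lsp^2(\msp,\measure')$ reduces matters to $f=\psi\in D$. By the construction in Corollary~\ref{c.dense}, $\psi$ is bounded with bounded support, so $\phi_\ell:=\psi\,(\d\measure_\ell/\d\measure)$ belongs to $\Lsp^2(\msp,\measure)$. Parseval's identity applied to the spectral decomposition of $\kerop$ on $\Lsp^2(\msp,\measure)$, combined with $\measure_\ell[\psi\eigenfn_i]=\ipp{\phi_\ell,\eigenfn_i}_{\measure}$, then yields the key identity
\begin{align*}
	\sum_{i=1}^{\infty}\eigenval_i\,\measure_\ell[\psi\eigenfn_i]^2
	\ =\ \ipp{\phi_\ell,\kerop\phi_\ell}_{\measure}
	\ =\ \measure_\ell^{\otimes 2}\big[\ker\,\psi^{\otimes 2}\big]. \qquad (\star)
\end{align*}
Truncating to the first $N$ terms, sending $\ell\to\infty$ via~\eqref{e.embed.tool.1}--\eqref{e.embed.tool.2}, and then $N\to\infty$ by monotone convergence gives Part~\eqref{l.embed.tool..1} for $\psi\in D$; continuity of the quadratic forms (each partial sum $f\mapsto\sum_{i\leq N}\eigenval_i\measure'[f\eigenfn_i]^2$ is bounded on $\Lsp^2(\msp,\measure')$ because $\eigenfn_i\in\Lsp^2(\msp,\measure')$, and the bound by $\langle f,\kerop'f\rangle_{\measure'}$ persists to the limit) together with density extend the inequality to every $f\in\Lsp^2(\msp,\measure')$.

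For Part~\eqref{l.embed.tool..2}, I would polarize $(\star)$ and extend by density to obtain $\Yop_\ell\Yop_\ell^{*}=\kerop_\ell$ on $\Lsp^2(\msp,\measure_\ell)$, with boundedness of $\Yop_\ell$ following from the partial-sum identity $\sum_{i,j\leq N}\eigenval_i\eigenval_j\,\measure_\ell[\eigenfn_i\eigenfn_j]^2=\measure_\ell^{\otimes 2}[\ker_N^2]$ (where $\ker_N:=\sum_{i\leq N}\eigenval_i\,\eigenfn_i\otimes\eigenfn_i$, obtained by Tonelli since the summands are nonnegative) combined with the a priori bound $\measure_\ell^{\otimes 2}[\ker^2]<\infty$. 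Taking Hilbert--Schmidt norms on both sides of $\Yop_\ell\Yop_\ell^{*}=\kerop_\ell$ yields
\begin{align*}
	\sum_{i,j\geq 1}\eigenval_i\eigenval_j\,\measure_\ell[\eigenfn_i\eigenfn_j]^2
	\ =\ \|\Yop_\ell^{*}\Yop_\ell\|_{\HS}^2
	\ =\ \|\kerop_\ell\|_{\HS}^2
	\ =\ \measure_\ell^{\otimes 2}[\ker^2].
\end{align*}
Passing $\ell\to\infty$---the left-hand side via~\eqref{e.embed.tool.3} (pointwise convergence of each term) and~\eqref{e.embed.tool.5} (uniform tail domination), the right-hand side via~\eqref{e.embed.tool.4}---gives $\|\Yop\Yop^{*}\|_{\HS}=\|\kerop'\|_{\HS}$. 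Combined with $0\leq\Yop\Yop^{*}\leq\kerop'$ from Part~\eqref{l.embed.tool..1}, the identity $\Yop\Yop^{*}=\kerop'$ follows from the algebraic fact that positive Hilbert--Schmidt operators $A\leq B$ with equal Hilbert--Schmidt norms must coincide: setting $C:=B-A\geq 0$, cyclicity of the trace gives
\begin{align*}
	0\ =\ \|B\|_{\HS}^2-\|A\|_{\HS}^2\ =\ 2\operatorname{tr}(A^{1/2}CA^{1/2})+\|C\|_{\HS}^2,
\end{align*}
forcing $\|C\|_{\HS}=0$, hence $A=B$.

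The main technical obstacle is the potential mutual singularity of $\measure$ and $\measure'$, which prevents any direct comparison of their $\Lsp^2$ spaces; this is precisely circumvented by routing the argument through the approximating measures $\measure_\ell\ll\measure$, where the local boundedness of the density $\d\measure_\ell/\d\measure$ restores enough integrability for Parseval's identity~$(\star)$ and for the interchange of summation and integration in the partial-sum identity used to establish boundedness of $\Yop_\ell$.
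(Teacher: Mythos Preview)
Your proposal is correct and follows essentially the same route as the paper's proof: establish the Parseval-type identity at the $\measure_\ell$ level (your $(\star)$, the paper's \eqref{e.ker.ell}), pass the truncated inequality to $\ell\to\infty$ for Part~\eqref{l.embed.tool..1}, and for Part~\eqref{l.embed.tool..2} match Hilbert--Schmidt norms using \eqref{e.embed.tool.3}--\eqref{e.embed.tool.5} and conclude via the elementary fact that $0\le A\le B$ with $\|A\|_{\HS}=\|B\|_{\HS}$ forces $A=B$. The paper organizes the same computation slightly differently---it works with the auxiliary operators $\kerop_\ell$, $\kerop_{\ell,N}$ on $\Lsp^2(\msp,\measure)$ (kernel $\ker\sqrt{\d\measure_\ell/\d\measure}^{\,\otimes 2}$) rather than your $\Yop_\ell$ on $\Lsp^2(\msp,\measure_\ell)$, and phrases the last step via the equivalence \eqref{e.hs.equivalence}---but the content is identical. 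One small remark: your appeal to Corollary~\ref{c.dense} to ensure $\psi$ has bounded support steps outside the abstract hypotheses of the lemma (which only place $D\subset\Cbsp(\msp)$); the paper makes the same implicit assumption when it writes ``$\d\measure_\ell/\d\measure$ is bounded on the support of $\psi$'', so this is harmless in the intended application, but strictly speaking the density argument for $\Yop_\ell\Yop_\ell^{*}=\kerop_\ell$ should be run over $\Ccsp(\msp)$ (which is dense in $\Lsp^2(\msp,\measure_\ell)$ regardless of $D$) rather than over $D$.
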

\begin{proof}
(\ref*{l.embed.tool..1})\
In view of Lemma~\ref{l.embed.eigen}, we stipulate that $\eigenfn_{i}\in\Lsp^2(\msp,\measure')$ for all $i\in \mathbb{N}$.
Take any $\psi\in D$.
It suffices to show that
\begin{align}
	\label{e.l.embed.tool.1}
	\measure'{}^{\otimes 2}\,\big[ \ker\, \psi^{\otimes 2}\big]
	-
	\sum_{i=1}^{N} \eigenval_{i}\, \measure' \,[\psi\eigenfn_{i} ]^2
	\geq
	0\ ,
	\quad N\in \mathbb{N}\ .
\end{align}
Indeed, both terms in \eqref{e.l.embed.tool.1} are continuous in $\psi$ wrt $\norm{\cdot}_{\measure'}$, so once the above inequality is proven for $\psi\in D$, it extends to all $f\in\Lsp^2(\msp,\measure')$, and that being true for all $N\in\mathbb{N}$ implies the desired result.
To show \eqref{e.l.embed.tool.1}, we use \eqref{e.embed.tool.2} to write the first term as 
$
	\lim_{\ell}
	\measure_{\ell}{}^{\otimes 2}\, \big[\ker\, \psi^{\otimes 2}\big]
$,
and use \eqref{e.embed.tool.1} to write the second term as 
$
	\lim_{\ell}
	\sum_{i=1}^{N} \eigenval_{i}\,\measure_{\ell} \,[\psi\eigenfn_{i}]^2
$.
Next, taking the difference of these limits yields
\begin{align}
	\label{e.l.embed.tool.2}
	\text{lhs of \eqref{e.l.embed.tool.1}}
	=
	\lim_{\ell}
	\measure_{\ell}{}^{\otimes 2}\, 
	\Big[ 
		\ker\, \psi^{\otimes 2} - \sum_{i=1}^{N}\eigenval_{i}\,(\psi\eigenfn_{i})^{\otimes 2}
	\Big]\ .
\end{align}
Given that $\d\measure_{\ell}/\d \measure$ is bounded on the support of $\psi$, we change the measure from $\measure_{\ell}$ to $\measure$ to express the right-hand side of \eqref{e.l.embed.tool.2} as the limit of 
\begin{align}
    \measure^{\otimes 2}
    \Big[
         \ker\, (\psi\,\d\measure_{\ell}/\d \measure)^{\otimes 2} 
         - 
         \sum_{i=1}^{N}\eigenval_{i}\,(\eigenfn_{i}\,\psi\,\d\measure_{\ell}/\d \measure)^{\otimes 2}
    \Big]
    \, .
\end{align}
By the spectral decomposition of $\kerop$, the last expression is equal to
$
	\sum_{i> N}\eigenval_{i}\,\ipp{ \eigenfn_{i} , \psi\,\d\measure_{\ell}/\d \measure }^2_{\measure}
$, 
which is clearly nonnegative, and hence \eqref{e.l.embed.tool.1} follows.

(\ref*{l.embed.tool..2})\
Let us prepare some notation and tools.
For a Hilbert--Schmidt integral operator $\op$ with kernel $\hsker$ on $\Lsp^2(\msp,\measuree)$, write
$
    \norm{\op}_{\HS}^2
    =
    \norm{\op}_{\HS,\measuree}^2
    :=
    \measuree\, [\hsker^2] 
$,
which is equal to
$
	\sum_{i,j}
	\ipp{ \phi_i, \op\,\phi_j }_{\measuree}^2
$
for any orthonormal basis $\phi_1,\phi_2,\ldots$. (See \cite[Theorem VI.22--23]{reed1972methods} for example.)
For positive operators $\op$ and $\op'$, the ordering $\op\leq\op'$ means that the difference $\op'-\op$ is positive.
Using the orthonormal-basis representation of the Hilbert--Schmidt norm, it is not difficult to check that:
\begin{align}
\label{e.hs.equivalence}
\begin{split}
	&\text{For any positive Hilbert--Schmidt } \op_1\leq\op_2\leq \cdots \leq\op', \text{ these statements are equivalent:}
\\
	&
	\qquad
	\norm{\op'-\op_\ell}_{\HS} \to 0\ ,
	\qquad
	\norm{\op_\ell}_{\HS} \to \norm{\op'}_{\HS}\ ,
	\qquad
	\op_\ell \to \op' \text{ weakly.}
\end{split}
\end{align}

We need a few more operators.
Let $\kerop_{\ell}$ be the integral operator on $\Lsp^2(\msp,\measure)$ with the kernel
\begin{align}
	\ker_{\ell}
	:=
	\ker\, \sqrt{\d\measure_{\ell}/\d\measure}^{\ \otimes 2}\ .
\end{align}
This operator is positive and Hilbert--Schmidt.
It being Hilbert--Schmidt follows by writing
\begin{align}
	\label{l.embed.tool..2.1}
	\measure_{}{}^{\otimes 2} \, \big[\ker^{2}_{\ell} \big]=\measure_{\ell}{}^{\otimes 2} \big[\ker^2\big]
\end{align}
and noting that the right-hand side is finite by \eqref{e.embed.tool.4}.
To see the positivity, observe that for any $f\in\Cloc(\msp)$ the spectral decomposition of $\kerop$ yields that
\begin{align}
	\label{e.ker.ell}
	\ipp{ f, \ker_{\ell} f}_{\measure}
%	=
%	\int_{\msp^2} \measure^{\otimes2}\,
%	\ker\cdot \big(f\sqrt{\d\measure_{\ell}/\d\measure}\,\big)^{\otimes 2}
	=
	\sum_{i=1}^{\infty} \eigenval_{i}\, \Ipp{ \,\eigenfn_{i}\, , \, f\sqrt{\d\measure_{\ell}/\d\measure}\, }_{\measure}^2\ ,
\end{align}
observe that this is nonnegative, and note that $\Cloc(\msp)$ is dense in $\Lsp^2(\msp,\measure)$ (which follows from \cite[Lemma~1.37]{kallenberg2021foundations} and Urysohn's lemma for example).
Next, in view of \eqref{e.ker.ell}, consider the finite-dimensional approximation $\kerop_{\ell,N}$ of $\kerop_{\ell}$
\begin{align}
	\label{l.embed.tool..2.2}
	\kerop_{\ell,N}: \Lsp^2(\msp,\measure)\to\Lsp^2(\msp,\measure)\ ,
	\quad
	\kerop_{\ell,N} 
	:=
	\sum_{i=1}^N 
	\eigenval_{i}\, \eigenfn_{i}\sqrt{\d\measure_{\ell}/\d\measure} 
	\, \big(\eigenfn_{i}\sqrt{\d\measure_{\ell}/\d\measure}\,,\,\cdott \big)_{\measure}\ ,
\end{align}
which is indeed positive and Hilbert--Schmidt.
From \eqref{e.ker.ell}--\eqref{l.embed.tool..2.2}, we see that $\kerop_{\ell,1}\leq\kerop_{\ell,2}\leq\cdots\leq \kerop_{\ell}$ and $\kerop_{\ell,N}\to\kerop_{\ell}$ weakly as $N\to\infty$.
By \eqref{e.hs.equivalence}, we have $\lim_N\norm{\kerop_{\ell,N}}_{\HS,\measure}=\norm{\kerop_{\ell}}_{\HS,\measure}$.

Let us now prove the statement of \eqref{l.embed.tool..2}.
First, the result of \eqref{l.embed.tool..1} implies that 
\begin{align}
	\label{l.embed.tool..2.kerop''}
	\kerop'': \Lsp^2(\msp,\measure') \to \Lsp^2(\msp,\measure')\ ,
	\quad
	\kerop'' f := \sum_{i=1}^\infty 
	\eigenval_{i}\, \eigenfn_{i} \, \ipp{\eigenfn_{i},f}_{\measure'}
\end{align}
defines a Hilbert--Schmidt operator with $\kerop''\leq \kerop'$, so our goal is to show that $\kerop''=\kerop'$.
This, by \eqref{e.hs.equivalence} with $\op_1=\op_2=\cdots=\kerop''$ and $\op'=\kerop'$, follows once we show that $\norm{\kerop''}_{\HS,\measure'}=\norm{\kerop'}_{\HS,\measure'}$.
To do the latter, use \eqref{e.embed.tool.4} and \eqref{l.embed.tool..2.1} to write $\norm{\kerop'}_{\HS,\measure'}=\lim_{\ell}\norm{\kerop_{\ell}}_{\HS,\measure'}$, which is equal to $\lim_{\ell}\lim_N\norm{\kerop_{\ell,N}}_{\HS,\measure}$, as we demonstrated in the previous paragraph.
So far, we have
\begin{align}
	\label{l.embed.tool..2.3}
	\norm{\kerop'}_{\HS,\measure'}^2
	=
	\lim_{\ell}\lim_N
	\norm{\kerop_{\ell,N}}_{\HS,\measure}^2
	=
	\lim_{\ell}\lim_N
	\sum_{i,j=1}^{\infty} \Ipp{ \eigenfn_i,\, \kerop_{\ell,N} \eigenfn_{j} }_{\measure}^2\ ,
\end{align}
where we used the representation of $\norm{\cdot}_{\HS,\measure}$ with the orthonormal functions $\eigenfn_{1}, \eigenfn_{2},\ldots$.
(The closure of $\operatorname{span}(\eigenfn_1,\eigenfn_2,\ldots)$ may not be the full space $\Lsp^2(\msp,\measure)$, but it does contain $\range(\kerop_{\ell,N})$, which suffices.)
Use \eqref{l.embed.tool..2.2} to write the right-hand side of \eqref{l.embed.tool..2.3} as
$
	\lim_{\ell}\lim_N \sum_{i,j\leq N} 
	\eigenval_{i}\eigenval_{j}\,  \measure_{\ell}^{\otimes 2}\, [\eigenfn_{i}\, \eigenfn_{j}]^2
$. 
By \eqref{e.embed.tool.5}, the two limits can be swapped.
Swapping the limit and using \eqref{e.embed.tool.3} in the result give
\begin{align}
	\label{l.embed.tool..2.4}
	\norm{\kerop'}_{\HS,\measure'}^2
	=
	\sum_{i,j=1}^{\infty} 
	\eigenval_{i}\eigenval_{j}\, \measure'{}^{\otimes 2}\,[ \eigenfn_{i}\, \eigenfn_{j}]^2\ .
\end{align}
Recognizing the right-hand side as $\norm{\kerop''}_{\HS,\measure'}^2$ (see \eqref{l.embed.tool..2.kerop''}) completes the proof.
\end{proof}

\section{Hilbert-space constructions}
\label{s.a.hilsp}

We prepare a consistency property of the $\isom_{I_1\ldots I_k}$s.

\begin{lem}\label{l.isom.consistent}
Take any internally disjoint $I_1, \ldots, I_k\in\I$ whose union gives an interval, and, for each $i=1,\ldots,k$, take internally disjoint $I_{i1},\ldots,I_{i n_j}\in\I$ whose union gives $I_i$.
We have
\begin{align}
	\isom_{\mathrm{all}\, I_{ij}\mathrm{s}} 
	= 
	\big( 
		\isom_{I_{11} \ldots I_{1 n_1}} \oplus \cdots \oplus \isom_{I_{k 1} \ldots I_{k n_k}}
	\big)
	\,
	\isom_{I_{1} \ldots I_{k}}\ .
\end{align}
\end{lem}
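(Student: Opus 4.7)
The plan is to invoke the uniqueness part of Lemma~\ref{l.polar}\eqref{l.polar.1} to identify the composition on the right-hand side with $\isom_{\mathrm{all}\, I_{ij}\mathrm{s}}$. Write $I := I_1 \cup \cdots \cup I_k$, $\til\isom := (\isom_{I_{11}\ldots I_{1 n_1}} \oplus \cdots \oplus \isom_{I_{k1}\ldots I_{k n_k}})\, \isom_{I_1\ldots I_k}$, and let $\Yop_{\mathrm{all}}$ denote $\Yop_{\mathrm{all}\, I_{ij}\mathrm{s}} = \oplus'_{i,j}\, \embed\Ystt{I_{ij}}$. One first checks, using \eqref{e.intersop.additive} and Proposition~\ref{p.embed..} applied to each $(I_{ij},I)$, that $\Yop_{\mathrm{all}}\Yop_{\mathrm{all}}^{*} = \intersop^{I}_{I} = \Ystt{I}\Ystt{I}^{*}$, so Lemma~\ref{l.polar}\eqref{l.polar.1} applies with $\Yop_1 = \Ystt{I}$ and $\Yop_2 = \Yop_{\mathrm{all}}$. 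The proof thus reduces to verifying three properties for $\til\isom$: that it is a partial isometry, that $\Yop_{\mathrm{all}}\til\isom = \Ystt{I}$, and that $\nullsp \til\isom = \nullsp \Ystt{I}$.

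The factorization $\Yop_{\mathrm{all}}\til\isom = \Ystt{I}$ should follow from direct manipulation. Using that the spatial embedding $\embed$ distributes across $\oplus'$ and that $\embed^{I_i}_I \circ \embed^{I_{ij}}_{I_i} = \embed^{I_{ij}}_I$, together with the middle-step application of $\Ystt{I_i} = \Yop_{I_{i1}\ldots I_{i n_i}}\isom_{I_{i1}\ldots I_{i n_i}}$, one obtains
\begin{align*}
    \Yop_{I_1\ldots I_k}
    \ = \
    \oplus'_i\, \embed \Ystt{I_i}
    \ = \
    \oplus'_i\, \embed\, \Yop_{I_{i1}\ldots I_{i n_i}}\, \isom_{I_{i1}\ldots I_{i n_i}}
    \ = \
    \Yop_{\mathrm{all}} \, \bigl(\oplus_i\, \isom_{I_{i1}\ldots I_{i n_i}}\bigr)\ .
\end{align*}
Right-multiplying by $\isom_{I_1\ldots I_k}$ and invoking $\Ystt{I} = \Yop_{I_1\ldots I_k}\isom_{I_1\ldots I_k}$ yields the desired identity. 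The null-space equality is then automatic: $\nullsp \til\isom \subset \nullsp \Ystt{I}$ from this factorization (if $\til\isom v = 0$ then $\Ystt{I} v = \Yop_{\mathrm{all}}\til\isom v = 0$), and $\nullsp \til\isom \supset \nullsp \isom_{I_1\ldots I_k} = \nullsp \Ystt{I}$ from the definition of $\til\isom$ together with Lemma~\ref{l.polar}\eqref{l.polar.1} applied to $\isom_{I_1\ldots I_k}$.

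The main obstacle will be showing that $\til\isom$ is a partial isometry. Since $\isom_{I_1\ldots I_k}$ is norm-preserving on $(\nullsp \isom_{I_1\ldots I_k})^{\perp}$ and $\oplus_i \isom_{I_{i1}\ldots I_{i n_i}}$ is norm-preserving on $\oplus_i(\nullsp \Ystt{I_i})^{\perp}$, it suffices to show the range inclusion $\range \isom_{I_1\ldots I_k} \subset \oplus_i(\nullsp \Ystt{I_i})^{\perp}$. By \eqref{e.isom.embed} the range already lies in $\oplus_i(\nullsp \embed\Ystt{I_i})^{\perp}$, so the matter reduces to the inclusion $\nullsp \Ystt{I_i} \subset \nullsp(\embed \Ystt{I_i})$ for each $i$. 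I would verify this by exploiting that $\Ystt{I_i}$ is a \emph{standard} Y operator in the sense of \eqref{e.Yst}: its null space is the closed span of those coordinate vectors $e_j$ on which $\Ystt{I_i}$ is the zero function, and for each such $e_j$ one has $\embed \Ystt{I_i} e_j = \embed 0 = 0$, which propagates to the full closed span by the continuity of $\embed \Ystt{I_i} : \lsp^2 \to \Lsp^2_I$ granted by Proposition~\ref{p.embed..}. Once this containment is secured, composing the two isometry properties gives $\|\til\isom v\| = \|\isom_{I_1\ldots I_k} v\| = \|v\|$ for $v \in (\nullsp \til\isom)^{\perp}$, finishing the proof.
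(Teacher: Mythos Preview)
Your proof is correct and follows the same strategy as the paper's: establish the factorization $\Ystt{I} = \Yop_{\mathrm{all}}\,\til\isom$ and then invoke the uniqueness part of Lemma~\ref{l.polar}\eqref{l.polar.1}. You work harder than necessary on the partial-isometry verification, though: the paper simply observes that $\nullsp\Ystt{J} = \{0\}$ for every standard Y operator (see \eqref{e.Yst} and the remark following \eqref{e.isom.Y}), so each $\isom_{I_{i1}\ldots I_{in_i}}$ and $\isom_{I_1\ldots I_k}$ is already an isometric \emph{embedding}, and hence so is their composition $\til\isom$---your range-inclusion and null-space bookkeeping then becomes vacuous.
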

\begin{proof}
First, using \eqref{e.Yop.sum}--\eqref{e.isom.Y} for $(I,I_{1},\ldots,I_{k})\mapsto (I_i,I_{i1},\ldots I_{in_i})$ gives
\begin{align} \label{l.isom.consistent.1}
	\Ystt{I_i} 
	=
	\big( \embed \Ystt{I_{i1}} \oplus' \cdots \oplus' \embed \Ystt{I_{in_i}} \big)
	\,\isom_{I_{i1} \ldots I_{in_i}} \ .
\end{align}
The embedding $\embed$ here maps $\BMsp(\Psp_{I_{ij}})\to\BMsp(\Psp_{I_i})$.
On both sides, apply the embedding $\embed:\BMsp(\Psp_{I_i})\to \BMsp(\Psp_{I})$, note that composing the two embeddings give the embedding $\embed:\BMsp(\Psp_{I_{ij}})\to\BMsp(\Psp_{I})$, sum both sides over $i=1,\ldots,k$ under $\oplus'$, right multiply both sides by $\isom_{I_1\ldots I_{k}}$, and use \eqref{e.Yop.sum}--\eqref{e.isom.Y} to write the left-hand side of the result as $\Ystt{I}$.
Doing so gives
\begin{align}
	\label{e.l.isom.consistent.1}
	\Ystt{I} 
	=
	\bigoplus'\nolimits_{ij} \embed \Ystt{I_{ij}} 
	\cdot
	\isom'\ ,
	\quad
        \text{where }
	\isom'
	:=
	\big( \isom_{I_{11} \ldots I_{1 n_1}} \oplus \cdots \oplus \isom_{I_{k 1} \ldots I_{k n_k}} \big)
	\,
	\isom_{I_{1} \ldots I_{k}}\ .
\end{align}
Given \eqref{e.l.isom.consistent.1}, we seek to apply the uniqueness statement in Lemma~\ref{l.polar}\eqref{l.polar.1} with $\Yop_1=\Ystt{I}$ and $\Yop_2=\oplus'_{ij}\embed \Ystt{I_{ij}}$.
The conditions on $\Yop_1,\Yop_2$ are satisfied due to \eqref{e.translation.Y.id}.
The operator $\isom'$ is an isometric embedding since the $\isom$s are, so in particular $\nullsp\isom'=\{0\}$. 
Note that $\nullsp\Ystt{I}=\{0\}$.
Also, $\Ystt{I}=\bigoplus'\nolimits_{ij} \embed\Ystt{I_{ij}} \cdot \isom_{\text{all} I_{ij}\text{s}}$ and $\nullsp\isom_{\text{all} I_{ij}\text{s}}=\{0\}$ by definition.
The uniqueness statement in Lemma~\ref{l.polar}\eqref{l.polar.1} applies and completes the proof.
\end{proof}

Combining Lemma~\ref{l.isom.consistent} and \eqref{e.isomu} immediately gives that
\begin{align}
	\label{e.isomu.consistent}
	\isomu^{\intvs''}_{\intvs}=\isomu^{\intvs''}_{\intvs'}\isomu^{\intvs'}_{\intvs}\ ,
	\quad
	\intvs\preceq\intvs'\preceq\intvs''\ .
\end{align}
Equipped with this property, we now prove Proposition~\ref{p.hilsp}.

\begin{proof}[Proof of Proposition~\ref{p.hilsp}]
Consider 
\begin{align}
	\hilsp'
	:= 
	\bigoplus_{\intvs\in\Intvs} \lsp^2(\intvs\times\N)
	=
	\big\{ v=(v_{\intvs})_{\intvs\in\Intvs} \, | \, v_{\intvs} = 0 \text{ for all but finitely many } \intvs\text{s} \big\}\ .
\end{align}
For $v\in\hilsp'$, let $\supp v :=\{\intvs\in\Intvs\,|\, v_{\intvs}\neq 0\}$ and $\refine v := \{ \intvss\in\Intvs\,|\, \intvs \preceq \intvss \text{ for all } \intvs\in\supp v \}$, and define
\begin{align}
	\label{e.p.hilsp.}
	\calN
	:=
	\Big\{ v\in\hilsp'_{\infty} \, \Big| \, \sum_{\intvs\in\supp v} \isomu_{\intvs}^{\intvss} v_{\intvs} = 0\  \text{ for some } \intvss \in \refine v \Big\}\ .
\end{align}
By \eqref{e.isomu.consistent}, once the condition in \eqref{e.p.hilsp.} holds for one $\intvss\in\refine v$, it holds for all $\intvss'\in\refine v$.
This property implies that $\calN$ is a linear subspace of $\hilsp'$.
Let $\hilsp'':=\hilsp'/\calN$, write $\overline{v}\in\hilsp''$ for the equivalence class containing $v\in\hilsp'$ modulo $\calN$, and endow $\hilsp''$ with the inner product
\begin{align}
	\label{e.p.hilsp..}
	\Ip{ \overline{v}, \overline{w} }_{\hilsp''}
	:=
	\IP{ 
		\sum_{\intvs\in\supp v} \isomu^{\intvss}_{\intvs} v_{\intvs}
		,  
		\sum_{\intvs\in\supp w} \isomu^{\intvss}_{\intvs} w_{\intvs}
	}_{\lsp^2(\intvss\times\N)}\ ,
\end{align}
where $\intvss$ is any element in $\refine v\cap \refine w$.
By \eqref{e.isomu.consistent}, the right-hand side of \eqref{e.p.hilsp..} does not depend on $\intvss$ as long as it is in $\refine v\cap \refine w$.
It follows from the definition of $\calN$ that the right-hand side of \eqref{e.p.hilsp..} does not depend on the representatives $v,w$ of the equivalence classes. 
We now define the universal Hilbert space $\hilsp$ as the closure of $\hilsp''$.
Write $\proj:\hilsp'\to\hilsp''\hookrightarrow\hilsp$ for the quotient map from $\hilsp'$ onto $\hilsp''=\hilsp'/\calN$, and write $\embedd'_{\intvs}:\lsp^2(\intvs\times\N)\to\hilsp'$ for the coordinatewise embedding.
It is not hard to check that $\embedd_{\intvs}:=\proj\,\embedd'_{\intvs}$ gives the desired isometric embedding of $\lsp^2(\intvs\times\N)$ into $\hilsp$.
\end{proof}

Let us prove Lemma~\ref{l.naimark}.

\begin{proof}[Proof of Lemma~\ref{l.naimark}]
(\ref*{l.naimark.1})\
We begin with a reduction.
Write $\ipp{\cdott,\cdott}$ for the inner product on $\Lsp^2_{I'}$, and note that $v\mapsto\ipp{v,\intersop^{I'}_{I}v}$ and
$
	v \mapsto
	\ipp{v,\Yopu_{I'}\,\proj_{I}\,\Yopu^{*}_{I'}v}
$
define quadratic forms on $\Lsp^2_{I'}$.
The first quadratic form vanishes on $\nullsp\intersop^{I'}_{I'}$~, because $\intersop^{I'}_{I}\leq\intersop^{I'}_{I'}$~.
The second quadratic form also vanishes on $\nullsp\intersop^{I'}_{I'}$~, because $\nullsp\intersop^{I'}_{I'}=\nullsp\Yopu^{*}_{I'}$~.
Let $(\eigenval_i,\eigenfn_i)$, $i\in\N$, be the eigenpairs of $\intersop^{I'}_{I'}$ with nonzero eigenvalues.
Since $\eigenfn_1,\eigenfn_2,\ldots$ is an orthonormal basis for $\nullsp\intersop^{I'}_{I}\,{}^{\perp}$, it suffices to show that 
\begin{align}
	\label{e.l.naimark.1.goal}
	\Ipp{ \eigenfn_i\ , \intersop^{I'}_{I}\, \eigenfn_{i} }
	=
	\Ipp{ \eigenfn_i\ ,\Yopu_{I'}\,\proj_{I}\,\Yopu^{*}_{I'}\, \eigenfn_{i} }
	=
	\Norm{ \proj_{I}\,\Yopu^{*}_{I'}\, \eigenfn_{i} }^2
	\ .
\end{align}

To prove \eqref{e.l.naimark.1.goal}, decomposing $I'$ into internally disjoint $I_1,I,I_3$, we begin by claiming that
\begin{align}
	\label{e.l.naimark.1.}
	\proj_{I}\, \embedd_{I_1\,I\,I_3}
	=
	\embedd_{I}\,
	\embedd^{I_1\,I\,I_3\,*}_{I}\ .
\end{align}
To see why this claim holds, consider the coordinatewise embedding
\begin{align}
	\label{e.l.naimark.1..}
	\embedd_{I}^{I_1\,I\,I_3}:
	\lsp^2(\{I\}\times \N)
	\hookrightarrow
	\lsp^2(\{I_1,I,I_3\}\times \N)\ .
\end{align}
The domain and range of $\embedd_{I}^{I_1\,I\,I_3}$ are isometrically embedded into $\hilsp$ respectively by $\embedd_{I}$ and $\embedd_{I_1\,I\,I_3}$, and these embeddings satisfy $\embedd_{I}=\embedd_{I_1\,I\,I_3}\,\embedd_{I}^{I_1\,I\,I_3}$, thanks to \eqref{e.hilsp.consistent} and \eqref{e.isomu.id1} for $\intvs=\{I\}$ and $\intvs'=\{I_1,I,I_3\}$.
The claim \eqref{e.l.naimark.1.} is straightforward to verify from these properties.
Next, on both sides of \eqref{e.l.naimark.1.}, right multiply with $\isomu^{I_1\,I\,I_3}_{I}$, use \eqref{e.hilsp.consistent} for $\intvs=\{I'\}$ and $\intvs'=\{I_1,I,I_3\}$ on the left-hand side, use \eqref{e.isomu.id2} for $\intvs=\{I'\}$ and $\intvs'=\{I_1,I,I_3\}$ on the right-hand side, and right multiply the result with $\Yop_{I'}\eigenfn_{i}$.
Doing so gives
\begin{align}
	\label{e.l.naimark.1...}
	\proj_{I}\, \Yopu_{I'}\, \eigenfn_{i}
	=
	\embedd_{I}\,
	\embedd^{I_1\,I\,I_3\,*}_{I}\,
	\isom_{I_1\,I\,I_3}\, \Ystt{I'}^{*}\, \eigenfn_{i}\ .
\end{align}
Use \eqref{e.l.polar} for $\Yop_1=\Ystt{I'}$ and $\Yop_2=\embed\Ystt{I_1}\oplus'\embed\Ystt{I}\oplus'\embed\Ystt{I_3}$ to obtain that
$
	\isom_{I_1\,I\,I_3}\, \Ystt{I'}^{*}\, \eigenfn_{i}
	=
	\embed\Ystt{I_1}^*\eigenfn_{i}\oplus\embed\Ystt{I}^*\eigenfn_{i}\oplus\embed\Ystt{I_3}^*\eigenfn_{i}
$,
and note that $\embedd^{I_1\,I\,I_3\,*}_{I}$ projects the last expression to $\embed\Ystt{I}^*\eigenfn_{i}$.
We conclude that the right-hand side of \eqref{e.l.naimark.1.goal} is equal to $\norm{\embed\Ystt{I}^*\eigenfn_{i}}^2$, which by Proposition~\ref{p.embed..} is equal to the left-hand side.

(\ref*{l.naimark.2})\
It suffices to show that, for each $i$, $\norm{\proj_{I_k}\embedd_{I'}e_i}\to 0$ $\P$-a.s.
To this end, use Part~\eqref{l.naimark.1} together with the property $\Ystt{I'}^*\eigenfn_i/\sqrt{\eigenval_i}=e_i$ (see \eqref{e.Ystt}) to obtain
\begin{align}
	\Norm{ \proj_{I_k}\,\embedd_{I'}\, e_i }^2
	=
	\frac{1}{\eigenval_i}\, \Ipp{ \eigenfn_i, \intersop^{I'}_{I_k} \eigenfn_i }
	\leq
	\frac{1}{\eigenval_i}
	\Big(\int_{\Psp_{I'}}\, \PMw{}^{\otimes 2}(\d\Path,\d\Pathh)
	\, \inters^{\Path,\Pathh}_{I'}(I_k)^2 \Big)^{1/2}\ ,
\end{align}
where we have used the Cauchy--Schwarz inequality wrt $\PMw{}^{\otimes 2}$ and that $\norm{\eigenfn_{i}}=1$ in the  inequality.
Applying the dominated convergence theorem to the last integral gives the desired result.
\end{proof}

\bibliographystyle{alpha}
\bibliography{shf}

\end{document}